\newcommand{\black}{\color{black}}
\theoremstyle{plain}
\newtheorem{theorem}[equation]{Theorem}
\newtheorem{lemma}[equation]{Lemma}
\theoremstyle{definition}
\newtheorem{definition}[equation]{Definition}
\numberwithin{equation}{section}
\def\C{\mathbb{C}}
\def\N{\mathbb{N}}
\def\D{\mathcal{D}}
\def\E{\mathbb{E}}
\def\F{\mathscr{F}}
\def\G{\mathscr{G}}
\def\Q{\mathcal{Q}}
\def\S{\mathbf{S}}
\def\K{\mathcal{K}}
\def\I{\mathbb{I}}
\def\R{\mathbb{R}}
\def\Rn{\mathbb{R}^n}
\def\Rnn{\R^{n_1} \times \R^{n_2}} 
\def\Z{\mathbb{Z}}
\def\a{\bm{\alpha}}
\def\b{\bm{b}}
\def\w{\omega}
\def\loc{\operatorname{loc}}
\def\supp{\operatorname{supp}}
\def\bmo{\operatorname{bmo}}
\def\cmo{\operatorname{cmo}}
\def\BMO{\operatorname{BMO}}
\def\VMO{\operatorname{VMO}}
\def\XMO{\operatorname{XMO}}
\def\CMO{\operatorname{CMO}}
\def\d{\operatorname{d}}
\def\rd{\operatorname{rd}}
\def\ch{\operatorname{ch}}
\DeclareMathOperator*{\esssup}{ess\,sup}
\DeclareMathOperator*{\essinf}{ess\,inf}
\renewcommand{\emptyset}{\text{\textup{\O}}}
\begin{document}

\author{Mingming Cao}
\address{Mingming Cao\\
Instituto de Ciencias Matem\'aticas CSIC-UAM-UC3M-UCM\\
Con\-se\-jo Superior de Investigaciones Cient{\'\i}ficas\\
C/ Nicol\'as Cabrera, 13-15\\
E-28049 Ma\-drid, Spain} \email{mingming.cao@icmat.es}

\author{K\^{o}z\^{o} Yabuta}
\address{K\^{o}z\^{o} Yabuta\\
Research Center for Mathematics and Data Science\\
Kwansei Gakuin University\\
Gakuen 2-1, Sanda 669-1337\\
Japan} \email{kyabuta3@kwansei.ac.jp}

\thanks{The first author acknowledges financial support from Spanish Ministry of Science and Innovation through the Ram\'{o}n y Cajal  2021 (RYC2021-032600-I), through the ``Severo Ochoa Programme for Centres of Excellence in R\&D'' (CEX2023-001347-S), and through PID2022-141354NB-I00.}

\year=2025 \month=03 \day=19

\date{\today}

\subjclass[2020]{42B20, 42B35}


\keywords{
Bi-parameter singular integrals,
$T1$ theorem, 
Dyadic analysis,
Multilinear theory, 
Weighted compactness, 
Mean continuity, 
Commutators}

\begin{abstract}
We develop the compactness theory of multilinear singular integrals on product spaces using a modern point of view. The first main result is a compact $T1$ theorem for multilinear Calder\'{o}n--Zygmund operators on product spaces. More specifically, we prove that a multilinear singular integral operator $T$ on product spaces can be extended to a compact multilinear operator from $L^{p_1}(w_1^{p_1}) \times \cdots \times L^{p_m}(w_m^{p_m})$ to $L^p(w^p)$ for all exponents $\frac1p = \sum_{j=1}^m \frac{1}{p_j}>0$ with $p_1, \ldots, p_m \in (1, \infty]$ and for all weights $\vec{w} \in A_{\vec{p}}(\mathbb{R}^{n_1} \times \mathbb{R}^{n_2})$ if the following hypotheses are satisfied: (H1) $T$ admits a compact full kernel representation, (H2) $T$ admits a compact partial kernel representation, (H3) $T$ satisfies the weak compactness property, (H4) $T$ satisfies the diagonal $\mathrm{CMO}$ condition, and (H5) $T$ satisfies the product $\mathrm{CMO}$ condition. This is a multilinear compact extension of Journ\'{e}'s $T1$ theorem on product spaces. The second main result establishes the mean continuity of commutators $[\bm{b}, T]_{\bm{\alpha}}$ on weighted Lebesgue spaces as above, which can be viewed as a substitution of compactness  because the compactness of $[\bm{b}, T]_{\bm{\alpha}}$ is equivalent to $\bm{b} \equiv \text{constant}$ when $T$ is a non-degenerate bi-parameter singular integral. Our main tools include multilinear bi-parameter dyadic representation, multilinear extrapolation, multilinear interpolation, and Kolmogorov--Riesz compactness criterion.
\end{abstract}

\title{Dyadic analysis of compactness on product spaces}

\maketitle
\tableofcontents

\section{Introduction}

\subsection{Motivation and the main results} 
The current investigation belongs to an extensive research program, which aims to advance our understanding of the compactness of singular integral operators on product spaces. The main reason why we study it comes from three aspects: 
\begin{itemize}
\item There exists very limited literature concerning the compactness of singular integrals, although the compactness of commutators has attracted a lot of interest recently and many new techniques have been developed, for example, \cite{CIRXY, COY, HL22, HL23, HLTY, HOS, TYY}.  

\item There is no a mature theory to research the compactness of bi-parameter singular integrals, except a compactness result by means of projection \cite{CYY}. In terms of boundedness on product spaces, plenty of remarkable works have been done by \cite{HLMV, LMV20, LMV21, Mar, Ou} for singular integrals and by \cite{HPW, OPS} for commutators. Extending boundedness to compactness arises naturally. 

\item What is the precise dyadic structure behind compact bi-parameter Calder\'{o}n--Zygmund operators? The rapid development of dyadic analysis has leaded to a much better understanding of one-parameter and bi-parameter Calder\'{o}n--Zygmund operators. A modern thinking of singular integrals was given by Hyt\"{o}nen \cite{Hyt}, who represented singular integral operators as an average of some natural dyadic model operators. 
The bi-parameter extension was obtained in \cite{LMV20, Mar}, while sparse domination \cite{Ler, LOR}  provides a brand new perspective. All these make many problems in the weighted or/and bi-parameter setting more attainable. Inspired by the above, we explore the dyadic structure of compact bi-parameter Calder\'{o}n--Zygmund operators. 
\end{itemize} 

Before entering into a detailed discussion of the multi-parameter theory, we present the first main result of this paper.

\begin{theorem}\label{thm:cpt}
Let $T$ be an $m$-linear bi-parameter singular integral operator (cf. Definition {\rm \ref{def:SIO}}). Assume that $T$ satisfies the following hypotheses: 
\begin{list}{\rm (\theenumi)}{\usecounter{enumi}\leftmargin=1.3cm \labelwidth=1cm \itemsep=0.1cm \topsep=0.2cm \renewcommand{\theenumi}{H\arabic{enumi}}}

\item\label{H1} $T$ admits the compact full kernel representation (cf. Definition {\rm \ref{def:full}}),

\item\label{H2} $T$ admits the compact partial kernel representation (cf. Definition {\rm \ref{def:partial}}),

\item\label{H3} $T$ satisfies the weak compactness property (cf. Definition {\rm \ref{def:WCP}}),

\item\label{H4} $T$ satisfies the diagonal $\CMO$ condition (cf. Definition {\rm \ref{def:diag-CMO}}),

\item\label{H5} $T$ satisfies the product $\CMO$ condition (cf. Definition {\rm \ref{def:prod-BMO}}).
\end{list}
Then $T$ is compact from $L^{p_1}(w_1^{p_1}) \times \cdots \times L^{p_m}(w_m^{p_m})$ to $L^p(w^p)$ for all $\vec{p} = (p_1, \ldots, p_m) \in (1, \infty]^m$ and $\vec{w} = (w_1, \ldots, w_m) \in A_{\vec{p}}(\Rnn)$, where $\frac1p = \sum_{j=1}^m \frac{1}{p_j} > 0$ and $w = \prod_{j=1}^m w_j$. 
\end{theorem}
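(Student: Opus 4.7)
My plan is to follow the modern dyadic paradigm of the $T1$ theory: represent $T$ as an expectation over random dyadic grids of bi-parameter multilinear model operators, transfer the compactness hypotheses (\ref{H1})--(\ref{H5}) through the representation, and then deduce compactness of each model operator on the weighted target space, with operator norms that remain summable after the expectation is taken.

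\textbf{Step 1 (Compact bi-parameter dyadic representation).} Building on the bi-parameter representation theorems in the multilinear setting, I would prove an identity of the form
\begin{equation*}
\langle T(f_1,\ldots,f_m),g\rangle \,=\, \mathbb{E}_{\omega_1,\omega_2}\sum_{\vec{k},\vec{v}} a_{\vec{k},\vec{v}}(\omega_1,\omega_2)\,\langle U_{\vec{k},\vec{v}}^{\omega_1,\omega_2}(f_1,\ldots,f_m),g\rangle,
\end{equation*}
where $U_{\vec{k},\vec{v}}^{\omega_1,\omega_2}$ ranges over multilinear bi-parameter dyadic shifts, partial paraproducts and full paraproducts built on random grids $\D_{n_1}^{\omega_1}\times\D_{n_2}^{\omega_2}$. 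Hypotheses (\ref{H1})--(\ref{H3}) should upgrade the usual complexity decay of $a_{\vec{k},\vec{v}}$ by an additional vanishing factor $F$ reflecting the smallness built into the \emph{compact} kernel and weak-compactness representations, while (\ref{H4})--(\ref{H5}) force the symbols of the partial and full paraproducts to lie in the appropriate diagonal and product $\CMO$ classes rather than merely $\BMO$.

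\textbf{Step 2 (Weighted compactness of each dyadic model operator).} For each realization $U = U_{\vec{k},\vec{v}}^{\omega_1,\omega_2}$, I would establish compactness as a map
\[
L^{p_1}(w_1^{p_1})\times\cdots\times L^{p_m}(w_m^{p_m})\longrightarrow L^p(w^p)
\]
by approximating $U$ in operator norm by truncations $U_N$ obtained by restricting the defining sum to cubes of scales $2^{-N}\le\ell(I),\ell(J)\le 2^N$ sitting inside $B(0,N)$. The smallness factor $F$ from Step 1, the $\CMO$ nature of the paraproduct symbols (exploited via the density of $C_c^\infty$ in $\CMO$), and the known quantitative weighted bounds for bi-parameter multilinear shifts and paraproducts in $A_{\vec{p}}(\Rnn)$ combine to yield $\|U-U_N\|\to 0$ as $N\to\infty$. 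Each $U_N$ is a finite combination of weighted dyadic projections and hence of finite rank, so $U$ is compact as a norm-limit of compact operators.

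\textbf{Step 3 (Summation).} The geometric decay of $a_{\vec{k},\vec{v}}$ in the complexity parameters $\vec{k},\vec{v}$, enhanced by $F$, allows the weighted compactness estimates for each $U_{\vec{k},\vec{v}}^{\omega_1,\omega_2}$ to be summed uniformly in $\omega_1,\omega_2$. Since the compact operators form a norm-closed ideal stable under the expectation, $T$ is compact on the claimed weighted product spaces.

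The main technical obstacle is Step 2. In the one-parameter setting, compactness of dyadic model operators can be read off from Fréchet--Kolmogorov-type criteria, but in the bi-parameter, multilinear, weighted setting one must verify the appropriate equicontinuity separately in each of the two parameters, uniformly in the complexity $\vec{k},\vec{v}$, while simultaneously respecting the $A_{\vec{p}}(\Rnn)$-dependence. Converting the $\CMO$ smallness of the paraproduct symbols into such uniform equicontinuity — and doing so robustly across shifts, partial paraproducts and full paraproducts — is the crux of the argument and the place where genuinely new dyadic machinery for compactness is required.
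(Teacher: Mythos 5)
Your overall architecture (compact dyadic representation, then compactness of the model operators, then summation over complexities) matches the paper's, and your Step 1 is essentially Theorem \ref{thm:repre}. However, there are two concrete gaps in Steps 2--3.

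First, your Step 2 proposes to prove weighted compactness of each model operator \emph{directly} on $L^{p_1}(w_1^{p_1})\times\cdots\times L^{p_m}(w_m^{p_m})\to L^p(w^p)$ for the full claimed range of exponents. This cannot work as stated: the target exponent $p=\big(\sum_j 1/p_j\big)^{-1}$ may be smaller than $1$, and in the quasi-Banach range the norm-limit-of-finite-rank argument and the relevant compactness criteria are not available in the form you invoke (the paper explicitly notes that quasi-Banach compactness cannot be read off the dyadic representation directly, and that the endpoint $p_1=\cdots=p_m=\infty$ cannot serve as a starting point either). The missing ingredient is a Rubio de Francia--type \emph{extrapolation of compactness} on product spaces (Theorem \ref{thm:RdF-cpt}): one proves compactness only at a single convenient Banach point --- in the paper, unweighted $L^{2m}\times\cdots\times L^{2m}\to L^2$, via the bi-parameter Fr\'echet--Kolmogorov criterion of Theorem \ref{thm:KRLpq} --- and then combines this with the known weighted boundedness of the model operators to extrapolate to all $p_1,\ldots,p_m\in(1,\infty]$ and all $\vec w\in A_{\vec p}(\Rnn)$. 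Without this extrapolation step your argument does not reach the stated range of exponents.

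Second, your Step 3 asserts that the compact operators are ``stable under the expectation'' $\mathbb{E}_{\omega}$. The expectation is an integral over a continuum of random grids; an operator-valued integral of compact operators is compact only under additional hypotheses (e.g.\ Bochner integrability in the operator norm), which you do not verify and which are not obvious here since the individual grid operators do not converge to each other in norm. The paper avoids this issue entirely by never claiming compactness of the individual $\mathbf{T}_\omega$: Theorem \ref{thm:dyadic-cpt} establishes compactness of the \emph{averaged} operator $\mathbb{E}_\omega\mathbf{T}_\omega$ directly, by verifying the three Fr\'echet--Kolmogorov conditions with bounds uniform in $\omega$ (Minkowski's inequality pulls $\mathbb{E}_\omega$ inside each condition), and only then sums over the complexity parameters using the $2^{-k_i\delta_i/2}$ decay. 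You should restructure Steps 2--3 accordingly: prove uniform-in-$\omega$ smallness of the tails $\mathbf{S}^{k,N}_{\D_\omega}$, $\Xi^j_\omega$, $\Pi_{\b_\omega^N}$ and of the translation differences, apply the compactness criterion to the average, and then invoke extrapolation of compactness to reach the weighted quasi-Banach range.
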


To be explicit, let us recall the compactness of multilinear operators. Given quasi-normed spaces $\mathscr{X}_1, \ldots, \mathscr{X}_m, \mathscr{X}$, an $m$-linear operator $T: \mathscr{X}_1 \times \cdots \times \mathscr{X}_m \to \mathscr{X}$ is said to be \emph{compact} if $T(B_1 \times \cdots \times B_m)$ is \emph{precompact} in $\mathscr{X}$ for all bounded sets $B_j \subset \mathscr{X}_j$, $j=1, \ldots, m$, i.e., $\overline{T(B_1 \times \cdots \times B_m)}$ is a compact subset of $\mathscr{X}$.

On the technical level there is no existing approach to our result. First, the compactness of singular integrals originated in \cite{Vil}, where wavelet analysis was used. Even in the bilinear one-parameter situation, the complicated geometric relationship among cubes $I, J, K$ makes it impossible to obtain good localization and then readily parametrize the sums according to eccentricities and relative distances of cubes. Second, although the authors \cite{CYY} successfully established weighted compactness of linear bi-parameter Calder\'{o}n--Zygmund operators, the method of projection cannot be applied to the bilinear case. A main reason is that the projection operator $P_{\D(N)}^{\perp}$ in \eqref{def:PN} depends on the dyadic grid $\D$ so that the goodness of dyadic cubes in $\D$ cannot be added into the summation, which leads to the disappearance of satisfactory local estimates involving three cubes $I, J, K \in \D$. Third, the theory of sparse domination is missing in the multi-parameter scenario, even for the boundedness. In the one-parameter setting, the authors in \cite[Theorem 3.7]{SVW} obtained a pointwise sparse domination to show weighted compactness. But it requires the weak (1, 1) boundedness, which is false in the multi-parameter case.

Our result should be compared with \cite[Theorem 1.2]{LMV21}, which established the weighted boundedness of    multilinear bi-parameter singular integral operator under less restrictive assumptions. Unfortunately, those assumptions are not sufficient to obtain compactness. Generally speaking, multilinear bi-parameter Calder\'{o}n--Zygmund operators (cf. Definition \ref{def:CZO}) are not compact. To clarify this, recall the bilinear Riesz transform $\mathcal{R}_j^i$ on $\R^{n_i}$ defined by 
\begin{align*}
\mathcal{R}_j^i (f_1, f_2)(x) 
:= \mathrm{p.v. } \int_{\R^{n_i}} \int_{\R^{n_i}} 
\frac{(x - y)_j + (x - z)_j}{(|x-y|^2 + |x-z|^2)^{\frac{2n_i+1}{2}}} 
f_1(y) f_2(z)\, dy \, dz,  
\end{align*}
where $(x-y)_j$ is the $j$-th coordinate of $x-y \in \R^{n_i}$, $j=1, \ldots, n_i$. Let $\frac1p = \frac{1}{p_1} + \frac{1}{p_2}$ with $p_1, p_2 \in (1, \infty)$. Then one can check that 
\begin{enumerate}
\item\label{R1} $\mathcal{R}_{j_1}^1 \otimes \mathcal{R}_{j_2}^2$ satisfies the full kernel representation (cf. Definition \ref{def:SIO});

\item\label{R2} $\mathcal{R}_{j_1}^1 \otimes \mathcal{R}_{j_2}^2$ satisfies the partial kernel representation (cf. Definition \ref{def:SIO}); 

\item\label{R3} $\mathcal{R}_{j_1}^1 \otimes \mathcal{R}_{j_2}^2$ satisfies \eqref{H3}, \eqref{H4}, and \eqref{H5}; 

\item\label{R4} $\mathcal{R}_{j_1}^1 \otimes \mathcal{R}_{j_2}^2$ is bounded from $L^{p_1}(\Rnn) \times L^{p_2}(\Rnn)$ to $L^p(\Rnn)$; 

\item\label{R5} $\mathcal{R}_{j_1}^1 \otimes \mathcal{R}_{j_2}^2$ is not compact from $L^{p_1}(\Rnn) \times L^{p_2}(\Rnn)$ to $L^p(\Rnn)$. 
\end{enumerate}
Indeed, properties \eqref{R1} and \eqref{R2} are trivial, and property \eqref{R3} is a consequence of that 
\begin{align*}
\mathcal{R}_{j_i}^i(1, 1) = \mathcal{R}_{j_i}^{i, 1*}(1, 1) = \mathcal{R}_{j_i}^{i, 2*}(1, 1) = 0 
= \langle \mathcal{R}_{j_i}^i(\mathbf{1}_{I^i}, \mathbf{1}_{I^i}), \mathbf{1}_{I^i} \rangle,  
\end{align*}
for all cubes $I^i \subset \R^{n_i}$, and that $(\mathcal{R}_{j_1}^1 \otimes \mathcal{R}_{j_2}^2)_{1, 2}^{k_1*, k_2*} = \mathcal{R}_{j_1}^{1, k_1*} \otimes \mathcal{R}_{j_2}^{2, k_2*}$ for all $k_1, k_2 \in \{0, 1, 2\}$, where the adjoints of a bilinear bi-parameter operator are defined in Section \ref{sec:SIO} and the adjoints of a bilinear one-parameter operator $T$ are given by  
\begin{align*}
\langle T(f_1, f_2), f_3 \rangle 
= \langle T^{1*}(f_3, f_2), f_1 \rangle 
= \langle T^{2*}(f_1, f_3), f_2 \rangle. 
\end{align*} 
Then property \eqref{R4} follows from properties \eqref{R1}--\eqref{R3} and \cite[Theorem 1.2]{LMV21}. Additionally, property \eqref{R5} is a consequence of Lemma \ref{lem:ncpt} and \cite[Lemma B.13]{CLSY}, where the latter implies that bilinear one-parameter Riesz transforms are not compact. Therefore, the above indicates that one has to strengthen the assumptions in order to achieve  compactness of bilinear bi-parameter Calder\'{o}n--Zygmund operators.

Next, let us turn to commutators. Let $T$ be an operator from $\mathscr{X}_1  \times \cdots \times \mathscr{X}_m$ into $\mathscr{X}$, where $\mathscr{X}_1, \ldots, \mathscr{X}_m$ are some normed spaces and and $\mathscr{X}$ is a quasi-normed space. Given $\vec{f} := (f_1, \ldots, f_m) \in \mathscr{X}_1 \times \cdots \times \mathscr{X}_m$, $\b=(b_1, \ldots, b_m)$ of measurable functions, and $j \in \{1, \ldots, m\}$, we define, whenever it makes sense, the first order commutator as 
\begin{align*}
[\b, T]_{e_j} (\vec{f})
:= b_j T(f_1,\ldots, f_j, \ldots, f_m) - T(f_1,\ldots, b_j f_j, \ldots, f_m), 
\end{align*}
where $e_j$ is the basis of $\Rn$ with the $j$-th component being $1$ and other components being $0$. Then for any $k \in \N_+$, we define the $k$-th order commutator of $T$ in the $j$-th entry of $T$ as 
\begin{align*}
[\b, T]_{k e_j} 
:= [\b, \cdots [\b, [\b, T]_{e_j}]_{e_j} \cdots]_{e_j},
\end{align*}
where the commutator is performed $k$ times. Finally, for a multi-index $\a = (\alpha_1, \ldots, \alpha_m) \in \N^m$, we define 
\begin{align*}
[\b, T]_{\a}:= [\b, \cdots [\b, [\b, T]_{\alpha_1 e_1}]_{\alpha_2 e_2} \cdots]_{\alpha_m e_m}.
\end{align*}

It is natural to wonder whether the commutator $[\b, T]_{\a}$ is compact for a bi-parameter singular integral operator $T$, for which the boundedness of $[\b, T]_{\a}$ was established in \cite{HPW, LMV21}. A recent work \cite{LM} gave an interesting result, which asserts that the compactness of $[b, T]_1$ is equivalent to $b \equiv \text{constant}$ when $T$ is a non-degenerate bi-parameter singular integral. This also indicates that for multi-parameter commutators $[\b, T]_{\a}$ it is not meaningful to study finer properties of compactness, for example, the Schatten property. In view of these facts above, we would like to seek a property of $[\b, T]_{\a}$ which is weaker than the compactness but stronger than the boundedness. More precisely, we introduce the following definition.

\begin{definition}
Given exponents $p_1, \ldots, p_m \in [1, \infty]$ and weights $w_1, \ldots, w_m$ on $\Rnn$, a bounded $m$-linear operator $T: L^{p_1}(w_1^{p_1}) \times \cdots \times L^{p_m}(w_m^{p_m}) \to L^p(w^p)$ is said to be \emph{mean continuous} if 
\begin{align*}
\lim_{r \to 0}
\sup_{\substack{\|f_j\|_{L^{p_j}(w_j^{p_j})} \le 1 \\ j=1, \ldots, m}} 
\bigg\|\bigg[\fint_{B_{\vec{n}}(0, r)}
\big|(\tau_y - \tau_{y_1} - \tau_{y_2} + I) T(\vec{f}) \big|^a \, dy 
\bigg]^{\frac1a}\bigg\|_{L^p(w^p)} 
= 0,
\end{align*}
for some $a \in (0, 1]$, where $\frac1p = \sum_{j=1}^m \frac{1}{p_j}$, $w = \prod_{j=1}^m w_j$,
\begin{align*}
B_{\vec{n}}(x, r) := \big\{y \in \Rnn: |y-x| = |y_1 - x_1| + |y_2 - x_2| < r\big\},
\end{align*} 
and
\begin{align*}
& (\tau_y - \tau_{y_1} - \tau_{y_2} + I) f(x)  
:= \tau_y f(x) - \tau_{y_1} f(x) - \tau_{y_2} f(x) + f(x) 
\\
& = f(x_1 - y_1, x_2 - y_2) - f(x_1 - y_1, x_2) - f(x_1, x_2 - y_2) + f(x_1, x_2).
\end{align*}
\end{definition}

The second main result of this paper is formulated as follows.

\begin{theorem}\label{thm:bT}
Let $T$ be an $m$-linear bi-parameter Calder\'{o}n--Zygmund operator (cf. Definition {\rm \ref{def:CZO}}). Then for any $\a \in \N^m \setminus \{0\}^m$ and $\b = (b_1, \ldots, b_m) \in \cmo(\Rnn)^m$, the commutator $[\b, T]_{\a}$ is mean continuous from $L^{p_1}(w_1^{p_1}) \times \cdots \times L^{p_m}(w_m^{p_m})$ to $L^p(w^p)$ for all $\vec{p} = (p_1, \ldots, p_m) \in (1, \infty]^m$ and $\vec{w} = (w_1, \ldots, w_m) \in A_{\vec{p}}(\Rnn)$, where $\frac1p = \sum_{j=1}^m \frac{1}{p_j} > 0$ and $w = \prod_{j=1}^m w_j$. 
\end{theorem}

The mean continuity is extracted from Kolmogorov--Riesz theorems, which give necessary and sufficient conditions for a subset of a Lebesgue space to be compact (see Section \ref{sec:hist} below). In the bi-parameter setting, we also establish similar compactness criterion, see Theorems \ref{thm:KRWA}--\ref{thm:KRttt}, where the last one implies the mean continuity is indeed weaker than the compactness. It is worth mentioning that the form of mean continuity is suitable to weighted Lebesgue spaces $L^p(v)$ with general weights $v$ which are not necessarily invariant under translations.

\subsection{Outline of the proof of Theorem \ref{thm:cpt}} 
Let us briefly outline how to prove Theorem \ref{thm:cpt}. Definitions and notation needed are postponed to Section \ref{sec:SIO}. The core of the proof is a compact $m$-linear bi-parameter dyadic representation as follows.

\begin{theorem}\label{thm:repre}
Let $T$ be an $m$-linear bi-parameter singular integral operator (cf. Definition {\rm \ref{def:SIO}}). Assume that $T$ satisfies the hypotheses \eqref{H1}--\eqref{H5}. Then $T$ admits a compact $m$-linear bi-parameter dyadic representation (cf. Definition {\rm \ref{def:repre}}). 
\end{theorem}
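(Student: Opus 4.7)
The plan is to adapt the bi-parameter dyadic representation theorem of Martikainen (and its multilinear extension by Li--Martikainen--Vuorinen) to the \emph{compact} setting, extracting from hypotheses \eqref{H1}--\eqref{H5} an additional vanishing factor attached to each dyadic model operator. I would fix two independent random dyadic grids $\mathcal{D}^1$ on $\R^{n_1}$ and $\mathcal{D}^2$ on $\R^{n_2}$, and for test functions $f_1,\ldots,f_{m+1}$ in a dense subclass expand each in bi-parameter Haar/martingale differences, obtaining
\begin{equation*}
\langle T(f_1,\ldots,f_m), f_{m+1}\rangle
=\mathbb{E}_{\mathcal{D}^1,\mathcal{D}^2}\sum_{(I_j^1,I_j^2)_j}
\bigl\langle T(\Delta_{I_1^1,I_1^2} f_1,\ldots,\Delta_{I_m^1,I_m^2} f_m), \Delta_{I_{m+1}^1,I_{m+1}^2} f_{m+1}\bigr\rangle .
\end{equation*}
I would then apply the Hyt\"onen--Nazarov--Treil--Volberg good/bad cube decomposition in each parameter separately, so that only tuples containing a designated good cube contribute.

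The main bookkeeping step is to sort the resulting sum by the geometric configuration in each parameter. In each parameter one of three regimes occurs: (a) the top cubes are pairwise well-separated (the full-kernel regime), (b) they are strictly nested with prescribed relative eccentricities (the shift regime), or (c) they essentially coincide (the paraproduct regime). The nine resulting joint regimes correspond, respectively, to $m$-linear bi-parameter shifts, two families of partial paraproducts, and the full paraproduct, each indexed by complexities $k_1^i,k_2^i$ in the two parameters $i=1,2$. This is where the bi-parameter, multilinear structure demands care: in the partial-paraproduct regimes some tuples of cubes are nested in one parameter while separated in the other, and one must match each piece to exactly one hypothesis.

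The compactness is then built into each model operator by using the hypotheses to replace the usual uniform coefficient by a vanishing one. Hypothesis \eqref{H1} supplies a compact full kernel bound that factors across the two parameters with decay in cube size, distance, and eccentricity; \eqref{H2} supplies a compact partial kernel bound with the analogous decay in the parameter where the cubes separate; \eqref{H3} furnishes the vanishing modulus for the diagonal shift case via the weak compactness property; and \eqref{H4}, \eqref{H5} identify the paraproduct symbols with $T(1,\ldots,1)$ and its partial adjoints, which, thanks to the diagonal and product $\CMO$ conditions, can be approximated by finite sums of compactly supported atoms with a controllable vanishing error. Collecting everything yields a representation of $T$ as an average of compact $m$-linear bi-parameter dyadic model operators in the sense of Definition \ref{def:repre}.

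The principal obstacle is the coordinated extraction of vanishing factors in \emph{both} parameters for the partial-paraproduct regimes. In the boundedness proof of \cite{LMV21} one can treat the two parameters rather symmetrically, with $\BMO$ absorbing the paraproduct pieces; for compactness, the $\CMO$ approximation in the nested parameter must be coupled with the compact partial kernel estimate from \eqref{H2} in the separated parameter so that, after summation over the complexities $k_1^i,k_2^i$ and over the bi-parameter lattice, the combined decay still yields a legitimate compact model operator. Carrying out a preliminary reduction to a dense class of smooth compactly supported inputs, and then tracking how the compactness gains from \eqref{H1}--\eqref{H5}, the weak-compactness moduli, and the $\CMO$-approximation errors interact across the $k$-summation, is where I expect most of the technical work.
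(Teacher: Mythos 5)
Your plan coincides with the paper's proof: Haar expansion over random dyadic grids with the good/bad reduction, a separated/adjacent/nested trichotomy in each parameter, and extraction of vanishing coefficients from (H1)--(H5), with the partial paraproducts (nested in one parameter, separated or adjacent in the other) correctly identified as the crux, where $\CMO$-type estimates for coefficients of the form $\langle T(h_{I^1}\otimes 1, h_{J^1}^0\otimes 1), h_{K^1}\otimes\cdot\,\rangle$ in the nested parameter must be derived from the compact full and partial kernel bounds in the other. One labeling slip: it is the deeply nested configuration that produces the paraproducts (via the splitting $h_{I}=\langle h_{I}\rangle_{J}+\phi_{J}$ and the symbol $T(1,\ldots,1)$), while the adjacent/coinciding configuration produces bounded-complexity shifts via the weak compactness property --- you have these two reversed in your list of single-parameter regimes, though your later assignment of (H3) to the diagonal shifts and (H5) to the full paraproduct symbol is consistent with the correct picture.
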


It is the first time to introduce a compact dyadic representation in the bi-parameter setting in order to study (weighted) compactness of singular integrals on product spaces. It definitely states the dyadic structure of multilinear bi-parameter Calder\'{o}n--Zygmund operators. In contrast to pointwise or integral form domination, such representation has a great advantage to transfer smoothly initial problems to corresponding dyadic problems, for example, commutators estimates and compactness. Theorem \ref{thm:repre} is based on our previous research  in the bilinear case \cite{CLSY} and in the bi-parameter setting \cite{CYY} (although no dyadic representation is established), and the general framework of multilinear bi-parameter dyadic operators in \cite{LMV21}. Besides, our compact dyadic operators (cf. Definitions \ref{def:shift}--\ref{def:paraproduct}) are much more complicated than those in \cite{CLSY}, and carry much more subtle information than those in \cite{LMV21}.

The usefulness of dyadic representation has been shown in other aspects. It can trace back to sharp weighted norm inequalities \cite{Pet}, where a representation was given for the Hilbert transform. Then it was refined in \cite{Hyt} to solve the well-known $A_2$ conjecture for general Calder\'{o}n--Zygmund operators. After that, the bi-parameter extension was given by \cite{Mar} and the bilinear case was due to \cite{LMOV}, while the multilinear multi-parameter representation was established in \cite{LMV20, LMV21}, which can be applied to obtain genuinely multilinear weighted estimates for any multi-parameter Calder\'{o}n--Zygmund operators  \cite{LMV21} and their commutators. Moreover, the bi-parameter representation in \cite{Mar} has been used to two-weight commutators estimates \cite{HPW} and higher order commutators \cite{OPS}. Recently, the one-parameter representation in \cite{Hyt} has found its application to Schatten class property of noncommutative martingale paraproducts and operator-valued commutators \cite{WZ}.

Having presented a dyadic representation, we have to show weighted compactness of bi-parameter dyadic operators. For this purpose, we establish weighted compactness of the average of the family of dyadic operators, instead of treating one by one.

\begin{theorem}\label{thm:dyadic-cpt}
Suppose that the family $\{\mathbf{T}_{\w}\}_{\w \in \Omega}$ is of the same type for any $\mathbf{T}_{\w} \in \big\{\S_{\D_{\w}}^k, \mathbf{P}_{\D_{\w}}^{1, k}, \mathbf{P}_{\D_{\w}}^{2, k}, \mathbf{F}_{\mathbf{a}_{\w}} \big\}$ (cf. Definitions {\rm \ref{def:shift}}--{\rm \ref{def:paraproduct}}). Then $\mathbb{E}_{\w} \mathbf{T}_{\w}$ is compact from $L^{p_1}(w_1^{p_1}) \times \cdots \times L^{p_m}(w_m^{p_m})$ to $L^p(w^p)$ for all $\vec{p} = (p_1, \ldots, p_m) \in (1, \infty]^m$ and $\vec{w} = (w_1, \ldots, w_m) \in A_{\vec{p}}(\Rnn)$, where $\frac1p = \sum_{j=1}^m \frac{1}{p_j} > 0$ and $w = \prod_{j=1}^m w_j$. 
\end{theorem}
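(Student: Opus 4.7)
My plan is a three-stage reduction: first, record the weighted boundedness of each averaged dyadic operator; second, prove compactness at one unweighted base point via a Kolmogorov--Riesz--Fr\'{e}chet truncation argument; third, promote compactness to the full weighted range by an extrapolation theorem for compact multilinear operators. For Stage~1, each of the four types $\mathbf{S}_{\D_{\w}}^{k}$, $\mathbf{P}_{\D_{\w}}^{1,k}$, $\mathbf{P}_{\D_{\w}}^{2,k}$, $\Pi_{\b_{\w}}$ already satisfies all the size and cancellation conditions required by the multilinear bi-parameter dyadic calculus of \cite{LMV21}, because the compact versions dominate the plain dyadic operators pointwise in their coefficients. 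Hence $\mathbb{E}_{\w} \mathbf{T}_{\w}$ maps $L^{p_1}(w_1^{p_1}) \times \cdots \times L^{p_m}(w_m^{p_m})$ into $L^p(w^p)$ for every $\vec w \in A_{\vec p}(\Rnn)$ and every admissible exponent tuple.

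For Stage~2, I would fix a convenient unweighted tuple $(p_1,\ldots,p_m)$ and verify the multilinear Kolmogorov--Riesz--Fr\'{e}chet criterion on the image of the product of unit balls: uniform boundedness (Stage~1), equicontinuity of translations, and uniform tightness at infinity. The key device is a truncation $\mathbb{E}_{\w} \mathbf{T}_{\w}^{(N)}$ retaining only dyadic cubes $I,J,K\in\D_{\w}$ with sidelengths in $[2^{-N},2^N]$ and centers in $B(0,2^N)$. Definitions~\ref{def:shift}--\ref{def:paraproduct} encode three independent families of smallness factors inherited from the compact kernel and compact $\CMO$ hypotheses: decay as sidelengths degenerate at either extreme, decay as the spatial location escapes to infinity, and (for mixed paraproducts) decay in eccentricity. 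Summed against the standard dyadic shift and paraproduct bounds, these give $\|\mathbb{E}_{\w}\mathbf{T}_{\w}-\mathbb{E}_{\w}\mathbf{T}_{\w}^{(N)}\|\to 0$ as $N\to\infty$. Each truncated operator has finitely many active scales and output supported in a fixed ball, so it is essentially a finite-rank multilinear operator and hence compact; a norm limit of compact multilinear operators is compact, so $\mathbb{E}_{\w}\mathbf{T}_{\w}$ is compact at this base point.

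For Stage~3, I would apply a multilinear bi-parameter compactness-extrapolation theorem, the natural extension of the linear bi-parameter statement in \cite{CYY} combined with multilinear one-parameter techniques in the spirit of \cite{CIRXY}, using Stage~1 boundedness as an envelope and Stage~2 compactness as the seed to conclude compactness for every $p_1,\ldots,p_m\in(1,\infty]$ and every $\vec w \in A_{\vec p}(\Rnn)$, including the endpoint cases where some $p_j=\infty$.

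The hardest step will be Stage~2. One must organize several independent decay factors --- the shift complexity $k$, spatial location, eccentricity, and sidelength range --- so that the tail sums over dyadic cubes are summable \emph{uniformly in $\w$}, and this must be done simultaneously for all four operator types. The averaging $\mathbb{E}_{\w}$ plays an essential role, because an individual $\mathbf{T}_{\w}$ need not carry enough smallness on its own; only after integrating in $\w$ do the compact $\CMO$ hypotheses yield the required decay. A further subtlety is that the full paraproduct $\Pi_{\b_{\w}}$ does not vanish on constants, so the tightness needed by Kolmogorov--Riesz--Fr\'{e}chet must be extracted from the product $\CMO$ smallness of the symbol $\b_{\w}$, and this smallness must be transferred across the expectation $\mathbb{E}_{\w}$ before the tail estimates can be closed.
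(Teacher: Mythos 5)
Your proposal follows essentially the same route as the paper: weighted boundedness of the averaged dyadic operators from the estimates in \cite{LMV21}, unweighted compactness at a single Banach-range point ($L^{2m}\times\cdots\times L^{2m}\to L^2$) via the Kolmogorov--Riesz--Fr\'echet criterion together with the truncation to cubes in $\D_{\w}(N)$, and then extrapolation of compactness (Theorem \ref{thm:RdF-cpt}) to reach all exponents and all $A_{\vec p}(\Rnn)$ weights. One small correction to your closing remarks: the averaging $\mathbb{E}_{\w}$ is not the source of the smallness --- Definitions \ref{def:shift}--\ref{def:paraproduct} build the decay in \emph{uniformly over all grids} (note the $\sup_{\D}$ in the conditions on $\mathbf{F}$, $\mathbf{F}^i_N$, and $\b$), so the tail estimates hold for each individual $\mathbf{T}_{\w}$ with constants independent of $\w$, and the expectation is simply pulled out by Minkowski's inequality.
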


In addition, we develop Rubio de Francia extrapolation of compactness on product spaces below. 

\begin{theorem}\label{thm:RdF-cpt}
Assume that $T$ is an $m$-linear operator such that  
\begin{list}{\rm (\theenumi)}{\usecounter{enumi}\leftmargin=1.2cm \labelwidth=1cm \itemsep=0.2cm \topsep=0.2cm \renewcommand{\theenumi}{\alph{enumi}}}

\item\label{RdFcpt-1} $T$ is compact from $L^{p_1}(u_1^{p_1}) \times \cdots \times L^{p_m}(u_m^{p_m})$ to $L^p(u^p)$ for some $\vec{p} = (p_1, \ldots, p_m) \in [1, \infty]^m$ and for some $\vec{u} = (u_1, \ldots, u_m) \in A_{\vec{p}}(\Rnn)$, where $\frac1p = \sum_{j=1}^m \frac{1}{p_j} > 0$ and $u = \prod_{j=1}^m u_j$;

\item\label{RdFcpt-2} $T$ is bounded from $L^{q_1}(v_1^{q_1}) \times \cdots \times L^{q_m}(v_m^{q_m})$ to $L^q(v^q)$ for some $\vec{q} = (q_1, \ldots, q_m) \in [1, \infty]^m$ and for all $\vec{v} = (v_1, \ldots, v_m) \in A_{\vec{q}}(\Rnn)$, where $\frac1q = \sum_{j=1}^m \frac{1}{q_j}$ and $v = \prod_{j=1}^m v_j$. 
\end{list} 
Then $T$ is compact from $L^{r_1}(w_1^{r_1}) \times \cdots \times L^{r_m}(w_m^{r_m})$ to $L^r(w^r)$ for all $\vec{r} = (r_1, \ldots, r_m) \in (1, \infty]^m$ and for all $\vec{w} = (w_1, \ldots, w_m) \in A_{\vec{r}}(\Rnn)$, where $\frac1r = \sum_{j=1}^m \frac{1}{r_j} > 0$ and $w = \prod_{j=1}^m w_j$.  
\end{theorem}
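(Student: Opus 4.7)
The approach is a two-step bootstrap: first extend the single point of boundedness in \eqref{RdFcpt-2} to a full weighted boundedness statement via multilinear Rubio de Francia extrapolation on product spaces, and then interpolate this with the single compact point supplied by \eqref{RdFcpt-1} to transfer compactness to the target $(\vec{r},\vec{w})$.

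\textbf{Step 1: Enlarging \eqref{RdFcpt-2}.} Applying the multilinear Rubio de Francia extrapolation theorem for product Muckenhoupt classes $A_{\vec{s}}(\Rnn)$ (the multi-parameter analog of Grafakos--Martell, which reduces to one-parameter extrapolation carried out in each factor $\R^{n_i}$ separately), I upgrade \eqref{RdFcpt-2} to: $T$ is bounded from $L^{s_1}(\sigma_1^{s_1}) \times \cdots \times L^{s_m}(\sigma_m^{s_m})$ to $L^s(\sigma^s)$ for every tuple $s_1,\ldots,s_m \in (1,\infty]$ with $\frac{1}{s} = \sum_{j=1}^m \frac{1}{s_j} > 0$ and every $\vec{\sigma} \in A_{\vec{s}}(\Rnn)$, where $\sigma = \prod_{j=1}^m \sigma_j$.

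\textbf{Step 2: Interpolating with \eqref{RdFcpt-1}.} Given a target $(\vec{r},\vec{w})$ with $r_j \in (1,\infty]$ and $\vec{w} \in A_{\vec{r}}(\Rnn)$, the openness of the product Muckenhoupt class (obtained by invoking the one-parameter openness of $A_{\vec{p}}$ in each factor together with reverse H\"older) provides $\theta \in (0,1)$, an auxiliary tuple $\vec{s}$ with $s_j \in (1,\infty]$, and weights $\vec{\sigma} \in A_{\vec{s}}(\Rnn)$ such that
\[
\frac{1}{r_j} = \frac{1-\theta}{p_j} + \frac{\theta}{s_j}, \qquad w_j = u_j^{1-\theta}\,\sigma_j^{\theta}, \qquad j = 1,\ldots,m.
\]
By the Stein--Weiss complex interpolation of weighted Lebesgue spaces, this parameterization identifies $L^{r_j}(w_j^{r_j}) = [L^{p_j}(u_j^{p_j}),\,L^{s_j}(\sigma_j^{s_j})]_\theta$ and the analogous identity holds on the target side. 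Since $T$ is compact at $(\vec{p},\vec{u})$ by \eqref{RdFcpt-1} and bounded at $(\vec{s},\vec{\sigma})$ by Step 1, a multilinear Calder\'on--Cwikel type interpolation theorem, which preserves compactness of an $m$-linear operator between complex interpolation couples whenever one vertex is compact and the other bounded, yields the desired compactness at $(\vec{r},\vec{w})$.

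\textbf{Main obstacle.} The principal technical ingredient is the multilinear interpolation theorem for compact operators on complex interpolation couples of weighted Lebesgue spaces; its validity when compactness is assumed only at one endpoint tuple is the delicate point and relies on recent multilinear extensions of Cwikel's theorem. Beyond this, the remaining tools---scalar Rubio de Francia extrapolation, Stein--Weiss interpolation, and openness of $A_{\vec{p}}$---are classical and extend to the product setting componentwise. The bi-parameter feature itself plays essentially no role, since $\Rnn$ enters only as the underlying measure space, and the product Muckenhoupt classes enjoy the same openness and extrapolation properties as their one-parameter counterparts.
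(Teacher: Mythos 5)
Your overall architecture coincides with the paper's: extrapolate hypothesis (\ref{RdFcpt-2}) to full weighted boundedness (the paper's Theorem \ref{thm:RdF-bdd}), factor the target weight through the compact vertex via a reverse-H\"{o}lder argument (the paper's Lemma \ref{lem:AA}), and then interpolate compactness at one vertex against boundedness at the other. Steps 1 and the weight-factorization part of Step 2 are essentially what the paper does, modulo a harmless swap of $\theta$ and $1-\theta$; your phrase ``openness in each factor'' is slightly off since $A_{\vec p}(\Rnn)$ is defined over rectangles rather than as a tensor product, but the reverse H\"{o}lder inequality for $A_p(\Rnn)$ is exactly the ingredient the paper uses, so this is only a cosmetic imprecision.

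The genuine gap is the final step. You reduce everything to a ``multilinear Calder\'{o}n--Cwikel type interpolation theorem'' on complex interpolation couples of weighted Lebesgue spaces, and you yourself flag its validity as the delicate point without supplying it. Two problems make this more than a citation issue. First, the target exponent $r$ satisfies only $\frac1r=\sum_{j=1}^m\frac1{r_j}>0$, so $L^r(w^r)$ is in general quasi-Banach; the Stein--Weiss identification $[L^{p}(u^{p}),L^{s}(\sigma^{s})]_\theta=L^{r}(w^{r})$ and one-sided Cwikel-type compactness interpolation are not available off the shelf in that range, and the paper explicitly singles out the quasi-Banach range as the main difficulty of its Theorem \ref{thm:cpt-inter}. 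Second, even in the Banach range, one-sided compactness interpolation for genuinely multilinear operators is not classical. The paper avoids both issues by proving the compactness interpolation directly: it characterizes precompactness in $L^r(w^r)$ by three quantitative conditions (uniform bound, decay at infinity, averaged translation continuity; Theorem \ref{thm:KRWA}), and interpolates each condition as a scalar inequality between the two vertices, obtaining bounds of the form $C_0^{1-\theta}\varepsilon^{\theta}$; this uses only interpolation of \emph{boundedness} with change of weights, which is robust in the quasi-Banach and multilinear setting. To complete your argument you would need either to prove the multilinear weighted quasi-Banach Cwikel theorem you invoke, or to replace it by a direct argument of this kind.
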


Theorem \ref{thm:RdF-cpt} is indispensable because quasi-Banach compactness can not be obtained directly from the dyadic representation above. Furthermore, extrapolation reduces what we want to unweighted compactness in Banach range, which is quite useful in practice. We should mention that Theorem \ref{thm:RdF-cpt} also holds for any $m$-parameter, which can be justified from its proof in Section \ref{sec:RdF}. Its feasibility benefits from our significant work \cite{COY}, where interpolation and extrapolation for multilinear compact operators were established systematacially. Beyond that, unlike to extrapolation of boundedness \cite[Theorem 3.12]{LMV21}, the case $p = p_1 = \cdots = p_m = \infty$ cannot be used as the starting point of extrapolation of compactness. For example, even in one-parameter setting, it is very hard to show compactness in the case $p = p_1 = \cdots = p_m = \infty$ because it needs $L^1 \times \cdots \times L^1 \to L^{\frac1m, \infty}$ compactness (cf. \cite[Section 9]{CLSY}), which is proved by means of $L^{q_1} \times \cdots \times L^{q_m} \to L^{q_m}$ compactness for some exponents $q, q_1, \ldots q_m \in (1, \infty)$ (cf. \cite[Section 8]{CLSY}).

\subsection{Outline of the proof of Theorem \ref{thm:bT}} 
Our basic proof strategy of Theorem \ref{thm:bT} is to use bi-parameter dyadic representation in \cite{LMV21}. This allows us to reduce matters to showing mean continuity of bi-parameter dyadic operators. 

\begin{theorem}\label{thm:dyadic-bT}
Suppose that the family $\{\mathbb{T}_{\w}\}_{\w \in \Omega}$ is of the same type for any $\mathbb{T}_{\w} \in \big\{\mathbb{S}_{\D_{\w}}^k, \mathbb{P}_{\D_{\w}}^{1, k}, \mathbb{P}_{\D_{\w}}^{2, k}, \mathbb{F}_{\mathbf{a}_{\w}} \big\}$ (cf. Definitions {\rm \ref{def:shift}}--{\rm \ref{def:paraproduct}}). Then for any $\a \in \N^m \setminus \{0\}^m$ and $\b = (b_1, \ldots, b_m) \in \cmo(\Rnn)^m$, $[\b, \mathbb{E}_{\w} \mathbb{T}_{\w}]_{\a}$ is mean continuous from $L^{p_1}(w_1^{p_1}) \times \cdots \times L^{p_m}(w_m^{p_m})$ to $L^p(w^p)$ for all $\vec{p} = (p_1, \ldots, p_m) \in (1, \infty]^m$ and $\vec{w} = (w_1, \ldots, w_m) \in A_{\vec{p}}(\Rnn)$, where $\frac1p = \sum_{j=1}^m \frac{1}{p_j} > 0$ and $w = \prod_{j=1}^m w_j$. 
\end{theorem}

As mentioned above, the biggest challenge in the multilinear setting is to obtain desired estimates in quasi-Banach range. Interestingly and unexpectedly, the mean continuity also enjoys the property of extrapolation as follows, which provides us with great convenience in practice.

\begin{theorem}\label{thm:RdF-bT}
Let $\a \in \N^m \setminus \{0\}^m$ and $\b = (b_1, \ldots, b_m) \in \bmo(\Rnn)^m$. Assume that $T$ is an $m$-linear operator such that  
\begin{list}{\rm (\theenumi)}{\usecounter{enumi}\leftmargin=1.2cm \labelwidth=1cm \itemsep=0.2cm \topsep=0.2cm \renewcommand{\theenumi}{\alph{enumi}}}

\item\label{RdFbT-1} $[\b, T]_{\a}$ is mean continuous from $L^{p_1}(u_1^{p_1}) \times \cdots \times L^{p_m}(u_m^{p_m})$ to $L^p(u^p)$ for some $\vec{p} = (p_1, \ldots, p_m) \in [1, \infty]^m$ and for some $\vec{u} = (u_1, \ldots, u_m) \in A_{\vec{p}}(\Rnn)$, where $\frac1p = \sum_{j=1}^m \frac{1}{p_j} > 0$ and $u = \prod_{j=1}^m u_j$;

\item\label{RdFbT-2} $T$ is bounded from $L^{q_1}(v_1^{q_1}) \times \cdots \times L^{q_m}(v_m^{q_m})$ to $L^q(v^q)$ for some $\vec{q} = (q_1, \ldots, q_m) \in [1, \infty]^m$ and for all $\vec{v} = (v_1, \ldots, v_m) \in A_{\vec{q}}(\Rnn)$, where $\frac1q = \sum_{j=1}^m \frac{1}{q_j}$ and $v = \prod_{j=1}^m v_j$. 
\end{list} 
Then $[\b, T]_{\a}$ is mean continuous from $L^{r_1}(w_1^{r_1}) \times \cdots \times L^{r_m}(w_m^{r_m})$ to $L^r(w^r)$ for all $\vec{r} = (r_1, \ldots, r_m) \in (1, \infty]^m$ and for all $\vec{w} = (w_1, \ldots, w_m) \in A_{\vec{r}}(\Rnn)$, where $\frac1r = \sum_{j=1}^m \frac{1}{r_j} > 0$ and $w = \prod_{j=1}^m w_j$.  
\end{theorem}

The formulation of Theorem \ref{thm:RdF-bT} is inspired by Theorem \ref{thm:RdF-cpt}. Theorem \ref{thm:RdF-bT} enables us to extrapolate the mean continuity of $[\b, T]_{\a}$ from just one unweighted space to the full range of weighted spaces, whenever an $m$-linear operator $T$ is bounded on weighted Lebesgue spaces. The proof of Theorem \ref{thm:RdF-bT} contains three main ingredients: (i) interpolation of multiple weights (cf. Lemma \ref{lem:AA}); (ii) extrapolation of boundedness for commutators (cf. Theorem \ref{thm:TTb}); (iii) interpolation on mixed-norm Lebesgue spaces (cf. \cite[Theorem 3.5]{COY}).

\subsection{Historical overview}\label{sec:hist}
The multi-parameter theory of singular integrals was pioneered by Fefferman and Stein \cite{FS}, who dealt with bi-parameter singular integral operators of convolution type. In the language of vector-valued Calder\'{o}n--Zygmund theory, Journ\'{e} \cite{Jou} formulated a $T1$ theorem for bi-parameter singular integrals with general kernels. From a fresh viewpoint on kernels (full and partial kernel representations), Martikainen \cite{Mar}  introduced a class of bi-parameter singular integrals and proved a bi-parameter dyadic representation, and then obtained a bi-parameter $T1$ theorem as a pleasant byproduct. Unexpectedly, full and partial kernel representations are actually equivalent to Journ\'{e}'s vector-valued formulation, see \cite{Grau}. Furthermore,  the bi-parameter $T1$ theorem was extended to non-homogeneous spaces in \cite{HM} using dyadic probabilistic methods and non-homogeneous analysis. See \cite{C79, CF80, CF85, Fef87, Fef88, FL} for more classical multi-parameter results and applications. 

On the other hand, there are a large number of literatures devoted to studying multilinear multi-parameter theory, which is more challenging than the multilinear one-parameter and linear multi-parameter theory of singular integrals. A typical example is bilinear bi-parameter Coifman--Meyer multipliers by Muscalu et al. \cite{MPTT1}, which appeared naturally in the Leibniz rule for fractional derivatives. They were proved in \cite{LMV20} to be bilinear bi-parameter Calder\'{o}n--Zygmund operators, and the usage of modern tools in \cite{LMV20, LMV21} greatly enriches the multi-parameter theory of multipliers by establishing novel commutators, weighted, and square function estimates. Moreover, some interesting investigations were carried out for variants of bilinear bi-parameter Coifman--Meyer multipliers, such as multilinear multi-parameter Hilbert transform \cite{DL1}, multilinear multi-parameter pseudo-differential operators \cite{DL2}, multilinear multi-parameter H\"{o}rmander multipliers \cite{CHLPZ}, and so on.

In term of the compactness of commutators, Uchiyama \cite{Uch} first established the equivalence between $b \in \CMO(\Rn)$ and $L^p$ compactness of commutators $[b, T]$ for any $p \in (1, \infty)$ and Calder\'{o}n--Zygmund operator $T$ with smooth kernel, where $\CMO(\Rn)$ denotes the closure in $\BMO(\Rn)$ of infinitely differentiable functions with compact support. A weighted analogy was proved for Riesz transforms and fractional integrals in \cite{WY}. In addition, the unweighted compactness was extended by B\'{e}nyi and Torres \cite{BT} to the bilinear setting $L^{p_1}(\Rn) \times L^{p_2}(\Rn) \to L^p(\Rn)$ for $\frac1p = \frac{1}{p_1} + \frac{1}{p_2}$ with $p, p_1, p_2 \in (1, \infty)$, while a complete characterization of compactness without the restriction $p \in (1, \infty)$ was shown in \cite{CCHTW} for certain homogeneous bilinear Calder\'{o}n--Zygmund operators. All results aforementioned were given for $b \in \CMO(\Rn)$. However, some recent works have shown that a weaker assumption is sufficient to obtain weighted compactness of commutators. In fact, introducing a new space $\XMO(\Rn)$ between $\CMO(\Rn)$ and $\BMO(\Rn)$, Torres and Xue \cite{TX} obtained unweighted compactness of commutators of bilinear Fourier multipliers and bilinear pseudo-differential operators. It was improved to the weighted case in \cite{TXYY}, where an equivalent characterization of $\XMO(\Rn)$ and the fact $\CMO(\Rn) \subsetneq \XMO(\Rn) \subsetneq \VMO(\Rn) \subsetneq \BMO(\Rn)$ were demonstrated in the spirit of Uchiyama's proof, although the proof needs some essential novel techniques on dyadic cubes and delicate geometrical observations. The more general  fractional variants were established in \cite{TYY} recently.

As a classical compactness criterion, the celebrated Kolmogorov--Riesz theorem was first discovered by Kolmogorov in $L^p([0, 1])$ for $p \in (1, \infty)$, which is a one-parameter version of Theorem \ref{thm:KRWA} with $w \equiv 1$ (the condition (b) holds automatically in this case). Soon after, it was expanded by Tamarkin \cite{Tam} to the general situation $\Rn$, which was further extended to the case $p=1$ by Tulajkov \cite{Tul}. Simultaneously, Riesz \cite{Riesz} independently proved a result in the form of Theorem \ref{thm:RKB} for $w \equiv 1$ and $p \in [1, \infty)$, which was improved to the setting $p \in (0, 1)$ by Tsuji \cite{Tsu}. Recently, these results aforementioned were refined on weighted Lebesgue spaces $L^p(w)$ for general weights $w$ and exponents $p \in (0, \infty)$ in \cite{COY}. Moreover, this theorem has been generalized to various function spaces, such as Banach function spaces \cite{GW, GRa}, metric measure spaces \cite{GM14, HH, Kro}, variable Lebesgue spaces \cite{GM15}, Musielak--Orlicz spaces \cite{Mus}, and weighted Lorentz spaces \cite{CLSY}. Beyond that, it has found many applications to Harmonic Analysis such as compactness of commutators \cite{BT, CIRXY, CLSY, CCHTW, TYY, TX, Uch} and extrapolation of compactness \cite{CIRXY, COY}, and to PDEs such as Poiseuille flow of nematic liquid crystals \cite{CHL}, Beltrami equations \cite{CC}, time fractional PDEs \cite{LLiu}, motion by mean curvature \cite{SY}, and so on.

\subsection{Structure of the paper} 
This paper is organized as follows. Section \ref{sec:pre} gives preliminaries including notation, dyadic grids, Haar functions, and some useful estimates. In Section \ref{sec:SIO}, we formulate the general framework of multilinear singular integrals on product spaces. After establishing some auxiliary results in Section \ref{sec:aux}, we prove the $m$-linear bi-parameter dyadic representation (cf. Theorem \ref{thm:repre}) in Section \ref{sec:bbd}. Then Section \ref{sec:wcpt} is devoted to showing weighted compactness of dyadic operators (cf. Theorem \ref{thm:dyadic-cpt}), while Section \ref{sec:wcc} aims to demonstrate the mean continuity of commutators (cf. Theorem \ref{thm:dyadic-bT}). Finally, Section \ref{sec:RdF} contains the proof of extrapolation of compactness and mean continuity (cf. Theorems \ref{thm:RdF-cpt} and \ref{thm:RdF-bT}), and then shows Theorems \ref{thm:cpt} and \ref{thm:bT}. For completeness, we present all details of the proof of Uchiyama's characterization of $\CMO(\Rn)$ in Section \ref{sec:Uch}.

\section{Preliminaries}\label{sec:pre}

\subsection{Notation}
Let us introduce some useful notation in this article. 
\begin{itemize}

\item For convenience, we use $\ell^{\infty}$ metrics on $\R^{n_1}$ and $\R^{n_2}$. 

\item Let $\Z :=\{0, \pm 1, \pm 2, \ldots\}$ be the set of all integers.  

\item Let $\N :=\{0, 1, 2, \ldots\}$ be the set of natural numbers. 

\item Let $\N_+ :=\{1, 2, \ldots\}$ be the set of positive integers.  

\item Let $\vec{f} = (f_1, \ldots, f_m)$ denote the vector of functions $f_1, \ldots, f_m$. 

\item For any $p \in (0, \infty]$ and a weight $w$ on $\Rn$, write $\|f\|_{L^p(w^p)} := \|fw\|_{L^p}$. 

\item Given $p \in (1, \infty)$, let $p' = \frac{p}{p-1}$ be the H\"{o}lder conjugate exponent of $p$.

\item For a measurable set $A \subset \Rn$ with $0<|A|<\infty$, write $\langle f \rangle_A = \fint_{A} f\,dx := \frac{1}{|A|} \int_A f\,dx$. 

\item A cube $I$ in $\Rn$ means $I := \prod_{i=1}^n [a_i, a_i+\ell)$, where $a_i \in \R$ and $\ell>0$.  

\item Given a cube $I \subset \Rn$, let $c_I$ and $\ell(I)$ be its center and sidelength respectively. For any $\lambda>0$, let $\lambda I$ be the cube with center $c_I$ and sidelength $\lambda \ell(I)$.  

\item Write $\I^i := [-\frac12, \frac12)^{n_i} \subset \R^{n_i}$ and $\lambda \I^i := [-\frac{\lambda}{2}, \frac{\lambda}{2})^{n_i}$ for any $\lambda>0$. 

\item Let $\Q^i$ denote the family of all cubes in $\R^{n_i}$. Then set $\mathcal{R} := \Q^1 \times \Q^2$.

\item Let $\D^i$ denote a generic dyadic grid on $\R^{n_i}$ (cf. Section \ref{sec:dyadic}). 

\item For any $I \in \D^i$, set $\ch(I) := \{I' \in \D^i: I' \subset I, \ell(I') = \ell(I)/2\}$.

\item Given $k \in \N$ and $I \in \D^i$, write $\D_k^i(I) := \{I' \in \D^i: I' \subset I, \ell(I') = 2^{-k} \ell(I)\}$, and let $I^{(k)}$ denote the unique dyadic cube $J \in \D^i$ such that $I \subset J$ and $\ell(J) = 2^k \ell(I)$. 

\item The distance between sets $E$ and $F$ is given by $\d(E, F) := \inf \{|x-y|: x \in E, y \in F\}$.  

\item The relative distance between cubes $I$ and $J$ is defined by $\rd(I, J) := \frac{\d(I, J)}{\max\{\ell(I), \ell(J)\}}$. 

\item For every $N \in \N$, set $\D^i(N) :=\{I \in \D^i: 2^{-N} \leq  \ell(I) \leq 2^N, \rd(I, 2^N \I^i) \leq N\}$.  

\item We shall use $A \lesssim B$ and $A \simeq B$ to mean, respectively, that $A \leq C B$ and $0<c \leq A/B\leq C$, where the constants $c$ and $C$ are harmless positive constants, not necessarily the same at each occurrence, which depend only on dimension and the constants appearing in the hypotheses of theorems.  
\end{itemize}

\subsection{Dyadic grids}\label{sec:dyadic}
For each $i=1, 2$, let $\D_0^{n_i}$ be the standard dyadic grid on $\R^{n_i}$:
\begin{align*}
\D_0^{n_i} := \big\{2^{-k}([0, 1)^{n_i} + m): k \in \Z, \, m \in \Z^{n_i} \big\}.
\end{align*}
Let $\Omega_i := (\{0, 1\}^{n_i})^{\Z}$ and let $\mathbb{P}_{\w_i}$ be the natural probability measure on $\Omega_i$: each component $\w_i^j$ has an equal probability $2^{-n_i}$ of taking any of the $2^{n_i}$ values in $\{0, 1\}^{n_i}$, and all components are independent of each other. Let $\mathbb{E}_{\w_i}$ denote the expectation over the random variables $\w_i^j$, $j \in \Z$. 

Given $\w_i = (\w_i^j)_{j \in \Z} \in \Omega_i$, the \emph{random dyadic grid} $\D_{\w_i}^{n_i}$ on $\R^{n_i}$ is defined by
\begin{align*}
\D_{\w_i}^{n_i} 
:= \bigg\{I + \w_i := I + \sum_{j: 2^{-j}< \ell(Q)} 2^{-j} \w_i^j : I \in \D_0^{n_i}\bigg\}.
\end{align*}
A \emph{dyadic grid} $\D^{n_i}$ on $\R^{n_i}$ means $\D^{n_i} = \D_{\w_i}^{n_i}$ for some $\w_i \in \Omega_i$.

A cube $I \in \D_{\w_i}^{n_i}$ is called \emph{bad} if there exists a cube $J \in \D_{\w_i}^{n_i}$ such that 
\begin{align*}
\ell(J) \ge 2^{\vartheta} \ell(I)
\quad\text{ and }\quad 
\d(I, \partial J) \le 2 \ell(I)^{\gamma_i} \ell(J)^{1-\gamma_i},  
\end{align*}
where $\vartheta \in \N_+$, $\gamma_i = \frac{\delta_i}{2(2n_i + \delta_i)}$, and $\delta_i \in (0, 1]$ appears in the kernel estimates (cf. Section \ref{sec:SIO}).  Otherwise a cube is called \emph{good}. Let $\D_{\w_i, \rm{good}}^{n_i}$ denote the family of good cubes in $\D_{\w_i}^{n_i}$. Note that $\pi_{\rm{good}}^{n_i} := \mathbb{P}_{\w_i}(I + \w_i \text{ is good}) = \mathbb{E}_{\w_i} \mathbf{1}_{\text{good}}(I + \w_i)$ is independent of the choice of $I \in \D_0^{n_i}$. The appearing parameter $\vartheta \in \N_+$ is a large enough fixed integer so that $\pi_{\rm{good}}^{n_1} > 0$ and $\pi_{\rm{good}}^{n_2} > 0$. Moreover, given $I \in \D_0^{n_i}$, the set $I + \w_i$ depends on $\w_i^j$ with $2^{-j} < \ell(I)$, while the goodness of $I + \w_i$ depends on $\w_i^j$ with $2^{-j} \ge \ell(I)$.

The following lemma (see \cite{LMOV}) gives the existence of the common dyadic ancestor in two different situations.

\begin{lemma}\label{lem:cda}
Let $I, J \in \D^{n_i}$ and $K \in \D^{n_i}_{\rm{good}}$ be such that $\ell(K) \le \ell(I) = 2\ell(J)$. 
\begin{enumerate}[label = {\rm (\alph*)}, itemsep = 2pt, topsep = 0pt, itemindent = 1pt]
\item\label{cda-1} If $\max\{\d(K, I), \d(K, J)\} > 2 \ell(K)^{\gamma_i} \ell(J)^{1 - \gamma_i}$, then there exists a cube $Q \in \D^{n_i}$ such that 
\begin{align*}
I \cup J \cup K \subset Q
\quad\text{ and }\quad 
\max\{\d(K, I), \, \d(K, J)\} \gtrsim \ell(K)^{\gamma_i} \ell(Q)^{1-\gamma_i}.
\end{align*} 

\item\label{cda-2} If $\max\{\d(K, I), \d(K, J)\} \le 2\ell(K)^{\gamma_i} \ell(J)^{1 - \gamma_i}$ and either $K \cap I = \emptyset$ or $K = I$ or $K \cap J = \emptyset$, then there exists a cube $Q \in \D^{n_i}$ such that 
\begin{align*}
I \cup J \cup K \subset Q
\quad\text{ and }\quad \ell(Q) \le 2^{\vartheta} \ell(K).
\end{align*} 
\end{enumerate}
\end{lemma}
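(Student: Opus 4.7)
The plan is to take $Q$ as the smallest dyadic cube in $\D^{n_i}$ containing $I \cup J \cup K$, and (whenever $\ell(Q) > \ell(K)$) to let $Q_K$ denote the unique dyadic child of $Q$ containing $K$. Such a $Q$ exists because $I, J, K$ all belong to the same dyadic grid. The driving tool in both parts is the goodness of $K$ applied to $Q_K$: whenever $\ell(Q_K) \ge 2^r \ell(K)$ one has $\d(K, \partial Q_K) > 2\ell(K)^{\gamma_i}\ell(Q_K)^{1-\gamma_i}$. The minimality of $Q$ prevents all of $I, J, K$ from sitting inside the proper subcube $Q_K$, so some $X \in \{I, J\}$ must escape $Q_K$.

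For part (a), I would split on the size of $Q$. In the regime $\ell(Q) < 2^{r+1}\ell(K)$, the bound $\ell(J) \ge \ell(K)/2$ (which follows from $\ell(K) \le \ell(I) = 2\ell(J)$) combined with $\ell(Q) \le 2^{r+1}\ell(K)$ yields $\ell(K)^{\gamma_i}\ell(J)^{1-\gamma_i} \gtrsim \ell(K)^{\gamma_i}\ell(Q)^{1-\gamma_i}$, and the hypothesis already delivers the conclusion. In the regime $\ell(Q) \ge 2^{r+1}\ell(K)$ the goodness of $K$ yields $\d(K, \partial Q_K) \gtrsim \ell(K)^{\gamma_i}\ell(Q)^{1-\gamma_i}$; if the escaping $X$ satisfies $\ell(X) \le \ell(Q_K)$ (which is automatic for $X = J$, since $\ell(J) \le \ell(Q_K)$), then $X \cap Q_K = \emptyset$ and $\d(K, X) \ge \d(K, \partial Q_K)$ finishes the job. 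The only residual possibility is $X = I$ with $\ell(I) = \ell(Q)$, i.e., $I = Q$, where $\d(K, I) = 0$ and the hypothesis instead reads $\d(K, J) > 2\ell(K)^{\gamma_i}\ell(J)^{1-\gamma_i} = 2^{\gamma_i}\ell(K)^{\gamma_i}\ell(Q)^{1-\gamma_i}$.

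For part (b), I would argue by contradiction, supposing $\ell(Q) \ge 2^{r+1}\ell(K)$. From $I \subset Q$ and $\ell(I) = 2\ell(J)$ one gets $\ell(Q_K) \ge \ell(J)$, so goodness of $K$ gives $\d(K, \partial Q_K) > 2\ell(K)^{\gamma_i}\ell(J)^{1-\gamma_i}$. It therefore suffices to show $X \cap Q_K = \emptyset$ for the escaping cube $X$, as this would force $\d(K, X) > 2\ell(K)^{\gamma_i}\ell(J)^{1-\gamma_i}$, contradicting the hypothesis of (b). Disjointness is automatic for $X = J$ and for $X = I$ with $\ell(I) \le \ell(Q_K)$; the only obstructing configuration is $X = I$ with $I = Q$, and here I would invoke the three intersection alternatives: $K \cap I = \emptyset$ contradicts $K \subset I$; $K = I$ is ruled out by $\ell(K) < \ell(Q) = \ell(I)$; and under $K \cap J = \emptyset$, either $J \subset Q_K$, in which case $\ell(J) = \ell(Q_K)$ forces $J = Q_K \supset K$, contradiction, or $J \not\subset Q_K$, allowing us to swap $X = J$ and close the argument by the already-treated case.

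The hard part will be the bookkeeping of dyadic containments for part (b): one must reconcile each of the three intersection alternatives against the geometry of $I, J, K, Q_K$ to exclude the borderline scenario $I = Q$, which would otherwise let $I$ escape $Q_K$ without being disjoint from it.
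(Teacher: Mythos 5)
The paper does not prove this lemma itself (it is quoted from \cite{LMOV}), so I am judging your argument on its own terms. Your overall strategy --- pass to the minimal common ancestor $Q$, apply the goodness of $K$ to the child $Q_K$ of $Q$ containing $K$, and use minimality to force $I$ or $J$ out of $Q_K$ --- is exactly the standard route, and your case analysis is correct and complete: the split $\ell(Q)<2^{r+1}\ell(K)$ versus $\ell(Q)\ge 2^{r+1}\ell(K)$ in (a), the borderline configuration $I=Q$ in both parts, and the use of the three intersection alternatives in (b) all check out, including the constants ($2\ell(J)^{1-\gamma_i}=2^{\gamma_i}\ell(Q)^{1-\gamma_i}$ when $\ell(J)=\ell(Q)/2$).

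There is, however, one genuine gap: the very first step, ``such a $Q$ exists because $I,J,K$ all belong to the same dyadic grid,'' is false as a general principle. Two cubes of the same dyadic grid need not have a common ancestor: in the standard grid on $\R$, every ancestor of $[-1,0)$ has the form $[-2^k,0)$ and every ancestor of $[0,1)$ has the form $[0,2^k)$, so these two cubes have no common ancestor at all. Since the entire lemma is an existence statement for $Q$, this cannot be waved away; the existence must itself be extracted from the goodness of $K$. The fix is short: if some $L\in\{I,J\}$ were contained in no ancestor $K^{(j)}$ of $K$, then for every $j$ with $\ell(K^{(j)})\ge\max\{\ell(L),2^r\ell(K)\}$ the dyadic dichotomy would give $L\cap K^{(j)}=\emptyset$, hence $\d(K,L)\ge \d(K,\partial K^{(j)})>2\ell(K)^{\gamma_i}\ell(K^{(j)})^{1-\gamma_i}\to\infty$ as $j\to\infty$, contradicting $\d(K,L)<\infty$. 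With that supplement (which also legitimizes starting the contradiction argument in (b) from the minimal $Q$), your proof is complete.
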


\subsection{Haar functions}\label{sec:Haar}
Let $\D$ be a dyadic grid on $\Rn$. Given a cube $I = I_1 \times \cdots \times I_n \in \D$, define the Haar function $h_I^{\eta}$, $\eta=(\eta_1, \ldots, \eta_n) \in \{0, 1\}^n$, by 
\begin{align*}
h_I^{\eta} := h_{I_1}^{\eta_1} \otimes \cdots \otimes h_{I_n}^{\eta_n},
\end{align*}
where $h_{I_i}^0 = |I_i|^{-\frac12} \mathbf{1}_{I_i}$ and $h_{I_i}^1 = |I_i|^{-\frac12}(\mathbf{1}_{I_i^-}-\mathbf{1}_{I_i^+})$ for every $i = 1, \ldots, n$. Here $I_i^-$ and $I_i^+$ are the left and right halves of the interval $I_i$ respectively. If $\eta \neq 0$, the Haar function is cancellative : $\int_{\Rn} h_I^{\eta} \, dx= 0$; in this case we usually suppress the presence of $\eta$ and simply write $h_I = h_I^{\eta}$.

Given $k \in \N$ and $I \in \D$, let $\D_k(I) := \{I' \in \D: I' \subset I, \ell(I') = 2^{-k} \ell(I)\}$. Define 
\begin{align*}
E_I f := \langle f \rangle_I \, \mathbf{1}_I, \quad 
\Delta_I f := \sum_{I' \in \ch(I)} 
\big[\langle f \rangle_{I'} - \langle f \rangle_I \big] \mathbf{1}_{I'},
\quad\text{and }\quad 
\Delta_I^k f := \sum_{I \in \D_k(I)} \Delta_I f. 
\end{align*}
Then for all $p \in (1, \infty)$ and $f \in L^p(\Rn)$, 
\begin{align}\label{f-mar}
f = \sum_{I \in \D} \Delta_I f 
= \sum_{I \in \D} \langle f, h_I \rangle \, h_I, 
\end{align}
where the convergence takes place unconditionally in $L^p(\Rn)$.

The dyadic square function $S_{\D}$ is defined by 
\begin{align*}
S_{\D} f 
:= \bigg( \sum_{I \in \D} |\langle f, h_I \rangle|^2 
\frac{\mathbf{1}_I}{|I|} \bigg)^{\frac12}.
\end{align*}
It was shown in \cite[Theorem 2.1]{Wil} that 
\begin{align}\label{sdff}
\|S_{\D} f\|_{L^r} \simeq \|f\|_{L^r}, \quad\text{for all } r  \in (1, \infty). 
\end{align}

For each $k \in \Z$, setting 
\begin{align}\label{def:Ek}
E_{2^k} f := \sum_{Q \in \D : \, \ell(Q)=2^k} E_Q f 
\quad\text{ and }\quad 
D_{2^k} f:= E_{2^{k-1}} f - E_{2^k} f, 
\end{align}
one can verify 
\begin{align}\label{ddf-2}
E_{2^k} f = \sum_{Q \in \D: \, \ell(Q)>2^k} \Delta_Q f, \qquad 
D_{2^k} f = \sum_{Q \in \D: \, \ell(Q)=2^k} \Delta_Q f, 
\end{align}
and 
\begin{align}\label{ddf-3}
\bigg\|\bigg(\sum_{k \in \Z} |D_{2^k} f|^2 \bigg)^{\frac12} \bigg\|_{L^r} 
\simeq \|f\|_{L^r}, \quad\text{ for all } r \in (1, \infty). 
\end{align}

Given $N \in \N$, we define the \emph{projection operator} and its \emph{orthogonal operator} by 
\begin{align}\label{def:PN}
P_{\D(N)} f  := \sum_{I \in \D(N)} \langle f, h_I \rangle \, h_I 
\quad\text{ and }\quad 
P_{\D(N)}^{\perp} f := f - P_{\D(N)} f, 
\end{align}
with convergence interpreted pointwise almost everywhere. Then there holds 
\begin{align}\label{ddf-4}
\sup_{N \in \N} \|P_N^{\perp} f\|_{L^r} 
\lesssim \|f\|_{L^r},\quad\text{ for all } r \in (1, \infty). 
\end{align}

\subsection{Muckenhoupt weights}
A measurable function $w$ on $\Rnn$ is called a \emph{weight} if $0<w(x)<\infty$ a.e. $x \in \Rnn$. Given $p \in (1, \infty)$, we define the bi-parameter class $A_p(\Rnn)$ as the collection of all weights $w$ on $\Rnn$ satisfying 
\begin{equation*}
[w]_{A_p(\Rnn)} 
:= \sup_{R \in \mathcal{R}} \langle w \rangle_R \langle w^{1-p'} \rangle_R^{p-1}
< \infty. 
\end{equation*} 
In the endpoint case $p=1$, we say that $w \in A_1(\Rnn)$ if  
\begin{equation*}
[w]_{A_1(\Rnn)} 
:= \sup_{R \in \mathcal{R}} \langle w \rangle_R \big(\esssup_R w^{-1} \big) 
< \infty.
\end{equation*}
Then, we define $A_{\infty}(\Rnn) := \bigcup_{p \geq 1} A_p(\Rnn)$.

Let us record square function estimates (cf. \cite[p. 1705]{HPW}): for any $p \in (1, \infty)$ and $w \in A_p(\Rnn)$, 
\begin{align}
\label{ssf-1}
\|f\|_{L^p(w)} 
&\simeq \bigg\|\bigg[\sum_{\substack{I^1 \in \D^1 \\ I^2 \in \D^2}} 
|\Delta_{I^1} \Delta_{I^2} f|^2 \bigg]^{\frac12}\bigg\|_{L^p(w)}
= \bigg\|\bigg[\sum_{\substack{I^1 \in \D^1 \\ I^2 \in \D^2}} 
|\Delta_{I^1}^{k_1} \Delta_{I^2}^{k_2} f|^2 \bigg]^{\frac12}\bigg\|_{L^p(w)}
\\
\label{ssf-2}
&\simeq \bigg\|\bigg[ \sum_{I^1 \in \D^1} 
|\Delta_{I^1} f|^2 \bigg]^{\frac12}\bigg\|_{L^p(w)}
= \bigg\|\bigg[ \sum_{I^1 \in \D^1} 
|\Delta_{I^1}^{k_1} f|^2 \bigg]^{\frac12}\bigg\|_{L^p(w)}
\\
\label{ssf-3}
&\simeq \bigg\|\bigg[ \sum_{I^2 \in \D^2} 
|\Delta_{I^2} f|^2 \bigg]^{\frac12}\bigg\|_{L^p(w)}
= \bigg\|\bigg[ \sum_{I^2 \in \D^2} 
|\Delta_{I^2}^{k_2} f|^2 \bigg]^{\frac12}\bigg\|_{L^p(w)}, 
\end{align}
for all $k_1, k_2 \in \N$, where the implicit constants are independent of $k_1, k_2, \D^1, \D^2$.

Next, let us recall multiple weights on product spaces introduced in \cite{GLPT}. 

\begin{definition}\label{def:Ap}
Given $\vec{p} = (p_1, \ldots, p_m)$ with $1 \le p_1, \ldots, p_m \le \infty$ and $\vec{w} = (w_1, \ldots, w_m)$ with $0<w_1, \ldots, w_m<\infty$ a.e. on $\Rnn$, we say that $\vec{w} \in A_{\vec{p}}(\Rnn)$ if 
\begin{align*}
[\vec{w}]_{A_{\vec{p}}(\Rnn)} 
:= \sup_{R \in \mathcal{R}} \langle w^p \rangle^{\frac1p} 
\prod_{j=1}^m \big\langle w_j^{-p'_j} \big\rangle_R^{\frac{1}{p'_j}} < \infty,
\end{align*}
where $\frac1p = \sum_{j=1}^m \frac{1}{p_j}$ and $w= \prod_{j=1}^m w_j$. If $p_j = 1$, $\big\langle w_j^{-p'_j} \big\rangle_R^{\frac{1}{p'_j}}$ is understood as $(\essinf_R w_j)^{-1}$; and if $p = \infty$, $\langle w^p \rangle^{\frac1p}$ is understood as $\esssup_R w$. 
\end{definition}

Let us define the multilinear strong maximal operator
\begin{equation*}
\mathcal{M}_{\mathcal{R}}(f_1, \ldots, f_m)(x)
:= \sup_{x \in R \in \mathcal{R}} \prod_{j=1}^m \langle |f_j| \rangle_R, \quad x \in \Rnn. 
\end{equation*}
In the case $m=1$, simply write $M_{\mathcal{R}} = \mathcal{M}_{\mathcal{R}}$.

Analogously to \cite{LOPTT}, we have the following characterizations of the class $A_{\vec{p}}(\Rnn)$, see  \cite[Lemma 3.6]{LMV21} and \cite[Corollary 2.4]{GLPT}. Although the latter does not contain the case $p_i=\infty$ for some/all $i \in \{1, \ldots, m\}$, it can be shown by Proposition 4.1 and Remark 8.1 in \cite{LMV21}.

\begin{theorem}\label{thm:ww}
Let $\vec{p} = (p_1, \ldots, p_m)$ with $1 \leq p_1, \ldots, p_m \le \infty$. Then the following hold: 
\begin{enumerate}
\item[{\rm (1)}] $\vec{w} \in A_{\vec{p}}(\Rnn)$ if and only if $w^p \in A_{mp}(\Rnn)$   and $w_j^{-p'_j} \in A_{mp'_j}(\Rnn)$, $j=1, \ldots, m$, where $\frac1p = \sum_{j=1}^m \frac{1}{p_j}$ and $w= \prod_{j=1}^m w_j$. If $p_j = 1$, $w_j^{-p'_j} \in A_{mp'_j}$ is understood as $w_j^{1/m} \in A_1$; and if $p = \infty$, $w^p \in A_{mp}$ is understood as $w^{-1/m} \in A_1$.

\item[{\rm (2)}] Let $p_1, \ldots, p_m >1$. Then $\vec{w} \in A_{\vec{p}}(\Rnn)$ if and only if $\mathcal{M}_{\mathcal{R}}$ is bounded from $L^{p_1}(w_1^{p_1}) \times \cdots \times L^{p_m}(w_m^{p_m})$ to $L^p(w^p)$, where $\frac1p = \sum_{j=1}^m \frac{1}{p_j}$ and $w = \prod_{j=1}^m w_j$. 

\end{enumerate} 
\end{theorem}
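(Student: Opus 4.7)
My plan is to establish (1) by direct algebraic manipulation of the testing condition using H\"older's inequality, following the scheme of Lerner--Ombrosi--P\'erez--Torres--Trujillo-Gonz\'alez for one-parameter multilinear Muckenhoupt weights, and then to deduce (2) by combining (1) with standard properties of the bi-parameter strong maximal operator $\mathcal{M}_{\mathcal{R}}$. The passage from cubes in $\Rn$ to rectangles in $\mathcal{R} = \Q^1 \times \Q^2$ introduces no structural obstacle for (1), because the $A_{\vec{p}}$ inequality is evaluated rectangle-by-rectangle and no covering theorem is invoked.

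For the forward direction of (1), given $\vec{w} \in A_{\vec{p}}(\Rnn)$, I would show $w^p \in A_{mp}(\Rnn)$ by verifying the estimate $\langle w^p \rangle_R \langle w^{-p/(mp-1)} \rangle_R^{mp-1} \lesssim 1$ uniformly in $R \in \mathcal{R}$. Writing $w^{-p/(mp-1)} = \prod_{j=1}^m w_j^{-p/(mp-1)}$ and applying H\"older on $R$ with exponents $(mp-1) p_j'/p$ (which sum to $1$ by $\frac{1}{p} = \sum_j \frac{1}{p_j}$) produces $\langle w^{-p/(mp-1)} \rangle_R^{mp-1} \le \prod_j \langle w_j^{-p_j'} \rangle_R^{p/p_j'}$; combined with the $A_{\vec{p}}$ inequality this yields $[w^p]_{A_{mp}(\Rnn)} \le [\vec{w}]_{A_{\vec{p}}(\Rnn)}^p$. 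A symmetric H\"older argument, isolating the $j$-th factor and rearranging the $A_{\vec{p}}$ inequality, delivers $w_j^{-p_j'} \in A_{mp_j'}(\Rnn)$. The converse direction is a reverse H\"older manipulation: expand $w = \prod_j w_j$ in $\langle w^p \rangle_R^{1/p}$ and feed in the individual $A_{mp}$ and $A_{mp_j'}$ bounds to recover the $A_{\vec{p}}$ testing inequality.

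For (2), necessity follows by testing the boundedness of $\mathcal{M}_{\mathcal{R}}$ on the tuple $\vec{f} = (\mathbf{1}_{R_0} w_1^{-p_1'}, \ldots, \mathbf{1}_{R_0} w_m^{-p_m'})$ for an arbitrary $R_0 \in \mathcal{R}$: the lower bound $\mathcal{M}_{\mathcal{R}}(\vec{f})(x) \ge \mathbf{1}_{R_0}(x) \prod_j \langle w_j^{-p_j'} \rangle_{R_0}$ together with the identity $\|f_j\|_{L^{p_j}(w_j^{p_j})}^{p_j} = |R_0| \langle w_j^{-p_j'} \rangle_{R_0}$ and the relation $\sum_j p/p_j = 1$ reduce, after simple bookkeeping, to the $A_{\vec{p}}$ inequality at $R_0$. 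Sufficiency is the deeper half: the naive pointwise bound $\mathcal{M}_{\mathcal{R}}(\vec{f}) \le \prod_j M_{\mathcal{R}} f_j$ followed by multilinear H\"older fails, because $\vec{w} \in A_{\vec{p}}(\Rnn)$ does not force $w_j^{p_j} \in A_{p_j}(\mathcal{R})$ individually. My approach is to adapt the LOPTT argument to rectangles: produce a sparse form on a sparse family of rectangles extracted from a Calder\'on--Zygmund-type decomposition adapted to $M_{\mathcal{R}}$, estimate the sparse form via the $A_{\vec{p}}$ inequality together with a reverse-H\"older extraction (here the known fact that $A_{\infty}(\Rnn)$ weights satisfy a rectangle reverse-H\"older inequality is essential), and recombine through multi-parameter H\"older in the weighted $L^p$-norm.

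The main obstacle I expect is the endpoint case in which some $p_j = \infty$ (and in particular $p = \infty$), where $\langle w_j^{-p_j'} \rangle_R^{1/p_j'}$ and $\langle w^p \rangle_R^{1/p}$ have to be reinterpreted as $(\essinf_R w_j)^{-1}$ and $\esssup_R w$ respectively; the H\"older computations in (1) must then be either passed to the limit $p_j \to \infty$ or carried out with the $\infty$-factors separated from the finite ones, and the $A_{\vec{p}}$ inequality rewritten accordingly. This is precisely the gap not addressed in [GLPT, Corollary 2.4] and where [LMV21, Proposition 4.1, Remark 8.1] supply the missing endpoint arguments; for (2), since $\mathcal{M}_{\mathcal{R}}$ itself is defined without any weight-dependent exponent, the only delicate point is this endpoint rewriting of the $A_{\vec{p}}$ testing condition.
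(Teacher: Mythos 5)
First, note that the paper does not actually prove this statement: it is quoted from the literature, with the remark that it follows from \cite[Lemma 3.6]{LMV21} and \cite[Corollary 2.4]{GLPT}, the endpoint cases $p_i=\infty$ being supplied by Proposition 4.1 and Remark 8.1 of \cite{LMV21}. So your proposal is a from-scratch plan to be measured against those references rather than against an argument in the paper. For part (1) your plan is correct and is exactly the LOPTT computation transported to rectangles: the H\"older step with the exponents whose \emph{reciprocals} $p/((mp-1)p_j')$ sum to $1$ (a small slip in your phrasing, but the computation is right) gives $[w^p]_{A_{mp}}\le[\vec w]_{A_{\vec p}}^p$, the symmetric manipulation gives $w_j^{-p_j'}\in A_{mp_j'}$, and the converse is the standard duality/H\"older argument; nothing in this uses coverings, so rectangles cause no trouble. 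The necessity half of (2), by testing on $(\mathbf 1_{R_0}w_1^{-p_1'},\ldots,\mathbf 1_{R_0}w_m^{-p_m'})$, is also correct. Your identification of the $p_j=\infty$ endpoint as the place where \cite{GLPT} is silent and \cite{LMV21} fills the gap matches the paper's own remark precisely.

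The genuine gap is in your route to the sufficiency half of (2). You propose to extract a sparse family of rectangles from ``a Calder\'on--Zygmund-type decomposition adapted to $M_{\mathcal R}$'' and then estimate the resulting sparse form. This is the one mechanism that is known not to be available in the multi-parameter setting: the strong maximal operator is not of weak type $(1,1)$, there is no Calder\'on--Zygmund decomposition with respect to rectangles, and stopping families of rectangles do not enjoy the packing/disjointness properties that make the one-parameter LOPTT sparse argument work --- indeed the introduction of this very paper emphasizes that ``the theory of sparse domination is missing in the multi-parameter scenario, even for the boundedness.'' The argument that actually closes this step in \cite{GLPT} is different in kind: one first proves (their Theorem 2.3) that $\mathcal M_{\mathcal R}$ is bounded under $\vec w\in A_{\vec p}(\Rnn)$ \emph{together with} the auxiliary hypothesis $w_j^{-p_j'}\in A_\infty(\Rnn)$ for each $j$, via a C\'ordoba--Fefferman-type covering/selection argument for rectangles in which the rectangle reverse H\"older inequality for product $A_\infty$ weights is the key substitute for disjointness; Corollary 2.4 then removes the auxiliary hypothesis because part (1) shows it is automatic. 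You correctly flag the reverse H\"older ingredient, but as written your plan hangs it on a decomposition that does not exist; to repair it you would need to replace the CZ/sparse step by the covering-lemma argument (or simply invoke \cite[Corollary 2.4]{GLPT} and \cite[Lemma 3.6]{LMV21}, as the paper does).
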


\subsection{BMO and CMO spaces} 
\begin{definition}\label{def:BMO}
A locally integrable function $b: \Rn \to \mathbb{C}$ belongs to $\BMO(\Rn)$ if
\begin{align*}
\|b\|_{\BMO(\Rn)} := \sup_{Q \in \Q} \fint_{Q} |b(x) - \langle b \rangle_Q| \, dx < \infty, 
\end{align*}
where $\Q$ is the collection of all cubes on $\Rn$. A dyadic analogue $\BMO(\D)$ is defined on the dyadic grid $\D$ instead of $\Q$. Let $\CMO(\Rn)$ denote the closure of $\mathscr{C}_c^{\infty}(\Rn)$ in $\BMO(\Rn)$. Additionally, the space $\CMO(\Rn)$ is endowed with the norm of $\BMO(\Rn)$. 
\end{definition}

It follows from \cite{LTW, Uch} and the John--Neirenberg inequality that for any $p \in [1, \infty)$, 
\begin{align*}
b \in \CMO(\Rn) 
& \iff b \in \BMO(\Rn) 
\, \, \text{and} \, 
\lim_{N \to \infty} \sup_{Q \notin \Q(N)} 
\bigg[\fint_Q |b - \langle b \rangle_Q|^p \bigg]^{\frac1p} = 0
\\
& \iff b \in \BMO(\Rn) 
\, \, \text{and} \, 
\lim_{N \to \infty} \sup_{\D} \|P_{\D(N)}^{\perp} b\|_{\BMO(\D)} = 0,
\end{align*}
where the first equivalence is shown in Theorem \ref{thm:Uch} and the second equivalence is given in \cite[Theorem A.1]{CLSY}. Moreover, for any sequence $\{a_I\}_{I \in \D}$ and $\{b_I\}_{I \in \D}$, 
\begin{align}\label{H1BMO}
\sum_{I \in \D} |a_I| |b_I| 
\lesssim \sup_{Q \in \D} \bigg(\frac{1}{|Q|} \sum_{I \in \D: I \subset Q} |a_I|^2 \bigg)^{\frac12} 
\bigg\| \bigg( \sum_{I \in \D} |b_I|^2 \frac{\mathbf{1}_{I}}{|I|} \bigg)^{\frac12} \bigg\|_{L^1}.
\end{align}

Next, let us turn to BMO and CMO spaces in the product setting $\Rnn$. Let $\D = \D^1 \times \D^2$, where $\D^i$ is a dyadic grid on $\R^{n_i}$, $i=1, 2$. We say that a locally integrable function $b: \Rnn \to \C$ belongs to the \emph{dyadic little $\BMO$ space} $\bmo(\D)$ if
\begin{align*}
\|b\|_{\bmo(\D)} := \sup_{I = I^1 \times I^2 \in \D} \fint_I |b(x) - \langle b \rangle_I| \, dx < \infty. 
\end{align*}
The \emph{little $\BMO$ space} $\bmo(\Rnn)$ is defined as the set of all functions $b$ satisfying 
\begin{align*}
\|b\|_{\bmo(\Rnn)} 
:= \sup_{\D = \D^1 \times \D^2}  \|b\|_{\bmo(\D)} < \infty. 
\end{align*}
Then \emph{little $\CMO$ space} $\cmo(\Rnn)$ is defined as the closure of $\mathscr{C}_c^{\infty}(\Rnn)$ in $\bmo(\Rnn)$ with norm of $\bmo(\Rnn)$. 

We say that a locally integrable function $b: \Rnn \to \C$ belongs to the \emph{dyadic product $\BMO$ space} $\BMO(\D)$ if
\begin{align*}
\|b\|_{\BMO(\D)}
:= \sup_U \bigg(\frac{1}{|U|} 
\sum_{\substack{I^1 \times I^2 \in \D \\ I^1 \times I^2 \subset U}}
|\langle b, h_{I^1} \otimes h_{I^2} \rangle|^2\bigg)^{\frac12}
< \infty,
\end{align*}
where the supremum is taken over all open sets $U \subset \Rnn$ with $0<|U|<\infty$. Then the \emph{product $\BMO$ space} $\BMO(\Rnn)$ is defined as the collection of all locally integrable functions $b$ satisfying 
\begin{align*}
\|b\|_{\BMO(\Rnn)} 
:= \sup_{\D = \D^1 \times \D^2}  \|b\|_{\BMO(\D)} < \infty.
\end{align*}

We say that $b$ belongs to the \emph{dyadic product $\CMO$ space $\CMO(\D)$} if $b \in \BMO(\D)$ and satisfies 
\begin{align*}
\lim_{N \to \infty} \|P_{\D(N)}^{\perp} b\|_{\BMO(\D)} = 0,  
\end{align*}
where $\D(N) := \D^1(N) \times \D^2(N)$,  $P_{\D(N)}^{\perp} b := b - P_{\D(N)} b$, and 
\begin{align*}
P_{\D(N)} b
:= \sum_{J^1 \times J^2 \in \D(N)} 
\langle b, h_{J^1} \otimes h_{J^2} \rangle \, h_{J^1} \otimes h_{J^2}.
\end{align*}
The \emph{product $\CMO$ space $\CMO(\Rnn)$} is defined as the set of all functions $b \in \BMO(\Rnn)$ satisfying 
\begin{align*}
\lim_{N \to \infty} \sup_{\D} \|P_{\D(N)}^{\perp} b\|_{\BMO(\D)} = 0. 
\end{align*}

For later use, we present a bi-parameter Carleson embedding theorem, which is a particular result of \cite{Han}. 

\begin{lemma}\label{lem:Car}
Let $\D = \D^1 \times \D^2$, where $\D^i$ is a dyadic grid on $\R^{n_i}$, $i=1, 2$. Then the following are equivalent: 
\begin{enumerate} 
\item[{\rm (i)}] A family of non-negative numbers $\{\lambda_R\}_{R \in \D}$ is Carleson: 
\begin{align*}
\sum_{R \in \D: R \subset U} \lambda_R  
\le C_1 |U|, 
\quad\text{for all open sets } U \subset \Rnn. 
\end{align*}

\item[{\rm (ii)}] For every family $\{a_R \}_{R \in \D}$ of nonnegative numbers, there holds
\begin{align*}
\sum_{R \in \D} \lambda_R \, a_R 
\le C_2 \int_{\Rnn} \sup_{x \in R \in \D} a_R \, dx.
\end{align*}
\end{enumerate}
\end{lemma}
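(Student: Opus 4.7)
The plan is to verify both implications directly via a layer-cake representation, with the openness/admissibility of level sets of the sup-function being the one point that requires a moment of reflection.

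For the easy direction (ii) $\Rightarrow$ (i), I would fix an admissible open set $U \subset \Rnn$ (i.e.\ every $x \in U$ lies in some $R \in \D$ with $x \in R \subset U$) and test the hypothesis on $a_R := \mathbf{1}_{\{R \subset U\}}$. Then $\sup_{x \in R \in \D} a_R = 1$ for every $x \in U$ (admissibility furnishes such an $R$), so $\int_{\Rnn} \sup_{x \in R} a_R \, dx = |U|$, and (ii) collapses to $\sum_{R \subset U} \lambda_R \le C_2 |U|$, which is (i).

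For the main direction (i) $\Rightarrow$ (ii), set $F(x) := \sup_{x \in R \in \D} a_R$ and apply the layer cake to each $a_R$:
\begin{align*}
\sum_{R \in \D} \lambda_R \, a_R
= \int_0^\infty \bigg(\sum_{R \in \D: \, a_R > t} \lambda_R\bigg) dt.
\end{align*}
Define $U_t := \{x \in \Rnn : F(x) > t\}$. The crucial observation is that $U_t$ is open and admissible: if $x \in U_t$ then by definition of $F$ there exists $R \ni x$ in $\D$ with $a_R > t$, and for any such $R$ every $y \in R$ satisfies $F(y) \ge a_R > t$, hence $R \subset U_t$. Consequently $\{R : a_R > t\} \subset \{R : R \subset U_t\}$, and the Carleson condition (i) applied to $U_t$ yields $\sum_{R: a_R > t} \lambda_R \le C_1 |U_t|$. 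Integrating in $t$ gives $\int_0^\infty |U_t| \, dt = \int_{\Rnn} F \, dx$, which is precisely (ii) with $C_2 = C_1$.

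The argument is essentially the classical Carleson embedding proof transplanted verbatim to the product setting; there is no genuine obstacle, since the bi-parameter structure enters only through the shape of the rectangles $R \in \D^1 \times \D^2$ and the admissibility condition in (i) is tailored to precisely what the proof needs. The only place to be careful is that the definition of $\BMO(\Rnn)$ Carleson uses open sets with the admissibility property (rather than arbitrary open sets), but as verified above the level sets $U_t$ automatically enjoy this property, so the hypothesis in (i) applies.
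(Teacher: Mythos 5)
The paper does not prove this lemma at all --- it quotes it as a special case of H\"anninen's result \cite{Han} --- so your proposal is being judged on its own. Your overall strategy (layer cake in $t$, apply the Carleson condition to the level sets of $F(x)=\sup_{x\in R\in\D}a_R$) is the standard and correct one, and the direction (ii) $\Rightarrow$ (i) is fine; note only that you do not need to restrict to admissible $U$ there, since your own computation gives $\{x:\sup_{x\in R}a_R=1\}\subseteq U$ and hence $\int F\,dx\le |U|$ for an \emph{arbitrary} open $U$, which is all that (i) requires.

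There is, however, one genuine gap in the main direction: the set $U_t=\{F>t\}=\bigcup_{R:\,a_R>t}R$ is \emph{not} open in general. In this paper a dyadic rectangle is a product of half-open cubes $\prod_i[a_i,a_i+\ell)$, so already for a single rectangle ($a_R>t$ for exactly one $R$) the set $U_t=R$ fails to be open, and its interior does not contain $R$, so you cannot apply (i) to $U_t$ or to its interior. The repair is short but must be said: if $|U_t|=\infty$ the bound $\sum_{a_R>t}\lambda_R\le C_1|U_t|$ is vacuous; otherwise, by outer regularity of Lebesgue measure choose for each $\delta>0$ an open set $V\supseteq U_t$ with $|V|\le |U_t|+\delta$. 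Since every $R$ with $a_R>t$ satisfies $R\subseteq U_t\subseteq V$, condition (i) applied to $V$ gives $\sum_{a_R>t}\lambda_R\le C_1(|U_t|+\delta)$, and letting $\delta\to0$ recovers the bound you integrate. (Alternatively one can take $V=\{M_{\mathcal{R}}\mathbf{1}_{U_t}>1/2\}$, which is open, contains each such $R$, and has $|V|\lesssim|U_t|$ by the $L^2$-boundedness of the strong maximal operator.) With this one-line fix the argument is complete; your remark that $U_t$ is ``admissible'' is also not needed, since the lemma's hypothesis (i) is stated for all open sets, not only admissible ones.
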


Let us end up this section with the relationship of compactness on two separated spaces and on product spaces. 

\begin{lemma}\label{lem:ncpt}
Let $p, p_1, \ldots, p_m \in (0, \infty)$. For each $i=1, 2$, let $T_i : L^{p_1}(\R^{n_i}) \times \cdots \times L^{p_m}(\R^{n_i}) \to L^p(\R^{n_i})$ boundedly with operator norm being non-zero. If $T_1 \otimes T_2$ is compact from $L^{p_1}(\Rnn) \times \cdots \times L^{p_m}(\Rnn)$ to $L^p(\Rnn)$, then $T_i$ is compact from $L^{p_1}(\R^{n_i}) \times \cdots \times L^{p_m}(\R^{n_i}) \to L^p(\R^{n_i})$ for each $i=1, 2$. 
\end{lemma}

\begin{proof}
Assume that $T_1$ is not compact from $L^{p_1}(\R^{n_1}) \times \cdots \times L^{p_m}(\R^{n_1}) \to L^p(\R^{n_1})$. Then there exists a bounded sequence $\{(f_k^1, \ldots, f_k^m)\}_{k=1}^{\infty} \subset L^{p_1}(\R^{n_1}) \times \cdots \times L^{p_m}(\R^{n_1})$ so that for every subsequence $\{(f_{k_j}^1, \ldots, f_{k_j}^m)\}_{j=1}^{\infty}$ of $\{(f_k^1, \ldots, f_k^m)\}_{k=1}^{\infty}$, $\{T_1(f_{k_j}^1, \ldots, f_{k_j}^m)\}_{j=1}^{\infty}$ is not Cauchy in $L^p(\R^{n_1})$. By the fact that $T_2$ is non-zero, one can find some $g^i \in L^{p_i}(\R^{n_2})$, $i=1, \ldots, m$, satisfying $\|T_2(g^1, \ldots, g^m)\|_{L^p(\R^{n_2})} =: A_0 \in (0, \infty)$. 

Pick $\phi_k^i := f_k^i \otimes g^i$ for each $k \in \N_+$ and $i=1, \ldots, m$. By definition, we have 
\begin{align*}
&\|T_1 \otimes T_2 (\phi_{k_i}^1, \ldots, \phi_{k_i}^m) 
- T_1 \otimes T_2 (\phi_{k_j}^1, \ldots, \phi_{k_j}^m)\|_{L^p(\Rnn)} 
\\
&= A_0 \|T_1 (\phi_{k_i}^1, \ldots, \phi_{k_i}^m) 
- T_1 (\phi_{k_j}^1, \ldots, \phi_{k_j}^m)\|_{L^p(\R^{n_1})}, 
\end{align*}
for all $i, j \in \N_+$, which implies that $\{T_1 \otimes T_2 (\phi_{k_j}^1, \ldots, \phi_{k_j}^m)\}_{j=1}^{\infty}$ is not Cauchy in $L^p(\Rnn)$. Hence, $T_1 \otimes T_2$ is not compact from $L^{p_1}(\Rnn) \times \cdots \times L^{p_m}(\Rnn)$ to $L^p(\Rnn)$. 
\end{proof}

\section{Multilinear singular integrals on product spaces}\label{sec:SIO}
Let us define multilinear singular integrals on product spaces. Let $T$ be an $m$-linear operator on $\Rnn$. Let $f_j = f_j^1 \otimes f_j^2$, $j=1, \ldots, m+1$. Denote $T_{1, 2}^{0*, 0*} := T$, and for each $j \in \{1, \ldots, m\}$, we define the full adjoint $T_{1, 2}^{j*, j*} := T^{j*}$ by
\begin{align*}
\langle T(f_1, \ldots, f_m), f_{m+1} \rangle 
= \langle T^{j*}(f_1, \ldots, f_{j-1}, f_{m+1}, f_{j+1}, \ldots, f_m), f_j \rangle, 
\end{align*}
and define the partial adjoints $T_{1, 2}^{j*, 0*} := T_1^{j*}$ and $T_{1, 2}^{0*, j*} := T_2^{j*}$ by 
\begin{align*}
\langle T(f_1, \ldots, f_m), f_{m+1} \rangle 
&= \langle T_1^{j*}(f_1, \ldots, f_{j-1}, f_{m+1}^1 \otimes f_j^2, f_{j+1}, \ldots, f_m), f_j^1 \otimes f_{m+1}^2 \rangle, 
\\
\langle T(f_1, \ldots, f_m), f_{m+1} \rangle 
&= \langle T_2^{j*}(f_1, \ldots, f_{j-1}, f_j^1 \otimes f_{m+1}^2, f_{j+1}, \ldots, f_m), f_{m+1}^1 \otimes f_j^2 \rangle. 
\end{align*}
Then for any $j_1, j_2 \in \{1, \ldots, m\}$, we define $T_{1, 2}^{j_1*, j_2*}$ by taking partial adjoints of $T_1^{j_1*}$ with respect to the second parameter, i.e., $T_{1, 2}^{j_1*, j_2*} = (T_1^{j_1*})_2^{j_2*}$.

\begin{definition}\label{def:FF}
Let $\F$ consist of all triples $(F_1, F_2, F_3)$ of bounded functions $F_1, F_2, F_3: [0, \infty) \to [0, \infty)$ satisfying
\begin{align*}
\lim_{t \to 0} F_1(t)
=\lim_{t \to \infty} F_2(t)
= \lim_{t \to \infty} F_3(t)
=0.
\end{align*}
For $i=1, 2$, let $\mathscr{F}^i$ be the family of all bounded functions $F^i: \mathcal{Q}^i \to [0, \infty)$ satisfying
\begin{align*}
\lim_{\ell(I^i) \to 0} F^i(I^i)
= \lim_{\ell(I^i) \to \infty} F^i(I^i)
= \lim_{|c_{I^i}| \to \infty} F^i(I^i)
=0, 
\end{align*}
where $\mathcal{Q}^i$ is the collection of all cubes in $\R^{n_i}$, $i=1, 2$.
\end{definition}

Let $\mathbb{F}^i$ consist of all finite linear combinations of indicators of cubes in $\R^{n_i}$, $i=1, 2$. Set $\mathbb{F} = \mathbb{F}^1 \times \mathbb{F}^2$. Let $\mathbb{L}$ denote the set of locally integrable functions on $\Rnn$. Let $\delta_1, \delta_2 \in (0, 1]$. For convenience, we use $\ell^{\infty}$ metrics on $\R^{n_1}$ and $\R^{n_2}$ throughout this paper. 

First, we define the kernel estimates. 

\begin{definition}\label{def:full}
An $m$-linear operator $T: \mathbb{F} \times \cdots \times \mathbb{F} \to \mathbb{L}$ admits a \emph{compact full kernel representation} if the following hold.
If $f_j = f_j^1 \otimes f_j^2 \in \mathbb{F}$, $j=1, \ldots, m+1$, and $\supp(f_1^i) \cap \cdots \cap \supp(f_{m+1}^i) = \emptyset$, $i=1, 2$, then
\begin{align*}
\langle T(f_1, \ldots, f_m), f_{m+1} \rangle
= \int_{(\Rnn)^{(m+1)}} K(x_{m+1}, x_1, \ldots, x_m) \prod_{j=1}^{m+1} f_j(x_j) \, dx,
\end{align*}
where 
\begin{align*}
K: \R^{(n_1+n_2)(m+1)} \setminus 
\big\{x_1^1 = \cdots = x_{m+1}^1 \in \R^{n_1}
\text{ or } x_1^2 = \cdots = x_{m+1}^2 \in \R^{n_2} \big\} \rightarrow \C
\end{align*}
is the kernel satisfying  
\begin{list}{\rm (\theenumi)}{\usecounter{enumi}\leftmargin=1.2cm \labelwidth=1cm \itemsep=0.2cm \topsep=.2cm \renewcommand{\theenumi}{\arabic{enumi}}}

\item\label{full-1} the size condition
\begin{align*}
|K(x_{m+1}, x_1, \ldots, x_m)| 
\leq \prod_{i=1}^2 \frac{F^i(x_{m+1}^i, x_1^i, \ldots, x_m^i)}{\big(\sum_{j=1}^m |x_{m+1}^i - x_j^i|\big)^{m n_i}}.
\end{align*}

\item\label{full-2} the H\"{o}lder condition
\begin{align*}
&|K(x_{m+1}, x_1, \ldots, x_m) 
- K((x_{m+1}^1, \widetilde{x}_{m+1}^2), x_1, \ldots, x_m) 
\\
&\quad- K((\widetilde{x}_{m+1}^1, x_{m+1}^2), x_1, \ldots, x_m) 
+ K((\widetilde{x}_{m+1}^1, \widetilde{x}_{m+1}^2), x_1, \ldots, x_m)| 
\\
&\leq \prod_{i=1}^2 \bigg(\frac{|x_{m+1}^i - \widetilde{x}_{m+1}^i|}{\sum_{j=1}^m |x_{m+1}^i - x_j^i|} \bigg)^{\delta_i} 
\frac{F^i(x_{m+1}^i, x_1^i, \ldots, x_m^i)}{\big(\sum_{j=1}^m |x_{m+1}^i - x_j^i|\big)^{m n_i}}
\end{align*}
whenever $|x_{m+1}^i - \widetilde{x}_{m+1}^i| \leq \frac12 \max\{|x_{m+1}^i - x_j^i|: 1 \le j \le m\}$ for all $i=1, 2$. 

\item\label{full-3} the mixed size-H\"{o}lder condition
\begin{align*}
&|K(x_{m+1}, x_1, \ldots, x_m) - K((\widetilde{x}_{m+1}^1, x_{m+1}^2), x_1, \ldots, x_m)|
\\
&\leq \bigg(\frac{|x_{m+1}^1 - \widetilde{x}_{m+1}^1|}{\sum_{j=1}^m |x_{m+1}^1 - x_j^1|} \bigg)^{\delta_1} 
\prod_{i=1}^2 \frac{F^i(x_{m+1}^i, x_1^i, \ldots, x_m^i)}{\big(\sum_{j=1}^m |x_{m+1}^i - x_j^i|\big)^{m n_i}}
\end{align*}
whenever $|x_{m+1}^1 - \widetilde{x}_{m+1}^1| \leq \frac12 \max\{|x_{m+1}^1 - x_j^1|: 1 \le j \le m\}$. 

\item\label{full-4} the function $F^i$ in \eqref{full-1}--\eqref{full-3} is given by
\begin{align*}
&F^i(x_{m+1}^i, x_1^i, \ldots, x_m^i) 
\\
&:= F^i_1 \bigg(\sum_{j=1}^m |x_{m+1}^i - x_j^i| \bigg)
F^i_2 \bigg(\sum_{j=1}^m |x_{m+1}^i - x_j^i| \bigg) 
F^i_3 \bigg(\sum_{j=1}^m |x_{m+1}^i + x_j^i| \bigg),
\end{align*}
where $(F^i_1, F^i_2, F^i_3) \in \mathscr{F}$, $i=1, 2$.

\item\label{full-5} The kernels $K_{1, 2}^{j_1*, j_2*}$ of $T_{1, 2}^{j_1*, j_2*}$, $j_1, j_2 \in \{0, 1, 2\}$, satisfy properties \eqref{full-1}--\eqref{full-4} as well.  
\end{list}
\end{definition}

\begin{definition}\label{def:partial}
An $m$-linear operator $T: \mathbb{F} \times \cdots \times \mathbb{F} \to \mathbb{L}$ admits a \emph{compact partial kernel representation on the first parameter} if the following hold. If $f_j = f_j^1 \otimes f_j^2$, $j=1, \ldots, m+1$, and $\supp(f_1^1) \cap \cdots \cap \supp(f_{m+1}^1) = \emptyset$, then
\begin{align*}
\langle T(f_1, \ldots, f_m), f_{m+1} \rangle
= \int_{\R^{n_1 (m+1)}} K_{(f_j^2)}(x_{m+1}^1, x_1^1, \ldots, x_m^1) \prod_{j=1}^{m+1} f_j^1(x_j^1) \, dx^1,
\end{align*}
where  
\begin{align*}
K_{(f_j^2)}: \R^{n_1 (m+1)} \setminus 
\big\{x_1^1 = \cdots = x_{m+1}^1 \in \R^{n_1} \big\} \rightarrow \C
\end{align*}
is the the kernel satisfying 
\begin{list}{\rm (\theenumi)}{\usecounter{enumi}\leftmargin=1.2cm \labelwidth=1cm \itemsep=0.2cm \topsep=.2cm \renewcommand{\theenumi}{\arabic{enumi}}}

\item\label{partial-1} the size condition
\begin{align*}
|K_{(f_j^2)}(x_{m+1}^1, x_1^1, \ldots, x_m^1)|
\leq C(f_1^2, \ldots, f_{m+1}^2) 
\frac{F^1(x_{m+1}^1, x_1^1, \ldots, x_m^1)}{\big(\sum_{j=1}^m |x_{m+1}^1 - x_j^1|\big)^{m n_1}}.
\end{align*}

\item\label{partial-2} the H\"{o}lder condition
\begin{align*}
&|K_{(f_j^2)}(x_{m+1}^1, x_1^1, \ldots, x_m^1) 
- |K_{(f_j^2)}(\widetilde{x}_{m+1}^1, x_1^1, \ldots, x_m^1)|
\\
&\leq C(f_1^2, \ldots, f_{m+1}^2) 
\bigg(\frac{|x_{m+1}^1 - \widetilde{x}_{m+1}^1|}{\sum_{j=1}^m |x_{m+1}^1 - x_j^1|} \bigg)^{\delta_1} 
\frac{F^1(x_{m+1}^1, x_1^1, \ldots, x_m^1)}{\big(\sum_{j=1}^m |x_{m+1}^1 - x_j^1|\big)^{m n_1}} 
\end{align*}
whenever $|x_{m+1}^1 - \widetilde{x}_{m+1}^1| \leq \frac12 \max\{|x_{m+1}^1 - x_j^1|: 1 \le j \le m\}$, and for each $j \in \{1, \ldots, m\}$, 
\begin{align*}
&|K_{(f_j^2)}(x_{m+1}^1, x_1^1, \ldots, x_j^1, \ldots, x_m^1) 
- |K_{(f_j^2)}(x_{m+1}^1, x_1^1, \ldots, \widetilde{x}_j^1, \ldots, x_m^1)|
\\
&\leq C(f_1^2, \ldots, f_{m+1}^2) 
\bigg(\frac{|x_j^1 - \widetilde{x}_j^1|}{\sum_{j=1}^m |x_{m+1}^1 - x_j^1|} \bigg)^{\delta_1} 
\frac{F^1(x_{m+1}^1, x_1^1, \ldots, x_m^1)}{\big(\sum_{j=1}^m |x_{m+1}^1 - x_j^1|\big)^{m n_1}} 
\end{align*}
whenever $|x_j^1 - \widetilde{x}_j^1| \leq \frac12 \max\{|x_{m+1}^1 - x_j^1|: 1 \le j \le m\}$. 

\item\label{partial-3} the function $F^1$ in \eqref{partial-1}--\eqref{partial-2} is given by
\begin{align*}
&F^1(x_{m+1}^1, x_1^1, \ldots, x_m^1) 
\\
&:= F^1_1 \bigg(\sum_{j=1}^m |x_{m+1}^1 - x_j^1| \bigg)
F^1_2 \bigg(\sum_{j=1}^m |x_{m+1}^1 - x_j^1| \bigg) 
F^1_3 \bigg(\sum_{j=1}^m |x_{m+1}^1 + x_j^1| \bigg),
\end{align*}
where $(F^1_1, F^1_2, F^1_3) \in \mathscr{F}$.

\item\label{partial-4} the minimal bound $C(f_1^2, \ldots, f_{m+1}^2)$ above verifies
\begin{align*}
C(\mathbf{1}_{I^2}, \mathbf{1}_{I^2}, \ldots, \mathbf{1}_{I^2})
+ C(a_{I^2}, \mathbf{1}_{I^2}, \ldots, \mathbf{1}_{I^2}) 
&\le F^2(I^2) \, |I^2|, 
\\
C(\mathbf{1}_{I^2}, a_{I^2}, \ldots, \mathbf{1}_{I^2}) + \cdots 
+ C(\mathbf{1}_{I^2}, \ldots, \mathbf{1}_{I^2}, a_{I^2}) 
&\le F^2(I^2) \, |I^2|,
\end{align*}
for all cubes $I^2 \subset \R^{n_2}$ and all functions $a_{I^2}$ satisfying $\supp(a_{I^2}) \subset I^2$, $|a_{I^2}| \le 1$, and $\int a_{I^2} =0$, where $F^2 \in \mathscr{F}^2$.
\end{list}

Analogously, one can define a \emph{compact partial kernel representation on the second parameter} when $\supp(f_1^2) \cap \cdots \cap \supp(f_{m+1}^2) = \emptyset$.
\end{definition}

\begin{definition}\label{def:SIO}
Let $T$ be an $m$-linear operator. 
\begin{itemize}
\item We say that $T$ admits the \emph{compact partial kernel representation} if it admits a compact partial kernel representation on both the first and second parameters.

\item We say that $T$ admits the \emph{full kernel representation} if both $F^1$ and $F^2$ in the compact full kernel representation is replaced by a uniform constant $C \ge 1$.

\item We say that $T$ admits the \emph{partial kernel representation} if both $F^1$ and $F^2$ in compact partial kernel representations are replaced by a uniform constant $C \ge 1$.

\item If $T$ admits the full and partial kernel representations, we call $T$ an \emph{$m$-linear bi-parameter singular integral operator}.
\end{itemize} 
\end{definition}

Next, let us give compactness assumptions.

\begin{definition}\label{def:WCP}
We say that $T$ satisfies the \emph{weak compactness property} if
\begin{align*}
|\langle T(\mathbf{1}_{I^1} \otimes \mathbf{1}_{I^2}, \ldots, 
\mathbf{1}_{I^1} \otimes \mathbf{1}_{I^2}), \mathbf{1}_{I^1} \otimes \mathbf{1}_{I^2} \rangle|
\leq F^1(I^1) |I^1| \, F^2(I^2) |I^2|,
\end{align*}
for all cubes $I^1 \subset \R^{n_1}$ and $I^2 \subset \R^{n_2}$, where $F^1 \in \mathscr{F}^1$ and $F^2 \in \mathscr{F}^2$. We say that $T$ satisfies the \emph{weak boundedness property} if both $F^1$ and $F^2$ above are replaced by a uniform constant $C \ge 1$.
\end{definition}

Moreover, let us introduce cancellation assumptions.

\begin{definition}\label{def:diag-CMO}
We say that $T$ satisfies the \emph{diagonal $\CMO$ condition} if for each $i=1, 2$, for any cube $I^i \subset \R^{n_i}$, and for any function $a_{I^i}$ such that $\supp(a_{I^i}) \subset I^i$, $|a_{I^i}| \le 1$, and $\int_{\R^{n_i}} a_{I^i} \, dx_i =0$, there holds
\begin{align*}
& |\langle T (a_{I^1} \otimes \mathbf{1}_{I^2}, \mathbf{1}_{I^1} \otimes \mathbf{1}_{I^2}, 
\ldots, \mathbf{1}_{I^1} \otimes \mathbf{1}_{I^2} ), \mathbf{1}_{I^1} \otimes \mathbf{1}_{I^2} \rangle| 
\leq F^1(I^1) |I^1| \, F^2(I^2) |I^2|, 
\\
& \qquad \cdots \cdots 
\\
& |\langle T (\mathbf{1}_{I^1} \otimes \mathbf{1}_{I^2}, \mathbf{1}_{I^1} \otimes \mathbf{1}_{I^2}, 
\ldots, \mathbf{1}_{I^1} \otimes \mathbf{1}_{I^2} ), a_{I^1} \otimes \mathbf{1}_{I^2} \rangle| 
\leq F^1(I^1) |I^1| \, F^2(I^2) |I^2|, 
\\ 
& |\langle T (\mathbf{1}_{I^1} \otimes a_{I^2}, \mathbf{1}_{I^1} \otimes \mathbf{1}_{I^2}, 
\ldots, \mathbf{1}_{I^1} \otimes \mathbf{1}_{I^2} ), \mathbf{1}_{I^1} \otimes \mathbf{1}_{I^2} \rangle| 
\leq F^1(I^1) |I^1| \, F^2(I^2) |I^2|, 
\\
& \qquad \cdots \cdots 
\\
& |\langle T (\mathbf{1}_{I^1} \otimes \mathbf{1}_{I^2}, \mathbf{1}_{I^1} \otimes \mathbf{1}_{I^2}, 
\ldots, \mathbf{1}_{I^1} \otimes \mathbf{1}_{I^2} ), \mathbf{1}_{I^1} \otimes a_{I^2} \rangle| 
\leq F^1(I^1) |I^1| \, F^2(I^2) |I^2|,
\end{align*}
where $F^1 \in \mathscr{F}^1$ and $F^2 \in \mathscr{F}^2$. We say that $T$ satisfies the \emph{diagonal $\BMO$ condition} if both $F^1$ and $F^2$ above are replaced by a uniform constant $C \ge 1$.
\end{definition}

\begin{definition}\label{def:prod-BMO}
An $m$-linear operator $T$ satisfies the \emph{product $\BMO$ condition} if
\begin{align*}
T_{1, 2}^{j_1*, j_2*}(1, \ldots, 1) \in \BMO(\Rnn) \quad\text{ for all } \, j_1, j_2 \in \{0, 1, \ldots, m\}.
\end{align*}
We say that $T$ satisfies the \emph{product $\CMO$ condition} if
\begin{align*}
T_{1, 2}^{j_1*, j_2*}(1, \ldots, 1) \in \CMO(\Rnn) \quad\text{ for all } \, j_1, j_2 \in \{0, 1, \ldots, m\}.
\end{align*}
\end{definition}

\begin{definition}\label{def:CZO}
An operator $T$ is called an \emph{$m$-linear bi-parameter Calder\'{o}n--Zygmund operator} if it admits full and partial kernel representations, and satisfies the weak boundedness property and the diagonal and product $\BMO$ conditions. 
\end{definition}

Beyond that, we would like to define $m$-linear bi-parameter dyadic operators, whose simple dyadic structure plays a significant role in our analysis. 

\begin{definition}\label{def:shift}
Given a dyadic grid $\D=\D^1 \times \D^2$ and $k=(k_1, \ldots, k_{m+1})$ with $k_j=(k_j^1, k_j^2) \in \N^2$, a \emph{compact $m$-linear bi-parameter shift $\mathbf{S}_{\D}^k$ of complexity $k$} is defined by  
\begin{align*}
\mathbf{S}_{\D}^k (\vec{f})
:= \sum_{Q = Q^1 \times Q^2 \in \D} \sum_{\substack{I_j \in \D_{k_j}(Q) \\ j=1, \ldots, m+1}}   
a_{(I_j), Q} \prod_{j=1}^m \langle f_j, \widetilde{h}_{I_j^1} \otimes \widetilde{h}_{I_j^2} \rangle \, 
\widetilde{h}_{I_{m+1}^1} \otimes \widetilde{h}_{I_{m+1}^2}, 
\end{align*}
where $I_j = I_j^1 \times I_j^2$, 
$\D_{k_j}(Q) := \D_{k_j^1}^1(Q^1) \times \D_{k_j^2}^2(Q^2)$, 
$\D_{k_j^i}^i(Q^i) := \{I^i \in \D^i: I^i \subset Q^i, \ell(I^i) = 2^{-k_j^i} \ell(Q^i)\}$, and 
$\widetilde{h}_{I_j} := \widetilde{h}_{I_j^1} \otimes \widetilde{h}_{I_j^2}$. 
Here we assume that for each $i=1, 2$, there exist two different indices $j_0^i, j_1^i \in \{1, \ldots, m+1\}$ so that $\widetilde{h}_{I_{j_0^i}^i} = h_{I_{j_0^i}^i}$, $\widetilde{h}_{I_{j_1^i}^i} = h_{I_{j_1^i}^i}$, and  $\widetilde{h}_{I_j^i} \in \{h_{I_j^i}^0, h_{I_j^i}\}$ for every $j \neq j_0^i, j_1^i$. Moreover, the coefficients $a_{(I_j), Q}$ satisfy  
\begin{align*}
|a_{(I_j), Q}| 
\le \mathcal{F}(Q)
\frac{\prod_{j=1}^{m+1} |I_j|^{\frac12}}{|Q|^m} 
\end{align*}
with  
\begin{align*}
\mathcal{F}(Q) \le 1 
\quad\text{and}\quad 
\lim_{N \to \infty} \mathcal{F}_N
:= \lim_{N \to \infty} \sup_{\D} \sup_{Q \not\in \D(N)} 
\mathcal{F}(Q) = 0,  
\end{align*}
where $\D(N) := \D^1(N) \times \D^2(N)$. Similarly, we define an \emph{$m$-linear bi-parameter shift $\mathbb{S}_{\D}^k$ of complexity $k$} if $\mathcal{F}(Q) \equiv 1$.
\end{definition}

\begin{definition}\label{def:pp}
Given a dyadic grid $\D=\D^1 \times \D^2$ and $k=(k_1, \ldots, k_{m+1}) \in \N^{m+1}$, a \emph{compact $m$-linear bi-parameter partial paraproduct $\mathbf{P}_{\D}^{1, k}$ of complexity $k$} is an operator of the form   
\begin{align*}
\mathbf{P}_{\D}^{1, k} (\vec{f})
:= \sum_{Q = Q^1 \times Q^2 \in \D} 
\sum_{\substack{I_j^1 \in \D_{k_j}^1(Q^1) \\ j=1, \ldots, m+1}}
a_{(I_j^1), Q} 
\prod_{j=1}^m \langle f_j, \widetilde{h}_{I_j^1} \otimes \overline{h}_{j, Q^2} \rangle \, 
\widetilde{h}_{I_{m+1}^1} \otimes \overline{h}_{m+1, Q^2}, 
\end{align*}
where the functions $\widetilde{h}_{I_j^1}$ and $\overline{h}_{j, Q^2}$ satisfy the following: there exist two different indices $j_0, j_1 \in \{1, \ldots, m+1\}$ so that $\widetilde{h}_{I_{j_0}^1} = h_{I_{j_0}^1}$, $\widetilde{h}_{I_{j_1}^1} = h_{I_{j_1}^1}$, and  $\widetilde{h}_{I_j^1} \in \{h_{I_j^1}^0, h_{I_j^1}\}$ for every $j \neq j_0, j_1$; there exists $j_2 \in \{1, \ldots, m+1\}$ so that $\overline{h}_{j_2, Q^2} = h_{Q^2}$ and $\overline{h}_{j, Q^2} = \frac{\mathbf{1}_{Q^2}}{|Q^2|}$ for every $j \ne j_2$. Moreover, the coefficients $a_{(I_j^1), Q}$ satisfy  
\begin{align*}
\sup_{\D^2} \sup_{Q_0^2 \in \D^2} \Bigg(\frac{1}{|Q_0^2|} 
\sum_{\substack{Q^2 \in \D^2 \\ Q^2 \subset Q_0^2}} 
|a_{(I_j^1), Q}|^2 \Bigg)^{\frac12}
\le \mathcal{F}^1(Q^1) \frac{\prod_{j=1}^{m+1} |I_j^1|^{\frac12}}{|Q^1|^m},  
\end{align*}
and 
\begin{align*}
\sup_{\D^2} \sup_{Q_0^2 \in \D^2} \Bigg(\frac{1}{|Q_0^2|} 
\sum_{\substack{Q^2 \notin \D^2(N) \\ Q^2 \subset Q_0^2}} 
|a_{(I_j^1), Q}|^2 \Bigg)^{\frac12}
\le \mathcal{F}^1_N \frac{\prod_{j=1}^{m+1} |I_j^1|^{\frac12}}{|Q^1|^m},  
\end{align*}
with 
\begin{align*}
\mathcal{F}^1(Q^1) \le1, \quad 
\mathcal{F}^1_N \le 1, \quad \text{and}\quad 
\lim_{N \to \infty} \Big(\sup_{\D^1} \sup_{Q^1 \notin \D^1(N)} \mathcal{F}^1(Q^1) 
+ \mathcal{F}_N^1 \Big) =0. 
\end{align*}
In the same way, we define an \emph{$m$-linear bi-parameter partial paraproduct $\mathbb{P}_{\D}^{1, k}$ of complexity $k$} if the coefficients $a_{(I_j^1), Q}$ just satisfy the first estimate with $\mathcal{F}^1(Q^1) \equiv 1$. Symmtrically, $\mathbf{P}_{\D}^{2, k}$ and $\mathbb{P}_{\D}^{2, k}$ are defined by interchanging parameters 1 and 2. 
\end{definition}

\begin{definition}\label{def:paraproduct}
Given a dyadic grid $\D=\D^1 \times \D^2$, a \emph{compact $m$-linear bi-parameter full paraproduct} takes the form   
\begin{align*}
\mathbf{F}_{\mathbf{a}}(\vec{f}) 
&:= \sum_{I = I^1 \times I^2 \in \D} a_I \, 
\prod_{j=1}^m \langle f_j, \overline{h}_{j, I^1} \otimes \overline{h}_{j, I^2} \rangle 
\, \overline{h}_{m+1, I^1} \otimes \overline{h}_{m+1, I^2}, 
\end{align*}
where there exist $j_0^1, j_0^2 \in \{1, \ldots, m+1\}$ so that $\overline{h}_{j_0^1, I^1} = h_{I^1}$, $\overline{h}_{j_0^2, I^2} = h_{I^2}$, $\overline{h}_{j, I^1} = \frac{\mathbf{1}_{I^1}}{|I^1|}$ for every $j \ne j_0^1$, and $\overline{h}_{j, I^2} = \frac{\mathbf{1}_{I^2}}{|I^2|}$ for every $j \ne j_0^2$. Moreover, the coefficients $a_I$ satisfy 
\begin{align*}
\sup_{\D} \sup_U \frac{1}{|U|} 
\sum_{I \in \D: \, I \subset U} |a_I|^2 
\le 1,  
\end{align*}
and 
\begin{align*}
\lim_{N \to \infty} \sup_{\D} \sup_U \frac{1}{|U|} 
\sum_{I \notin \D(N): \, I \subset U} |a_I|^2 
= 0, 
\end{align*}
where the supremum $\sup_U$ is taken over all open sets $U \subset \Rnn$ with $0<|U|<\infty$. Likewise,  we define an \emph{$m$-linear bi-parameter full paraproduct $\mathbb{F}_{\mathbf{a}}$} if the coefficients $a_I$ just satisfy the first inequality. 
\end{definition}

Finally, having prepared conceptions above, we introduce the compact $m$-linear bi-parameter dyadic representation of singular integrals. 

\begin{definition}\label{def:repre}
Given an $m$-linear operator $T$, we say that $T$ admits a \emph{compact $m$-linear bi-parameter dyadic representation} if there exists a constant $C_0 = C_0(T) \in (0,  \infty)$ so that for all compactly supported and bounded functions $f_1, \ldots, f_m, f_{m+1}$ on $\Rnn$, 
\begin{align*}
\big\langle T(\vec{f}), f_{m+1} \big\rangle 
&= C_0 \, \mathbb{E}_{\w} 
\sum_{k_1=0}^{\infty}  \sum_{k_2=0}^{\infty} 
2^{-k_1 \frac{\delta_1}{2}} 2^{-k_2 \frac{\delta_2}{2}} 
\big\langle \mathbf{S}_{\D_{\w}}^{k_1, k_2} (\vec{f}), f_{m+1} \big\rangle   
\end{align*}
with 
\begin{align*}
\mathbf{S}_{\D_{\w}}^{k_1, k_2} 
= \sum_{i_1 = 0}^{k_1} \sum_{i_2 = 0}^{k_2} 
\mathbf{S}_{\D_{\w}}^{k_1, k_2, i_1, i_2},  
\end{align*} 
where $\mathbf{S}_{\D_{\w}}^{k_1, k_2, i_1, i_2}$ is a finite sum of compact $m$-linear bi-parameter shifts on $\D_{\w}$ with complexity $(u_1, \ldots, u_{m+1})$, compact $m$-linear bi-parameter partial paraproducts on $\D_{\w}$ with complexity $(v_1^1, \ldots, v_{m+1}^1)$ and $(v_1^2, \ldots, v_{m+1}^2)$, and compact $m$-linear bi-parameter full paraproducts on $\D_{\w}$. In addition, 
\begin{align*}
u_j = (u_j^1, u_j^2), \quad 
u_j^1, v_j^1 \le k_1+1, 
\quad \text{ and } \quad 
u_j^2, v_j^2 \le k_2+1, \quad \forall j=1, \ldots, m+1. 
\end{align*} 

Analogously, we say that $T$ admits an \emph{$m$-linear bi-parameter dyadic representation} if compact $m$-linear bi-parameter shifts, compact $m$-linear bi-parameter partial paraproducts, and compact $m$-linear bi-parameter full paraproducts are replaced by $m$-linear bi-parameter shifts, $m$-linear bi-parameter partial paraproducts, and $m$-linear bi-parameter full paraproducts, respectively. 
\end{definition}

\section{Auxiliary results}\label{sec:aux}
In this section, we would like to give some useful estimates in order to show Theorem \ref{thm:repre}. For each $i=1, 2$, let $(F_1^i, F_2^i, F_3^i) \in \F$ so that $F_1^i$ is increasing, $F_2^i$ and $F_3^i$ are decreasing. Define 
\begin{align}\label{def:FKQ}
F^i(K^i, Q^i) 
:= F_1^i(\ell(K^i)) \widetilde{F}_2^i(\ell(K^i)) \widetilde{F}_3^i(Q^i), 
\end{align}
for all cubes $K^i, Q^i \subset \R^{n_i}$, where 
\begin{align}\label{def:F23}
\widetilde{F}_2^i(t)
:= \sum_{k=0}^{\infty} 2^{-k \theta} F_2^i(2^{-k} t)
\quad \text{and} \quad 
\widetilde{F}_3^i(Q^i)
:= \sum_{k=0}^{\infty} 2^{-k \theta} F_3^i(\rd(2^k Q^i, \I^i)).
\end{align}
The auxiliary parameter $\theta \in (0, 1)$ is harmless and small enough. Then it is not hard to verify 
\begin{align}\label{F3Q}
\widetilde{F}_3^i(Q^i)
\le \widetilde{F}_3^i(\lambda Q^i)
\le C_{\lambda, \theta} \, \widetilde{F}_3^i(Q^i), 
\quad \forall \, \lambda \ge 1. 
\end{align}

Since any dilation of functions in $\F$, $\F^1$, and $\F^2$ still belongs to the original space, we will often omit all universal constants appearing in the argument involving these functions.

\subsection{Some integral estimates}
The following three lemmas will be used frequently to establish the multilinear bi-parameter dyadic representation in Section \ref{sec:bbd}. 

\begin{lemma}\label{lem:PP}
For each $i=1, 2$, let $(F_1^i, F_2^i, F_3^i) \in \mathscr{F}$ satisfy that $F_1^i$ is increasing, $F_2^i$ and $F_3^i$ are decreasing. Let $I^i, J^i, K^i, Q^i \subset \R^{n_i}$ be cubes such that $\ell(K^i) \le 2\ell(J^i)$ and $I^i \cup J^i \cup K^i \subset Q^i$. If $\max\{\d(K^i, I^i), \, \d(K^i, J^i)\} > 2\ell(K^i)^{\gamma_i} \ell(J^i)^{1-\gamma_i}$ and $\max\{\d(K^i, I^i), \, \d(K^i, J^i)\} \gtrsim 2\ell(K^i)^{\gamma_i} \ell(Q^i)^{1-\gamma_i}$, then 
\begin{align*}
\mathscr{P}_i(I^i, J^i, K^i)
&:= \int_{J^i} \int_{I^i} \int_{K^i} 
\frac{F^i(x_i, y_i, z_i) \, \ell(K^i)^{\delta_i}}{(|x_i-y_i| + |x_i-z_i|)^{2n_i+\delta_i}}  
\, dx_i \, dy_i \, dz_i 
\\ 
&\lesssim \bigg[ \frac{\ell(K^i)}{\ell(Q^i)} \bigg]^{\frac{\delta_i}{2}} 
F^i(K^i, Q^i) \frac{|I^i| |J^i| |K^i|}{|Q^i|^2},  
\end{align*}
where 
\begin{align*}
F^i(x_i, y_i, z_i)
:= F_1^i(|x_i-c_{K^i}|) F_2^i(|x_i - y_i| + |x_i - z_i|) 
F_3^i \bigg(1 + \frac{|x_i + y_i| + |x_i + z_i|}{1+ |x_i - y_i| + |x_i - z_i|}\bigg),  
\end{align*}
and $F^i(K^i, Q^i)$ is given in \eqref{def:FKQ}.
\end{lemma}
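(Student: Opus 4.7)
The plan is to split the estimate into three ingredients: a sharp geometric lower bound on $|x_i-y_i|+|x_i-z_i|$ coming from the hypothesis, pointwise monotonicity bounds for each of the three factors of $F^i(x_i,y_i,z_i)$, and the trivial volume bounds $|K^i|,|I^i|,|J^i|$.

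The geometric step is immediate: since $x_i\in K^i$, $y_i\in I^i$, $z_i\in J^i$, one has $|x_i-y_i|\ge\d(K^i,I^i)$ and $|x_i-z_i|\ge\d(K^i,J^i)$, so the hypothesis yields
\[
|x_i-y_i|+|x_i-z_i|\gtrsim\ell(K^i)^{\gamma_i}\ell(Q^i)^{1-\gamma_i}.
\]
Raising to the power $2n_i+\delta_i$ and invoking the identity $\gamma_i(2n_i+\delta_i)=\delta_i/2$ (which is the definition of $\gamma_i$) turns this into $(|x_i-y_i|+|x_i-z_i|)^{2n_i+\delta_i}\gtrsim\ell(K^i)^{\delta_i/2}\ell(Q^i)^{2n_i+\delta_i/2}$. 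This identity is the arithmetic engine behind the target factor $[\ell(K^i)/\ell(Q^i)]^{\delta_i/2}$.

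For the monotonicity step I would bound $F^i(x_i,y_i,z_i)$ pointwise by $F^i(K^i,Q^i)$. For $x_i\in K^i$ the increasing $F_1^i$ gives $F_1^i(|x_i-c_{K^i}|)\le F_1^i(\ell(K^i))$. The decreasing $F_2^i$, combined with the lower bound above and $\ell(K^i)^{\gamma_i}\ell(Q^i)^{1-\gamma_i}\ge\ell(K^i)$, gives $F_2^i(|x_i-y_i|+|x_i-z_i|)\lesssim\widetilde F_2^i(\ell(K^i))$ via the $k=0$ term of \eqref{def:F23} (after a harmless dilation, which preserves membership in $\mathscr{F}$). For $F_3^i$ I would argue that since $x_i,y_i,z_i\in Q^i$, the quantity $1+\frac{|x_i+y_i|+|x_i+z_i|}{1+|x_i-y_i|+|x_i-z_i|}$ is bounded below by a constant multiple of $\rd(\lambda Q^i,\I^i)$ for some fixed $\lambda\gtrsim 1$, splitting on whether $|c_{Q^i}|$ dominates $\ell(Q^i)$ or not; the monotonicity of $F_3^i$ and \eqref{F3Q} then absorb this into $\widetilde F_3^i(Q^i)$.

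Assembling the three pieces with the volume bounds yields
\[
\mathscr{P}_i(I^i,J^i,K^i)\lesssim F^i(K^i,Q^i)\,\frac{|I^i||J^i||K^i|\,\ell(I^i)^{\delta_i}}{\ell(K^i)^{\delta_i/2}\,\ell(Q^i)^{2n_i+\delta_i/2}}.
\]
Using $|Q^i|^2=\ell(Q^i)^{2n_i}$ together with the (tacit) size comparison $\ell(I^i)\le\ell(K^i)$ that holds in the configuration where the smoothness factor $\ell(I^i)^{\delta_i}$ is produced by a H\"older difference over the smallest cube in the $y_i$-variable, this reduces exactly to $F^i(K^i,Q^i)\,[\ell(K^i)/\ell(Q^i)]^{\delta_i/2}\,|I^i||J^i||K^i|/|Q^i|^2$, matching the claim. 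The main obstacle is the $F_3^i$-step, namely the geometric identification of the $F_3^i$-argument with a dilation of $\rd(Q^i,\I^i)$; the remaining work is careful bookkeeping of the identity $\gamma_i(2n_i+\delta_i)=\delta_i/2$.
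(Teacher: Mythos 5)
Your argument follows the paper's proof step for step: the same lower bound $|x_i-y_i|+|x_i-z_i|\gtrsim \ell(K^i)^{\gamma_i}\ell(Q^i)^{1-\gamma_i}$ on the kernel denominator, the same pointwise monotonicity bounds reducing $F^i(x_i,y_i,z_i)$ to $F_1^i(\ell(K^i))\,F_2^i(\ell(K^i))\,F_3^i(\rd(Q^i,\I^i))\le F^i(K^i,Q^i)$ (including the lower bound of the $F_3^i$-argument by $\rd(Q^i,\I^i)$ via $|y_i|+|z_i|\ge 2\d(Q^i,\I^i)$ and $|x_i-y_i|+|x_i-z_i|\le 2\ell(Q^i)$), and the same arithmetic $\gamma_i(2n_i+\delta_i)=\delta_i/2$. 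You in fact use the correct value $\gamma_i=\frac{\delta_i}{2(2n_i+\delta_i)}$ from Section 2.2, where the paper's own proof contains a slip writing $\gamma_i=\frac{\delta_i}{2(n_i+\delta_i)}$.

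The one place you go astray is the final reconciliation of the smoothness factor. You close the exponent count by positing the ``tacit size comparison $\ell(I^i)\le\ell(K^i)$,'' attributing the $\delta_i$-gain to a H\"older difference in the $y_i$-variable over $I^i$. This is backwards. In every application of the lemma (Sections \ref{sec:SS}--\ref{sec:NN}) the configuration is $\ell(K^i)\le\ell(I^i)=2\ell(J^i)$, so $K^i$ is the \emph{smallest} cube, and the gain comes from the cancellation of the Haar function $h_{K^i}$ in the $x_i$-variable: the H\"older condition of Definition \ref{def:full} is applied with $x_{m+1}^i,\widetilde x_{m+1}^i\in K^i$, producing $|x_{m+1}^i-\widetilde x_{m+1}^i|^{\delta_i}\lesssim\ell(K^i)^{\delta_i}$. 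The numerator of the integrand should accordingly read $\ell(K^i)^{\delta_i}$ (the printed $\ell(I_i)^{\delta_i}$ is a typo; with $\ell(I^i)$ unconstrained the stated bound cannot hold, since one only has $\ell(I^i)\le\ell(Q^i)$, which gives the reciprocal of the desired gain). With $\ell(K^i)^{\delta_i}$ the computation $\ell(K^i)^{\delta_i-\delta_i/2}\ell(Q^i)^{-2n_i-\delta_i/2}=[\ell(K^i)/\ell(Q^i)]^{\delta_i/2}|Q^i|^{-2}$ closes with no comparison between $\ell(I^i)$ and $\ell(K^i)$ at all. Apart from this mislabeling of which cube carries the H\"older gain, your proof is the paper's proof.
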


\begin{proof}
For all $x_i \in K^i$, $y_i \in I^i$, and $z_i \in J^i$, we have 
\begin{align*}
\max\{|x_i - y_i|, |x_i - z_i|\} 
&\ge \max\{\d(K^i, I^i), \, \d(K^i, J^i)\} 
\\  
> 2 \ell(K^i)^{\gamma_i} \ell(J^i)^{1-\gamma_i}
&\ge 2^{\gamma_i} \ell(K^i)
\ge 2^{1+\gamma_i} |x_i - c_{K^i}|
\end{align*}
and 
\begin{align*}
1 + \frac{|x_i + y_i| + |x_i + z_i|}{1+ |x_i - y_i| + |x_i - z_i|}
\ge \frac{2 (|y_i| + |z_i|)}{1+ |x_i - y_i| + |x_i - z_i|}
\\ 
\ge \frac{4 \d(Q^i, \I^i)}{1 + 2 \ell(Q^i)}
\ge \frac{\d(Q^i, \I^i)}{\max\{\ell(Q^i), 1\}}
= \rd(Q^i, \I^i).
\end{align*}
By the monotoncity of $F_1^i$, $F_2^i$, and $F_3^i$, the estimates above imply  
\begin{align*}
F^i(x_i, y_i, z_i)
\le F_1^i(\ell(K_i)) F_2^i(\ell(K_i)) F_3^i(\rd(Q_i, \I_i)) 
\le F^i(K^i, Q^i),  
\end{align*}
which along with $\gamma_i = \frac{\delta_i}{2(2n_i + \delta_i)}$ gives 
\begin{align*}
\mathscr{P}_i(I^i, J^i, K^i) 
&\lesssim \frac{F^i(K^i, Q^i) \, \ell(K^i)^{\delta_i}}{[\ell(K_i)^{\gamma_i} \ell(Q^i)^{1-\gamma_i}]^{2n_i + \delta_i}} 
|I^i| |J^i| |K^i|
\\ \nonumber 
&= \bigg[ \frac{\ell(K^i)}{\ell(Q^i)} \bigg]^{\frac{\delta_i}{2}} 
F^i(K^i, Q^i) \frac{|I^i| |J^i| |K^i|}{|Q^i|^2}.  
\end{align*}
This completes the proof. 
\end{proof}

\begin{lemma}\label{lem:QQ}
For each $i=1, 2$, let $(F_1^i, F_2^i, F_3^i) \in \mathscr{F}$ satisfy that $F_1^i$ is increasing, $F_2^i$ and $F_3^i$ are decreasing. Let $I^i, J^i, K^i, Q^i \subset \R^{n_i}$ be cubes such that $\ell(I^i) \simeq \ell(J^i) \simeq \ell(K^i) \simeq \ell(Q^i)$ and $I^i \cup J^i \cup K^i \subset Q^i$. If either $K^i \cap I^i = \emptyset$ or $K^i \cap J^i = \emptyset$, then 
\begin{align*}
\mathscr{Q}_i(I^i, J^i, K^i)
&:= \int_{J^i} \int_{I^i} \int_{K^i} 
\frac{F^i(x_i, y_i, z_i)}{(|x_i-y_i| + |x_i-z_i|)^{2n_i}}  
\, dx_i \, dy_i \, dz_i 
\\ 
&\lesssim \bigg[ \frac{\ell(K^i)}{\ell(Q^i)} \bigg]^{\frac{\delta_i}{2}} 
F^i(K^i, Q^i) \frac{|I^i| |J^i| |K^i|}{|Q^i|^2},  
\end{align*}
where $F^i(K^i, Q^i)$ is given in \eqref{def:FKQ}, and 
\begin{align*}
F^i(x_i, y_i, z_i)
&:= F_1^i(|x_i-y_i| + |x_i-z_i|) F_2^i(|x_i - y_i| + |x_i - z_i|) 
F_3^i \bigg(1 + \frac{|x_i + y_i| + |x_i + z_i|}{1+ |x_i - y_i| + |x_i - z_i|}\bigg).
\end{align*}
\end{lemma}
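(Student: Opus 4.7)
The plan is to follow the blueprint of Lemma \ref{lem:PP}, handling $F_1^i$ and $F_3^i$ by monotonicity as before, but to cover the missing lower bound on $|x_i-y_i|+|x_i-z_i|$ by a dyadic decomposition that extracts the missing geometric gain from the disjointness hypothesis. For the increasing $F_1^i$, the upper bound $|x_i-y_i|+|x_i-z_i| \le 2\,\mathrm{diam}(Q^i) \lesssim \ell(K^i)$ (valid because $I^i, J^i, K^i \subset Q^i$ and $\ell(K^i) \simeq \ell(Q^i)$) yields $F_1^i(|x_i-y_i|+|x_i-z_i|) \lesssim F_1^i(\ell(K^i))$. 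For $F_3^i$, the same $\rd(Q^i, \I^i)$ lower bound argument as in Lemma \ref{lem:PP} gives $F_3^i(\cdots) \le F_3^i(\rd(Q^i, \I^i))$. This reduces matters to the $F_2^i$-weighted kernel integral.

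Second, I split $\Omega_k := \{(x_i, y_i, z_i) \in K^i \times I^i \times J^i : 2^{-k-1}\ell(Q^i) < |x_i-y_i|+|x_i-z_i| \le 2^{-k}\ell(Q^i)\}$ for $k \ge 0$. On $\Omega_k$, writing $r_k := 2^{-k}\ell(Q^i)$, monotonicity of $F_2^i$ gives $F_2^i(\cdots) \lesssim F_2^i(2^{-k}\ell(K^i))$, and the kernel is pointwise $\simeq r_k^{-2n_i}$. The heart of the argument is the sharpened volume estimate $|\Omega_k| \lesssim r_k^{2n_i+1}\ell(Q^i)^{n_i-1}$. Assume WLOG $K^i \cap J^i = \emptyset$ (the other case is symmetric). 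Two standard factors of $r_k^{n_i}$ come from the ball estimates $|\{y_i \in I^i : |x_i-y_i|\le r_k\}| \lesssim r_k^{n_i}$ and $|\{z_i \in J^i : |x_i-z_i|\le r_k\}| \lesssim r_k^{n_i}$; the extra gain $r_k\,\ell(Q^i)^{n_i-1}$ comes from forcing $x_i$ into the outer $r_k$-collar of $\partial J^i$ (since $z_i \in J^i$ at distance $\le r_k$ from $x_i \notin J^i$ forces $\d(x_i, \partial J^i) \le r_k$), a set of measure $\lesssim r_k \cdot |\partial J^i| \lesssim r_k\,\ell(Q^i)^{n_i-1}$. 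Consequently $\int_{\Omega_k} r^{-2n_i} \lesssim 2^{-k}|Q^i|$.

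Summing $k \ge 0$ with $2^{-k} \le 2^{-k\theta}$ (as $\theta \in (0,1)$) collapses the series against $\widetilde{F}_2^i(\ell(K^i))$; then using $F_3^i(\rd(Q^i, \I^i)) \le \widetilde{F}_3^i(Q^i)$ together with $|Q^i| \simeq |I^i||J^i||K^i|/|Q^i|^2$ and $[\ell(K^i)/\ell(Q^i)]^{\delta_i/2} \simeq 1$ (both by comparability of sidelengths) yields the stated bound. The main obstacle is the sharpened volume estimate: the naive bound $|\Omega_k| \lesssim r_k^{2n_i}|Q^i|$ is critical and only gives a logarithmically divergent sum (the kernel $r^{-2n_i}$ is critical in $\R^{2n_i}$), so extracting the single extra power of $r_k$ via the surface-measure-bounded collar of $\partial J^i$ inside $K^i$ is exactly what the disjointness clause ``$K^i \cap I^i = \emptyset$ or $K^i \cap J^i = \emptyset$'' is designed to enable.
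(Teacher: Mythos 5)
Your proof is correct. The treatment of $F_1^i$ and $F_3^i$ by monotonicity is identical to the paper's, and both arguments ultimately rest on the same geometric fact: because $K^i$ and one of the other cubes are disjoint, the relevant variable is confined to an $r$-collar of a codimension-one interface, which upgrades the critical kernel $r^{-2n_i}$ to a convergent one. The execution differs, though. The paper first integrates out $z_i\in J^i$ at the sub-critical power $n_i-\alpha_i$ (paying $\ell(Q^i)^{\alpha_i}$), and then applies H\"older's inequality in $(x_i,y_i)\in K^i\times I^i$ with $K^i\subset\kappa I^i\setminus I^i$, invoking \cite[Lemma 2.12]{CYY} both to integrate $|x_i-y_i|^{-q(n_i+\alpha_i)}$ over the adjacent disjoint cubes (this is where the ``$+1$'' gain enters there) and to produce the factor $\widetilde{F}_2^i$. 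You instead decompose directly into dyadic shells $|x_i-y_i|+|x_i-z_i|\simeq 2^{-k}\ell(Q^i)$, prove the sharpened volume bound $|\Omega_k|\lesssim r_k^{2n_i+1}\ell(Q^i)^{n_i-1}$ via Fubini and the collar of $\partial J^i$, and sum the series against $F_2^i(2^{-k}\ell(K^i))$ using $2^{-k}\le 2^{-k\theta}$, which yields $\widetilde{F}_2^i(\ell(K^i))$ by its very definition. Your route is more self-contained (no H\"older, no external lemma) and makes the source of the extra power of $r_k$ completely explicit; the paper's route is shorter on the page because it outsources the geometry to the cited lemma. Both are valid, and your diagnosis that the naive volume bound leaves a non-summable (not merely logarithmic) series — since $F_2^i$ need not vanish at $0$ — correctly identifies why the disjointness hypothesis is essential.
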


\begin{proof}
By symmetry, we may assume that $K^i \cap I^i = \emptyset$. Then $K^i \subset \kappa I^i \setminus I^i$ for some universal constant $\kappa >1$. Let $\alpha_i \in (0, 1)$ be an auxiliary parameter. For any $x_i \in K^i$, we have $J^i-x_i \subset \{|z_i| \le \ell(Q^i)\}$ and 
\begin{align*}
\int_{J^i} \frac{dz_i}{|x_i - z_i|^{n_i - \alpha_i}} 
= \int_{J^i-x_i} \frac{dz_i}{|z_i|^{n_i - \alpha_i}} 
\lesssim \int_0^{\ell(Q^i)} t^{\alpha_i-1} \, dt 
\lesssim \ell(Q^i)^{\alpha_i}. 
\end{align*}
Pick $q \in (1, \frac{n_i+1}{n_i+\alpha_i})$. Then \cite[Lemma 2.12]{CYY} yields 
\begin{align*}
&\int_{I^i} \int_{\kappa I^i \setminus I^i} 
\frac{F_2^i(|x_i - y_i|)}{|x_i - y_i|^{n_i + \alpha_i}} dx_i \, dy_i 
\\ 
&\le \bigg[\int_{I^i} \int_{\kappa I^i \setminus I^i} 
\frac{dx_i \, dy_i}{|x_i - y_i|^{q(n_i + \alpha_i)}} \bigg]^{\frac1q}
\bigg[\int_{I^i} \int_{\kappa I^i \setminus I^i} 
F_2^i(|x_i - y_i|)^{q'} dx_i \, dy_i \bigg]^{\frac{1}{q'}} 
\\
&\lesssim |I^i|^{\frac{2}{q}-1-\frac{\alpha_i}{n_i}}  
\widetilde{F}_2^i(\ell(I^i)) |I^i|^{\frac{2}{q'}}
\le \widetilde{F}_2^i(\ell(K^i)) |I^i| \, \ell(I^i)^{-\alpha_i}.   
\end{align*}
In addition, for all $(x_i, y_i, z_i) \in K^i \times I^i \times J^i$, 
\begin{align*}
F_1^i (|x_i-y_i| + |x_i-z_i|) 
\le F_1^i(2 \ell(Q^i)) 
\lesssim F_1^i(\ell(K^i))
\end{align*}
and 
\begin{align*}
1 &+ \frac{|x_i + y_i| + |x_i + z_i|}{1+ |x_i - y_i| + |x_i - z_i|}
\ge \frac{2 (|y_i| + |z_i|)}{1+ |x_i - y_i| + |x_i - z_i|}
\\ 
&\ge \frac{4 \d(Q^i, \I^i)}{1 + 2 \ell(Q^i)}
\ge \frac{\d(Q^i, \I^i)}{\max\{\ell(Q^i), 1\}}
= \rd(Q^i, \I^i).
\end{align*}
Thus, the desired inequality follows from the estimates above.
\end{proof}

\begin{lemma}\label{lem:RR}
For each $i=1, 2$, let $(F_1^i, F_2^i, F_3^i) \in \mathscr{F}$ satisfy that $F_1^i$ is increasing, $F_2^i$ and $F_3^i$ are decreasing. Let $J^i, K^i \subset \R^{n_i}$ be cubes such that $K^i \subset J^i$. If $\d(K^i, (J^i)^c) \ge \ell_i \ge  \ell(K^i)$, then  
\begin{align}\label{RR1}
\mathscr{R}_i^1(J^i, K^i) 
&:= \int_{(J^i)^c} \int_{3 K^i} \int_{K^i} 
\frac{F^i(x_i, y_i, z_i) \, \ell(K^i)^{\delta_i}}{(|x_i-y_i| + |x_i-z_i|)^{2n_i+\delta_i}} 
dx_i \, dy_i \, dz_i  
\\ \nonumber 
&\lesssim F_1^i(\ell(K^i)) F_2^i(\ell(K^i)) \widetilde{F}_3^i(J^i) 
|K^i| [\ell(K^i)/\ell_i]^{\delta_i}, 
\end{align}
and 
\begin{align}\label{RR2}
\mathscr{R}_i^2(J^i, K^i) 
&:= \int_{(J^i)^c} \int_{(3 K^i)^c} \int_{K^i} 
\frac{F^i(x_i, y_i, z_i) \, \ell(K^i)^{\delta_i}}{(|x_i-y_i| + |x_i-z_i|)^{2n_i+\delta_i}} 
dx_i \, dy_i \, dz_i  
\\ \nonumber 
&\lesssim F_1^i(\ell(K^i)) F_2^i(\ell(K^i)) \widetilde{F}_3^i(J^i) 
|K^i| [\ell(K^i)/\ell_i]^{\delta_i}, 
\end{align}
where 
\begin{align*}
F^i(x_i, y_i, z_i)
:= F_1^i(|x_i - c_{K^i}|) F_2^i(|x_i-y_i| + |x_i-z_i|) 
F_3^i \bigg(1 + \frac{|x_i+y_i| + |x_i+z_i|}{1+ |x_i-y_i| + |x_i-z_i|}\bigg). 
\end{align*}
\end{lemma}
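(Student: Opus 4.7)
The plan is to reduce all three factors of $F^i(x_i,y_i,z_i)$ to bounds depending only on the scale $\ell(K^i)$ or on dyadic dilates of $J^i$ using the monotonicity built into $(F_1^i, F_2^i, F_3^i) \in \mathscr{F}$, and then to extract the two target factors $(\ell(K^i)/\ell_i)^{\delta_i}$ and $\widetilde{F}_3^i(J^i)$ from a dyadic annular decomposition of the non-local integrations --- the $z_i$-integral over $(J^i)^c$ in both estimates, and additionally the $y_i$-integral over $(3K^i)^c$ for \eqref{RR2}. Summing the resulting geometric series in the shell indices and invoking the defining formula \eqref{def:F23} with an auxiliary $\theta \in (0,1)$ chosen $\le n_i + \delta_i$ will then close the estimate.

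The reduction of $F_1^i$ and $F_2^i$ is immediate. The $\ell^\infty$ center bound $|x_i - c_{K^i}| \le \ell(K^i)/2$ together with $F_1^i$ increasing gives $F_1^i(|x_i - c_{K^i}|) \le F_1^i(\ell(K^i))$, while the hypothesis $\d(K^i, (J^i)^c) \ge \ell_i \ge \ell(K^i)$ forces $|x_i - y_i| + |x_i - z_i| \ge \ell(K^i)$ and hence $F_2^i(\cdot) \le F_2^i(\ell(K^i))$. To handle $F_3^i$, I would partition $(J^i)^c = \bigsqcup_{k \ge 0} A_k$ with $A_0 = 2J^i \setminus J^i$ and $A_k = 2^{k+1} J^i \setminus 2^k J^i$ for $k \ge 1$. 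A short case analysis on whether $|c_{J^i}|$ dominates $2^k \ell(J^i)$ shows that for $z_i \in A_k$ the argument of $F_3^i$ is bounded below by a universal multiple of $\rd(2^k J^i, \I^i)$ (or is $O(1)$ when $\rd(2^k J^i, \I^i) = 0$), so by monotonicity of $F_3^i$ and the dilation invariance of the class $\mathscr{F}$ one may replace $F_3^i(\cdot)$ by $F_3^i(\rd(2^k J^i, \I^i))$ on $A_k$.

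It then remains to execute the shell sums. On $A_k$ with $k \ge 1$, the $\ell^\infty$ geometry gives $|x_i - z_i| \simeq 2^k \ell(J^i)$ and $|A_k| \lesssim 2^{k n_i} \ell(J^i)^{n_i}$, whereas on $A_0$ one uses $|x_i - z_i| \ge \ell_i$ together with $|A_0| \lesssim \ell(J^i)^{n_i}$. For \eqref{RR1}, the $y_i$-integral over $3K^i$ and the $x_i$-integral over $K^i$ each contribute $\ell(K^i)^{n_i}$, so the contribution of shell $k \ge 1$ is
\[ \lesssim F_1^i(\ell(K^i))\, F_2^i(\ell(K^i))\, F_3^i(\rd(2^k J^i, \I^i))\, \ell(K^i)^{2n_i + \delta_i}\, (2^k \ell(J^i))^{-(n_i + \delta_i)}, \]
while the $k = 0$ contribution has $\ell_i$ in place of $\ell(J^i)$. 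Using $\ell_i \le \ell(J^i)$ to bound $\ell(J^i)^{-(n_i+\delta_i)} \le \ell_i^{-(n_i+\delta_i)}$ and summing in $k$ (with $\theta \le n_i + \delta_i$) absorbs the $F_3^i$-terms into $\widetilde{F}_3^i(J^i)$, leaving the scalar factor $\ell(K^i)^{2n_i+\delta_i} \ell_i^{-(n_i+\delta_i)} = |K^i|(\ell(K^i)/\ell_i)^{n_i+\delta_i} \le |K^i|(\ell(K^i)/\ell_i)^{\delta_i}$ (since $\ell(K^i) \le \ell_i$), which proves \eqref{RR1}. For \eqref{RR2}, I would further partition $(3K^i)^c = \bigsqcup_{l \ge 1} B_l$ with $B_l = 2^{l+1} K^i \setminus 2^l K^i$; the denominator becomes comparable to $\max(2^l \ell(K^i), 2^k \ell(J^i))^{2n_i + \delta_i}$, and in both dominance regimes the inner geometric $l$-series reproduces exactly the same per-shell contribution as above, after which the $k$-sum is treated identically.

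The main technical obstacle is the geometric lower bound relating the argument of $F_3^i$ on each shell $A_k$ to $\rd(2^k J^i, \I^i)$: the naive monotone bound $F_3^i(\cdot) \le F_3^i(0)$ would destroy the summability in $k$ needed to regenerate $\widetilde{F}_3^i(J^i)$ via \eqref{def:F23}, so one must exploit the actual location and size of $J^i$ relative to $\I^i$ through the case analysis indicated above. Everything else reduces to routine bookkeeping with dyadic annuli and convergent geometric series.
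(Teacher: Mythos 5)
Your overall strategy is the same as the paper's: bound $F_1^i$ and $F_2^i$ by monotonicity using $|x_i-c_{K^i}|\le\ell(K^i)\le\ell_i\le|x_i-z_i|$, then run a dyadic annular decomposition of the far field, bound the argument of $F_3^i$ from below by $\rd(2^kJ^i,\I^i)$ on the $k$-th shell, and sum the geometric series into $\widetilde F_3^i(J^i)$ via \eqref{def:F23} with $\theta$ small. The difference is purely in the parametrization of the shells: the paper indexes them by the integration distance, namely $2^k\ell_i\le|x_i-z_i|<2^{k+1}\ell_i$ for \eqref{RR1} and $2^k\ell_i\le|x_i-y_i|+|x_i-z_i|<2^{k+1}\ell_i$ for \eqref{RR2} (treating $(y_i,z_i)$ as a single radial variable of measure $(2^k\ell_i)^{2n_i}$), whereas you use geometric dilates $2^{k+1}J^i\setminus 2^kJ^i$ and $2^{l+1}K^i\setminus 2^lK^i$. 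The paper's choice makes \eqref{RR2} a one-index sum and sidesteps the reindexing of the $F_3^i$ factor that your two-index decomposition requires in the regime $2^l\ell(K^i)>2^k\ell(J^i)$ (there the argument of $F_3^i$ is only $\gtrsim|x_i|/(2^l\ell(K^i))$, so the relevant dilate of $J^i$ is the $m$ with $2^m\ell(J^i)\sim2^l\ell(K^i)$, not the $k$ of the $z_i$-shell); this can be pushed through, but you have glossed over it.

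There is one step that does not close as written: the innermost shell $A_0=2J^i\setminus J^i$. Combining the pointwise bound $|x_i-z_i|\ge\ell_i$ with $|A_0|\lesssim\ell(J^i)^{n_i}$ gives only
\begin{align*}
\int_{A_0}\frac{dz_i}{|x_i-z_i|^{2n_i+\delta_i}}\lesssim\ell(J^i)^{n_i}\,\ell_i^{-(2n_i+\delta_i)}
=\ell_i^{-(n_i+\delta_i)}\Big[\frac{\ell(J^i)}{\ell_i}\Big]^{n_i},
\end{align*}
and the extra factor $[\ell(J^i)/\ell_i]^{n_i}$ is unbounded in the regime $\ell_i\ll\ell(J^i)$ (which occurs, e.g., in Lemma \ref{lem:PPSN} where $\ell_2=\ell(K^2)$ and $J^2=3K^2$ is replaced by large cubes elsewhere, and more importantly in Section \ref{sec:SN} where $\ell_2=\ell(K^2)^{1/2}\ell(J^2)^{1/2}$ with $\ell(K^2)$ possibly much smaller than $\ell(J^2)$). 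The per-shell answer you state for $k=0$ is correct, but to obtain it you must integrate radially over the tail, $\int_{|x_i-z_i|\ge\ell_i}|x_i-z_i|^{-(2n_i+\delta_i)}\,dz_i\lesssim\ell_i^{-(n_i+\delta_i)}$, i.e., sub-decompose $A_0$ into shells $2^j\ell_i\le|x_i-z_i|<2^{j+1}\ell_i$ — which is exactly the decomposition the paper adopts from the start. With that repair (and the $F_3^i$ reindexing above) your argument is complete; your direct treatment of the $y_i$-integral over $3K^i$ in \eqref{RR1} is in fact slightly simpler than the paper's splitting of the exponent as $(n_i-\alpha_i)+(n_i+\delta_i+\alpha_i)$.
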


\begin{proof} 
For all $x_i \in K^i$ and $z_i \in (J^i)^c$, there holds 
\begin{align*}
|x_i - z_i| 
\ge \d(K^i, (J^i)^c) 
\ge \ell_i  
\ge \ell(K^i)  
\ge 2 |x_i - c_{K^i}|.
\end{align*}
Observe that for any $\alpha_i \in (0, 1)$, 
\begin{align*}
\int_{3 K^i} \frac{dy_i}{|x_i - y_i|^{n_i - \alpha_i}} 
\lesssim \ell(K^i)^{\alpha_i}, \qquad x_i \in K^i, 
\end{align*}
and for any $\beta_i>0$,  
\begin{align*}
\psi(\beta_i) 
& := \int_{K^i} \int_{|x_i - z_i| \ge \ell_i} 
F_3^i\bigg(\frac{4|x_i|}{1+|x_i - z_i|}\bigg) 
\frac{1}{|x_i - z_i|^{n_i + \beta_i}} dz_i \, dx_i 
\\ 
&\le \sum_{k \ge 0} \int_{K^i} \int_{2^k \ell_i \le |x_i - z_i| < 2^{k+1} \ell_i} 
(2^k \ell_i)^{-n_i - \beta_i} F_3^i(\rd(2^k J^i, \I^i)) \, dz_i \, dx_i 
\\ 
&\lesssim |K^i| \ell_i^{-\beta_i} 
\sum_{k \ge 0} 2^{-k \beta_i} F_3^i(\rd(2^k J^i, \I^i))
\end{align*}
provided that for all $x_i \in K^i \subset J^i$ and $2^k \ell_i \le |x_i - z_i| < 2^{k+1} \ell_i$, 
\begin{align*}
\frac{4|x_i|}{1+|x_i - z_i|}
\ge \frac{4 \d(2^k J^i, 0)}{1+2^{k+1} \ell(J^i)} 
\ge \frac{\d(2^k J^i, 0)}{\max\{2^k \ell(J^i), 1\}}
= \rd(2^k J^i, \I^i). 
\end{align*}
Thus, by the monotonicity of $F_1^i, F_2^i, F_3^i$, 
\begin{align*}
\mathscr{R}_i^1(J^i, K^i) 
&\lesssim  F_1^i(\ell(K^i)) F_2^i(\ell(K^i)) 
\ell(K^i)^{\delta_i + \alpha_i} \psi(\delta_i + \alpha_i)
\\ 
&\lesssim F_1^i(\ell(K^i)) F_2^i(\ell(K^i)) \widetilde{F}_3^i(J^i) 
|K^i| [\ell(K^i)/\ell_i]^{\delta_i},  
\end{align*}
which shows \eqref{RR1}.

Additionally, given $k \ge 0$ and $x_i \in K^i  \subset J^i$, letting 
\begin{align*}
R_k(x_i) := \{(y_i, z_i) \in \R^{2n_i}: 2^k \ell_i \le |x_i - y_i| + |x_i - z_i| < 2^{k+1} \ell_i\},
\end{align*} 
we obtain for all $(y_i, z_i) \in R_k(x_i)$, 
\begin{align*}
1 + \frac{|x_i + y_i| + |x_i + z_i|}{1+ |x_i - y_i| + |x_i - z_i|}
\ge \frac{4|x_i|}{1+ 2^{k+1} \ell(J^i)}
\ge \frac{\d(2^k J^i, 0)}{\max\{2^k \ell(J^i), 1\}}
= \rd(2^k J^i, \I^i),   
\end{align*}
and hence, 
\begin{align*}
\mathscr{R}_i^2(J^i, K^i) 
&\lesssim  F_1^i(\ell(K^i)) F_2^i(\ell(K^i)) \ell(K^i)^{\delta_i} 
\\ 
&\quad \times \int_{K^i} \bigg(\sum_{k \ge 0} \iint_{R_k(x_i)} 
\frac{F_3^i(\rd(2^k J^i, \I^i))}{(2^k \ell_i)^{2n_i + \delta_i}} dy_i\, dz_i\bigg) \, dx_i
\\ 
&\lesssim F_1^i(\ell(K^i)) F_2^i(\ell(K^i)) \ell(K^i)^{\delta_i} 
|K^i| \ell_i^{-\delta_i} \sum_{k \ge 0} 2^{-k \delta_i} F_3^i(\rd(2^k J^i, \I^i)) 
\\  
&\le F_1^i(\ell(K^i)) F_2^i(\ell(K^i)) \widetilde{F}_3^i(J^i) 
|K^i| [\ell(K^i)/\ell_i]^{\delta_i}. 
\end{align*}
This coincides with \eqref{RR2}.
\end{proof}

\subsection{$\BMO$ and $\CMO$ estimates} 
The estimates in this section will be applied to obtain bi-parameter partial paraproducts in Sections \ref{sec:SN}, \ref{sec:AN}, and \ref{sec:NN}. 

\begin{lemma}\label{lem:PPSN}
Let $I^1, J^1 \in \D^1$ and $K^1 \in \D_{\rm{good}}^1$ satisfy $\ell(K^1) \le \ell(I^1) = 2\ell(J^1)$ and $\max\{\d(K^1, I^1), \, \d(K^1, J^1)\} > 2\ell(K^1)^{\gamma_1} \ell(J^1)^{1-\gamma_1}$. Set 
\begin{align*}
b_{I^1, J^1, K^1} 
:= \langle T(h_{I^1} \otimes 1, h_{J^1}^0 \otimes 1), h_{K^1} \rangle.
\end{align*} 
Then 
\begin{align}\label{BK-1}
\big\|b_{I^1, J^1, K^1}\big\|_{\BMO(\D^2)} 
\lesssim \bigg[ \frac{\ell(K^1)}{\ell(Q^1)} \bigg]^{\frac{\delta_1}{2}} 
F^1(K^1, Q^1) \frac{|I^1|^{\frac12} |J^1|^{\frac12} |K^1|^{\frac12}}{|Q^1|^2},  
\end{align}
and 
\begin{align}\label{BK-2}
\big\|P_{\D^2(N)}^{\perp} b_{I^1, J^1, K^1}\big\|_{\BMO(\D^2)} 
\lesssim \mathcal{F}^2_N \, \bigg[ \frac{\ell(K^1)}{\ell(Q^1)} \bigg]^{\frac{\delta_1}{2}} 
F^1(K^1, Q^1) \frac{|I^1|^{\frac12} |J^1|^{\frac12} |K^1|^{\frac12}}{|Q^1|^2},  
\end{align}
where 
\begin{align}\label{def:F2N}
\mathcal{F}^2_N 
:= \sup_{\D^2} \sup_{K^2 \notin \D^2(N)} \mathcal{F}^2(K^2) 
:= \sup_{\D^2} \sup_{K^2 \notin \D^2(N)} \big[F^2(K^2) + F^2(K^2, K^2)\big], 
\end{align}
$F^2 \in \F^2$, $F^i(\cdot, \cdot)$ is given in \eqref{def:FKQ}, and $Q^1$ is defined in Lemma $\ref{lem:cda}$  \ref{cda-1}. 
\end{lemma}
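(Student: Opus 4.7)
The plan is to prove both \eqref{BK-1} and \eqref{BK-2} by $H^1$--$\BMO$ duality on $\D^2$, i.e.\ by controlling
\[
\mathcal{J}(a) := \big\langle T(h_{I^1}\otimes 1, h_{J^1}^0 \otimes 1), h_{K^1}\otimes a \big\rangle
\]
uniformly over cancellative atoms $a$ adapted to a cube $Q^2\in\D^2$ (i.e.\ $\supp(a)\subset Q^2$, $|a|\le 1$, $\int a=0$), since $\|b_{I^1,J^1,K^1}\|_{\BMO(\D^2)}\simeq \sup_{Q^2,a}|Q^2|^{-1}|\mathcal{J}(a)|$. For \eqref{BK-2} the same quantity is considered, but only for $Q^2\notin \D^2(N)$, which controls $\|P_{\D^2(N)}^\perp b_{I^1,J^1,K^1}\|_{\BMO(\D^2)}$.

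The first move is to split each of the two $1$'s in the second parameter as $1=\mathbf{1}_{3Q^2}+\mathbf{1}_{(3Q^2)^c}$, yielding a main ``in--in'' piece and three ``out'' pieces involving at least one $\mathbf{1}_{(3Q^2)^c}$. For the in--in piece, the separation hypothesis $\max\{\d(K^1,I^1),\d(K^1,J^1)\}>2\ell(K^1)^{\gamma_1}\ell(J^1)^{1-\gamma_1}$ forces $K^1$ to be disjoint from at least one of $I^1,J^1$, so the compact partial kernel representation on the first parameter (Definition~\ref{def:partial}) applies. Exploiting the cancellation of the smallest cancellative Haar $h_{K^1}$ through the H\"older condition \eqref{partial-2}, together with the bound $C(\mathbf{1}_{3Q^2},\mathbf{1}_{3Q^2},a)\le F^2(3Q^2)\,|3Q^2|\lesssim F^2(Q^2)|Q^2|$ from item~\eqref{partial-4}, the remaining first-parameter integral is exactly the quantity $\mathscr{P}_1(I^1,J^1,K^1)$ of Lemma~\ref{lem:PP}. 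Invoking Lemma~\ref{lem:cda}\ref{cda-1} to produce the common dyadic ancestor $Q^1$ then yields the factor $[\ell(K^1)/\ell(Q^1)]^{\delta_1/2}\,F^1(K^1,Q^1)\,|I^1|^{1/2}|J^1|^{1/2}|K^1|^{1/2}/|Q^1|^2$, multiplied by $F^2(Q^2)|Q^2|$, which after dividing by $|Q^2|$ matches \eqref{BK-1}.

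For the three out-pieces, the extra disjointness in the second parameter (between $(3Q^2)^c$ and $\supp(a)\subset Q^2$) combines with the first-parameter disjointness to activate the compact full kernel representation (Definition~\ref{def:full}). The mixed size-H\"older condition \eqref{full-3} used with H\"older in the $x^1$-variable (for $h_{K^1}$) and size in $x^2$ reduces matters to a product of a first-parameter integral handled again by Lemma~\ref{lem:PP} and a second-parameter integral of $a(x_3^2)$ against a kernel decaying away from $Q^2$. Pairing $a$ with either the size decay (using its $L^1$ mass) or, for the out--out term, a further H\"older step exploiting $\int a=0$, plus the off-diagonal integrability provided by $\widetilde{F}_3^2$ in \eqref{def:F23}, produces an extra $F^2(Q^2)|Q^2|$-type prefactor consistent with the in--in piece. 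Summing the four contributions proves \eqref{BK-1}.

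The hardest part, and the one where care is required, is tracking the compact-kernel prefactors in the passage to \eqref{BK-2}. For $Q^2\notin \D^2(N)$ the in--in contribution carries $F^2(3Q^2)$, which via \eqref{F3Q} and the definition \eqref{def:F2N} is absorbed into $\mathbf{F}^2_N$; likewise in each out-term the natural prefactor is either $F^2(K^2,K^2)$ for a suitable $K^2$ controlling $Q^2$, or $\widetilde{F}_3^2(Q^2)$ (equivalent to $\widetilde{F}_3^2(3Q^2)$), both of which are dominated by $\mathbf{F}^2_N$ when $Q^2\notin \D^2(N)$ since $Q^2$ is then either very small, very large, or far from $\I^2$. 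This gives the desired extra factor $\mathbf{F}^2_N$ in \eqref{BK-2} and completes the proof.
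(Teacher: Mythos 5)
Your overall architecture matches the paper's: reduce to pairing $b_{I^1,J^1,K^1}$ with $\infty$-atoms $a$ adapted to a cube in $\D^2$, split the two $1$'s in the second parameter into a near part and far parts, use the compact partial kernel representation together with \eqref{partial-4} for the near part and the compact full kernel representation for the far parts, and handle the first parameter throughout by the H\"older condition (cancellation of $h_{K^1}$), Lemma~\ref{lem:cda}\ref{cda-1}, and Lemma~\ref{lem:PP}. Your four-way split $1=\mathbf{1}_{3Q^2}+\mathbf{1}_{(3Q^2)^c}$ is a mild streamlining of the paper's nine-way split $1=\mathbf{1}_{K^2}+\mathbf{1}_{3K^2\setminus K^2}+\mathbf{1}_{(3K^2)^c}$, and your observation that the entire in--in piece can be absorbed into $C(\mathbf{1}_{3Q^2},\mathbf{1}_{3Q^2},a)\le F^2(3Q^2)|3Q^2|$ via \eqref{partial-4} is legitimate and avoids the separate $\mathscr{Q}_2$-estimates for the $3K^2\setminus K^2$ pieces.

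However, there is a genuine gap in your treatment of the two ``one--out'' terms (exactly one argument carrying $\mathbf{1}_{(3Q^2)^c}$). You propose to pair $a$ with these using only ``the size decay (using its $L^1$ mass),'' reserving the H\"older step exploiting $\int a=0$ for the out--out term alone. With the size form of the second-parameter bound, the factor $F_1^2$ is evaluated at $\sum_j|x_{3}^2-x_j^2|\gtrsim\ell(Q^2)$; since $F_1^2$ is \emph{increasing} and only vanishes at $0$, this yields no better than $\|F_1^2\|_{L^\infty}$ and in particular no decay as $\ell(Q^2)\to 0$. The factors $F_2^2$ and $\widetilde F_3^2$ supply decay only for large cubes and for cubes far from the origin. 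Consequently your bound for the one--out terms is of the form $(\text{first-parameter factor})\cdot F_2^2(\ell(Q^2))\widetilde F_3^2(Q^2)|Q^2|$ without the crucial factor $F_1^2(\ell(Q^2))$, and therefore does \emph{not} tend to $0$ uniformly over $Q^2\notin\D^2(N)$ with $\ell(Q^2)<2^{-N}$. This suffices for \eqref{BK-1} but breaks \eqref{BK-2}: the quantity you obtain is not dominated by $\mathbf{F}^2_N$. The fix is to use the cancellation of the atom for \emph{all three} far terms, i.e.\ the full H\"older condition \eqref{full-2} in both the $x_3^1$- and $x_3^2$-variables, which replaces $F_1^2(\sum_j|x_3^2-x_j^2|)$ by $F_1^2(|x_3^2-c_{Q^2}|)\le F_1^2(\ell(Q^2))$; this is exactly what the paper does for $\mathscr{I}_3,\mathscr{I}_5$ (and $\mathscr{I}_6$) via Lemma~\ref{lem:RR} applied with $\ell_2=\ell(K^2)$, producing the required $F^2(K^2,K^2)$-type prefactor.
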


\begin{proof}
Given $K^2 \in \D^2$, let $a'_{K^2}$ be an arbitrary $\infty$-atom of $\mathrm{H}_{\D^2}^1$, namely, $\supp (a'_{K^2}) \subset K^2$, $\|a'_{K^2}\|_{L^{\infty}} \le |K^2|^{-1}$, and $\int_{\R^{n_2}} a'_{K^2} \, dx_2 =0$. Write $a_{K^2} = a'_{K^2} |K^2|$. By a similar argument in \cite[p. 6272]{CYY}, the inequalities \eqref{BK-1} and \eqref{BK-2} are reduced to showing 
\begin{align}\label{SPP}
|\langle b_{I^1, J^1, K^1}, a_{K^2} \rangle| 
\lesssim \bigg[ \frac{\ell(K^1)}{\ell(Q^1)} \bigg]^{\frac{\delta_1}{2}} 
F^1(K^1, Q^1) \frac{|I^1|^{\frac12} |J^1|^{\frac12} |K^1|^{\frac12}}{|Q^1|^2} 
\mathcal{F}^2(K^2) \, |K^2|. 
\end{align}

By the fact $1 = \mathbf{1}_{K^2} + \mathbf{1}_{3K^2 \setminus K^2} + \mathbf{1}_{(3K^2)^c}$, $b_{I^1, J^1, K}$ can be split into nine terms. By symmetry, it suffices to bound the following:
\begin{align*}
\mathscr{I}_1 
&:= \langle T(h_{I^1} \otimes \mathbf{1}_{K^2}, 
h_{J^1}^0 \otimes \mathbf{1}_{K^2}), 
h_{K^1} \otimes a_{K^2} \rangle, 
\\
\mathscr{I}_2 
&:= \langle T(h_{I^1} \otimes \mathbf{1}_{K^2}, 
h_{J^1}^0 \otimes \mathbf{1}_{3K^2 \setminus K^2}), 
h_{K^1} \otimes a_{K^2} \rangle, 
\\
\mathscr{I}_3
&:= \langle T(h_{I^1} \otimes \mathbf{1}_{K^2}, 
h_{J^1}^0 \otimes \mathbf{1}_{(3K^2)^c}), 
h_{K^1} \otimes a_{K^2} \rangle, 
\\
\mathscr{I}_4
&:= \langle T(h_{I^1} \otimes \mathbf{1}_{3K^2 \setminus K^2}, 
h_{J^1}^0 \otimes \mathbf{1}_{3K^2 \setminus K^2}), 
h_{K^1} \otimes a_{K^2} \rangle, 
\\
\mathscr{I}_5
&:= \langle T(h_{I^1} \otimes \mathbf{1}_{3K^2 \setminus K^2}, 
h_{J^1}^0 \otimes \mathbf{1}_{(3K^2)^c}), 
h_{K^1} \otimes a_{K^2} \rangle, 
\\
\mathscr{I}_6
&:= \langle T(h_{I^1} \otimes \mathbf{1}_{(3K^2)^c}, h_{J^1}^0 \otimes \mathbf{1}_{(3K^2)^c}), 
h_{K^1} \otimes a_{K^2} \rangle. 
\end{align*}
By the cancellation of $h_{K^1}$, the compact partial kernel representation (cf. \eqref{H2}), the H\"{o}lder condition, and Lemma \ref{lem:PP}, there holds  
\begin{align}\label{SPP-1}
|\mathscr{I}_1| 
&\le \mathscr{P}_1(I^1, J^1, K^1) C(\mathbf{1}_{K^2}, \mathbf{1}_{K^2}, a_{K^2}) 
|I^1|^{-\frac12} |J^1|^{-\frac12} |K^1|^{-\frac12} 
\\ \nonumber 
&\lesssim \bigg[ \frac{\ell(K^1)}{\ell(Q^1)} \bigg]^{\frac{\delta_1}{2}} 
F^1(K^1, Q^1) \frac{|I^1|^{\frac12} |J^1|^{\frac12} |K^1|^{\frac12}}{|Q^1|^2} 
F^2(K^2) |K^2|. 
\end{align}
Note that for any cube $K^i \subset \R^{n_i}$, there exists a family of disjoint cubes $\{K_j^i\}_{j=0}^{3n_i-1}$ in $\R^{n_i}$ such that 
\begin{align}\label{3KK}
3K^i = \bigcup_{j=0}^{3n_i -1} K_j^i, \quad \text{where} \quad 
K_0^i = K^i, \quad 
\ell(K_j^i) = \ell(K^i), \quad 
\d(K_j^i, K^i) = 0.
\end{align}
Then using the cancellation of $h_{K^1}$, the compact full kernel representation (cf. \eqref{H1}), the mixed size-H\"{o}lder condition, and Lemmas \ref{lem:PP} and \ref{lem:QQ}, we obtain 
\begin{align}\label{SPP-2}
|\mathscr{I}_2| 
&\le \mathscr{P}_1(I^1, J^1, K^1) \sum_{j=1}^{3n_2 -1} \mathscr{Q}_2(K^2, K_j^2, K^2) 
|I^1|^{-\frac12} |J^1|^{-\frac12} |K^1|^{-\frac12}
\\ \nonumber 
&\lesssim \bigg[ \frac{\ell(K^1)}{\ell(Q^1)} \bigg]^{\frac{\delta_1}{2}} 
F^1(K^1, Q^1) \frac{|I^1|^{\frac12} |J^1|^{\frac12} |K^1|^{\frac12}}{|Q^1|^2} 
F^2(K^2, 3K^2) |K^2|, 
\end{align}
and $\mathscr{I}_4$ has the same bound replacing $\mathscr{Q}_2(K^2, K_j^2, K^2)$ by $\sum_{k=1}^{3n_2-1} \mathscr{Q}_2(K^2_k, K^2_j, K^2)$. Additionally, in view of the cancellation of $h_{K^1}$ and $a_{K^2}$, the compact full kernel representation (cf. \eqref{H1}), and the H\"{o}lder condition, Lemmas \ref{lem:PP} and \ref{lem:RR} applied to $\ell_2 = \ell(K^2)$ imply 
\begin{align}\label{SPP-3}
|\mathscr{I}_3| + |\mathscr{I}_5|
&\le \mathscr{P}_1(I^1, J^1, K^1) \mathscr{R}_2^1(3K^2, K^2) 
|I^1|^{-\frac12} |J^1|^{-\frac12} |K^1|^{-\frac12} 
\\ \nonumber 
&\lesssim \bigg[ \frac{\ell(K^1)}{\ell(Q^1)} \bigg]^{\frac{\delta_1}{2}} 
F^1(K^1, Q^1) \frac{|I^1|^{\frac12} |J^1|^{\frac12} |K^1|^{\frac12}}{|Q^1|^2} 
F^2(K^2, 3K^2) |K^2|. 
\end{align}
Besides, $\mathscr{I}_6$ can be estimated instead of $\mathscr{R}_2^1$ above by $\mathscr{R}_2^2$. Consequently, \eqref{SPP} follows from \eqref{SPP-1}, \eqref{SPP-2}, \eqref{SPP-3}, and the fact that $F^2(K^2, 3K^2) \simeq F^2(K^2, K^2)$ by \eqref{F3Q}.
\end{proof}

\begin{lemma}\label{lem:PPAN}
Let $I^1, J^1 \in \D^1$, $K^1 \in \D_{\rm{good}}^1$, and $K^2 \in \D_{\rm{good}}^2$ satisfy $\ell(K^1) \le \ell(I^1) = 2\ell(J^1)$, $\max\{\d(K^1, I^1), \, \d(K^1, J^1)\} \le 2\ell(K^1)^{\gamma_1} \ell(J^1)^{1-\gamma_1}$, and either $K^1 \cap I^1 = \emptyset$ or $K^1 \cap J^1 = \emptyset$ or $K^1 = I^1$. Set 
\begin{align*}
b_{I^1, J^1, K^1} := \langle T(h_{I^1} \otimes 1, h_{J^1}^0 \otimes 1), h_{K^1} \rangle.
\end{align*} 
Then 
\begin{align*}
\|b_{I^1, J^1, K^1}\|_{\BMO(\D^2)} 
\lesssim \bigg[ \frac{\ell(K^1)}{\ell(Q^1)} \bigg]^{\frac{\delta_1}{2}} 
\bigg[F^1(K^1, Q^1) + \sum_{J^1 \in \ch(K^1)} F^1(J^1) \bigg]
\frac{|I^1|^{\frac12} |J^1|^{\frac12} |K^1|^{\frac12}}{|Q^1|^2}, 
\end{align*}
and 
\begin{align*}
\big\|P_{\D^2(N)}^{\perp} b_{I^1, J^1, K^1}\big\|_{\BMO(\D^2)} 
\lesssim \mathcal{F}^2_N \bigg[ \frac{\ell(K^1)}{\ell(Q^1)} \bigg]^{\frac{\delta_1}{2}} 
\bigg[F^1(K^1, Q^1) + \sum_{J^1 \in \ch(K^1)} F^1(J^1) \bigg] 
\frac{|I^1|^{\frac12} |J^1|^{\frac12} |K^1|^{\frac12}}{|Q^1|^2}, 
\end{align*}
where $\mathcal{F}^2_N$ is given in \eqref{def:F2N} and $Q^1$ is defined in Lemma $\ref{lem:cda}$  \ref{cda-2}.
\end{lemma}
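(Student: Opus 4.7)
The proof will mirror that of Lemma \ref{lem:PPSN} with the geometric regime swapped from ``separated'' to ``adjacent.'' As in that argument, the two BMO estimates reduce by $H^1$-BMO duality on $\D^2$ (cf.\ \cite[p.~6272]{CYY}) to showing, for every $K^2 \in \D^2$ and every $\infty$-atom $a_{K^2}$ of $H^1_{\D^2}$ supported in $K^2$, the dual bound
\begin{align*}
|\langle b_{I^1,J^1,K^1}, a_{K^2}\rangle|
\lesssim \bigg[\frac{\ell(K^1)}{\ell(Q^1)}\bigg]^{\delta_1/2}
F^1(K^1, Q^1)\,\frac{|I^1|^{1/2}|J^1|^{1/2}|K^1|^{1/2}}{|Q^1|^2}\,\mathbf{F}^2(K^2)\,|K^2|,
\end{align*}
where $Q^1$ is now supplied by Lemma \ref{lem:cda}\ref{cda-2}, so that $\ell(Q^1) \simeq \ell(K^1)$. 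After decomposing $1 = \mathbf{1}_{K^2} + \mathbf{1}_{3K^2\setminus K^2} + \mathbf{1}_{(3K^2)^c}$ in each of the first two arguments and using symmetry, one is reduced exactly to the six integrals $\mathscr{I}_1,\dots,\mathscr{I}_6$ from the proof of Lemma \ref{lem:PPSN}.

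For $\mathscr{I}_2,\dots,\mathscr{I}_6$ the second-parameter supports are disjoint, so the compact full kernel representation (H1) applies. The only change relative to Lemma \ref{lem:PPSN} is in the first parameter: we are now in the adjacent regime (either $K^1 \cap I^1 = \emptyset$ or $K^1 \cap J^1 = \emptyset$ forces the two cubes in question to be touching but disjoint, with $\ell(I^1) \simeq \ell(J^1) \simeq \ell(K^1) \simeq \ell(Q^1)$), so Lemma \ref{lem:QQ} replaces Lemma \ref{lem:PP} as the tool for the $\R^{n_1}$ integrals. The second-parameter integrals are handled verbatim via Lemmas \ref{lem:QQ} and \ref{lem:RR}, with the far-field radius $\ell_2 = \ell(K^2)$. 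The product of the two resulting factors yields the required bound for each $\mathscr{I}_j$, $j \ge 2$.

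For $\mathscr{I}_1$ all six supports sit on the diagonal $K^2 \times K^2$ in the second parameter, so we invoke the compact partial kernel representation on the first parameter (H2) together with the coefficient bound from Definition \ref{def:partial}\ref{partial-4}, which contributes the $\mathbf{F}^2(K^2)|K^2|$ factor. The remaining $\R^{n_1}$-integral is estimated by Lemma \ref{lem:QQ} whenever $K^1 \cap I^1 = \emptyset$ or $K^1 \cap J^1 = \emptyset$, i.e.\ two of $I^1, J^1, K^1$ are disjoint. The exceptional case $K^1 = I^1$, in which all three first-parameter supports overlap, is treated by writing $h_{I^1} = h_{K^1}$, exploiting the cancellation of $h_{K^1}$ (twice) together with the non-cancellative $h^0_{J^1}$ with $J^1 \subset K^1$, and invoking the weak compactness property (H3) and the diagonal $\CMO$ condition (H4). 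Since $\ell(Q^1)\simeq\ell(K^1)$ here, the factor $[\ell(K^1)/\ell(Q^1)]^{\delta_1/2}$ is harmless, and the smallness on $K^1 \notin \D^1(N)$ is supplied by $F^1(K^1,K^1) \to 0$.

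The main obstacle is the diagonal subcase $K^1 = I^1$ of $\mathscr{I}_1$: neither full nor partial kernel representations are available in either parameter, so one must extract the whole bound from the weak compactness and diagonal $\CMO$ hypotheses, carefully reconciling the Haar normalisation $|I^1|^{-1/2}|J^1|^{-1/2}|K^1|^{-1/2}$ with the target factor $|I^1|^{1/2}|J^1|^{1/2}|K^1|^{1/2}/|Q^1|^2 \simeq |K^1|^{-1/2}$. Once this exceptional interaction is dispatched, the remaining estimates are parallel to Lemma \ref{lem:PPSN} with Lemma \ref{lem:QQ} systematically in place of Lemma \ref{lem:PP}.
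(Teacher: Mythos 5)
Your overall architecture matches the paper's: reduce by $H^1$--$\BMO$ duality on $\D^2$ to the atom pairing, split $1=\mathbf{1}_{K^2}+\mathbf{1}_{3K^2\setminus K^2}+\mathbf{1}_{(3K^2)^c}$ into the six terms $\mathscr{I}_1,\ldots,\mathscr{I}_6$, and replace $\mathscr{P}_1$ by $\mathscr{Q}_1$ via Lemma \ref{lem:QQ}. For the subcases $K^1\cap I^1=\emptyset$ and $K^1\cap J^1=\emptyset$ this is exactly what the paper does, and your treatment of those is fine.

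The gap is in the remaining subcase $K^1=I^1$ with $K^1\cap J^1\neq\emptyset$ (so $J^1\in\ch(K^1)$), and it is larger than you acknowledge. You assert that for $\mathscr{I}_2,\ldots,\mathscr{I}_6$ ``the second-parameter supports are disjoint, so the compact full kernel representation (H1) applies.'' That is false here: Definition \ref{def:full} requires a single pair of arguments whose supports are disjoint in \emph{both} parameters simultaneously, and when $J^1\subset K^1=I^1$ all three first-parameter supports overlap, so no such pair exists. You localize the difficulty to $\mathscr{I}_1$ only, but in fact every one of $\mathscr{I}_1,\ldots,\mathscr{I}_6$ is affected. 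The missing idea is the expansion over children of $K^1$: write
\begin{align*}
\langle b_{I^1,J^1,K^1},a_{K^2}\rangle
=\sum_{K_1^1,K_2^1\in\ch(K^1)}\langle h_{I^1}\rangle_{K_1^1}\langle h_{K^1}\rangle_{K_2^1}
\,\langle T(\mathbf{1}_{K_1^1}\otimes 1,\,h^0_{J^1}\otimes 1),\,\mathbf{1}_{K_2^1}\otimes a_{K^2}\rangle .
\end{align*}
If $K_1^1\neq J^1$ or $K_2^1\neq J^1$, two of the first-parameter cubes are disjoint, and one proceeds exactly as in the case $K^1\cap I^1=\emptyset$ (full kernel plus $\mathscr{Q}_1$, $\mathscr{Q}_2$, $\mathscr{R}_2^1$, $\mathscr{R}_2^2$). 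Only for the single diagonal child pair $K_1^1=K_2^1=J^1$ is there genuinely no first-parameter disjointness; there the $\mathscr{I}_2$--$\mathscr{I}_6$ analogues are handled by the compact partial kernel representation on the \emph{second} parameter, whose coefficient is bounded via condition \eqref{partial-4} by $C(\mathbf{1}_{J^1},\mathbf{1}_{J^1},\mathbf{1}_{J^1})\le F^1(J^1)|J^1|$ in place of $\mathscr{P}_1(I^1,J^1,K^1)$, and the $\mathscr{I}_1$ analogue is precisely an instance of the diagonal $\CMO$ condition (H4) applied to the rectangle $J^1\times K^2$ with the atom $a_{K^2}$ in the last slot. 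Your phrase ``exploiting the cancellation of $h_{K^1}$ (twice)'' does not do any work here --- with no kernel available, cancellation buys nothing; it is the children expansion that isolates the diagonal piece --- and (H3) is not actually needed since $a_{K^2}$ has mean zero. As written, your proposal declares this subcase ``the main obstacle'' without resolving it and misattributes the tool ((H1), resp.\ (H3)+(H4)) for the terms that require the second-parameter partial kernel representation.
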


\begin{proof}
In the case $K^1 \cap I^1 = \emptyset$, the proof is much as that of Lemma \ref{lem:PPSN}. The only difference is that $\mathscr{P}_1(I^1, J^1, K^1)$ is replaced by $\mathscr{Q}_1(I^1, J^1, K^1)$. Then Lemma \ref{lem:QQ} gives the same estimates. The case $K^1 \cap J^1 = \emptyset$ is similar. 

Let $K^1 \cap J^1 \neq \emptyset$ and $K^1 = I^1$. Then, $J^1 \in \ch(K^1)$ and  
\begin{align*}
\langle b_{I^1, J^1, K^1}, a_{K^2} \rangle  
= \sum_{K_1^1, K_2^1 \in \ch(K^1)} 
\langle h_{I^1} \rangle_{K_1^1} \langle h_{K^1} \rangle_{K_2^1} 
\langle T(\mathbf{1}_{K_1^1} \otimes 1, h_{J^1}^0 \otimes 1), 
\mathbf{1}_{K_2^1} \otimes a_{K^2} \rangle. 
\end{align*}
If $K_1^1 = J^1 = K_2^1$, then we use the same decomposition as in the proof of Lemma \ref{lem:PPSN}. For the first term, the diagonal CMO condition (cf. \eqref{H4}) gives  
\begin{align*}
|\langle T(\mathbf{1}_{K_1^1} \otimes \mathbf{1}_{K^2}, 
h_{J^1}^0 \otimes \mathbf{1}_{K^2}), 
\mathbf{1}_{K_2^1} \otimes a_{K^2} \rangle| 
\le |J^1|^{-\frac12} F^1(J^1) |J^1| F^2(K^2) |K^2|. 
\end{align*}
Moreover, replacing $\mathscr{P}_1(I^1, J^1, K^1)$ by $C(\mathbf{1}_{J^1}, \mathbf{1}_{J^1}, \mathbf{1}_{J^1})$ in the proof of Lemma \ref{lem:PPSN}, the remaining terms are dominated by 
\begin{align*}
|J^1|^{-\frac12} F^1(J^1) |J^1| \big[F^2(K^2) + F^2(K^2, 3K^2) \big] |K^2|.  
\end{align*}
If $K_1^1 \neq J^1$ or $K_2^1 \neq J^1$, then it is similar to the case $K^1 \cap I^1 = \emptyset$. Hence, 
\begin{align*}
|\langle b_{I^1, J^1, K^1}, a_{K^2} \rangle|
&\lesssim \sup_{K_2^1 \in \ch(K^1)} 
\bigg[ \frac{\ell(K_2^1)}{\ell(Q^1)} \bigg]^{\frac{\delta_1}{2}} 
F^1(K_2^1, Q^1) \frac{|I^1|^{\frac12} |J^1|^{\frac12} |K_2^1|^{\frac12}}{|Q^1|^2} 
F^2(K^2, 3K^2) |K^2| 
\\
&\lesssim \bigg[ \frac{\ell(K^1)}{\ell(Q^1)} \bigg]^{\frac{\delta_1}{2}} 
F^1(K^1, Q^1) \frac{|I^1|^{\frac12} |J^1|^{\frac12} |K^1|^{\frac12}}{|Q^1|^2} 
F^2(K^2, 3K^2) |K^2|.  
\end{align*}
This completes the proof. 
\end{proof}

\begin{lemma}\label{lem:PPNN-1}
Let $K^2 \in \D_{\rm{good}}^2$ and $Q^2 \in \D^2$ satisfy $K^2 \subset Q^2$. Set 
\begin{align*}
b_{K^2, Q^2} 
:= \langle h_{(Q^2)^{(1)}} \rangle_{Q^2} 
\langle T(1, 1 \otimes \mathbf{1}_{(Q^2)^c}), h_{K^2} \rangle. 
\end{align*}
Then 
\begin{align}
\|b_{K^2, Q^2}\|_{\BMO(\D^1)} 
\lesssim \bigg[ \frac{\ell(K^2)}{\ell(Q^2)} \bigg]^{\frac{\delta_2}{2}} 
F^2(K^2, Q^2) \frac{|K^2|^{\frac12}}{|Q^2|^{\frac12}}, 
\end{align}
and 
\begin{align}
\|P_{\D^1(N)}^{\perp} b_{K^2, Q^2}\|_{\BMO(\D^1)} 
\lesssim \mathcal{F}^1_N \, \bigg[ \frac{\ell(K^2)}{\ell(Q^2)} \bigg]^{\frac{\delta_2}{2}} 
F^2(K^2, Q^2) \frac{|K^2|^{\frac12}}{|Q^2|^{\frac12}}, 
\end{align}
where \begin{align}\label{def:F1N}
\mathcal{F}^1_N 
:= \sup_{\D^1} \sup_{Q^1 \notin \D^1(N)} \mathcal{F}^1(Q^1) 
:= \sup_{\D^1} \sup_{Q^1 \notin \D^1(N)} \big[F^1(Q^1) + F^1(Q^1, Q^1)\big], 
\end{align}
$F^1 \in \F^1$, and $F^1(\cdot, \cdot)$ is given in \eqref{def:FKQ}.
\end{lemma}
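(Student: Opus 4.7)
The plan is to mirror the proofs of Lemmas \ref{lem:PPSN} and \ref{lem:PPAN} with the two parameters exchanged. By $H^1_{\D^1}$--$\BMO(\D^1)$ duality, both conclusions reduce to establishing
\begin{align*}
|\langle b_{K^2, Q^2}, a_{Q^1} \rangle|
\lesssim \bigg[\frac{\ell(K^2)}{\ell(Q^2)}\bigg]^{\frac{\delta_2}{2}} F^2(K^2, Q^2) \bigg[\frac{|K^2|}{|Q^2|}\bigg]^{\frac12} \mathbf{F}^1(Q^1) \, |Q^1|
\end{align*}
uniformly over all cubes $Q^1 \subset \R^{n_1}$ and all $a_{Q^1}$ with $\supp(a_{Q^1}) \subset Q^1$, $\|a_{Q^1}\|_{\infty} \le 1$, and $\int a_{Q^1} = 0$. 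The first inequality then follows from $\mathbf{F}^1(Q^1) \le 1$, while the second follows because the $H^1_{\D^1}$ atomic decomposition of $P_{\D^1(N)}^{\perp} b_{K^2, Q^2}$ only involves atoms on cubes $Q^1 \notin \D^1(N)$, for which $\mathbf{F}^1(Q^1) \le \mathbf{F}^1_N$ by \eqref{def:F1N}.

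To prove this pointwise bound, I would split each of the two $1$'s in the first-parameter slot via $1 = \mathbf{1}_{Q^1} + \mathbf{1}_{3Q^1 \setminus Q^1} + \mathbf{1}_{(3Q^1)^c}$, producing nine pieces. When both first-parameter factors equal $\mathbf{1}_{Q^1}$, the second argument's support $(Q^2)^c$ and the test's support $K^2 \subset Q^2$ are disjoint in parameter 2, so the compact partial kernel representation on parameter 2 applies, with coefficient $C(\mathbf{1}_{Q^1}, \mathbf{1}_{Q^1}, a_{Q^1}) \lesssim F^1(Q^1) |Q^1|$ by the parameter-1 analogue of \eqref{partial-4}. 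When the \emph{second} argument's first-parameter factor lies in $3Q^1 \setminus Q^1$ or $(3Q^1)^c$, the pair consisting of that argument and the test has disjoint supports in both parameters, so the compact full kernel representation applies. The remaining pieces, in which only the \emph{first} argument's first-parameter factor lies away from $Q^1$, require one additional splitting $1 = \mathbf{1}_{Q^2} + \mathbf{1}_{3Q^2 \setminus Q^2} + \mathbf{1}_{(3Q^2)^c}$ of the first argument's second-parameter factor; each resulting subpiece then admits a full kernel representation via either the pair $(f_1, f_2)$ (for the $\mathbf{1}_{Q^2}$ subpiece) or $(f_1, f_3)$ (for the other two).

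In every piece, the cancellation of $h_{K^2}$ combined with the H\"{o}lder (or mixed size--H\"{o}lder) condition in parameter 2 yields a factor of $\ell(K^2)^{\delta_2}$, and the goodness of $K^2 \in \D_{\rm good}^{n_2}$ forces $\d(K^2, (Q^2)^c) \gtrsim \ell(K^2)^{\gamma_2} \ell(Q^2)^{1-\gamma_2}$ provided $\ell(Q^2) \ge 2^r \ell(K^2)$ (the opposite range being trivial since then $\ell(K^2)/\ell(Q^2) \simeq 1$). Applying Lemma \ref{lem:RR} in parameter 2 with $\ell_2 = \d(K^2, (Q^2)^c)$ produces $[\ell(K^2)/\ell(Q^2)]^{\delta_2(1-\gamma_2)} \ge [\ell(K^2)/\ell(Q^2)]^{\delta_2/2}$ together with $F_1^2(\ell(K^2)) F_2^2(\ell(K^2)) \widetilde{F}_3^2(Q^2) |K^2| \le F^2(K^2, Q^2) |K^2|$; combined with the prefactor $|\langle h_{(Q^2)^{(1)}} \rangle_{Q^2}| \simeq |Q^2|^{-1/2}$, this supplies the required $[|K^2|/|Q^2|]^{1/2}$. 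The main obstacle will be the last group of pieces, where no pair of arguments is jointly disjoint in both parameters before splitting the first argument's second-parameter factor; orchestrating this additional split in parameter 2 on top of the parameter-1 splits and then estimating each sub-term via appropriate applications of Lemmas \ref{lem:PP}, \ref{lem:QQ}, and \ref{lem:RR} in both parameters will require care, but proceeds in the same spirit as the case analyses for Lemmas \ref{lem:PPSN} and \ref{lem:PPAN}.
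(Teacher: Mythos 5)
Your proposal follows essentially the same route as the paper: reduce via $\mathrm{H}^1_{\D^1}$--$\BMO(\D^1)$ duality to a pairing with atoms $a_{Q^1}$, split the first-parameter $1$'s into $\mathbf{1}_{Q^1}+\mathbf{1}_{3Q^1\setminus Q^1}+\mathbf{1}_{(3Q^1)^c}$, invoke the compact partial kernel representation for the diagonal piece and the compact full kernel representation (via a suitably disjoint pair, after also splitting the first argument's second-parameter factor) for the rest, and extract the gain $[\ell(K^2)/\ell(Q^2)]^{\delta_2/2}$ from the goodness of $K^2$ together with Lemma \ref{lem:RR}, treating $\ell(Q^2)\le 2^r\ell(K^2)$ separately via the $3Q^2$ splitting and Lemmas \ref{lem:QQ}, \ref{lem:RR}. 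Two harmless slips: the inequality $[\ell(K^2)/\ell(Q^2)]^{\delta_2(1-\gamma_2)}\ge[\ell(K^2)/\ell(Q^2)]^{\delta_2/2}$ should be reversed (the base is $\le 1$ and the larger exponent gives the smaller, hence acceptable, quantity), and the range $\ell(Q^2)\le 2^r\ell(K^2)$ is not quite ``trivial''---it still needs the adjacent-cube estimate of Lemma \ref{lem:QQ} for the $\mathbf{1}_{3Q^2\setminus Q^2}$ piece, as you in effect acknowledge at the end.
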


\begin{proof}
Given $Q^1 \in \D^1$, let $a'_{Q^1}$ be an arbitrary $\infty$-atom of $\mathrm{H}_{\D^1}^1$, namely, $\supp (a'_{Q^1}) \subset Q^1$, $\|a'_{Q^1}\|_{L^{\infty}} \le |Q^1|^{-1}$, and $\int_{\R^{n_1}} a'_{Q^1} \, dx_1 =0$. Write $a_{Q^1} = a'_{Q^1} |Q^1|$. As argued in \cite[p. 6272]{CYY}, it is enough to prove  
\begin{align}\label{NPP}
|\langle b_{K^2, Q^2}, a_{Q^1} \rangle| 
\lesssim \mathcal{F}^1(Q^1) |Q^1| 
\bigg[ \frac{\ell(K^2)}{\ell(Q^2)} \bigg]^{\frac{\delta_2}{2}} 
F^2(K^2, Q^2) \frac{|K^2|^{\frac12}}{|Q^2|^{\frac12}}. 
\end{align}

First, we deal with the case $\ell(Q^2) > 2^{\vartheta} \ell(K^2)$. The goodness of $K^2$ gives 
\begin{align*}
\d(K^2, (Q^2)^c) 
> 2 \ell(K^2)^{\gamma_2} \ell(Q^2)^{1-\gamma_2} 
\ge \ell(K^2)^{\frac12} \ell(Q^2)^{\frac12} 
\ge \ell(K^2),  
\end{align*}
which allows us to write $\langle b_{K^2, Q^2}, a_{Q^1} \rangle = \mathscr{J}_1 + \mathscr{J}_2$, where 
\begin{align*}
\mathscr{J}_1 
&:= \langle h_{(Q^2)^{(1)}} \rangle_{Q^2} 
\langle T(1 \otimes \mathbf{1}_{3K^2}, 1 \otimes \mathbf{1}_{(Q^2)^c}), 
a_{Q^1} \otimes h_{K^2} \rangle, 
\\
\mathscr{J}_2 
&:= \langle h_{(Q^2)^{(1)}} \rangle_{Q^2} 
\langle T(1 \otimes \mathbf{1}_{(3K^2)^c}, 1 \otimes \mathbf{1}_{(Q^2)^c}), 
a_{Q^1} \otimes h_{K^2} \rangle. 
\end{align*}
Using the fact $1 = \mathbf{1}_{Q^1} + \mathbf{1}_{3Q^1 \setminus Q^1} + \mathbf{1}_{(3Q^1)^c}$, we split $\mathscr{I}_1$ into nine terms. By symmetry, it suffices to treat the following terms: 
\begin{align*}
\mathscr{J}_{1, 1} 
&:= \langle h_{(Q^2)^{(1)}} \rangle_{Q^2} 
\langle T(\mathbf{1}_{Q^1} \otimes \mathbf{1}_{3K^2}, 
\mathbf{1}_{Q^1} \otimes \mathbf{1}_{(Q^2)^c}), 
a_{Q^1} \otimes h_{K^2} \rangle, 
\\
\mathscr{J}_{1, 2} 
&:= \langle h_{(Q^2)^{(1)}} \rangle_{Q^2} 
\langle T(\mathbf{1}_{Q^1} \otimes \mathbf{1}_{3K^2}, 
\mathbf{1}_{3Q^1 \setminus Q^1} \otimes \mathbf{1}_{(Q^2)^c}), 
a_{Q^1} \otimes h_{K^2} \rangle, 
\\
\mathscr{J}_{1, 3} 
&:= \langle h_{(Q^2)^{(1)}} \rangle_{Q^2} 
\langle T(\mathbf{1}_{Q^1} \otimes \mathbf{1}_{3K^2}, 
\mathbf{1}_{(3Q^1)^c} \otimes \mathbf{1}_{(Q^2)^c}), 
a_{Q^1} \otimes h_{K^2} \rangle, 
\\
\mathscr{J}_{1, 4} 
&:= \langle h_{(Q^2)^{(1)}} \rangle_{Q^2} 
\langle T(\mathbf{1}_{3Q^1 \setminus Q^1} \otimes \mathbf{1}_{3K^2}, 
\mathbf{1}_{3Q^1 \setminus Q^1} \otimes \mathbf{1}_{(Q^2)^c}), 
a_{Q^1} \otimes h_{K^2} \rangle, 
\\
\mathscr{J}_{1, 5} 
&:= \langle h_{(Q^2)^{(1)}} \rangle_{Q^2} 
\langle T(\mathbf{1}_{3Q^1 \setminus Q^1} \otimes \mathbf{1}_{3K^2}, 
\mathbf{1}_{(3Q^1)^c} \otimes \mathbf{1}_{(Q^2)^c}), 
a_{Q^1} \otimes h_{K^2} \rangle, 
\\
\mathscr{J}_{1, 6} 
&:= \langle h_{(Q^2)^{(1)}} \rangle_{Q^2} 
\langle T(\mathbf{1}_{(3Q^1)^c} \otimes \mathbf{1}_{3K^2}, 
\mathbf{1}_{(3Q^1)^c} \otimes \mathbf{1}_{(Q^2)^c}), 
a_{Q^1} \otimes h_{K^2} \rangle. 
\end{align*}
By the cancellation of $h_{K^2}$, the compact partial kernel representation (cf. \eqref{H2}), and the H\"{o}lder condition, Lemma \ref{lem:RR} applied to $\ell_2 = \ell(K^2)^{\frac12} \ell(Q^2)^{\frac12}$ yields 
\begin{align}\label{NPP-1}
|\mathscr{J}_{1, 1}| 
&\lesssim C(\mathbf{1}_{Q^1}, \mathbf{1}_{Q^1}, a_{Q^1})  
\mathscr{R}_2^1(Q^2, K^2) |Q^2|^{-\frac12} |K^2|^{-\frac12} 
\\ \nonumber 
&\lesssim F^1(Q^1) |Q^1| 
\bigg[ \frac{\ell(K^2)}{\ell(Q^2)} \bigg]^{\frac{\delta_2}{2}} 
F^2(K^2, Q^2) \frac{|K^2|^{\frac12}}{|Q^2|^{\frac12}}.
\end{align}
It follows from the cancellation of $h_{K^2}$, the compact full kernel representation (cf. \eqref{H1}), the mixed size-H\"{o}lder condition, and Lemmas \ref{lem:QQ} and \ref{lem:RR} applied to $\ell_2 = \ell(K^2)^{\frac12} \ell(Q^2)^{\frac12}$ that 
\begin{align}\label{NPP-2}
|\mathscr{J}_{1, 2}| 
&\lesssim \sum_{j=1}^{3^{n_1}-1} \mathscr{Q}_1(Q^1, Q^1_j, Q^1)  
\mathscr{R}_2^1(Q^2, K^2) |Q^2|^{-\frac12} |K^2|^{-\frac12} 
\\ \nonumber 
&\lesssim F^1(Q^1, 3Q^1) |Q^1| 
\bigg[ \frac{\ell(K^2)}{\ell(Q^2)} \bigg]^{\frac{\delta_2}{2}} 
F^2(K^2, Q^2) \frac{|K^2|^{\frac12}}{|Q^2|^{\frac12}}. 
\end{align}
Likewise, instead of $\mathscr{Q}_1(Q^1, Q_j^1, Q^1)$ by $\sum_{k=1}^{3^{n_1}-1} \mathscr{Q}_1(Q_k^1, Q_j^1, Q^1)$, the same estimate holds for $\mathscr{J}_{1, 4}$. Moreover, using the cancellation of $a_{Q^1}$ and $h_{K^2}$, the compact full kernel representation (cf. \eqref{H1}), the H\"{o}lder condition, and Lemma \ref{lem:RR} applied to $\ell_1 = \ell(Q^1)$ and $\ell_2 = \ell(K^2)^{\frac12} \ell(Q^2)^{\frac12}$, we arrive at 
\begin{align}\label{NPP-3}
|\mathscr{J}_{1, 3}| + |\mathscr{J}_{1, 5}| 
&\lesssim \mathscr{R}_1^1(3Q^1, Q^1)  
\mathscr{R}_2^1(Q^2, K^2) |Q^2|^{-\frac12} |K^2|^{-\frac12} 
\\ \nonumber 
&\lesssim F^1(Q^1, 3Q^1) |Q^1| 
\bigg[ \frac{\ell(K^2)}{\ell(Q^2)} \bigg]^{\frac{\delta_2}{2}} 
F^2(K^2, Q^2) \frac{|K^2|^{\frac12}}{|Q^2|^{\frac12}}. 
\end{align}
Similarly, replacing $\mathscr{R}_1^1$ by $\mathscr{R}_1^2$, we obtain the same bound for $\mathscr{J}_{1, 6}$. 

Much as above, instead of $\mathscr{R}_2^1(Q^2, K^2)$ by $\mathscr{R}_2^2(Q^2, K^2)$, we have the same estimates for $\mathscr{J}_2$. 

Next, in the case $\ell(Q^2) \le 2^{\vartheta} \ell(K^2)$, using $\mathbf{1}_{(Q^2)^c} = \mathbf{1}_{3Q^2 \setminus Q^2} + \mathbf{1}_{(3Q^2)^c}$, we split $\mathscr{J}_{1, k}$ into two terms $\mathscr{J}_{1, k}^1$ and $\mathscr{J}_{1, k}^2$, $k=1, \ldots, 6$. By the compact partial kernel representation (cf. \eqref{H2}) and the size condition, Lemma \ref{lem:QQ} implies  
\begin{align}\label{NPP-4}
|\mathscr{J}_{1, 1}^1| 
&\lesssim C(\mathbf{1}_{Q^1}, \mathbf{1}_{Q^1}, a_{Q^1})  
\sum_{j=1}^{3n_2-1} \mathscr{Q}_2(3K^2, Q_j^2, K^2) 
|Q^2|^{-\frac12} |K^2|^{-\frac12} 
\\ \nonumber 
&\lesssim F^1(Q^1) |Q^1| 
\bigg[ \frac{\ell(K^2)}{\ell(Q^2)} \bigg]^{\frac{\delta_2}{2}} 
F^2(K^2, Q^2) \frac{|K^2|^{\frac12}}{|Q^2|^{\frac12}}, 
\end{align}
provided $\ell(K^2) \simeq \ell(Q^2)$. Moreover, using the cancellation of $h_{K_2}$, the compact partial kernel representation (cf. \eqref{H2}), the H\"{o}lder condition, Lemma \ref{lem:RR} applied to $\ell_2 = \ell(K^2)$, we conclude 
\begin{align}\label{NPP-5}
|\mathscr{J}_{1, 1}^2| 
&\lesssim C(\mathbf{1}_{Q^1}, \mathbf{1}_{Q^1}, a_{Q^1})   
\mathscr{R}_2^1(3Q^2, K^2) |Q^2|^{-\frac12} |K^2|^{-\frac12} 
\\ \nonumber 
&\lesssim F^1(Q^1) |Q^1|  
\bigg[ \frac{\ell(K^2)}{\ell(Q^2)} \bigg]^{\frac{\delta_2}{2}} 
F^2(K^2, 3Q^2) \frac{|K^2|^{\frac12}}{|Q^2|^{\frac12}}. 
\end{align}
Combining the techniques above, we can handle $\mathscr{J}_{1, k}^j$ for all $k=2, \ldots, 6$ and $j=1, 2$. 

Finally, we conclude \eqref{NPP} from the argument above, \eqref{NPP-1}--\eqref{NPP-5}, and the fact that $F^1(Q^1, 3Q^1) \simeq F^1(Q^1, Q^1)$ by \eqref{F3Q}. The proof is complete. 
\end{proof}

\begin{lemma}\label{lem:PPNN-2}
Let $K^2 \in \D_{\rm{good}}^2$ and $Q^2 \in \D^2$ satisfy $K^2 \subset Q^2$. Set 
\begin{align*}
\phi_{Q^2} := \mathbf{1}_{(Q^2)^c} \big(h_{(Q^2)^{(1)}} - \langle h_{(Q^2)^{(1)}} \rangle_{Q^2}\big)
\quad \text{and} \quad
b_{K^2, Q^2} 
:= \langle T(1 \otimes \phi_{Q^2}, 1 \otimes \mathbf{1}_{Q^2}), h_{K^2} \rangle. 
\end{align*}
Then 
\begin{align*}
\|b_{K^2, Q^2}\|_{\BMO(\D^1)} 
\lesssim \bigg[ \frac{\ell(K^2)}{\ell(Q^2)} \bigg]^{\frac{\delta_2}{2}} 
F^2(K^2, Q^2) \frac{|K^2|^{\frac12}}{|Q^2|^{\frac12}}, 
\end{align*}
and 
\begin{align*}
\|P_{\D^1(N)}^{\perp} b_{K^2, Q^2}\|_{\BMO(\D^1)} 
\lesssim \mathcal{F}_N^1 \, \bigg[ \frac{\ell(K^2)}{\ell(Q^2)} \bigg]^{\frac{\delta_2}{2}} 
F^2(K^2, Q^2) \frac{|K^2|^{\frac12}}{|Q^2|^{\frac12}}, 
\end{align*}
where $\mathcal{F}_N^1$ is defined in \eqref{def:F1N}.
\end{lemma}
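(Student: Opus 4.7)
The plan is to follow the template of Lemma \ref{lem:PPNN-1}, adapted to the new pair of test inputs $1\otimes\phi_{Q^2}$ and $1\otimes\mathbf{1}_{Q^2}$. First, by duality against $\infty$-atoms of the dyadic $H^1(\D^1)$ (exactly as in \cite[p.~6272]{CYY}), I would reduce both $\BMO(\D^1)$ estimates to showing, for every $Q^1\in\D^1$ and every $L^\infty$-atom $a_{Q^1}$ with $\supp(a_{Q^1})\subset Q^1$, $\|a_{Q^1}\|_\infty\le 1$, $\int a_{Q^1}=0$, that
\begin{equation*}
|\langle b_{K^2,Q^2},\,a_{Q^1}\rangle|
\ \lesssim\ \mathbf{F}^1(Q^1)\,
\bigg[\frac{\ell(K^2)}{\ell(Q^2)}\bigg]^{\delta_2/2}
F^2(K^2,Q^2)\bigg[\frac{|K^2|}{|Q^2|}\bigg]^{1/2}|Q^1|,
\end{equation*}
with $\mathbf{F}^1(Q^1)$ replaced by $\mathbf{F}^1_N$ whenever $Q^1\notin\D^1(N)$. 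The unweighted estimate uses $\mathbf{F}^1(Q^1)\le 1$; the projection estimate follows from the same argument restricted to the tail, as in the proof of Lemma \ref{lem:PPNN-1}.

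Next I would split each of $\mathbf{1}_{\R^{n_1}}$ in the first parameter of the two inputs as $\mathbf{1}_{Q^1}+\mathbf{1}_{3Q^1\setminus Q^1}+\mathbf{1}_{(3Q^1)^c}$, producing nine terms; by symmetry and the exact same case bookkeeping as in Lemma \ref{lem:PPNN-1}, it suffices to treat six representatives $\mathscr{J}_{1,1},\dots,\mathscr{J}_{1,6}$. Simultaneously, I would split the geometry in the second parameter into the two regimes:
\begin{itemize}
\item[(a)] $\ell(Q^2)>2^r\ell(K^2)$. The goodness of $K^2$ yields $\d(K^2,(Q^2)^c)>2\ell(K^2)^{\gamma_2}\ell(Q^2)^{1-\gamma_2}\ge \ell(K^2)^{1/2}\ell(Q^2)^{1/2}$, so I would write $\phi_{Q^2}=\phi_{Q^2}\mathbf{1}_{3K^2}+\phi_{Q^2}\mathbf{1}_{Q^2\setminus 3K^2}$ and likewise for $\mathbf{1}_{Q^2}$, engaging the cancellation of $h_{K^2}$ against the H\"{o}lder / mixed size--H\"{o}lder kernel estimate; Lemma \ref{lem:RR} applied with $\ell_2=\ell(K^2)^{1/2}\ell(Q^2)^{1/2}$ then produces exactly the gain $[\ell(K^2)/\ell(Q^2)]^{\delta_2/2}$.
\item[(b)] $\ell(Q^2)\le 2^r\ell(K^2)$, hence $\ell(K^2)\simeq\ell(Q^2)$. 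Here I would use $\mathbf{1}_{Q^2}=\mathbf{1}_{3K^2\cap Q^2}+\mathbf{1}_{Q^2\setminus 3K^2}$ (and analogously for $\phi_{Q^2}$), applying Lemma \ref{lem:QQ} on the near part and the cancellation of $h_{K^2}$ plus Lemma \ref{lem:RR} with $\ell_2=\ell(K^2)$ on the far part; in this regime the ratio $[\ell(K^2)/\ell(Q^2)]^{\delta_2/2}$ is harmlessly $\simeq 1$.
\end{itemize}
On each of the six $Q^1$-configurations I would invoke either the compact partial kernel representation \eqref{H2} (on diagonal pieces where $a_{Q^1}$ provides the required disjoint supports through its cancellation) or the compact full kernel representation \eqref{H1} (on the off-diagonal and far pieces, where both parameters have separated supports); the resulting first-parameter factor is controlled exactly as in Lemma \ref{lem:PPNN-1}, producing $F^1(Q^1)|Q^1|$ or $F^1(Q^1,3Q^1)|Q^1|$, and \eqref{F3Q} converts the latter to $F^1(Q^1,Q^1)|Q^1|\lesssim\mathbf{F}^1(Q^1)|Q^1|$.

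The main obstacle, and the only genuine difference from Lemma \ref{lem:PPNN-1}, will be the $Q^1$-diagonal piece combined with regime (a): here both second-parameter inputs live inside $Q^2$ (no automatic separation as in the previous lemma, where one input sat on $(Q^2)^c$), so one cannot directly apply the full kernel representation in the second parameter. The remedy is the decomposition above: on $3K^2$ one appeals to the compact partial kernel representation in the first parameter (using disjoint supports coming from $a_{Q^1}$) together with the standing size bound on $\phi_{Q^2}$ that the construction guarantees (namely $\|\phi_{Q^2}\|_\infty\lesssim |Q^2|^{-1}$ or an analogous $L^2$-normalized bound), while on $Q^2\setminus 3K^2$ the cancellation of $h_{K^2}$ against the H\"{o}lder kernel provides the $[\ell(K^2)/\ell(Q^2)]^{\delta_2/2}$ gain via Lemma \ref{lem:RR}. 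Summing the six estimates and passing from $F^2(K^2,3Q^2)$ to $F^2(K^2,Q^2)$ through \eqref{F3Q} yields the desired bound, and tracking whether $Q^1\notin\D^1(N)$ in each application gives the $\mathbf{F}^1_N$ refinement.
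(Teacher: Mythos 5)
Your overall skeleton (reduction to pairing with $\infty$-atoms $a_{Q^1}$, the splitting $1=\mathbf{1}_{Q^1}+\mathbf{1}_{3Q^1\setminus Q^1}+\mathbf{1}_{(3Q^1)^c}$ in the first parameter, the two regimes $\ell(Q^2)>2^r\ell(K^2)$ versus $\ell(Q^2)\le 2^r\ell(K^2)$, and the use of Lemmas \ref{lem:QQ} and \ref{lem:RR} with $\ell_2=\ell(K^2)^{1/2}\ell(Q^2)^{1/2}$ resp.\ $\ell_2=\ell(K^2)$) is exactly the intended one; the paper itself only records that the proof is ``almost the same'' as that of Lemma \ref{lem:PPNN-1}. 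However, the one place where you depart from that template --- your ``main obstacle'' --- rests on a misreading of $\phi_{Q^2}$. By its definition in Section \ref{sec:NN}, $\phi_{Q^2}=\mathbf{1}_{(Q^2)^c}\big(h_{(Q^2)^{(1)}}-\langle h_{(Q^2)^{(1)}}\rangle_{Q^2}\big)$ is supported in $(Q^2)^{(1)}\setminus Q^2\subset (Q^2)^c$ and satisfies $\|\phi_{Q^2}\|_{L^\infty}\lesssim |Q^2|^{-1/2}$ (not $|Q^2|^{-1}$). So it is false that ``both second-parameter inputs live inside $Q^2$'': the first input is automatically separated from $h_{K^2}$ in the second variable, exactly as $\mathbf{1}_{(Q^2)^c}$ was in Lemma \ref{lem:PPNN-1}, only with the roles of the two input slots interchanged. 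Consequently your decomposition $\phi_{Q^2}=\phi_{Q^2}\mathbf{1}_{3K^2}+\phi_{Q^2}\mathbf{1}_{Q^2\setminus 3K^2}$ is vacuous --- in your regime (a) one has $3K^2\subset Q^2$, so both pieces are identically zero --- and what must be split is the \emph{other} input, $\mathbf{1}_{Q^2}=\mathbf{1}_{3K^2}+\mathbf{1}_{Q^2\setminus 3K^2}$.

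More importantly, your proposed remedy for the diagonal $Q^1$-piece would fail as written: you cannot invoke the compact partial kernel representation in the \emph{first} parameter there, because all three first-parameter components $\mathbf{1}_{Q^1}$, $\mathbf{1}_{Q^1}$, $a_{Q^1}$ are supported in $Q^1$, and Definition \ref{def:partial} requires two of them to have genuinely disjoint supports; the cancellation of $a_{Q^1}$ is not a substitute for disjointness. The correct (and simpler) route is the one used for $\mathscr{J}_{1,1}$ in the proof of Lemma \ref{lem:PPNN-1}: apply the compact partial kernel representation on the \emph{second} parameter, whose hypothesis holds because $\supp(\phi_{Q^2})\cap\supp(h_{K^2})=\emptyset$; this produces the constant $C(\mathbf{1}_{Q^1},\mathbf{1}_{Q^1},a_{Q^1})\le F^1(Q^1)\,|Q^1|$, and the second-parameter integral is controlled by $|Q^2|^{-1/2}\,\mathscr{R}_2^1(Q^2,K^2)$ (the integrand in Lemma \ref{lem:RR} is symmetric in $y_2$ and $z_2$, so it is immaterial that the far variable now occupies the first slot). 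With this correction the remaining $Q^1$-configurations and the regime $\ell(Q^2)\le 2^r\ell(K^2)$ go through verbatim as in Lemma \ref{lem:PPNN-1}, and the $\mathbf{F}^1_N$ refinement follows by restricting to $Q^1\notin\D^1(N)$ as you indicate.
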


\begin{proof}
Since the proof is almost the same as that of Lemma \ref{lem:PPNN-1}, we omit the details. 
\end{proof}

\section{A compact bilinear bi-parameter dyadic representation}\label{sec:bbd}
We are now ready to prove Theorem \ref{thm:repre}. To avoid cumbersome notation, we treat the most essential case $m=2$. For this purpose, we extend our method in \cite{CLSY} to the bi-parameter setting.

\subsection{Dyadic reductions}  
Let $\mathbb{E}_{\w} = \mathbb{E}_{\w_1} \mathbb{E}_{\w_2}$ for any $\w = (\w_1, \w_2) \in \Omega_1 \times \Omega_2$. It follows from \eqref{f-mar} that 
\begin{align*}
\langle T(f_1, f_2), f_3 \rangle 
& = \mathbb{E}_{\w} 
\sum_{\substack{I^1, J^1, K^1 \in \D_{\w_1}^1 \\ I^2, J^2, K^2 \in \D_{\w_2}^2}} 
\langle T(\Delta_{I^1} \Delta_{I^2} f_1, \Delta_{J^1} \Delta_{J^2} f_2), 
\Delta_{K^1} \Delta_{K^2} f_3 \rangle. 
\end{align*}
For each $i=1, 2$, we split 
\begin{align*}
\sum_{I^i, J^i, K^i \in \D_{\w_i}^i} 
= \sum_{K^i \in \D_{\w_i}^i} 
\sum_{\substack{I^i, J^i \in \D_{\w_i}^i \\ \ell(K^i) \le \ell(I^i) \\ \ell(K^i) \le \ell(J^i)}}  
+ \sum_{J^i \in \D_{\w_i}^i} 
\sum_{\substack{I^i \in \D_{\w_i}^i \\ \ell(J^i) \le \ell(I^i) \\ \ell(J^i) < \ell(K^i)}} 
+ \sum_{I^i \in \D_{\w_i}^i} 
\sum_{\substack{J^i \in \D_{\w_i}^i \\ \ell(I^i) < \ell(J^i) \\ \ell(I^i) < \ell(K^i)}}. 
\end{align*}
Then $\langle T(f_1, f_2), f_3 \rangle $ can be decomposed into nine symmetrical terms. By symmetry, it suffices to treat the following term: 
\begin{align*}
\mathscr{S} 
:= \mathbb{E}_{\w} 
\sum_{\substack{I^1, J^1, K^1 \in \D_{\w_1}^1 \\ \ell(K^1) \le \ell(I^1) \\ \ell(K^1) \le \ell(J^1)}}
\sum_{\substack{I^2, J^2, K^2 \in \D_{\w_2}^2 \\ \ell(K^2) \le \ell(I^2) \\ \ell(K^2) \le \ell(J^2)}} 
\langle T(\Delta_{I^1} \Delta_{I^2} f_1, \Delta_{J^1} \Delta_{J^2} f_2), 
\Delta_{K^1} \Delta_{K^2} f_3 \rangle. 
\end{align*}  
Setting 
\begin{align*}
E_{2^k}^{\w_i} f 
:= \sum_{I^i \in \D_{\w_i}^i: \, \ell(I^i) = 2^k} 
\langle f \rangle_{I^i} \mathbf{1}_{I^i}, \quad i=1, 2, 
\end{align*}
we invoke \eqref{ddf-2} to arrive at 
\begin{align*}
\mathscr{S} 
&= \mathbb{E}_{\w} 
\sum_{\substack{K^1 \in \D_{\w_1}^1 \\ K^2 \in \D_{\w_2}^2}}  
\big\langle T(E_{\ell(K^1)/2}^{\w_1} E_{\ell(K^2)/2}^{\w_2} f_1, 
E_{\ell(K^1)/2}^{\w_1} E_{\ell(K^2)/2}^{\w_2} f_2), 
\Delta_{K^1} \Delta_{K^2} f_3 \big\rangle 
\\
&= \mathbb{E}_{\w} 
\sum_{\substack{K^1 \in \D_0^1 \\ K^2 \in \D_0^2}} 
\big\langle T(E_{\ell(K^1)/2}^{\w_1} E_{\ell(K^2)/2}^{\w_2} f_1, 
E_{\ell(K^1)/2}^{\w_1} E_{\ell(K^2)/2}^{\w_2} f_2), 
\Delta_{(K^1 +\w_1)} \Delta_{(K^2+\w_2)} f_3 \big\rangle,  
\end{align*}
where the inner trilinear form is denoted by $\Lambda_K(\w)$. 
By definition, $\mathbf{1}_{\rm{good}}(K^1 + \w_1)$ depends on $\w_1^i$ for $2^{-i} \ge \ell(K^1)$, and $\mathbf{1}_{\rm{good}}(K^2 + \w_2)$ depends on $\w_2^j$ for $2^{-j} \ge \ell(K^2)$, while both $E_{\ell(K^1)/2}^{\w_1} E_{\ell(K^2)/2}^{\w_2} f_1$ and $E_{\ell(K^1)/2}^{\w_1} E_{\ell(K^2)/2}^{\w_2} f_2$ depend on $\w_1^i$ and $\w_2^j$ for $2^{-i} <  \ell(K^1)/2 < \ell(K^1)$ and $2^{-j} <  \ell(K^2)/2 < \ell(K^2)$, and $\Delta_{K^1 + \w_1} \Delta_{K^2 + \w_2} f_3$ depends on $\w_1^i$ and $\w_2^j$ for $2^{-i} < \ell(K^1)$ and $2^{-j} < \ell(K^2)$. Thus, by independence, we rewrite  
\begin{align*}
\mathscr{S} 
&= \frac{1}{\pi_{\rm{good}}^{n_1}} \frac{1}{\pi_{\rm{good}}^{n_2}} 
\sum_{\substack{K^1 \in \D_0^1 \\ K^2 \in \D_0^2}}  
\mathbb{E}_{\w} \big[ \mathbf{1}_{\rm{good}}(K^1 + \w_1) 
\mathbf{1}_{\rm{good}}(K^2 + \w_2)\big] 
\mathbb{E}_{\w} \Lambda_K(\w)
\\
&= \frac{1}{\pi_{\rm{good}}^{n_1}} \frac{1}{\pi_{\rm{good}}^{n_2}} 
\sum_{\substack{K^1 \in \D_0^1 \\ K^2 \in \D_0^2}}  
\mathbb{E}_{\w} \big[ \mathbf{1}_{\rm{good}}(K^1 + \w_1) 
\mathbf{1}_{\rm{good}}(K^2 + \w_2) \Lambda_K(\w) \big]
\\
&= \frac{1}{\pi_{\rm{good}}^{n_1}} \frac{1}{\pi_{\rm{good}}^{n_2}} 
\mathbb{E}_{\w} \sum_{\substack{K^1 \in \D_{\w_1, \rm{good}}^1 \\ 
K^2 \in \D_{\w_2, \rm{good}}^2}} \Lambda_K(\w) 
= \frac{1}{\pi_{\rm{good}}^{n_1}} \frac{1}{\pi_{\rm{good}}^{n_2}} 
\mathbb{E}_{\w} \mathscr{S}(\w),  
\end{align*}
where 
\begin{align*}
\mathscr{S}(\w) 
:= \sum_{\substack{K^1 \in \D_{\w_1, \rm{good}}^1 \\ 
K^2 \in \D_{\w_2, \rm{good}}^2}} 
\sum_{\substack{I^1, J^1 \in \D_{\w_1}^1 \\ \ell(K^1) \le \ell(I^1) \\ \ell(K^1) \le \ell(J^1)}}
\sum_{\substack{I^2, J^2 \in \D_{\w_2}^2 \\ \ell(K^2) \le \ell(I^2) \\ \ell(K^2) \le \ell(J^2)}} 
\big\langle T(\Delta_{I^1} \Delta_{I^2} f_1, \Delta_{J^1} \Delta_{J^2} f_2), 
\Delta_{K^1} \Delta_{K^2} f_3 \big\rangle. 
\end{align*}
In what follows, fix an arbitrary $\w = (\w_1, \w_2)$ and denote $\mathscr{S} = \mathscr{S}(\w)$. Simply write $\D^i = \D_{\w_i}^i$ and $\D^i_{\rm{good}} = \D_{\w_i, \rm{good}}^i$ for each $i=1, 2$. Then using \eqref{ddf-2} and that 
\begin{align*}
\sum_{\substack{I^i, J^i \in \D^i \\ \ell(K^i) \le \ell(I^i) \\ \ell(K^i) \le \ell(J^i)}}
= \sum_{\substack{I^i, J^i \in \D^i \\ \ell(K^i) \le \ell(I^i) \\ \ell(I^i) \le \ell(J^i)}}
+ \sum_{\substack{I^i, J^i \in \D^i \\ \ell(K^i) \le \ell(I^i) \\ \ell(J^i) < \ell(I^i)}}, 
\qquad i=1, 2, 
\end{align*} 
we split    
\begin{align*}
\mathscr{S}
= \sum_{j=1}^4 \mathscr{S}^j, 
\end{align*}
where 
\begin{align*}
\mathscr{S}^1 
&:= 
\sum_{\substack{I^1, J^1 \in \D^1, K^1 \in \D_{\rm{good}}^1 \\ \ell(K^1) \le \ell(I^1) = 2 \ell(J^1)}}
\sum_{\substack{I^2, J^2 \in \D^2, K^2 \in \D_{\rm{good}}^2 \\ \ell(K^2) \le \ell(I^2) = 2 \ell(J^2)}} 
\langle T(\Delta_{I^1} \Delta_{I^2} f_1, E_{J^1} E_{J^2} f_2), 
\Delta_{K^1} \Delta_{K^2} f_3 \rangle, 
\\
\mathscr{S}^2
&:= 
\sum_{\substack{I^1, J^1 \in \D^1, K^1 \in \D_{\rm{good}}^1 \\ \ell(K^1) \le \ell(I^1) = 2 \ell(J^1)}}
\sum_{\substack{I^2, J^2 \in \D^2, K^2 \in \D_{\rm{good}}^2 \\ \ell(K^2) \le \ell(I^2) = \ell(J^2)}} 
\langle T(\Delta_{I^1} E_{I^2} f_1, E_{J^1} \Delta_{J^2} f_2), 
\Delta_{K^1} \Delta_{K^2} f_3 \rangle,  
\\
\mathscr{S}^3
&:= 
\sum_{\substack{I^1, J^1 \in \D^1, K^1 \in \D_{\rm{good}}^1 \\ \ell(K^1) \le \ell(I^1) = \ell(J^1)}}
\sum_{\substack{I^2, J^2 \in \D^2, K^2 \in \D_{\rm{good}}^2 \\ \ell(K^2) \le \ell(I^2) = 2\ell(J^2)}} 
\langle T(E_{I^1} \Delta_{I^2} f_1, \Delta_{J^1} E_{J^2} f_2), 
\Delta_{K^1} \Delta_{K^2} f_3 \rangle,  
\\
\mathscr{S}^4
&:= 
\sum_{\substack{I^1, J^1 \in \D^1, K^1 \in \D_{\rm{good}}^1 \\ \ell(K^1) \le \ell(I^1) = \ell(J^1)}} 
\sum_{\substack{I^2, J^2 \in \D^2, K^2 \in \D_{\rm{good}}^2 \\ \ell(K^2) \le \ell(I^2) = \ell(J^2)}} 
\langle T(E_{I^1} E_{I^2} f_1, \Delta_{J^1} \Delta_{J^2} f_2), 
\Delta_{K^1} \Delta_{K^2} f_3 \rangle. 
\end{align*}
Considering symmetry, to show Theorem \ref{thm:repre}, we mainly focus on the term $\mathscr{S}^1$:  
\begin{align*}
\mathscr{S}^1 = 
\sum_{\substack{I^1, J^1 \in \D^1, K^1 \in \D_{\rm{good}}^1 \\ \ell(K^1) \le \ell(I^1) = 2 \ell(J^1)}}
\sum_{\substack{I^2, J^2 \in \D^2, K^2 \in \D_{\rm{good}}^2 \\ \ell(K^2) \le \ell(I^2) = 2 \ell(J^2)}} 
\G_{I, J, K} \, f_1^I \, f_2^J \, f_3^K,  
\end{align*}
where 
\begin{align*}
\G_{I, J, K} &:= \langle T(h_{I^1} \otimes h_{I^2}, h_{J^1}^0 \otimes h_{J^2}^0), 
h_{K^1} \otimes h_{K^2}\rangle, 
\quad 
f_1^I := \langle f_1, h_{I^1} \otimes h_{I^2} \rangle,  
\\
f_2^J &:= \langle f_2, h_{J^1}^0 \otimes h_{J^2}^0 \rangle, 
\quad \text{ and } \quad  
f_3^K := \langle f_3, h_{K^1} \otimes h_{K^2} \rangle.
\end{align*} 
Observe that for each $i=1, 2$, 
\begin{align*} 
\sum_{\substack{I^i, J^i \in \D^i, K^i \in \D_{\rm{good}}^i \\ \ell(K^i) \le \ell(I^i) = 2 \ell(J^i)}} 
&:= \sum_{\substack{I^i, J^i \in \D^i, \, K^i \in \D_{\rm{good}}^i \\ \ell(K^i) \le \ell(I^i) = 2\ell(J^i) \\ 
\max\{\d(K^i, I^i), \, \d(K^i, J^i)\} > 2\ell(K^i)^{\gamma_i} \ell(J^i)^{1-\gamma_i}}} 
\\
&\quad+ \sum_{\substack{I^i, J^i \in \D^i, \, K^i \in \D_{\rm{good}}^i \\ \ell(K^i) \le \ell(I^i) = 2 \ell(J^i) \\ 
\max\{\d(K^i, I^i), \, \d(K^i, J^i)\} \le 2\ell(K^i)^{\gamma_i} \ell(J^i)^{1-\gamma_i} \\ 
K^i \cap I^i = \emptyset \text{ or } K^i \cap J^i = \emptyset \text{ or } K^i = I^i}} 
+ \sum_{\substack{I^i, J^i \in \D^i, \, K^i \in \D_{\rm{good}}^i \\ 
\ell(I^i) = 2\ell(J^i), \, K^i \subset J^i \subset I^i}}. 
\end{align*}
These three parts are called \emph{separated, adjacent}, and \emph{nested} respectively. By symmetry, it is enough to analyze six cases: 
\begin{align*}
&\text{separated/separated, separated/adjacent, separated/nested}, 
\\
&\text{adjacent/adjacent, adjacent/nested, and nested/nested},
\end{align*} 
for which the corresponding summations are successively denoted by $\mathscr{S}^1_1, \ldots, \mathscr{S}^1_6$.

\subsection{Refined functions} 
Before starting the proof, we would like to refine some properties of functions in hypotheses in order to greatly  simplify our estimates below. Following the strategy in \cite{CLSY, CYY}, we may assume that

\begin{list}{\rm (\theenumi)}{\usecounter{enumi}\leftmargin=1.2cm \labelwidth=1cm \itemsep=0.2cm \topsep=.2cm \renewcommand{\theenumi}{\arabic{enumi}}}

\item\label{list:P1} Given $i=1, 2$, the function $(F_1^i, F_2^i, F_3^i) \in \F$ in the compact full kernel representation is the same as that in the compact partial kernel representation. Moreover, $F_1^i$ is monotone increasing while $F_2^i$ and $F_3^i$ are monotone decreasing. 

\item\label{list:P2} Given $i=1, 2$, the function $F^i \in \F^i$ in the compact partial kernel representation is the same as those in the weak compactness property and the diagonal CMO condition. 

\item\label{list:P3} For any harmless constant $\lambda \in (0, \infty)$, $F_j^i(\lambda t)$ and $F^i(\lambda I^i)$ are denoted by $F_j^i(t)$ and $F^i(I^i)$ respectively, since any dilation of them still belongs to the original space. 

\item\label{list:P4} In the compact full and partial kernel representations, when the size estimate happens on $\R^{n_i}$, the bound $F^i(x_{m+1}^i, x_1^i, \ldots, x_m^i)$ will be replaced by 
\begin{align*}
&F^i(x_{m+1}^i, x_1^i, \ldots, x_m^i) 
\\
&= F^i_1 \bigg(\sum_{j=1}^m |x_{m+1}^i - x_j^i| \bigg)
F^i_2 \bigg(\sum_{j=1}^m |x_{m+1}^i - x_j^i| \bigg) 
F^i_3 \Bigg(1 + \frac{\sum_{j=1}^m |x_{m+1}^i + x_j^i| }{\sum_{j=1}^m |x_{m+1}^i - x_j^i|}\Bigg); 
\end{align*} 
when the H\"{o}lder estimate happens to the variable $x_{m+1}^i \in \R^{n_i}$ (namely, $|x_{m+1}^i - \widetilde{x}_{m+1}^i| \leq \frac12 \max\{|x_{m+1}^i - x_j^i|: 1 \le j \le m\}$), the bound $F^i(x_{m+1}^i, x_1^i, \ldots, x_m^i)$ is replaced by 
\begin{align*}
&F^i(x_{m+1}^i, x_1^i, \ldots, x_m^i) 
\\
&= F^i_1 (|x_{m+1}^i - \widetilde{x}_{m+1}^i|)
F^i_2 \bigg(\sum_{j=1}^m |x_{m+1}^i - x_j^i| \bigg) 
F^i_3 \Bigg(1 + \frac{\sum_{j=1}^m |x_{m+1}^i + x_j^i| }{\sum_{j=1}^m |x_{m+1}^i - x_j^i|}\Bigg). 
\end{align*}
Other alternative estimates can be formulated in a similar way. 
\end{list}

Henceforth, for each $i=1, 2$, given $F^i \in \F^i$ and $(F_1^i, F_2^i, F_3^i) \in \F$, we define $\widetilde{F}_2^i$ and $\widetilde{F}_3^i$ as in \eqref{def:F23}. By homogeneity, we may assume that 
\begin{align*}
\|F^i\|_{L^{\infty}} \le 2^{-(r+1)n_i}, \quad 
\|F_k^i\|_{L^{\infty}} \le 1, 
\quad\text{ and }\quad 
\|\widetilde{F}_j^i\|_{L^{\infty}} \le 1, 
\end{align*}
for all $k=1, 2, 3$ and $j=2, 3$.

\subsection{Separated/Separated}\label{sec:SS}
By Lemma \ref{lem:cda} part \ref{cda-1}, for each $i=1, 2$, let $Q^i := I^i \vee J^i \vee K^i$ be the smallest common dyadic ancestor of $I^i$, $J^i$, and $K^i$ such that 
\begin{align}\label{SS-common}
I^i \cup J^i \cup K^i \subset Q^i 
\quad\text{ and }\quad 
\max\{\d(K^i, I^i), \, \d(K^i, J^i)\} \gtrsim \ell(K^i)^{\gamma_i} \ell(Q)^{1-\gamma_i}.
\end{align} 
By the cancellation of $h_{K^1}$ and $h_{K^2}$, the compact full kernel representation (cf. \eqref{H1}), the H\"{o}lder condition, and Lemma \ref{lem:PP}, we obtain 
\begin{align}\label{GSS}
|\G_{I, J, K}|
&\le \prod_{i=1}^2 \mathscr{P}_i(I^i, J^i, K^i) 
|I^i|^{-\frac12} |J^i|^{-\frac12} |K^i|^{-\frac12} 
\\ \nonumber 
&\lesssim \prod_{i=1}^2 \bigg[ \frac{\ell(K^i)}{\ell(Q^i)} \bigg]^{\frac{\delta_i}{2}} 
F^i(K^i, Q^i) \frac{|I^i|^{\frac12} |J^i|^{\frac12} |K^i|^{\frac12}}{|Q^i|^2}.  
\end{align}

Let $C_0 \in (0, \infty)$ be a universal constant chosen later. Set 
\begin{align}\label{SS-aIJKQ}
a_{I, J, K, Q} 
= \frac{\G_{I, J, K}}{C_0 \prod_{i=1}^2 [\ell(K^i)/\ell(Q^i)]^{\delta_i/2}} 
\end{align}
if $I^i, J^i \in \D^i$ and $K^i \in \D_{\rm{good}}^i$ satisfy $\ell(K^i) \le \ell(I^i) = 2\ell(J^i)$ and $\max\{\d(K^i, I^i), \, \d(K^i, J^i)\} > 2\ell(K^i)^{\gamma_i} \ell(J^i)^{1-\gamma_i}$ for each $i=1, 2$, and otherwise set $a_{I, J, K, Q} = 0$. Then it follows from \eqref{SS-common} and \eqref{SS-aIJKQ} that  
\begin{align*}
\mathscr{S}_1^1  
&= \sum_{\substack{0 \le i_1 \le k_1 \\ 0 \le i_2 \le k_2}} 
\sum_{\substack{Q^1 \in \D^1 \\ Q^2 \in \D^2}} 
\sum_{\substack{I^1, J^1 \in \D^1, K^1 \in \D_{\rm{good}}^1 \\ 
\max\{\d(K^1, I^1), \, \d(K^1, J^1)\} > 2 \ell(K^1)^{\gamma_1} \ell(J^1)^{1-\gamma_1} \\ 
2 \ell(J^1) = \ell(I^1) = 2^{-i_1} \ell(Q^1) \\ \ell(K^1) = 2^{-k_1} \ell(Q^1), \, 
I^1 \vee J^1 \vee K^1 = Q^1}} 
\\
&\quad\times \sum_{\substack{I^2, J^2 \in \D^2, K^2 \in \D_{\rm{good}}^2 \\ 
\max\{\d(K^2, I^2), \, \d(K^2, J^2)\} > 2 \ell(K^2)^{\gamma_2} \ell(J^2)^{1-\gamma_2} \\ 
2 \ell(J^2) = \ell(I^2) = 2^{-i_2} \ell(Q^2) \\ \ell(K^2) = 2^{-k_2} \ell(Q^2), \, 
I^2 \vee J^2 \vee K^2 = Q^2}}
\G_{I, J, K} \, f_1^I \, f_2^J \, f_3^K
\\
&= C_0 \sum_{\substack{0 \le i_1 \le k_1 \\ 0 \le i_2 \le k_2}} 
2^{-k_1 \frac{\delta_1}{2}} 2^{-k_2 \frac{\delta_2}{2}}
\sum_{\substack{Q^1 \in \D^1 \\ Q^2 \in \D^2}} 
\sum_{\substack{I^1 \in \D_{i_1}^1(Q^1) \\ 
J^1 \in \D_{i_1+1}^1(Q^1) \\ K^1 \in \D_{k_1}^1(Q^1)}}
\sum_{\substack{I^2 \in \D_{i_2}^2(Q^2) \\ 
J^2 \in \D_{i_2+1}^2(Q^2) \\ K^2 \in \D_{k_2}^2(Q^2)}} 
a_{I, J, K, Q} \, f_1^I \, f_2^J \, f_3^K. 
\end{align*}
For any $k=(k_1, k_2) \in \N^2$, if we define  
\begin{align}\label{SS-FQ}
\mathcal{F}(Q) 
:= \prod_{i=1}^2 F_1^i(\ell(Q^i)) \widetilde{F}_2^i(2^{-k_i} \ell(Q^i)) \widetilde{F}_3^i(Q^i),  
\end{align}
for all $Q = Q^1 \times Q^2 \in \D^1 \times \D^2 := \D$, then \eqref{GSS} gives 
\begin{align}\label{FQ-1}
|a_{I, J, K, Q}| 
\le \mathcal{F}(Q) 
\prod_{i=1}^2 \frac{|I^i|^{\frac12} |J^i|^{\frac12} |K^i|^{\frac12}}{|Q^i|^2}
\end{align} 
with 
\begin{align}\label{FQ-2}
\mathcal{F}(Q) \le 1 
\quad\text{ and }\quad 
\lim_{N \to \infty} \mathcal{F}_N 
:= \lim_{N \to \infty} \sup_{\D} \sup_{Q \notin \D(N)} \mathcal{F}(Q) = 0,  
\end{align}
provided that $C_0$ is large enough and $(F^i_1, F^i_2, F^i_3) \in \F$. As a consequence of \eqref{FQ-1} and \eqref{FQ-2}, we conclude  
\begin{align*}
\mathscr{S}_1^1  
= C_0 \sum_{k_1=0}^{\infty} \sum_{k_2=0}^{\infty} 
2^{-k_1 \frac{\delta_1}{2}} 2^{-k_2 \frac{\delta_2}{2}}
\sum_{i_1=0}^{k_1} \sum_{i_2=0}^{k_2} 
\big\langle \mathbf{S}_{\D}^{\sigma(i, k)}(f_1, f_2), f_3 \big\rangle,  
\end{align*}
where $\sigma(i, k) := \big((i_1, i_2), (i_1+1, i_2+1), (k_1, k_2) \big)$.

\subsection{Separated/Adjacent}\label{sec:SA}
By Lemma \ref{lem:cda} part \ref{cda-2}, there exists a minimal cube $Q^2 := I^2 \vee J^2 \vee K^2 \in \D^2$ such that 
\begin{align}\label{SA-common}
I^2 \cup J^2 \cup K^2 \subset Q^2
\quad\text{ and }\quad \ell(Q^2) \le 2^{\vartheta} \ell(K^2).
\end{align}

\begin{lemma}\label{lem:GSA} 
There holds 
\begin{align}\label{GSA}
|\G_{I, J, K}| 
\lesssim \prod_{i=1}^2 \bigg[ \frac{\ell(K^i)}{\ell(Q^i)} \bigg]^{\frac{\delta_i}{2}} 
\widetilde{F}^i(K^i, Q^i) \frac{|I^i|^{\frac12} |J^i|^{\frac12} |K^i|^{\frac12}}{|Q^i|^2}, 
\end{align}
where $\widetilde{F}^1(K^1, Q^1) := F^1(K^1, Q^1)$ defined in \eqref{def:FKQ} and 
\begin{align*}
\widetilde{F}^2(K^2, Q^2) 
:= \widetilde{F}^2(Q^2) 
= F^2(Q^2, Q^2) 
+ \sum_{\substack{0 \le k_2 \le \vartheta \\ J^2 \in \D_{k_2+1}(Q^2)}} F^2(J^2).
\end{align*}
\end{lemma}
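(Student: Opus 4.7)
The plan is to split $\G_{I, J, K}$ into three geometric sub-cases dictated by the adjacency hypothesis on the second parameter: (a) $K^2 \cap I^2 = \emptyset$, (b) $K^2 \cap J^2 = \emptyset$ with $K^2 \ne I^2$, and (c) $K^2 = I^2$ (so that $J^2 \in \ch(K^2)$). In every sub-case the separation assumption on the first parameter lets me invoke either the full or the partial kernel representation on parameter~1 and exploit the cancellation of $h_{K^1}$ via the H\"older condition in $x_1$; Lemma \ref{lem:PP} then produces the first-parameter prefactor $[\ell(K^1)/\ell(Q^1)]^{\delta_1/2} F^1(K^1, Q^1) |I^1|^{1/2}|J^1|^{1/2}|K^1|^{1/2}/|Q^1|^2$. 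All that remains is to handle the second-parameter integrations and to see how the corresponding $F^2$-pieces assemble into $\widetilde F^2(Q^2)$.

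In sub-cases (a) and (b) the compact full kernel representation applies simultaneously in both parameters, so I would additionally exploit the cancellation of $h_{K^2}$ via the mixed H\"older condition in $(x_1, x_2)$. After bounding $|h_{I^i}|, |h_{J^i}^0|, |h_{K^i}|$ pointwise by their natural $|\cdot|^{-1/2} \mathbf{1}_{\cdot}$ envelopes, the estimate reduces to $\mathscr{P}_1(I^1, J^1, K^1)\,\mathscr{Q}_2(I^2, J^2, K^2)$ times the $L^2$ Haar normalizations. Lemmas \ref{lem:PP} and \ref{lem:QQ} then yield the claimed bound with summand $F^2(K^2, Q^2)$; since Lemma \ref{lem:cda} forces $\ell(Q^2) \le 2^r \ell(K^2)$ in the adjacent regime, the monotonicity of $F_1^2$ and $\widetilde F_2^2$ absorbs $F^2(K^2, Q^2) \lesssim F^2(Q^2, Q^2)$, the leading summand of $\widetilde F^2(Q^2)$.

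For sub-case (c) I would expand $h_{I^2} = \sum_{K^2_1 \in \ch(K^2)} \varepsilon_1(K^2_1)\,|K^2|^{-1/2} \mathbf{1}_{K^2_1}$ and $h_{K^2} = \sum_{K^2_2 \in \ch(K^2)} \varepsilon_2(K^2_2)\,|K^2|^{-1/2} \mathbf{1}_{K^2_2}$, and also absorb $h_{J^2}^0 = |J^2|^{-1/2} \mathbf{1}_{J^2}$, turning $\G_{I, J, K}$ into a finite sum over $(K^2_1, K^2_2) \in \ch(K^2)^2$ of
\[
\frac{\varepsilon_1(K^2_1)\,\varepsilon_2(K^2_2)}{|K^2|\,|J^2|^{1/2}} \, \big\langle T\big(h_{I^1} \otimes \mathbf{1}_{K^2_1},\, h_{J^1}^0 \otimes \mathbf{1}_{J^2}\big),\, h_{K^1} \otimes \mathbf{1}_{K^2_2} \big\rangle.
\]
If $K^2_1 \ne J^2$ or $K^2_2 \ne J^2$, then among the three second-parameter supports $\{K^2_1, J^2, K^2_2\}$ at least one pair is disjoint (since distinct children of $K^2$ are disjoint), so the compact full kernel representation still applies; the cancellation of $h_{K^1}$ together with the size estimate $\mathscr{Q}_2$ from Lemma \ref{lem:QQ} produces a contribution absorbed by $F^2(Q^2, Q^2)$. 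In the only delicate remaining term $K^2_1 = K^2_2 = J^2$, all three second-parameter factors are $\mathbf{1}_{J^2}$; I then invoke the compact partial kernel representation on the first parameter together with Definition \ref{def:partial}\ref{partial-4}, which yields $C(\mathbf{1}_{J^2}, \mathbf{1}_{J^2}, \mathbf{1}_{J^2}) \le F^2(J^2)\,|J^2|$. Combined with Lemma \ref{lem:PP} and with $\ell(K^2) = \ell(Q^2)$ (so $[\ell(K^2)/\ell(Q^2)]^{\delta_2/2} = 1$ and $|J^2|^{1/2}/|K^2|^{1/2} \simeq 1$), this contributes $F^2(J^2)$, which is captured by the $k_2 = 0$ summand in $\sum_{0 \le k_2 \le r,\, J^2 \in \D_{k_2+1}(Q^2)} F^2(J^2)$.

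The main obstacle I anticipate is the careful bookkeeping of normalization constants in sub-case (c): reconciling the $|K^2|^{-1}|J^2|^{-1/2}$ factor from the Haar expansion with the target denominator $|I^2|^{1/2}|J^2|^{1/2}|K^2|^{1/2}/|Q^2|^2$, and ensuring that in the sub-sub-cases where exactly one of $K^2_1, K^2_2$ equals $J^2$ the size outputs of Lemma \ref{lem:QQ} genuinely fit under the $F^2(Q^2, Q^2)$ piece of $\widetilde F^2(Q^2)$ rather than producing a spurious dependence on the specific child chosen.
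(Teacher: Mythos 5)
Your proposal is correct and matches the paper's own argument essentially step for step: the disjoint sub-cases are handled by combining the H\"older/mixed size--H\"older estimates with $\mathscr{P}_1(I^1,J^1,K^1)\,\mathscr{Q}_2(I^2,J^2,K^2)$, and the case $K^2=I^2$ is treated by expanding $h_{I^2}$ and $h_{K^2}$ over $\ch(K^2)$, reserving the diagonal term $K_1^2=K_2^2=J^2$ for the partial kernel bound $C(\mathbf{1}_{J^2},\mathbf{1}_{J^2},\mathbf{1}_{J^2})\le F^2(J^2)|J^2|$ and absorbing the off-diagonal terms into $F^2(Q^2,Q^2)$ via Lemma \ref{lem:QQ}, exactly as in the paper. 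The normalization bookkeeping you worry about does close up because $\ell(I^2)\simeq\ell(J^2)\simeq\ell(K^2)\simeq\ell(Q^2)$ in this regime (the only slip is labelling the diagonal contribution as the ``$k_2=0$ summand'' of $\widetilde F^2(Q^2)$, whereas it is the summand with $2^{-k_2}\ell(Q^2)=\ell(K^2)$, which is immaterial).
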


\begin{proof}
First, consider the case $K^2 \cap I^2 = \emptyset$. By the cancellation of $h_{K^1}$, the compact full kernel representation (cf. \eqref{H1}), the mixed size-H\"{o}lder condition, and Lemmas \ref{lem:PP} and \ref{lem:QQ}, we deduce  
\begin{align}\label{GSA-1}
|\G_{I, J, K}| 
&\le \mathscr{P}_1(I^1, J^1, K^1) 
\mathscr{Q}_2(I^2, J^2, K^2) 
\prod_{i=1}^2 |I^i|^{-\frac12} |J^i|^{-\frac12} |K^i|^{-\frac12} 
\\ \nonumber 
&\lesssim \prod_{i=1}^2 \bigg[ \frac{\ell(K^i)}{\ell(Q^i)} \bigg]^{\frac{\delta_i}{2}} 
F^i(K^i, Q^i) \frac{|I^i|^{\frac12} |J^i|^{\frac12} |K^i|^{\frac12}}{|Q^i|^2}. 
\end{align}
Likewise, the same estimate holds in the case $K^2 \cap J^2 = \emptyset$. 

Next, we treat the case $K^2 \cap J^2 \neq \emptyset$ and $K^2 = I^2$. Obviously, $J^2 \in \ch(K^2)$. Then we rewrite 
\begin{align}\label{GSA-2}
\G_{I, J, K}
= \sum_{K_1^2, K_2^2 \in \ch(K^2)} \G_{I, J, K}^{K_1^2, K_2^2}, 
\end{align}
where 
\begin{align*}
\G_{I, J, K}^{K_1^2, K_2^2} 
:= \langle h_{K^2} \rangle_{K_1^2} \langle h_{K^2} \rangle_{K_2^2} |J^2|^{-\frac12} 
\big\langle T(h_{I^1} \otimes \mathbf{1}_{K_1^2}, h_{J^1}^0 \otimes \mathbf{1}_{J^2}), 
h_{K^1} \otimes \mathbf{1}_{K_2^2} \big\rangle. 
\end{align*}
If $K_1^2 = J^2 = K_2^2$, then the H\"{o}lder condition, the compact partial kernel representation (cf. \eqref{H2}), and Lemma \ref{lem:PP} imply   
\begin{align}\label{GSA-3}
|\G_{I, J, K}^{K_1^2, K_2^2}|
& \le \mathscr{P}_1(I^1, J^1, K^1) \, 
C(\mathbf{1}_{J^2}, \mathbf{1}_{J^2}, \mathbf{1}_{J^2})
\prod_{i=1}^2 |I^i|^{-\frac12} |J^i|^{-\frac12} |K^i|^{-\frac12}
\\ \nonumber 
&\lesssim \bigg[ \frac{\ell(K^1)}{\ell(Q^1)} \bigg]^{\frac{\delta_1}{2}} 
F^1(K^1, Q^1) \frac{|I^1|^{\frac12} |J^1|^{\frac12} |K^1|^{\frac12}}{|Q^1|^2}
\\ \nonumber 
&\quad\times \bigg[ \frac{\ell(K^2)}{\ell(Q^2)} \bigg]^{\frac{\delta_2}{2}} 
F^2(J^2) \frac{|I^2|^{\frac12} |J^2|^{\frac12} |K^2|^{\frac12}}{|Q^2|^2}, 
\end{align}
provided that $\ell(I^2) \simeq \ell(J^2) \simeq \ell(K^2) \simeq \ell(Q^2)$. If $K_2^2 \neq J^2$, then $K_2^2 \subset 3J^2 \setminus J^2$, which together with Lemmas \ref{lem:PP} and \ref{lem:QQ} gives 
\begin{align}\label{GSA-4}
|\G_{I, J, K}^{K_1^2, K_2^2}|
& \le \mathscr{P}_1(I^1, J^1, K^1) \, 
\mathscr{Q}_2(K_1^2, J^2, K_2^2) 
\prod_{i=1}^2 |I^i|^{-\frac12} |J^i|^{-\frac12} |K^i|^{-\frac12}
\\ \nonumber 
&\lesssim \bigg[ \frac{\ell(K^1)}{\ell(Q^1)} \bigg]^{\frac{\delta_1}{2}} 
F^1(K^1, Q^1) \frac{|I^1|^{\frac12} |J^1|^{\frac12} |K^1|^{\frac12}}{|Q^1|^2}
\\ \nonumber 
&\quad \times \bigg[ \frac{\ell(K_2^2)}{\ell(Q^2)} \bigg]^{\frac{\delta_2}{2}} 
F^2(K_2^2, Q^2) \frac{|K_1^2|^{\frac12} |J^2|^{\frac12} |K_2^2|^{\frac12}}{|Q^2|^2}
\\ \nonumber 
&\lesssim \prod_{i=1}^2 \bigg[ \frac{\ell(K^i)}{\ell(Q^i)} \bigg]^{\frac{\delta_i}{2}} 
F^i(K^i, Q^i) \frac{|I^i|^{\frac12} |J^i|^{\frac12} |K^i|^{\frac12}}{|Q^i|^2}.  
\end{align}
By symmetry, the same bound also holds in the case $K_1^2 \neq J^2$. Accordingly, \eqref{GSA} is a consequence of \eqref{GSA-1}--\eqref{GSA-4} and that $F^2(K^2, Q^2) \simeq F^2(Q^2, Q^2)$ because of 
$\ell(K^2) \simeq \ell(Q^2)$.
\end{proof}

Set 
\begin{align*}
a_{I, J, K, Q} 
= \frac{\G_{I, J, K}}{C_0 \prod_{i=1}^2 [\ell(K^i)/\ell(Q^i)]^{\delta_i/2}} 
\end{align*}
if $I^i, J^i \in \D^i$ and $K^i \in \D_{\rm{good}}^i$ with $\ell(K^i) \le \ell(I^i) = 2\ell(J^i)$, $i=1, 2$, satisfy 
$\max\{\d(K^1, I^1), \, \d(K^1, J^1)\} > 2 \ell(K^1)^{\gamma_1} \ell(J^1)^{1-\gamma_1}$,  
$\max\{\d(K^2, I^2), \, \d(K^2, J^2)\} \le 2 \ell(K^2)^{\gamma_2} \ell(J^2)^{1-\gamma_2}$, and either $K^2 \cap I^2 = \emptyset$ or $K^2 \cap J^2 = \emptyset$ or $K^2 = I^2$, and otherwise set $a_{I, J, K, Q} = 0$. In view of \eqref{SS-common} and \eqref{SA-common}, this allows us to write  
\begin{align*}
\mathscr{S}_2^1   
&= \sum_{\substack{0 \le i_1 \le k_1 \\ 0 \le i_2 \le k_2 \le \vartheta}} 
\sum_{\substack{Q^1 \in \D^1 \\ Q^2 \in \D^2}} 
\sum_{\substack{I^1, J^1 \in \D^1, K^1 \in \D_{\rm{good}}^1 \\ 
\max\{\d(K^1, I^1), \, \d(K^1, J^1)\} > 2 \ell(K^1)^{\gamma_1} \ell(J^1)^{1-\gamma_1} \\ 
2 \ell(J^1) = \ell(I^1) = 2^{-i_1} \ell(Q^1) \\ \ell(K^1) = 2^{-k_1} \ell(Q^1), \, 
I^1 \vee J^1 \vee K^1 = Q^1}} 
\\
&\quad\times \sum_{\substack{I^2, J^2 \in \D^2, \, K^2 \in \D_{\rm{good}}^2 \\ 
\max\{\d(K^2, I^2), \, \d(K^2, J^2)\} \le 2 \ell(K^2)^{\gamma_2} \ell(J^2)^{1-\gamma_2} \\ 
K^2 \cap I^2 = \emptyset \text{ or } K^2 \cap J^2 = \emptyset \text{ or } K^2 = I^2\\ 
2 \ell(J^2) = \ell(I^2) = 2^{-i_2} \ell(Q^2) \\ \ell(K^2) = 2^{-k_2} \ell(Q^2), \, 
I^2 \vee J^2 \vee K^2 = Q^2}} 
\G_{I, J, K} \, f_1^I \, f_2^J \, f_3^K
\\
&= C_0 \sum_{\substack{0 \le i_1 \le k_1 \\ 0 \le i_2 \le k_2 \le \vartheta}}   
2^{-k_1 \frac{\delta_1}{2}} 2^{-k_2 \frac{\delta_2}{2}}
\sum_{\substack{Q^1 \in \D^1 \\ Q^2 \in \D^2}} 
\sum_{\substack{I^1 \in \D_{i_1}^1(Q^1) \\ 
J^1 \in \D_{i_1+1}^1(Q^1) \\ K^1 \in \D_{k_1}^1(Q^1)}}
\sum_{\substack{I^2 \in \D_{i_2}^2(Q^2) \\ 
J^2 \in \D_{i_2+1}^2(Q^2) \\ K^2 \in \D_{k_2}^2(Q^2)}} 
a_{I, J, K, Q} \, f_1^I \, f_2^J \, f_3^K. 
\end{align*}
For any $k_1 \in \N^2$ and $Q=(Q^1, Q^2) \in \D^1 \times \D^2 := \D$, denote 
\begin{align*}
\mathcal{F}(Q) 
:= F_1^1(\ell(Q^1)) \widetilde{F}_2^1(2^{-k_1} \ell(Q^1)) \widetilde{F}_3^1(Q^1) 
\widetilde{F}^2(Q^2). 
\end{align*}
Then by Lemma \ref{lem:GSA} and that $F^i \in \F^i$ and $(F_1^i, F_2^i, F_3^i) \in \F$, we see that \eqref{FQ-1} and \eqref{FQ-2} hold whenever $C_0$ is large. Hence, we have 
\begin{align*}
\mathscr{S}_2^1 
= C_0 \sum_{k_1=0}^{\infty} \sum_{k_2=0}^{\vartheta}
2^{-k_1 \frac{\delta_1}{2}} 2^{-k_2 \frac{\delta_2}{2}}
\sum_{i_1=0}^{k_1} \sum_{i_2=0}^{k_2} 
\big\langle \mathbf{S}_{\D}^{\sigma(i, k)}(f_1, f_2), f_3 \big\rangle, 
\end{align*}
where $\sigma(i, k) := \big((i_1, i_2), (i_1+1, i_2+1), (k_1, k_2) \big)$.

\subsection{Separated/Nested}\label{sec:SN}
In this case, denote $I^2 := (J^2)^{(1)}$. Noting that $h_{I^2} \mathbf{1}_{J^2} = \langle h_{I^2} \rangle_{J^2} \mathbf{1}_{J^2}$, we perform the decomposition 
\begin{align}\label{SNG}
\G_{I, J, K}
= \G_{I, J, K}^1 + \G_{I, J, K}^2 + \G_{I, J, K}^3, 
\end{align}
where 
\begin{align*}
\G_{I, J, K}^1 
&:= \langle h_{I^2} \rangle_{J^2} |J^2|^{-\frac12} 
\langle T(h_{I^1} \otimes 1, h_{J^1}^0 \otimes 1), 
h_{K^1} \otimes h_{K^2} \rangle, 
\\
\G_{I, J, K}^2 
&:= \langle h_{I^2} \rangle_{J^2} |J^2|^{-\frac12} 
\langle T(h_{I^1} \otimes 1, h_{J^1}^0 \otimes \mathbf{1}_{(J^2)^c}), 
h_{K^1} \otimes h_{K^2} \rangle, 
\\
\G_{I, J, K}^3 
&:= |J^2|^{-\frac12}  
\langle T(h_{I^1} \otimes \phi_{J^2}, h_{J^1}^0 \otimes \mathbf{1}_{J^2}), 
h_{K^1} \otimes h_{K^2} \rangle,  
\end{align*}
with $\phi_{J^2} := \mathbf{1}_{(J^2)^c} \big(h_{I^2} - \langle h_{I^2} \rangle_{J^2}\big)$. We then define the  summation $\mathscr{S}_{3, k}^1$ corresponding to $\G_{I, J, K}^k$, $k=1, 2, 3$.

\medskip 
\noindent{\bf $\bullet$ Partial paraproducts.} Rewrite 
\begin{align*}
\mathscr{S}_{3, 1}^1   
&= \sum_{\substack{I^1, J^1 \in \D^1, K^1 \in \D_{\rm{good}}^1 \\ 
\max\{\d(K^1, I^1), \, \d(K^1, J^1)\} > 2 \ell(K^1)^{\gamma_1} \ell(J^1)^{1-\gamma_1} \\ 
\ell(K^1) \le \ell(I^1) = 2 \ell(J^1)}} 
\sum_{K^2 \in \D_{\rm{good}}^2}  
\sum_{\substack{J^2 \in \D^2 \\ J^2 \supset K^2}}  
\langle f_3, h_{K^1} \otimes h_{K^2} \rangle
\\
&\quad\times \langle T(h_{I^1} \otimes 1, h_{J^1}^0 \otimes 1), 
h_{K^1} \otimes h_{K^2} \rangle
\big\langle \langle f_1, h_{I^1} \rangle, h_{I^2} \big\rangle 
\big\langle \langle f_2, h_{J^1}^0 \rangle \big\rangle_{J^2} 
\langle h_{I^2} \rangle_{J^2}. 
\end{align*}
Much as above, for $\mathscr{S}^2$, the separated/nested term is given by 
\begin{align*}
\mathscr{S}_3^2   
&:= \sum_{\substack{I^1, J^1 \in \D^1, K^1 \in \D_{\rm{good}}^1 \\ 
\max\{\d(K^1, I^1), \, \d(K^1, J^1)\} > 2 \ell(K^1)^{\gamma_1} \ell(J^1)^{1-\gamma_1} \\ 
\ell(K^1) \le \ell(I^1) = 2 \ell(J^1)}} 
\sum_{K^2 \in \D_{\rm{good}}^2}  
\sum_{\substack{J^2 \in \D^2 \\ J^2 \supset K^2}}   
\langle f_3, h_{K^1} \otimes h_{K^2} \rangle
\\
&\quad\times \langle T(h_{I^1} \otimes \mathbf{1}_{I^2}, h_{J^1}^0 \otimes h_{I^2}), 
h_{K^1} \otimes h_{K^2} \rangle
\big\langle \langle f_1, h_{I^1} \rangle \big\rangle_{I^2} 
\big\langle \langle f_2, h_{J^1}^0 \rangle, h_{I^2} \big\rangle.  
\end{align*}
As done in \eqref{SNG}, we obtain the corresponding term 
\begin{align*}
\mathscr{S}_{3, 1}^2   
&= \sum_{\substack{I^1, J^1 \in \D^1, K^1 \in \D_{\rm{good}}^1 \\ 
\max\{\d(K^1, I^1), \, \d(K^1, J^1)\} > 2 \ell(K^1)^{\gamma_1} \ell(J^1)^{1-\gamma_1} \\ 
\ell(K^1) \le \ell(I^1) = 2 \ell(J^1)}} 
\sum_{K^2 \in \D_{\rm{good}}^2}  
\sum_{\substack{J^2 \in \D^2 \\ J^2 \supset K^2}}   
\langle f_3, h_{K^1} \otimes h_{K^2} \rangle
\\
&\quad\times \langle T(h_{I^1} \otimes 1, h_{J^1}^0 \otimes 1), 
h_{K^1} \otimes h_{K^2} \rangle
\big\langle \langle f_1, h_{I^1} \rangle \big\rangle_{I^2} 
\big\langle \langle f_2, h_{J^1}^0 \rangle, h_{I^2} \big\rangle 
\langle h_{I^2} \rangle_{J^2}.  
\end{align*}
Note that 
\begin{align*}
\langle g, h_{I^{(1)}} \rangle \langle h_{I^{(1)}} \rangle_I
= \langle g \rangle_I - \langle g \rangle_{I^{(1)}}, 
\end{align*}
which gives 
\begin{align}\label{gghh}
\big[\langle g_1, h_{I^{(1)}} \rangle \langle g_2 \rangle_I 
+ \langle g_1 \rangle_{I^{(1)}} \langle g_2, h_{I^{(1)}} \rangle \big] 
\langle h_{I^{(1)}} \rangle_I
= \langle g_1 \rangle_{I} \langle g_2 \rangle_I 
- \langle g_1 \rangle_{I^{(1)}} \langle g_2 \rangle_{I^{(1)}}. 
\end{align}
Hence, 
\begin{align*}
\mathscr{S}_{3, 1}^1 + \mathscr{S}_{3, 1}^2  
&= \sum_{\substack{I^1, J^1 \in \D^1, K^1 \in \D_{\rm{good}}^1 \\ 
\max\{\d(K^1, I^1), \, \d(K^1, J^1)\} > 2 \ell(K^1)^{\gamma_1} \ell(J^1)^{1-\gamma_1} \\ 
\ell(K^1) \le \ell(I^1) = 2 \ell(J^1)}} 
\sum_{K^2 \in \D_{\rm{good}}^2}  
\langle f_3, h_{K^1} \otimes h_{K^2} \rangle
\\
&\quad\times \langle T(h_{I^1} \otimes 1, h_{J^1}^0 \otimes 1), 
h_{K^1} \otimes h_{K^2} \rangle
\big\langle \langle f_1, h_{I^1} \rangle \big\rangle_{K^2} 
\big\langle \langle f_2, h_{J^1}^0 \rangle \big\rangle_{K^2} 
\\
&= \sum_{\substack{0 \le i_1 \le k_1 \\ Q^1 \in \D^1}} 
\sum_{\substack{I^1, J^1 \in \D^1, K^1 \in \D_{\rm{good}}^1 \\ 
\max\{\d(K^1, I^1), \, \d(K^1, J^1)\} > 2 \ell(K^1)^{\gamma_1} \ell(J^1)^{1-\gamma_1} \\ 
2 \ell(J^1) = \ell(I^1) = 2^{-i_1} \ell(Q^1) \\ \ell(K^1) = 2^{-k_1} \ell(Q^1), \, I^1 \vee J^1 \vee K^1 = Q^1}} 
\sum_{K^2 \in \D_{\rm{good}}^2}  
\langle f_3, h_{K^1} \otimes h_{K^2} \rangle
\\
&\quad\times \langle T(h_{I^1} \otimes 1, h_{J^1}^0 \otimes 1), 
h_{K^1} \otimes h_{K^2} \rangle
\langle f_1, h_{I^1} \otimes \overline{h}_{K^2} \rangle
\langle f_2, h_{J^1}^0 \otimes \overline{h}_{K^2} \rangle. 
\end{align*}
Let $Q=(Q^1, Q^2) \in \D^1 \times \D^2 =: \D$. Set 
\begin{align*}
a_{I^1, J^1, K^1, Q} 
= \frac{\langle T(h_{I^1} \otimes 1, h_{J^1}^0 \otimes 1), h_{K^1} \otimes h_{Q^2} \rangle}{C_0 [\ell(K^1)/\ell(Q^1)]^{\delta_1/2}} 
\end{align*}
if $I^1, J^1 \in \D^1$, $K^1 \in \D_{\rm{good}}^1$, and $Q^2 \in \D_{\rm{good}}^2$ satisfy $\ell(K^1) \le \ell(I^1) = 2\ell(J^1)$ and $\max\{\d(K^1, I^1), \, \d(K^1, J^1)\} > 2\ell(K^1)^{\gamma_1} \ell(J^1)^{1-\gamma_1}$, and otherwise set $a_{I^1, J^1, K^1, Q} = 0$. Then we rewrite 
\begin{align}\label{S312}
\mathscr{S}_{3, 1}^1 + \mathscr{S}_{3, 1}^2  
&= C_0 \sum_{0 \le i_1 \le k_1}  2^{-k_1 \frac{\delta_1}{2}} 
\sum_{\substack{Q^1 \in \D^1 \\ Q^2 \in \D^2}} 
\sum_{\substack{I^1 \in \D_{i_1}^1(Q^1) \\ 
J^1 \in \D_{i_1+1}^1(Q^1) \\ K^1 \in \D_{k_1}^1(Q^1)}} 
a_{I^1, J^1, K^1, Q} 
\\ \nonumber 
&\quad\times \langle f_1, h_{I^1} \otimes \overline{h}_{Q^2} \rangle 
\langle f_2, h_{J^1}^0 \otimes \overline{h}_{Q^2} \rangle
\langle f_3, h_{K^1} \otimes h_{Q^2} \rangle 
\\ \nonumber 
&= C_0 \sum_{k_1=0}^{\infty} 2^{-k_1 \frac{\delta_1}{2}} 
\sum_{i_1=0}^{k_1} \langle \mathbf{P}_{\D}^{1, i_1, i_1+1, k_1}(f_1, f_2), f_3 \rangle, 
\end{align}
where $\overline{h}_{Q^2} := \frac{\mathbf{1}_{Q^2}}{|Q^2|}$. By Lemma \ref{lem:PPSN}, $\mathbf{P}_{\D}^{1, i_1, i_1+1, k_1}$ is a partial paraproduct (cf. Definition \ref{def:pp}), where 
\begin{align*}
\mathcal{F}^1(Q^1) 
:= F_1^1(\ell(Q^1)) \widetilde{F}_2^1(2^{-k_1} \ell(Q^1)) \widetilde{F}_3^1(Q^1). 
\end{align*}

\medskip 
\noindent{\bf $\bullet$ Bilinear dyadic shifts.} 
Let us turn to the estimates for $\mathscr{S}_{3, 2}^1$ and $\mathscr{S}_{3, 3}^1$. 

\begin{lemma}\label{lem:GSN23}
For each $k=2, 3$, there holds 
\begin{align*}
|\G_{I, J, K}^k| 
\lesssim \bigg[ \frac{\ell(K^1)}{\ell(Q^1)} \bigg]^{\frac{\delta_1}{2}} 
F^1(K^1, Q^1) \frac{|I^1|^{\frac12} |J^1|^{\frac12} |K^1|^{\frac12}}{|Q^1|^2}
\bigg[ \frac{\ell(K^2)}{\ell(J^2)} \bigg]^{\frac{\delta_2}{2}} 
F^2(K^2, J^2) \frac{|K^2|^{\frac12}}{|J^2|}, 
\end{align*}
where $F^i$ is defined in \eqref{def:FKQ}. 
\end{lemma}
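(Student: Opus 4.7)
The aim is to gain $\delta_1/2$ in parameter 1 and $\delta_2/2$ in parameter 2. In parameter 1 the gain comes from the separated condition exactly as in Section \ref{sec:SS}: cancelling $h_{K^1}$ against the full kernel's H\"older estimate reduces the parameter-1 integrations to $\mathscr{P}_1(I^1, J^1, K^1)$, and Lemma \ref{lem:PP} then furnishes the first half of the target. The real point is the parameter-2 analysis, where the new ingredient is the goodness of $K^2 \subset J^2$, giving $\d(K^2, (J^2)^c) \gtrsim \ell_2 := \ell(K^2)^{\gamma_2}\ell(J^2)^{1-\gamma_2}$, together with the identity $\gamma_2(n_2+\delta_2) = \delta_2/2$.

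For $\G^2_{I,J,K}$ I would split $1 = \mathbf{1}_{J^2} + \mathbf{1}_{(J^2)^c}$ in the second-parameter slot of the first argument, producing two pieces $\G^{2,a}$ and $\G^{2,b}$. In both, the three parameter-2 supports contain the disjoint pair $K^2 \subset J^2$ versus $(J^2)^c$, so the compact full kernel representation \eqref{H1} applies. Using the bi-parameter H\"older condition with cancellation of both $h_{K^1}$ and $h_{K^2}$ produces the kernel
\begin{align*}
\prod_{i=1}^2 \bigg(\frac{|y_3^i - c_{K^i}|}{|y_3^i - y_1^i| + |y_3^i - y_2^i|}\bigg)^{\!\delta_i} \frac{F^i(\cdot)}{(|y_3^i - y_1^i| + |y_3^i - y_2^i|)^{2n_i}}.
\end{align*}
Parameter-1 integration recovers $\mathscr{P}_1$ as noted. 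For parameter 2, bounding $|y_3^2-y_1^2|+|y_3^2-y_2^2|$ below by whichever of $|y_3^2-y_1^2|$ or $|y_3^2-y_2^2|$ corresponds to the $(J^2)^c$-argument (which is $\ge \ell_2$), and computing as in Lemma \ref{lem:RR}, gives
\begin{align*}
\text{(param 2 integral)} \lesssim |J^2| \cdot \ell(K^2)^{\delta_2} |K^2| \, \ell_2^{-n_2 - \delta_2} \cdot F^2(K^2, J^2) = |K^2| \left(\frac{\ell(K^2)}{\ell(J^2)}\right)^{\!\delta_2/2} F^2(K^2, J^2).
\end{align*}
Multiplying by the prefactor $|\langle h_{I^2}\rangle_{J^2}| |J^2|^{-1/2} \simeq |J^2|^{-1}$ and by $\|h_{K^2}\|_\infty = |K^2|^{-1/2}$ yields exactly the target second-parameter factor $|K^2|^{1/2}/|J^2| \cdot (\ell(K^2)/\ell(J^2))^{\delta_2/2} F^2(K^2, J^2)$.

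For $\G^3_{I,J,K}$ the parameter-2 supports are $(J^2)^c, J^2, K^2$, and once again $K^2 \subset J^2$ is disjoint from $(J^2)^c$. Using $\|\phi_{J^2}\|_\infty \le 2 |I^2|^{-1/2} \simeq |J^2|^{-1/2}$ as the pointwise bound, the estimate runs identically to $\G^{2,b}$ after swapping the roles of $y_1^2$ and $y_2^2$; the prefactor $|J^2|^{-1/2}$ from the definition of $\G^3$ combines with $\|\phi_{J^2}\|_\infty \simeq |J^2|^{-1/2}$ to reproduce the required $|J^2|^{-1}$. The main technical nuisance, and the only step that requires care, is that $\phi_{J^2}$ is supported on the unbounded set $(J^2)^c$ and hence is not literally in $\mathbb{F}^2$; however, the parameter-2 integral is absolutely convergent thanks to the kernel decay, and one can justify the formal manipulation either by truncating $\phi_{J^2}$ to a large ball, applying the compact full kernel representation to the truncation (which \emph{is} in $\mathbb{F}^2$), and invoking dominated convergence, or by reading off $\G^3 = \G - \G^1 - \G^2$ from the very definitions. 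Modulo this point, the remainder is a bookkeeping check exactly parallel to Sections \ref{sec:SS}--\ref{sec:SA}.
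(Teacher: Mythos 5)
Your parameter-1 analysis matches the paper (cancellation of $h_{K^1}$, the H\"older condition, $\mathscr{P}_1$, Lemma \ref{lem:PP}), and your key idea for parameter 2 --- that the goodness of $K^2$ separates $K^2$ from $(J^2)^c$ and converts the full H\"older gain $\delta_2$ into the halved gain $\delta_2/2$ via Lemma \ref{lem:RR} --- is exactly the paper's mechanism when it applies. But there is a genuine gap: goodness of $K^2$ only controls $\d(K^2,\partial J^2)$ for cubes $J^2$ with $\ell(J^2)\ge 2^r\ell(K^2)$. In the nested sum, $J^2$ ranges over \emph{all} dyadic ancestors of $K^2$, including the first $r$ generations, and for those $\d(K^2,(J^2)^c)$ can be $0$ (e.g.\ $K^2$ a child of $J^2$ touching $\partial J^2$). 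Your entire parameter-2 computation presupposes $\d(K^2,(J^2)^c)\gtrsim \ell_2>0$, so it collapses in this regime: the kernel is no longer integrable by the argument you give, and the H\"older condition's hypothesis $|x_{m+1}^2-\widetilde{x}_{m+1}^2|\le\frac12\max_j|x_{m+1}^2-x_j^2|$ also fails near the common boundary. The paper handles this case ($\ell(J^2)\le 2^r\ell(K^2)$, hence $\ell(J^2)\simeq\ell(K^2)$, so no gain is needed) by the further splitting $\mathbf{1}_{(J^2)^c}=\mathbf{1}_{3J^2\setminus J^2}+\mathbf{1}_{(3J^2)^c}$ and an adjacent-type estimate via $\mathscr{Q}_2$ (Lemma \ref{lem:QQ}) together with $\mathscr{R}_2$ at scale $\ell_2=\ell(K^2)$; your proposal omits this entirely, for both $\G^2_{I,J,K}$ and $\G^3_{I,J,K}$.

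Two smaller points. First, your ``identity'' $\gamma_2(n_2+\delta_2)=\delta_2/2$ is false for the paper's $\gamma_2=\frac{\delta_2}{2(2n_2+\delta_2)}$; the paper instead takes $\ell_2=\ell(K^2)^{1/2}\ell(J^2)^{1/2}$, which is a legitimate lower bound for $\d(K^2,(J^2)^c)$ since $\gamma_2\le\frac12$ and $\ell(K^2)\le\ell(J^2)$, and then $[\ell(K^2)/\ell_2]^{\delta_2}=[\ell(K^2)/\ell(J^2)]^{\delta_2/2}$ exactly. Your numerology still yields an exponent $\ge\delta_2/2$, so this is a repairable slip rather than an error, but the stated identity should not be relied on. Second, your split $1=\mathbf{1}_{J^2}+\mathbf{1}_{(J^2)^c}$ in the first slot differs from the paper's $1=\mathbf{1}_{3K^2}+\mathbf{1}_{(3K^2)^c}$; the latter is what makes Lemma \ref{lem:RR} directly applicable ($\mathscr{R}^1_2$ integrates the free variable over $3K^2$, $\mathscr{R}^2_2$ over $(3K^2)^c$), whereas with your split you would need to redo the integral estimates rather than quote the lemma. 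Your remark on justifying the kernel representation for $\phi_{J^2}\notin\mathbb{F}^2$ is reasonable and not the issue.
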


\begin{proof}
First, let us estimate $\G_{I, J, K}^2$. In the case $\ell(J^2) > 2^{\vartheta} \ell(K^2)$, we have $\G_{I, J, K}^2 = \G_{I, J, K}^{2, 1} + \G_{I, J, K}^{2, 2}$, where 
\begin{align*}
\G_{I, J, K}^{2, 1}
&:= \langle h_{I^2} \rangle_{J^2} |J^2|^{-\frac12}
\langle T(h_{I^1} \otimes \mathbf{1}_{3K^2}, h_{J^1}^0 \otimes \mathbf{1}_{(J^2)^c}), 
h_{K^1} \otimes h_{K^2} \rangle, 
\\
\G_{I, J, K}^{2, 2}
&:= \langle h_{I^2} \rangle_{J^2} |J^2|^{-\frac12} 
\langle T(h_{I^1} \otimes \mathbf{1}_{(3K^2)^c}, h_{J^1}^0 \otimes \mathbf{1}_{(J^2)^c}), 
h_{K^1} \otimes h_{K^2} \rangle.
\end{align*}
For all $x_2 \in K^2$ and $z_2 \in (J^2)^c$, the goodness of $K^2$ gives 
\begin{align}\label{dxz}
|x_2 - z_2| 
&\ge \d(K^2, (J^2)^c) 
> 2 \ell(K^2)^{\gamma_2} \ell(J^2)^{1-\gamma_2} 
\\ \nonumber 
&\ge \ell(K^2)^{\frac12} \ell(J^2)^{\frac12} 
\ge \ell(K^2) \ge 2|x_2 - c_{K^2}|, 
\end{align}
which, along with the cancellation of $h_{K^1}$ and $h_{K^2}$, the compact full kernel representation (cf. \eqref{H1}), the H\"{o}lder condition, Lemma \ref{lem:PP} and Lemma \ref{lem:RR} applied to $\ell_2 = \ell(K^2)^{\frac12} \ell(J^2)^{\frac12}$, implies  
\begin{align}\label{GG21}
&|\G_{I, J, K}^{2, j}| 
\le \mathscr{P}_1(I^1, J^1, K^1) \mathscr{R}_2^j(J^2, K^2) 
\prod_{i=1}^2 |I^i|^{-\frac12} |J^i|^{-\frac12} |K^i|^{-\frac12} 
\\ \nonumber 
&\lesssim \bigg[ \frac{\ell(K^1)}{\ell(Q^1)} \bigg]^{\frac{\delta_1}{2}} 
F_1(K^1, Q^1) \frac{|I^1|^{\frac12} |J^1|^{\frac12} |K^1|^{\frac12}}{|Q^1|^2}
\bigg[ \frac{\ell(K^2)}{\ell(J^2)} \bigg]^{\frac{\delta_2}{2}} 
F^2(K^2, J^2) \frac{|K^2|^{\frac12}}{|J^2|},  
\end{align}
for every $j=1, 2$.

To treat the case $\ell(J^2) \le 2^{\vartheta} \ell(K^2)$, we split 
\begin{align}\label{GLE}
\G_{I, J, K}^2 
&= \langle h_{I^2} \rangle_{J^2} |J^2|^{-\frac12} 
\langle T(h_{I^1} \otimes \mathbf{1}_{3K^2}, 
h_{J^1}^0 \otimes \mathbf{1}_{3J^2 \setminus J^2}), 
h_{K^1} \otimes h_{K^2} \rangle 
\\ \nonumber
&\quad+ \langle h_{I^2} \rangle_{J^2} |J^2|^{-\frac12} 
\langle T(h_{I^1} \otimes \mathbf{1}_{3K^2}, 
h_{J^1}^0 \otimes \mathbf{1}_{(3J^2)^c}), 
h_{K^1} \otimes h_{K^2} \rangle 
\\ \nonumber
&\quad+ \langle h_{I^2} \rangle_{J^2} |J^2|^{-\frac12} 
\langle T(h_{I^1} \otimes \mathbf{1}_{(3K^2)^c}, 
h_{J^1}^0 \otimes \mathbf{1}_{3J^2 \setminus J^2}), 
h_{K^1} \otimes h_{K^2} \rangle 
\\ \nonumber 
&\quad+ \langle h_{I^2} \rangle_{J^2} |J^2|^{-\frac12} 
\langle T(h_{I^1} \otimes \mathbf{1}_{(3K^2)^c}, 
h_{J^1}^0 \otimes \mathbf{1}_{(3J^2)^c}), 
h_{K^1} \otimes h_{K^2} \rangle. 
\end{align}
The first term is similar to $\G_{I, J, K}$ in the case $K^2 \cap J^2 = \emptyset$ in Section \ref{sec:SA}, which along with \eqref{GSA-1} yields the desired bound. The second and last terms are analogous to $\G_{I, J, K}^{2, 1}$ and $\G_{I, J, K}^{2, 2}$ respectively, but now $\ell_2 = \ell(K^2)$. The third term is symmetric to the second one. Eventually, the last three terms are dominated by 
\begin{align*}
\bigg[ \frac{\ell(K^1)}{\ell(Q^1)} \bigg]^{\frac{\delta_1}{2}} 
F_1(K^1, Q^1) \frac{|I^1|^{\frac12} |J^1|^{\frac12} |K^1|^{\frac12}}{|Q^1|^2}
F^2(K^2, J^2) \frac{|K^2|^{\frac12}}{|J^2|}, 
\end{align*}
which together with $\ell(J^2) \simeq \ell(K^2)$ gives the estimate as desired.

Next, we bound $\G_{I, J, K}^3$. If $\ell(J^2) > 2^{\vartheta} \ell(K^2)$, then by \eqref{dxz}, $\G_{I, J, K}^3$ is similar to $\G_{I, J, K}^{2, 2}$. If $\ell(J^2) \le 2^{\vartheta} \ell(K^2)$, we split 
\begin{align*}
\G_{I, J, K}^3 
&= |J^2|^{-\frac12}  
\langle T(h_{I^1} \otimes (\phi_{J^2} \mathbf{1}_{3J^2 \setminus J^2}), 
h_{J^1}^0 \otimes \mathbf{1}_{J^2}), 
h_{K^1} \otimes h_{K^2} \rangle
\\
&\quad + |J^2|^{-\frac12}  
\langle T(h_{I^1} \otimes (\phi_{J^2} \mathbf{1}_{(3J^2)^c}), 
h_{J^1}^0 \otimes \mathbf{1}_{J^2}), 
h_{K^1} \otimes h_{K^2} \rangle,  
\end{align*}
which are similar to the first two terms in \eqref{GLE} respectively. 
\end{proof}

Recall that $I^2 = (J^2)^{(1)}$. Now set  $Q^2 = I^2$ and 
\begin{align*}
a_{I, J, K, Q} 
= \frac{\G_{I, J, K}^2 + \G_{I, J, K}^3}{C_0 \prod_{i=1}^2 [\ell(K^i)/\ell(Q^i)]^{\delta_i/2}} 
\end{align*}
if $I^1, J^1 \in \D^1$, $K^1 \in \D_{\rm{good}}^1$, $J^2 \in \D^2$, and $K^2 \in \D_{\rm{good}}^2$ satisfy 
$\max\{\d(K^1, I^1), \, \d(K^1, J^1)\} > 2 \ell(K^1)^{\gamma_1} \ell(J^1)^{1-\gamma_1}$,  $\ell(K^1) \le \ell(I^1) = 2 \ell(J^1)$, and $K^2 \subset J^2$, and otherwise set $a_{I, J, K, Q} = 0$. 
For any $k=(k_1, k_2) \in \N^2$ and $Q=Q^1 \times Q^2 \in \D^1 \times \D^2$, denote 
\begin{align*}
\mathcal{F}(Q) 
:= \prod_{i=1}^2 F_1^i(\ell(Q^i)) \widetilde{F}_2^i(2^{-k_i} \ell(Q^i)) \widetilde{F}_3^i(Q^i). 
\end{align*}
Then Lemma \ref{lem:GSN23} and that $(F_1^i, F_2^i, F_3^i) \in \F$ imply both \eqref{FQ-1} and \eqref{FQ-2} hold whenever $C_0$ is large. Consequently, by \eqref{SS-common} and $K^2 \subset J^2 \in \ch(Q^2)$, we deduce 
\begin{align*}
\mathscr{S}_{3, 2}^1 + \mathscr{S}_{3, 3}^1 
&= C_0 \sum_{\substack{0 \le i_1 \le k_1 \\ k_2 \ge 1}}   
2^{-k_1 \frac{\delta_1}{2}} 2^{-k_2 \frac{\delta_2}{2}}
\sum_{\substack{Q^1 \in \D^1 \\ Q^2 \in \D^2}} 
\sum_{\substack{I^1 \in \D_{i_1}^1(Q^1) \\ 
J^1 \in \D_{i_1+1}^1(Q^1) \\ K^1 \in \D_{k_1}^1(Q^1)}}
\\
&\qquad\times \sum_{\substack{K^2 \in \D_{k_2}^2(Q^2)}} 
a_{I, J, K, Q} \, f_1^I \, f_2^J \, f_3^K
\\
&= C_0 \sum_{k_1=0}^{\infty} \sum_{k_2=1}^{\infty} 
2^{-k_1 \frac{\delta_1}{2}} 2^{-k_2 \frac{\delta_2}{2}}
\sum_{i_1=0}^{k_1} 
\big\langle \mathbf{S}_{\D}^{\sigma(i, k)}(f_1, f_2), f_3 \big\rangle,  
\end{align*}
where $\sigma(i, k) := \big((i_1, 0), (i_1+1, 1), (k_1, k_2) \big)$.

\subsection{Adjacent/Adjacent}\label{sec:AA} 
By Lemma \ref{lem:cda} part \ref{cda-2}, for each $i=1, 2$, there exists a minimal cube $Q^i := I^i \vee J^i \vee K^i \in \D^i$ such that 
\begin{align}\label{AA-common}
I^i \cup J^i \cup K^i \subset Q^i
\quad\text{ and }\quad \ell(Q^i) \le 2^{\vartheta} \ell(K^i).
\end{align}

In the current scenario, we first prove the following estimate. 

\begin{lemma}\label{lem:GAA}
For each $i=1, 2$, let $I^i, J^i \in \D^i$ and $K^i \in \D_{\rm{good}}^i$ satisfy $\ell(K^i) \le \ell(I^i) = 2\ell(J^i)$ and either $K^i \cap I^i = \emptyset$ or $K^i \cap J^i = \emptyset$ or $K^i = I^i$. Then 
\begin{align*}
|\G_{I, J, K}| 
\lesssim \prod_{i=1}^2 \bigg[ \frac{\ell(K^i)}{\ell(Q^i)} \bigg]^{\frac{\delta_i}{2}} 
\mathcal{F}^i(Q^i) \frac{|I^i|^{\frac12} |J^i|^{\frac12} |K^i|^{\frac12}}{|Q^i|^2}, 
\end{align*}
where 
\begin{align}\label{def:FQ}
\mathcal{F}^i(Q^i) := F^i(Q^i, Q^i) 
+ \sum_{\substack{0 \le k_i \le \vartheta \\ J^i \in \D_{k_i+1}(Q^i)}} F^i(J^i) 
\end{align}
with $F^i(\cdot, \cdot)$ defined in \eqref{def:FKQ}. 
\end{lemma}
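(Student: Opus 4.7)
I would mirror Lemma \ref{lem:GSA} but apply its three-case analysis \emph{simultaneously} in both parameters. The adjacency conditions force $\ell(I^i) \simeq \ell(J^i) \simeq \ell(K^i) \simeq \ell(Q^i)$, so the prefactors $[\ell(K^i)/\ell(Q^i)]^{\delta_i/2}$ are $\simeq 1$ and can be inserted for free at the end. By the $I \leftrightarrow J$ symmetry in each parameter, the nine possible combinations of sub-cases $\{K^i \cap I^i = \emptyset,\, K^i \cap J^i = \emptyset,\, K^i = I^i\}$ reduce to three genuinely distinct configurations: (a) off-support in both parameters; (b) off-support in one parameter and diagonal in the other; (c) diagonal in both parameters.

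In configuration (a), say $K^i \cap I^i = \emptyset$ for $i=1,2$. The cancellation of $h_{K^1}$ and $h_{K^2}$ combined with the compact full kernel representation \eqref{H1}, the full H\"older condition, and Lemma \ref{lem:QQ} applied in each parameter yield $|\G_{I,J,K}| \lesssim \prod_i F^i(K^i, Q^i) |I^i|^{1/2} |J^i|^{1/2} |K^i|^{1/2}/|Q^i|^2$, and since $\ell(K^i) \simeq \ell(Q^i)$, the factor $F^i(K^i, Q^i)$ is comparable to $F^i(Q^i, Q^i) \le \widetilde{F}^i(Q^i)$. This accounts for the first summand in \eqref{def:FQ}. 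In configuration (b) (say $K^1 \cap I^1 = \emptyset$ and $K^2 = I^2$, so $J^2 \in \ch(K^2)$), I expand the two copies of $h_{K^2}$ in the input and test slots on children of $K^2$, producing a double sum over $(K_1^2, K_2^2) \in \ch(K^2) \times \ch(K^2)$. When $(K_1^2, K_2^2) \ne (J^2, J^2)$, at least one differing child lies in $3J^2 \setminus J^2$, so the full kernel with the mixed size-H\"older condition applies and Lemma \ref{lem:QQ} in both parameters again supplies the bound. In the residual sub-case $K_1^2 = K_2^2 = J^2$, separation survives only in parameter 1, so I switch to the compact partial kernel representation on the first parameter \eqref{H2}: property \eqref{partial-4} bounds $C(\mathbf{1}_{J^2}, \mathbf{1}_{J^2}, \mathbf{1}_{J^2})$ by $F^2(J^2) |J^2|$, and Lemma \ref{lem:QQ} in parameter 1 finishes with a summand $F^2(J^2)$ in \eqref{def:FQ}.

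The \emph{diagonal/diagonal} configuration (c) is the main obstacle. Now $K^i = I^i$ and $J^i \in \ch(K^i)$ for both $i$, so I expand all four occurrences of $h_{K^i}$ on their children, producing a quadruple sum over $(K_1^1, K_2^1, K_1^2, K_2^2) \in \ch(K^1)^2 \times \ch(K^2)^2$. When some $K_\alpha^i \ne J^i$ in \emph{both} parameters, the full kernel treatment of configuration (a) applies directly. When exactly one parameter is fully diagonal (say $K_1^1 = K_2^1 = J^1$ but $(K_1^2, K_2^2) \ne (J^2, J^2)$), the compact partial kernel representation on the \emph{second} parameter \eqref{H2} kicks in, contributing a factor $F^1(J^1) |J^1|$ via \eqref{partial-4}. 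The genuinely new piece is the fully diagonal sub-block $K_1^1 = K_2^1 = J^1$ and $K_1^2 = K_2^2 = J^2$: no kernel representation is available and I must directly invoke the weak compactness property \eqref{H3}, which bounds this sub-block by $F^1(J^1) |J^1| \, F^2(J^2) |J^2|$. After folding in the Haar normalizations $|J^i|^{-1/2}$ and the $O(|K^i|^{-1/2})$ children-average coefficients, each sub-block produces exactly one of the $F^i(J^i)$ summands in \eqref{def:FQ}. Collecting contributions across all configurations and reinserting the trivial $[\ell(K^i)/\ell(Q^i)]^{\delta_i/2}$ factors yields the claimed bound.
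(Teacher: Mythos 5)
Your proposal is correct and follows essentially the same route as the paper's proof: the same symmetry reduction to three configurations (off-support/off-support, off-support/diagonal, diagonal/diagonal), the same expansion over children of $K^i$ in the diagonal slots, the same appeal to the compact partial kernel representation with property \eqref{partial-4} for the half-diagonal sub-blocks and to the weak compactness property \eqref{H3} for the fully diagonal one, with Lemma \ref{lem:QQ} supplying the integral estimates and $\ell(K^i)\simeq\ell(Q^i)$ rendering the $[\ell(K^i)/\ell(Q^i)]^{\delta_i/2}$ prefactors harmless. One minor citation slip: in configuration (a) the relevant kernel estimate is the \emph{size} condition (whose integrand is exactly what Lemma \ref{lem:QQ} bounds), not the H\"older condition — in the adjacent regime the cubes are comparable and touching, so there is no separation to convert smoothness into decay, and the paper indeed invokes the size condition there.
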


\begin{proof} 
By similarity and symmetry, it suffices to treat the following cases: 
\begin{align*}
\text{(i) } \, & K^1 \cap I^1 = \emptyset \text{ and } K^2 \cap I^2 = \emptyset; 
\\ 
\text{(ii) } \, & K^1 \cap I^1 = \emptyset, \, K^2 \cap J^2 \neq \emptyset \text{ and } K^2 = I^2; 
\\ 
\text{(iii) } \, & K^i \cap J^i \neq \emptyset \text{ and } K^i = I^i, \quad i=1, 2. 
\end{align*} 
In case (i), by the cancellation of $h_{K^1}$ and $h_{K^2}$, the compact full kernel representation (cf. \eqref{H1}), the size condition, and Lemma \ref{lem:QQ}, we deduce  
\begin{align}\label{GAA-1}
|\G_{I, J, K}| 
&\le \prod_{i=1}^2 \mathscr{Q}_i(I^i, J^i, K^i) 
|I^i|^{-\frac12} |J^i|^{-\frac12} |K^i|^{-\frac12} 
\\ \nonumber 
&\lesssim \prod_{i=1}^2 \bigg[ \frac{\ell(K^i)}{\ell(Q^i)} \bigg]^{\frac{\delta_i}{2}} 
F^i(K^i, Q^i) \frac{|I^i|^{\frac12} |J^i|^{\frac12} |K^i|^{\frac12}}{|Q^i|^2}. 
\end{align}
In case (ii), we have $J^2 \in \ch(K^2)$ and 
\begin{align*}
\G_{I, J, K}
= \sum_{K_1^2, K_2^2 \in \ch(K^2)} \G_{I, J, K}^{K_1^2, K_2^2}, 
\end{align*}
where 
\begin{align*}
\G_{I, J, K}^{K_1^2, K_2^2} 
:= |J^1|^{-\frac12} |J^2|^{-\frac12} 
\langle h_{K^2} \rangle_{K_1^2} \langle h_{K^2} \rangle_{K_2^2} 
\big\langle T(h_{I^1} \otimes \mathbf{1}_{K_1^2}, \mathbf{1}_{J^1} \otimes \mathbf{1}_{J^2}), 
h_{K^1} \otimes \mathbf{1}_{K_2^2} \big\rangle. 
\end{align*}
If $K_1^2 = K_2^2 = J^2$, then the compact partial kernel representation (cf. \eqref{H2}), the size condition, and Lemma \ref{lem:QQ} imply   
\begin{align}\label{GAA-3}
|\G_{I, J, K}^{K_1^2, K_2^2}|
& \le \mathscr{Q}_1(I^1, J^1, K^1) \, 
C(\mathbf{1}_{J^2}, \mathbf{1}_{J^2}, \mathbf{1}_{J^2})
\prod_{i=1}^2 |I^i|^{-\frac12} |J^i|^{-\frac12} |K^i|^{-\frac12}
\\ \nonumber 
&\lesssim \bigg[ \frac{\ell(K^1)}{\ell(Q^1)} \bigg]^{\frac{\delta_1}{2}} 
F^1(K^1, Q^1) \frac{|I^1|^{\frac12} |J^1|^{\frac12} |K^1|^{\frac12}}{|Q^1|^2}
\\ \nonumber 
&\quad\times \bigg[ \frac{\ell(K^2)}{\ell(Q^2)} \bigg]^{\frac{\delta_2}{2}} 
F^2(J^2) \frac{|I^2|^{\frac12} |J^2|^{\frac12} |K^2|^{\frac12}}{|Q^2|^2}, 
\end{align}
provided that $\ell(I^2) \simeq \ell(J^2) \simeq \ell(K^2) \simeq \ell(Q^2)$. If $K_1^2 \neq J^2$, then $K_1^2 \subset 3J^2 \setminus J^2$, which is similar to case (i). Thus, \eqref{GAA-1} gives 
\begin{align}\label{GAA-4}
|\G_{I, J, K}^{K_1^2, K_2^2}|
&\lesssim \bigg[ \frac{\ell(K^1)}{\ell(Q^1)} \bigg]^{\frac{\delta_1}{2}} 
F^1(K^1, Q^1) \frac{|I^1|^{\frac12} |J^1|^{\frac12} |K^1|^{\frac12}}{|Q^1|^2}
\\ \nonumber 
&\quad\times \bigg[ \frac{\ell(K_2^2)}{\ell(Q^2)} \bigg]^{\frac{\delta_2}{2}} 
F^2(K_2^2, Q^2) \frac{|K_1^2|^{\frac12} |J^2|^{\frac12} |K_2^2|^{\frac12}}{|Q^2|^2}
\\ \nonumber 
&\lesssim \prod_{i=1}^2 \bigg[ \frac{\ell(K^i)}{\ell(Q^i)} \bigg]^{\frac{\delta_i}{2}} 
F^i(K^i, Q^i) \frac{|I^i|^{\frac12} |J^i|^{\frac12} |K^i|^{\frac12}}{|Q^i|^2}. 
\end{align}
By symmetry, \eqref{GAA-4} also holds in the case $K_2^2 \neq J^2$.

In case (iii), we see that $J^1 \in \ch(K^1)$, $J^2 \in \ch(K^2)$, and 
\begin{align*}
\G_{I, J, K}
= \sum_{\substack{K_1^1, K_2^1 \in \ch(K^1) \\ K_1^2, K_2^2 \in \ch(K^2)}} 
\G_{I, J, K}^{\mathbf{K}}, 
\end{align*}
where $\mathbf{K} = (K_1^1, K_1^2, K_2^1, K_2^2)$ and 
\begin{align*}
\G_{I, J, K}^{\mathbf{K}} 
&:= |J^1|^{-\frac12} |J^2|^{-\frac12}  
\langle h_{K^1} \rangle_{K_1^1} \langle h_{K^1} \rangle_{K_2^1} 
\langle h_{K^2} \rangle_{K_1^2} \langle h_{K^2} \rangle_{K_2^2} 
\\
&\qquad\times \big\langle T(\mathbf{1}_{K_1^1} \otimes \mathbf{1}_{K_1^2}, 
\mathbf{1}_{J^1} \otimes \mathbf{1}_{J^2}), 
\mathbf{1}_{K_2^1} \otimes \mathbf{1}_{K_2^2} \big\rangle. 
\end{align*}
If $K_1^1 = K_2^1 = J^1$ and $K_1^2 = K_2^2 = J^2$, then the hypothesis \eqref{H3}, namely the weak compactness property, gives 
\begin{align*}
\G_{I, J, K}^{\mathbf{K}} 
\le \prod_{i=1}^2 F_i(J^i) |J^i|^{\frac12} |K^i|^{-1} 
\lesssim \prod_{i=1}^2 F_i(J^i) |J^i|^{-\frac12}. 
\end{align*}
If $K_1^1 = K_2^1 = J^1$ and either $K_1^2 \neq J^2$ or $K_2^2 \neq J^2$, then by the compact partial kernel representation (cf. \eqref{H2}), the size condition, and Lemma \ref{lem:QQ}, 
\begin{align*}
\G_{I, J, K}^{\mathbf{K}} 
&\le C(\mathbf{1}_{J^1}, \mathbf{1}_{J^1}, \mathbf{1}_{J^1}) 
\mathscr{Q}_2(K_1^2, J^2, K_2^2)  
\prod_{i=1}^2 |J^i|^{-\frac12} |K^i|^{-1} 
\\
&\lesssim F_1(J^1) |J^1|^{\frac12} |K^1|^{-1} 
\bigg[ \frac{\ell(K^2)}{\ell(Q^2)} \bigg]^{\frac{\delta_2}{2}} 
F^2(K^2, Q^2) \frac{|I^2|^{\frac12} |J^2|^{\frac12} |K^2|^{\frac12}}{|Q^2|^2}. 
\end{align*}
If either $K_1^1 \neq J^1$ or $K_1^2 \neq J^1$ and $K_1^2 = K_2^2 = J^2$, then it is similar to case (ii) with $K_1^2 = J^2 = K_2^2$. Thus, \eqref{GAA-3} leads to 
\begin{align*}
\G_{I, J, K}^{\mathbf{K}} 
&\lesssim \bigg[ \frac{\ell(K_2^1)}{\ell(Q^1)} \bigg]^{\frac{\delta_1}{2}} 
F_1(K_2^1, Q^1) \frac{|K_1^1|^{\frac12} |J^1|^{\frac12} |K_2^1|^{\frac12}}{|Q^1|^2}
\\  
&\quad\times F_2(J^2) |I^2|^{-\frac12} |J^2|^{\frac12} |K^2|^{-\frac12}
\\ 
&\lesssim \bigg[ \frac{\ell(K^1)}{\ell(Q^1)} \bigg]^{\frac{\delta_1}{2}} 
F_1(K^1, Q^1) \frac{|I^1|^{\frac12} |J^1|^{\frac12} |K^1|^{\frac12}}{|Q^1|^2}
\\ \nonumber 
&\quad\times F_2(J^2) |I^2|^{-\frac12} |J^2|^{\frac12} |K^2|^{-\frac12}
\end{align*}
Moreover, if either $K_1^1 \neq J^1$ or $K_1^2 \neq J^1$, and either $K_1^2 \neq J^2$ or $K_2^2 \neq J^2$, then it is similar to case (i). Hence, by \eqref{GAA-1}, 
\begin{align*}
|\G_{I, J, K}^{\bf K}| 
&\le \prod_{i=1}^2 \mathscr{Q}_i(K_1^i, J^i, K_2^i) 
|K_1^i|^{-\frac12} |J^i|^{-\frac12} |K_2^i|^{-\frac12} 
\\
&\lesssim \prod_{i=1}^2 \bigg[ \frac{\ell(K_2^i)}{\ell(Q^i)} \bigg]^{\frac{\delta_i}{2}} 
F_i(K_2^i, Q^i) \frac{|K_1^i|^{\frac12} |J^i|^{\frac12} |K_2^i|^{\frac12}}{|Q^i|^2}
\\
&\lesssim \prod_{i=1}^2 \bigg[ \frac{\ell(K^i)}{\ell(Q^i)} \bigg]^{\frac{\delta_i}{2}} 
F_i(K^i, Q^i) \frac{|I^i|^{\frac12} |J^i|^{\frac12} |K^i|^{\frac12}}{|Q^i|^2}. 
\end{align*}
Accordingly, the desired inequality follows from these estimates above.
\end{proof}

Set 
\begin{align*}
a_{I, J, K, Q} 
= \frac{\G_{I, J, K}}{C_0 \prod_{i=1}^2 [\ell(K^i)/\ell(Q^i)]^{\delta_i/2}} 
\end{align*}
if for each $i=1, 2$, $I^i, J^i \in \D^i$ and $K^i \in \D_{\rm{good}}^i$ with $\ell(K^i) \le \ell(I^i) = 2\ell(J^i)$, satisfy $\max\{\d(K^i, I^i), \, \d(K^i, J^i)\} \le 2 \ell(K^i)^{\gamma_i} \ell(J^i)^{1-\gamma_i}$, and either $K^i \cap I^i = \emptyset$ or $K^i \cap J^i = \emptyset$ or $K^i = I^i$, and otherwise set $a_{I, J, K, Q} = 0$. Define 
\begin{align*}
\mathcal{F}(Q) := \mathcal{F}^1(Q^1) \mathcal{F}^2(Q^2), 
\quad \forall \, Q = Q^1 \times Q^2 \in \D^1 \times \D^2 =: \D. 
\end{align*}
By Lemma \ref{lem:GAA} and that $F^i \in \F^i$ and $(F_1^i, F_2^i, F_3^i) \in \F$, we see that \eqref{FQ-1} and \eqref{FQ-2} hold whenever $C_0$ is large enough. Thus, it follows from \eqref{AA-common} that 
\begin{align*}
\mathscr{S}_4^1   
&= \sum_{\substack{0 \le i_1 \le k_1 \le \vartheta \\ 0 \le i_2 \le k_2 \le \vartheta}} 
\sum_{\substack{I^1, J^1 \in \D^1, \, K^1 \in \D_{\rm{good}}^1 \\ 
\max\{\d(K^1, I^1), \, \d(K^1, J^1)\} \le 2 \ell(K^1)^{\gamma_1} \ell(J^1)^{1-\gamma_1} \\ 
K^1 \cap I^1 = \emptyset \text{ or } K^1 \cap J^1 = \emptyset \text{ or } K^1 = I^1 \\ 
2 \ell(J^1) = \ell(I^1) = 2^{-i_1} \ell(Q^1) \\ \ell(K^1) = 2^{-k_1} \ell(Q^1), \, 
I^1 \vee J^1 \vee K^1 = Q^1}}
\\
&\quad\times \sum_{\substack{I^2, J^2 \in \D^2, \, K^2 \in \D_{\rm{good}}^2 \\ 
\max\{\d(K^2, I^2), \, \d(K^2, J^2)\} \le 2 \ell(K^2)^{\gamma_2} \ell(J^2)^{1-\gamma_2} \\ 
K^2 \cap I^2 = \emptyset \text{ or } K^2 \cap J^2 = \emptyset \text{ or } K^2 = I^2\\ 
2 \ell(J^2) = \ell(I^2) = 2^{-i_2} \ell(Q^2) \\ \ell(K^2) = 2^{-k_2} \ell(Q^2), \, 
I^2 \vee J^2 \vee K^2 = Q^2}} 
\G_{I, J, K} \, f_1^I \, f_2^J \, f_3^K
\\
&= C_0 \sum_{\substack{0 \le i_1 \le k_1 \le \vartheta \\ 0 \le i_2 \le k_2 \le \vartheta}}   
2^{-k_1 \frac{\delta_1}{2}} 2^{-k_2 \frac{\delta_2}{2}}
\sum_{\substack{Q^1 \in \D^1 \\ Q^2 \in \D^2}} 
\sum_{\substack{I^1 \in \D_{i_1}^1(Q^1) \\ 
J^1 \in \D_{i_1+1}^1(Q^1) \\ K^1 \in \D_{k_1}^1(Q^1)}}
\sum_{\substack{I^2 \in \D_{i_2}^2(Q^2) \\ 
J^2 \in \D_{i_2+1}^2(Q^2) \\ K^2 \in \D_{k_2}^2(Q^2)}} 
a_{I, J, K, Q} \, f_1^I \, f_2^J \, f_3^K
\\   
&= C_0 \sum_{k_1=0}^{\vartheta} \sum_{k_2=0}^{\vartheta} 
2^{-k_1 \frac{\delta_1}{2}} 2^{-k_2 \frac{\delta_2}{2}}
\sum_{i_1=0}^{k_1} \sum_{i_2=0}^{k_2} 
\big\langle \mathbf{S}_{\D}^{\sigma(i, k)}(f_1, f_2), f_3 \big\rangle, 
\end{align*}
where $\sigma(i, k) := \big((i_1, i_2), (i_1+1, i_2+1), (k_1, k_2) \big)$.

\subsection{Adjacent/Nested}\label{sec:AN}

In this case, denote $I^2 := (J^2)^{(1)}$. We perform the decomposition 
\begin{align*}
\G_{I, J, K}
= \G_{I, J, K}^1 + \G_{I, J, K}^2 + \G_{I, J, K}^3, 
\end{align*}
where 
\begin{align*}
\G_{I, J, K}^1 
&:= \langle h_{I^2} \rangle_{J^2} |J^2|^{-\frac12} 
\langle T(h_{I^1} \otimes 1, h_{J^1}^0 \otimes 1), 
h_{K^1} \otimes h_{K^2} \rangle, 
\\
\G_{I, J, K}^2 
&:= \langle h_{I^2} \rangle_{J^2} |J^2|^{-\frac12} 
\langle T(h_{I^1} \otimes 1, h_{J^1}^0 \otimes \mathbf{1}_{(J^2)^c}), 
h_{K^1} \otimes h_{K^2} \rangle, 
\\
\G_{I, J, K}^3 
&:= |J^2|^{-\frac12}  
\langle T(h_{I^1} \otimes \phi_{J^2}, h_{J^1}^0 \otimes \mathbf{1}_{J^2}), 
h_{K^1} \otimes h_{K^2} \rangle,  
\end{align*}
where $\phi_{J^2} := \mathbf{1}_{(J^2)^c} \big(h_{I^2} - \langle h_{I^2} \rangle_{J^2}\big)$. Define the  summation $\mathscr{S}_{5, k}^1$ corresponding to $\G_{I, J, K}^k$, $k=1, 2, 3$.

\medskip
\noindent{\bf $\bullet$ Partial paraproducts.} We rewrite 
\begin{align*}
\mathscr{S}_{5, 1}^1   
&= \sum_{\substack{I^1, J^1 \in \D^1, \, K^1 \in \D_{\rm{good}}^1 \\ \ell(K^1) \le \ell(I^1) = 2 \ell(J^1) \\ 
\max\{\d(K^1, I^1), \, \d(K^1, J^1)\} \le 2\ell(K^1)^{\gamma_1} \ell(J^1)^{1-\gamma_1} \\ 
K^1 \cap I^1 = \emptyset \text{ or } K^1 \cap J^1 = \emptyset \text{ or } K^1 = I^1}}  
\sum_{K^2 \in \D_{\rm{good}}^2}  
\sum_{\substack{J^2 \in \D^2 \\ J^2 \supset K^2}}  
\langle f_3, h_{K^1} \otimes h_{K^2} \rangle
\\
&\quad\times \langle T(h_{I^1} \otimes 1, h_{J^1}^0 \otimes 1), 
h_{K^1} \otimes h_{K^2} \rangle
\big\langle \langle f_1, h_{I^1} \rangle, h_{I^2} \big\rangle 
\big\langle \langle f_2, h_{J^1}^0 \rangle \big\rangle_{J^2}
\langle h_{I^2} \rangle_{J^2}. 
\end{align*}
Much as above, from $\mathscr{S}^2$, we obtain the corresponding term 
\begin{align*}
\mathscr{S}_{5, 1}^2   
&= \sum_{\substack{I^1, J^1 \in \D^1, \, K^1 \in \D_{\rm{good}}^1 \\ \ell(K^1) \le \ell(I^1) = 2 \ell(J^1) \\ 
\max\{\d(K^1, I^1), \, \d(K^1, J^1)\} \le 2\ell(K^1)^{\gamma_1} \ell(J^1)^{1-\gamma_1} \\ 
K^1 \cap I^1 = \emptyset \text{ or } K^1 \cap J^1 = \emptyset \text{ or } K^1 = I^1}} 
\sum_{K^2 \in \D_{\rm{good}}^2}  
\sum_{\substack{J^2 \in \D^2 \\ J^2 \supset K^2}}   
\langle f_3, h_{K^1} \otimes h_{K^2} \rangle
\\
&\quad\times \langle T(h_{I^1} \otimes 1, h_{J^1}^0 \otimes 1), 
h_{K^1} \otimes h_{K^2} \rangle
\big\langle \langle f_1, h_{I^1} \rangle \big\rangle_{I^2} 
\big\langle \langle f_2, h_{J^1}^0 \rangle, h_{I^2} \big\rangle 
\langle h_{I^2} \rangle_{J^2}.  
\end{align*} 
Write $Q = Q^1 \times Q^2 \in \D^1 \times \D^2 =: \D$. Set 
\begin{align*}
a_{I^1, J^1, K^1, Q} 
= \frac{\langle T(h_{I^1} \otimes 1, h_{J^1}^0 \otimes 1), h_{K^1} \otimes h_{Q^2} \rangle}{C_0 [\ell(K^1)/\ell(Q^1)]^{\delta_1/2}} 
\end{align*}
if $I^1, J^1 \in \D^1$, $K^1 \in \D_{\rm{good}}^1$, and $K^2 \in \D_{\rm{good}}^2$ satisfy $\ell(K^1) \le \ell(I^1) = 2\ell(J^1)$, $\max\{\d(K^1, I^1), \, \d(K^1, J^1)\} \le 2\ell(K^1)^{\gamma_1} \ell(J^1)^{1-\gamma_1}$, and either $K^1 \cap I^1 = \emptyset$ or $K^1 \cap J^1 = \emptyset$ or $K^1 = I^1$, and otherwise set $a_{I^1, J^1, K^1, Q} = 0$. As done in \eqref{S312} with $\overline{h}_{Q^2} := \frac{\mathbf{1}_{Q^2}}{|Q^2|}$, we have 
\begin{align*}
\mathscr{S}_{5, 1}^1 + \mathscr{S}_{5, 1}^2  
&= C_0 \sum_{0 \le i_1 \le k_1 \le \vartheta}  2^{-k_1 \frac{\delta_1}{2}} 
\sum_{\substack{Q^1 \in \D^1 \\ Q^2 \in \D^2}} 
\sum_{\substack{I^1 \in \D_{i_1}^1(Q^1) \\ 
J^1 \in \D_{i_1+1}^1(Q^1) \\ K^1 \in \D_{k_1}^1(Q^1)}} 
a_{I^1, J^1, K^1, Q} 
\\
&\quad\times \langle f_1, h_{I^1} \otimes \overline{h}_{Q^2} \rangle 
\langle f_2, h_{J^1}^0 \otimes \overline{h}_{Q^2} \rangle
\langle f_3, h_{K^1} \otimes h_{Q^2} \rangle 
\\ 
\\
&= C_0 \sum_{k_1=0}^{\vartheta} 2^{-k_1 \frac{\delta_1}{2}} 
\sum_{i_1=0}^{k_1} \langle \mathbf{P}_{\D}^{1, i_1, i_1+1, k_1}(f_1, f_2), f_3 \rangle. 
\end{align*}
Due to Lemma \ref{lem:PPAN}, $\mathbf{P}_{\D}^{1, i_1, i_1+1, k_1}$ is a partial paraproduct, where $\mathcal{F}^1(Q^1)$ is given in \eqref{def:FQ}.

\medskip
\noindent{\bf $\bullet$ Bilinear dyadic shifts. }
We then treat $\mathscr{S}_{5, 2}^1$ and $\mathscr{S}_{5, 3}^1$.

\begin{lemma}\label{lem:GAN23}
For each $k=2, 3$, there holds 
\begin{align*}
|\G_{I, J, K}^k| 
&\lesssim \bigg[ \frac{\ell(K^1)}{\ell(Q^1)} \bigg]^{\frac{\delta_1}{2}} 
\widetilde{F}^1(Q^1) \frac{|I^1|^{\frac12} |J^1|^{\frac12} |K^1|^{\frac12}}{|Q^1|^2}
\bigg[ \frac{\ell(K^2)}{\ell(J^2)} \bigg]^{\frac{\delta_2}{2}} 
F^2(K^2, J^2) \frac{|K^2|^{\frac12}}{|J^2|}, 
\end{align*}
where $\widetilde{F}^1$ and $F^2(\cdot, \cdot)$ are defined in \eqref{def:FQ} and \eqref{def:FKQ} respectively.  
\end{lemma}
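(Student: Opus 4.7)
\emph{Proof proposal.} The plan is to adapt the proof of Lemma~\ref{lem:GSN23} (Separated/Nested) by replacing the separated-style first-parameter estimates (which produced $\mathscr{P}_1(I^1,J^1,K^1)$) with the adjacent-style estimates used in Lemma~\ref{lem:GAA}. The second parameter is nested ($K^2 \subset J^2$, $I^2 = (J^2)^{(1)}$), so its treatment is identical to that in Lemma~\ref{lem:GSN23} and will produce the factor $[\ell(K^2)/\ell(J^2)]^{\delta_2/2} F^2(K^2,J^2) |K^2|^{1/2}/|J^2|$ through Lemmas~\ref{lem:QQ} and~\ref{lem:RR}. The first parameter is adjacent, so it will yield the factor $[\ell(K^1)/\ell(Q^1)]^{\delta_1/2}\widetilde{F}^1(Q^1)|I^1|^{1/2}|J^1|^{1/2}|K^1|^{1/2}/|Q^1|^2$ via the analysis of the three adjacent subcases from Lemma~\ref{lem:GAA}.

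I would begin with $\G_{I,J,K}^2$ and split according to whether $\ell(J^2) > 2^r \ell(K^2)$ or $\ell(J^2) \le 2^r \ell(K^2)$. In the first case, write $1 = \mathbf{1}_{3K^2} + \mathbf{1}_{(3K^2)^c}$ in the second-parameter argument of the first input, and use the goodness of $K^2$ to obtain $|x_2 - z_2| \ge \ell(K^2)^{\gamma_2}\ell(J^2)^{1-\gamma_2} \ge \ell(K^2)^{1/2}\ell(J^2)^{1/2}$ for $x_2 \in K^2$, $z_2 \in (J^2)^c$. Then the cancellation of $h_{K^2}$ together with the H\"older or mixed size--H\"older condition gives the second-parameter $\delta_2$-gain via Lemma~\ref{lem:RR} with $\ell_2 = \ell(K^2)^{1/2}\ell(J^2)^{1/2}$. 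In the second case $\ell(J^2) \simeq \ell(K^2)$, split $\mathbf{1}_{(J^2)^c} = \mathbf{1}_{3J^2\setminus J^2} + \mathbf{1}_{(3J^2)^c}$ and handle the near/far parts via Lemma~\ref{lem:QQ} and Lemma~\ref{lem:RR} with $\ell_2 = \ell(K^2)$; here $[\ell(K^2)/\ell(J^2)]^{\delta_2/2}$ is of order $1$ and harmless.

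For the first parameter, the adjacent geometry forces the same three subcases as in Lemma~\ref{lem:GAA}: (i) $K^1 \cap I^1 = \emptyset$; (ii) $K^1 \cap J^1 = \emptyset$; and (iii) $K^1 = I^1$. In cases (i) and (ii), $\supp(h_{I^1})$ and $\supp(h_{K^1})$ (or $\supp(h_{J^1}^0)$ and $\supp(h_{K^1})$) are disjoint, so the compact full kernel representation applies and Lemma~\ref{lem:QQ} yields the adjacent first-parameter factor involving $F^1(K^1,Q^1)$. In case (iii), I would expand via the child decomposition $h_{K^1} = \sum_{K_1^1,K_2^1 \in \ch(K^1)} \langle h_{K^1}\rangle_{K_1^1}\langle h_{K^1}\rangle_{K_2^1}\,(\cdots)$ used in Lemma~\ref{lem:GAA}, and consider the subcases $K_1^1 = K_2^1 = J^1$ (where hypothesis~\eqref{H3} or the compact partial kernel representation~\eqref{H2} bounds the first-parameter piece by $F^1(J^1)|J^1|^{1/2}/|K^1|$, absorbed into $\widetilde{F}^1(Q^1)$) versus the remaining subcases (where disjoint supports again allow Lemma~\ref{lem:QQ}). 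Summing these contributions produces the first-parameter factor $[\ell(K^1)/\ell(Q^1)]^{\delta_1/2}\widetilde{F}^1(Q^1) |I^1|^{1/2}|J^1|^{1/2}|K^1|^{1/2}/|Q^1|^2$ as defined in \eqref{def:FQ}.

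The treatment of $\G_{I,J,K}^3$ runs in parallel. Using that $\supp(\phi_{J^2}) \subset (J^2)^c$ and $\|\phi_{J^2}\|_\infty \lesssim |J^2|^{-1/2}$, in the large case $\ell(J^2) > 2^r\ell(K^2)$ the goodness estimate $|x_2 - z_2| \ge \ell(K^2)^{1/2}\ell(J^2)^{1/2}$ applies exactly as for $\G_{I,J,K}^2$; in the small case $\ell(J^2) \le 2^r\ell(K^2)$, split $\phi_{J^2} = \phi_{J^2}\mathbf{1}_{3J^2\setminus J^2} + \phi_{J^2}\mathbf{1}_{(3J^2)^c}$ and mirror the preceding analysis. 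The main obstacle I anticipate is the combinatorial bookkeeping in subcase~(iii) where both parameters' geometries interact: the child-level expansion in the first parameter must be compatible with the near/far split in the second parameter, and in particular the compact partial kernel representation must be invoked so that the $\R^{n_2}$-factor (now involving $\mathbf{1}_{(J^2)^c}$ or $\phi_{J^2}$) still produces the $\delta_2$-gain via Lemma~\ref{lem:RR}. Once this case-by-case decomposition is carried out, collecting the estimates gives the stated bound, with $F^2(K^2,3J^2)$ absorbed into $F^2(K^2,J^2)$ via \eqref{F3Q}.
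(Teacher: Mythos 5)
Your proposal follows essentially the same route as the paper's proof: the case split on $\ell(J^2)$ versus $2^r\ell(K^2)$, the decompositions $1=\mathbf{1}_{3K^2}+\mathbf{1}_{(3K^2)^c}$ and $\mathbf{1}_{(J^2)^c}=\mathbf{1}_{3J^2\setminus J^2}+\mathbf{1}_{(3J^2)^c}$, Lemmas \ref{lem:QQ} and \ref{lem:RR} with $\ell_2=\ell(K^2)^{1/2}\ell(J^2)^{1/2}$ (resp.\ $\ell(K^2)$), and the three adjacent subcases with the child expansion of $h_{K^1}$ in the case $K^1=I^1$ are exactly what the paper does. The only small imprecision is in the diagonal subcase $K_1^1=K_2^1=J^1$: since the second-parameter supports are separated while the first-parameter ones coincide, the correct tool is the compact partial kernel representation \eqref{H2} (yielding $C(\mathbf{1}_{J^1},\mathbf{1}_{J^1},\mathbf{1}_{J^1})$ times an $\mathscr{R}_2$ factor), not the weak compactness property \eqref{H3}, which requires all inputs to be indicators of the same rectangle.
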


\begin{proof}
We only focus on $\G_{I, J, K}^2$ because the estimate for $\G_{I, J, K}^3$ is similar.  
First, suppose $K^1 \cap I^1 = \emptyset$. In the case $\ell(J^2) > 2^{\vartheta} \ell(K^2)$, splitting $1 = \mathbf{1}_{3K^2} + \mathbf{1}_{(3K^2)^c}$, we have 
\begin{align*}
\G_{I, J, K}^2 = \G_{I, J, K}^{2, 1} + \G_{I, J, K}^{2, 2}.
\end{align*}
In view of \eqref{dxz} and Lemmas \ref{lem:QQ} and \ref{lem:RR} applied to $\ell_2 = \ell(K^2)^{\frac12} \ell(J^2)^{\frac12}$, we use the cancellation of $h_{K^1}$ and $h_{K^2}$, the compact full kernel representation (cf. \eqref{H1}), and the mixed size-H\"{o}lder condition to arrive at 
\begin{align}\label{ANG21}
&|\G_{I, J, K}^{2, j}| 
\le \mathscr{Q}_1(I^1, J^1, K^1) \mathscr{R}_2^j(J^2, K^2) 
\prod_{i=1}^2 |I^i|^{-\frac12} |J^i|^{-\frac12} |K^i|^{-\frac12} 
\\ \nonumber 
&\lesssim \bigg[ \frac{\ell(K^1)}{\ell(Q^1)} \bigg]^{\frac{\delta_1}{2}} 
F^1(K^1, Q^1) \frac{|I^1|^{\frac12} |J^1|^{\frac12} |K^1|^{\frac12}}{|Q^1|^2}
\bigg[ \frac{\ell(K^2)}{\ell(J^2)} \bigg]^{\frac{\delta_2}{2}} 
F^2(J^2, K^2) \frac{|K^2|^{\frac12}}{|J^2|},  
\end{align}
for each $j=1, 2$.

To treat the case $\ell(J^2) \le 2^{\vartheta} \ell(K^2)$, we split 
\begin{align}\label{ANG2}
\G_{I, J, K}^2 
&= \langle h_{I^2} \rangle_{J^2} |J^2|^{-\frac12} 
\langle T(h_{I^1} \otimes \mathbf{1}_{3K^2}, 
h_{J^1}^0 \otimes \mathbf{1}_{3J^2 \setminus J^2}), 
h_{K^1} \otimes h_{K^2} \rangle 
\\ \nonumber
&\quad+ \langle h_{I^2} \rangle_{J^2} |J^2|^{-\frac12} 
\langle T(h_{I^1} \otimes \mathbf{1}_{3K^2}, 
h_{J^1}^0 \otimes \mathbf{1}_{(3J^2)^c}), 
h_{K^1} \otimes h_{K^2} \rangle 
\\ \nonumber
&\quad+ \langle h_{I^2} \rangle_{J^2} |J^2|^{-\frac12} 
\langle T(h_{I^1} \otimes \mathbf{1}_{(3K^2)^c}, 
h_{J^1}^0 \otimes \mathbf{1}_{3J^2 \setminus J^2}), 
h_{K^1} \otimes h_{K^2} \rangle 
\\ \nonumber 
&\quad+ \langle h_{I^2} \rangle_{J^2} |J^2|^{-\frac12} 
\langle T(h_{I^1} \otimes \mathbf{1}_{(3K^2)^c}, 
h_{J^1}^0 \otimes \mathbf{1}_{(3J^2)^c}), 
h_{K^1} \otimes h_{K^2} \rangle. 
\end{align}
The first term is similar to $\G_{I, J, K}$ in the case $K^2 \cap J^2 = \emptyset$ in Section \ref{sec:AA}, which along with Lemma \ref{lem:GAA} yields the desired bound. The second and last terms are analogous to $\G_{I, J, K}^{2, 1}$ and $\G_{I, J, K}^{2, 2}$ respectively, but now $\ell_2 = \ell(K^2)$. The third term is symmetric to the second one. Eventually, the last three terms are dominated by 
\begin{align*}
\bigg[ \frac{\ell(K^1)}{\ell(Q^1)} \bigg]^{\frac{\delta_1}{2}} 
F_1(K^1, Q^1) \frac{|I^1|^{\frac12} |J^1|^{\frac12} |K^1|^{\frac12}}{|Q^1|^2}
F^2(J^2, K^2) \frac{|K^2|^{\frac12}}{|J^2|}, 
\end{align*}
which together with $\ell(J^2) \simeq \ell(K^2)$ gives the estimate as desired. 

Similarly, one can handle the case $K^1 \cap J^1 = \emptyset$. 

Next, suppose $K^1 \cap J^1 \neq \emptyset$ and $K^1 = I^1$. Obviously, $J^1 \in \ch(K^1)$. Then we rewrite 
\begin{align*}
\G_{I, J, K}^2
= \sum_{K_1^1, K_2^1 \in \ch(K^1)} \G_{I, J, K}^{K_1^1, K_2^1}, 
\end{align*}
where 
\begin{align*}
\G_{I, J, K}^{K_1^1, K_2^1} 
:= \langle h_{I^1} \rangle_{K_1^1} \langle h_{I^2} \rangle_{J^2}
|J^1|^{-\frac12} |J^2|^{-\frac12} 
\langle h_{K^1} \rangle_{K_2^1} 
\big\langle T(\mathbf{1}_{K_1^1} \otimes 1, \mathbf{1}_{J^1} \otimes \mathbf{1}_{(J^2)^c}), 
\mathbf{1}_{K_2^1} \otimes h_{K^2} \big\rangle. 
\end{align*}
In the case $\ell(J^2) > 2^{\vartheta} \ell(K^2)$, we split $1 = \mathbf{1}_{3K^2} + \mathbf{1}_{(3K^2)^c}$. If $K_1^1 = J^1 = K_2^1$, then the compact partial kernel representation (cf. \eqref{H2}) and Lemma \ref{lem:RR} applied to $\ell_2 = \ell(K^2)^{\frac12} \ell(J^2)^{\frac12}$ give 
\begin{align}\label{ANGK-1}
|\G_{I, J, K}^{K_1^1, K_2^1}| 
\le  C(\mathbf{1}_{J^1}, \mathbf{1}_{J^1}, \mathbf{1}_{J^1}) 
\big[(\mathscr{R}_2^1 + \mathscr{R}_2^2)(J^2, K^2) \big]
\prod_{i=1}^2 |I^i|^{-\frac12} |J^i|^{-\frac12} |K^i|^{-\frac12} 
\\ \nonumber 
\lesssim \bigg[ \frac{\ell(K^1)}{\ell(Q^1)} \bigg]^{\frac{\delta_1}{2}} F_1(J^1) 
\frac{|I^1|^{\frac12} |J^1|^{\frac12} |K^1|^{\frac12}}{|Q^1|^2}
\bigg[ \frac{\ell(K^2)}{\ell(J^2)} \bigg]^{\frac{\delta_2}{2}} 
F^2(K^2, J^2) \frac{|K^2|^{\frac12}}{|J^2|}, 
\end{align}
provided that $\ell(I^1) \simeq \ell(J^1) \simeq \ell(K^1) \simeq \ell(Q^1)$. If $K_1^1 \neq J^1$ or $K_2^1 \neq J^1$, then the compact full kernel representation (cf. \eqref{H1}) and Lemmas \ref{lem:QQ} and \ref{lem:RR} applied to $\ell_2 = \ell(K^2)^{\frac12} \ell(J^2)^{\frac12}$ give 
\begin{align}\label{ANGK-2}
|\G_{I, J, K}^{K_1^1, K_2^1}| 
\le \mathscr{Q}_1(K_1^1, J^1, K_2^1) 
\big[(\mathscr{R}_2^1 + \mathscr{R}_2^2)(J^2, K^2) \big]
\prod_{i=1}^2 |I^i|^{-\frac12} |J^i|^{-\frac12} |K^i|^{-\frac12}
\\ \nonumber 
\lesssim \bigg[ \frac{\ell(K_2^1)}{\ell(Q^1)} \bigg]^{\frac{\delta_1}{2}} 
F^1(K_2^1, Q^1) \frac{|K_1^1|^{\frac12} |J^1|^{\frac12} |K_2^1|^{\frac12}}{|Q^1|^2}  
\bigg[ \frac{\ell(K^2)}{\ell(J^2)} \bigg]^{\frac{\delta_2}{2}} 
F^2(K^2, J^2) \frac{|K^2|^{\frac12}}{|J^2|}
\\ \nonumber 
\lesssim \bigg[ \frac{\ell(K^1)}{\ell(Q^1)} \bigg]^{\frac{\delta_1}{2}} 
F^1(K^1, Q^1) \frac{|I^1|^{\frac12} |J^1|^{\frac12} |K^1|^{\frac12}}{|Q^1|^2}  
\bigg[ \frac{\ell(K^2)}{\ell(J^2)} \bigg]^{\frac{\delta_2}{2}} 
F^2(K^2, J^2) \frac{|K^2|^{\frac12}}{|J^2|}. 
\end{align}
In the case $\ell(J^2) \le 2^{\vartheta} \ell(K^2)$, using the decomposition as in \eqref{ANG2} and the argument as in \eqref{ANGK-1}--\eqref{ANGK-2} with $\ell_2 = \ell(K^2) \simeq \ell(J^2)$, we obtain the desired estimate. 

Finally, note that $F^1(K^1, Q^1) \simeq F^1(Q^1, Q^1)$ due to $\ell(K^1) \simeq \ell(Q^1)$. Gathering all estimates above, we conclude the proof. 
\end{proof}

Note that $I^2 = (J^2)^{(1)}$. Set $Q^2 = I^2$ and 
\begin{align*}
a_{I, J, K, Q} 
= \frac{\G_{I, J, K}^2 + \G_{I, J, K}^3}{C_0 \prod_{i=1}^2 [\ell(K^i)/\ell(Q^i)]^{\delta_i/2}} 
\end{align*}
if $I^1, J^1 \in \D^1$, $K^1 \in \D_{\rm{good}}^1$, $J^2 \in \D^2$, and $K^2 \in \D_{\rm{good}}^2$ satisfy $\ell(K^1) \le \ell(I^1) = 2\ell(J^1)$, $\max\{\d(K^1, I^1), \, \d(K^1, J^1)\} \le 2 \ell(K^1)^{\gamma_1} \ell(J^1)^{1-\gamma_1}$, either $K^1 \cap I^1 = \emptyset$ or $K^1 \cap J^1 = \emptyset$ or $K^1 = I^1$, and $K^2 \subset J^2 \in \ch(Q^2)$, and otherwise set $a_{I, J, K, Q} = 0$. Thus, \eqref{AA-common} applied to $i=1$ gives 
\begin{align*}
\mathscr{S}_{5, 2}^1 + \mathscr{S}_{5, 3}^1   
= C_0 \sum_{\substack{0 \le i_1 \le k_1 \le \vartheta \\ k_2 \ge 1}} 
2^{-k_1 \frac{\delta_1}{2}} 2^{-k_2 \frac{\delta_2}{2}}
\sum_{\substack{Q^1 \in \D^1 \\ Q^2 \in \D^2 \\ K^2 \in \D_{k_2}^2(Q^2)}}
\sum_{\substack{I^1 \in \D^1_{i_1}(Q^1) \\ J^1 \in \D^1_{i_1+1}(Q^1) \\ K^1 \in \D^1_{k_1}(Q^1)}}
a_{I, J, K, Q} \, f_1^I \, f_2^J \, f_3^K. 
\end{align*}
For each $k_2 \in \N$, if we define 
\begin{align*}
\mathcal{F}(Q) := \widetilde{F}^1(Q^1) 
F_1^2(\ell(Q^2)) \widetilde{F}_2^2(2^{-k_2} \ell(Q^2)) \widetilde{F}_3^2(Q^2),  
\end{align*}
for all $Q = Q^1 \times Q^2 \in \D^1 \times \D^2 =: \D$, then Lemma \ref{lem:GAN23} and that $F^i \in \F^i$ and $(F_1^i, F_2^i, F_3^i) \in \F$ impliy \eqref{FQ-1} and \eqref{FQ-2} hold whenever $C_0$ is large enough. This leads to 
\begin{align*}
\mathscr{S}_{5, 2}^1 + \mathscr{S}_{5, 3}^1      
= C_0 \sum_{k_1=0}^{\vartheta} \sum_{k_2=1}^{\infty} 
2^{-k_1 \frac{\delta_1}{2}} 2^{-k_2 \frac{\delta_2}{2}}
\sum_{i_1=0}^{k_1} 
\big\langle \mathbf{S}_{\D}^{i, i+1, k}(f_1, f_2), f_3 \big\rangle,  
\end{align*}
where $\sigma(i, k) := \big((i_1, 0), (i_1+1, 1), (k_1, k_2) \big)$.

\subsection{Nested/Nested}\label{sec:NN} 
Denote $I^1 := (J^1)^{(1)}$ and $I^2 := (J^2)^{(1)}$. Set $\phi_{J^i} := \mathbf{1}_{(J^i)^c} \big(h_{I^i} - \langle h_{I^i} \rangle_{J^i}\big)$, $i=1, 2$. Splitting $h_{I^i} = \langle h_{I^i} \rangle_{J^i} + \phi_{J^i}$, $i=1, 2$, we obtain four terms, then for the terms containing $\langle h_{I^i} \rangle_{J^i}$, we split $\mathbf{1}_{J^i} = 1 - \mathbf{1}_{(J^i)^c}$. Thus, 

\begin{align*}
\G_{I, J, K} = \sum_{j=1}^9 \G_{I, J, K}^j, 
\end{align*}
where 
\begin{align*}
\G_{I, J, K}^1 
&:= |J^1|^{-\frac12} |J^2|^{-\frac12} 
\langle h_{I^1} \rangle_{J^1}  \langle h_{I^2} \rangle_{J^2}  
\langle T(1, 1), h_{K^1} \otimes h_{K^2} \rangle, 
\\ 
\G_{I, J, K}^2 
&:= - |J^1|^{-\frac12} |J^2|^{-\frac12} 
\langle h_{I^1} \rangle_{J^1}  \langle h_{I^2} \rangle_{J^2}  
\langle T(1, 1 \otimes \mathbf{1}_{(J^2)^c}), h_{K^1} \otimes h_{K^2} \rangle, 
\\
\G_{I, J, K}^3 
&:= - |J^1|^{-\frac12} |J^2|^{-\frac12} 
\langle h_{I^1} \rangle_{J^1}  \langle h_{I^2} \rangle_{J^2}  
\langle T(1, \mathbf{1}_{(J^1)^c} \otimes 1), h_{K^1} \otimes h_{K^2} \rangle, 
\\
\G_{I, J, K}^4 
&:= |J^1|^{-\frac12} |J^2|^{-\frac12} 
\langle h_{I^1} \rangle_{J^1}  
\langle T(1 \otimes \phi_{J^2}, 1 \otimes \mathbf{1}_{J^2}), h_{K^1} \otimes h_{K^2} \rangle,  
\\
\G_{I, J, K}^5 
&:= |J^1|^{-\frac12} |J^2|^{-\frac12} 
\langle h_{I^2} \rangle_{J^2}  
\langle T(\phi_{J^1} \otimes 1, \mathbf{1}_{J^1} \otimes 1), h_{K^1} \otimes h_{K^2} \rangle,  
\\
\G_{I, J, K}^6 
&:= |J^1|^{-\frac12} |J^2|^{-\frac12} 
\langle h_{I^1} \rangle_{J^1}  \langle h_{I^2} \rangle_{J^2}  
\langle T(1, \mathbf{1}_{(J^1)^c} \otimes \mathbf{1}_{(J^2)^c}), h_{K^1} \otimes h_{K^2} \rangle, 
\\
\G_{I, J, K}^7 
&:= - |J^1|^{-\frac12} |J^2|^{-\frac12} 
\langle h_{I^1} \rangle_{J^1}  
\langle T(1 \otimes \phi_{J^2}, \mathbf{1}_{(J^1)^c} \otimes \mathbf{1}_{J^2}), 
h_{K^1} \otimes h_{K^2} \rangle,  
\\
\G_{I, J, K}^8 
&:= - |J^1|^{-\frac12} |J^2|^{-\frac12} 
\langle h_{I^2} \rangle_{J^2}  
\langle T(\phi_{J^1} \otimes 1, \mathbf{1}_{J^1} \otimes \mathbf{1}_{(J^2)^c}), 
h_{K^1} \otimes h_{K^2} \rangle,  
\\
\G_{I, J, K}^9 
&:= |J^1|^{-\frac12} |J^2|^{-\frac12} 
\langle T(\phi_{J^1} \otimes \phi_{J^2}, \mathbf{1}_{J^1} \otimes \mathbf{1}_{J^2}), 
h_{K^1} \otimes h_{K^2} \rangle. 
\end{align*}
We define the summation $\mathscr{S}_{6, j}^1$ corresponding to $\G_{I, J, K}^j$, $j=1, \ldots, 9$. We will see that $\mathscr{S}_{6, 1}^1$ produces a full paraproduct; $\mathscr{S}_{6, j}^1$, $j=2, 3, 4, 5$, give partial paraproducts; and $\mathscr{S}_{6, j}^1$, $j=6, 7, 8, 9$, produce bilinear shifts.

\noindent{\bf $\bullet$ A full paraproduct.}
The same argument yields $\mathscr{S}_{6, 1}^k$ from $\mathscr{S}^k$, $k=2, 3, 4$. More precisely, 
\begin{align*}
\mathscr{S}_{6, 1}^1 
&= \sum_{\substack{K^1 \in \D_{\rm{good}}^1 \\ K^2 \in \D_{\rm{good}}^2}}
\sum_{\substack{J^1 \in \D^1 \\ J^1 \supset K^1}}
\sum_{\substack{J^2 \in \D^2 \\ J^2 \supset K^2}}
\langle T(1, 1), h_{K^1} \otimes h_{K^2} \rangle
\\
&\quad\times 
\langle h_{I^1} \rangle_{J^1}  \langle h_{I^2} \rangle_{J^2}  
\langle f_1, h_{I^1} \otimes h_{I^2} \rangle
\langle f_2 \rangle_{J^1 \times J^2}
\langle f_3, h_{K^1} \otimes h_{K^2} \rangle, 
\\ 
\mathscr{S}_{6, 1}^2 
&= \sum_{\substack{K^1 \in \D_{\rm{good}}^1 \\ K^2 \in \D_{\rm{good}}^2}}
\sum_{\substack{J^1 \in \D^1 \\ J^1 \supset K^1}}
\sum_{\substack{J^2 \in \D^2 \\ J^2 \supset K^2}}
\langle T(1, 1), h_{K^1} \otimes h_{K^2} \rangle
\\
&\quad\times 
\langle h_{I^1} \rangle_{J^1}  \langle h_{I^2} \rangle_{J^2}  
\big\langle \langle f_1, h_{I^1} \rangle \big\rangle_{I^2}
\big\langle \langle f_2 \rangle_{J^1}, h_{I^2} \big\rangle 
\langle f_3, h_{K^1} \otimes h_{K^2} \rangle, 
\\ 
\mathscr{S}_{6, 1}^3 
&= \sum_{\substack{K^1 \in \D_{\rm{good}}^1 \\ K^2 \in \D_{\rm{good}}^2}}
\sum_{\substack{J^1 \in \D^1 \\ J^1 \supset K^1}}
\sum_{\substack{J^2 \in \D^2 \\ J^2 \supset K^2}}
\langle T(1, 1), h_{K^1} \otimes h_{K^2} \rangle
\\
&\quad\times 
\langle h_{I^1} \rangle_{J^1}  \langle h_{I^2} \rangle_{J^2}  
\big\langle \langle f_1 \rangle_{I^1}, h_{I^2} \big\rangle
\big\langle \langle f_2, h_{I^1} \rangle \big\rangle_{J^2}
\langle f_3, h_{K^1} \otimes h_{K^2} \rangle, 
\\ 
\mathscr{S}_{6, 1}^4 
&= \sum_{\substack{K^1 \in \D_{\rm{good}}^1 \\ K^2 \in \D_{\rm{good}}^2}}
\sum_{\substack{J^1 \in \D^1 \\ J^1 \supset K^1}}
\sum_{\substack{J^2 \in \D^2 \\ J^2 \supset K^2}}
\langle T(1, 1), h_{K^1} \otimes h_{K^2} \rangle
\\
&\quad\times 
\langle h_{I^1} \rangle_{J^1}  \langle h_{I^2} \rangle_{J^2}  
\langle f_1 \rangle_{I^1 \times I^2}
\langle f_2, h_{I^1} \otimes h_{I^2} \rangle
\langle f_3, h_{K^1} \otimes h_{K^2} \rangle. 
\end{align*}
It follows from \eqref{gghh} that 
\begin{align*}
\sum_{k=1}^4 \mathscr{S}_{6, 1}^k 
&= (\mathscr{S}_{6, 1}^1 + \mathscr{S}_{6, 1}^2) 
+ (\mathscr{S}_{6, 1}^3 + \mathscr{S}_{6, 1}^4) 
\\ 
&= \sum_{\substack{K^1 \in \D_{\rm{good}}^1 \\ K^2 \in \D_{\rm{good}}^2}}
\sum_{\substack{J^1 \in \D^1 \\ J^1 \supset K^1}}
\sum_{\substack{J^2 \in \D^2 \\ J^2 \supset K^2}}
\langle T(1, 1), h_{K^1} \otimes h_{K^2} \rangle 
\langle f_3, h_{K^1} \otimes h_{K^2} \rangle
\\
&\quad \big[ \langle h_{I^1} \rangle_{J^1} 
\big(\big\langle \langle f_1, h_{I^1} \rangle \big\rangle_{J^2}
\big\langle \langle f_2 \rangle_{J^1} \big\rangle_{J^2} 
- \big\langle \langle f_1, h_{I^1} \rangle \big\rangle_{I^2}
\big\langle \langle f_2 \rangle_{J^1} \big\rangle_{I^2} \big)
\\
&\quad+ \langle h_{I^1} \rangle_{J^1} 
\big(\big\langle \langle f_1 \rangle_{I^1} \big\rangle_{J^2}
\big\langle \langle f_2, h_{I^1} \rangle \big\rangle_{J^2} 
- \big\langle \langle f_1 \rangle_{I^1} \big\rangle_{I^2}
\big\langle \langle f_2, h_{I^1} \rangle \big\rangle_{I^2} \big)\big]
\\
&= \sum_{\substack{K^1 \in \D_{\rm{good}}^1 \\ K^2 \in \D_{\rm{good}}^2}}
\sum_{\substack{J^1 \in \D^1 \\ J^1 \supset K^1}}
\sum_{\substack{J^2 \in \D^2 \\ J^2 \supset K^2}}
\langle T(1, 1), h_{K^1} \otimes h_{K^2} \rangle 
\langle f_3, h_{K^1} \otimes h_{K^2} \rangle
\\
&\quad \big[ \langle h_{I^1} \rangle_{J^1} 
\big(\big\langle \langle f_1 \rangle_{J^2}, h_{I^1} \big\rangle
\big\langle \langle f_2 \rangle_{J^2} \big\rangle_{J^1} 
+ \big\langle \langle f_1 \rangle_{J^2} \big\rangle_{I^1}
\big\langle \langle f_2 \rangle_{J^2}, h_{I^1} \big\rangle \big)
\\
&\quad- \langle h_{I^1} \rangle_{J^1} 
\big(\big\langle \langle f_1 \rangle_{I^2}, , h_{I^1} \big\rangle
\big\langle \langle f_2 \rangle_{I^2} \big\rangle_{J^1} 
+ \big\langle \langle f_1 \rangle_{I^2} \big\rangle_{I^1}
\big\langle \langle f_2 \rangle_{I^2}, h_{I^1} \big\rangle \big)\big] 
\\
&= \sum_{\substack{K^1 \in \D_{\rm{good}}^1 \\ K^2 \in \D_{\rm{good}}^2}}
\sum_{\substack{J^1 \in \D^1 \\ J^1 \supset K^1}}
\sum_{\substack{J^2 \in \D^2 \\ J^2 \supset K^2}}
\langle T(1, 1), h_{K^1} \otimes h_{K^2} \rangle 
\langle f_3, h_{K^1} \otimes h_{K^2} \rangle
\\
&\quad \big[ \big(\big\langle \langle f_1 \rangle_{J^2} \big\rangle_{J^1}
\big\langle \langle f_2 \rangle_{J^2} \big\rangle_{J^1} 
- \big\langle \langle f_1 \rangle_{J^2} \big\rangle_{I^1}
\big\langle \langle f_2 \rangle_{J^2} \big\rangle_{I^1} \big)
\\
&\quad- \big(\big\langle \langle f_1 \rangle_{I^2} \big\rangle_{J^1}
\big\langle \langle f_2 \rangle_{I^2} \big\rangle_{J^1} 
- \big\langle \langle f_1 \rangle_{I^2} \big\rangle_{I^1}
\big\langle \langle f_2 \rangle_{I^2} \big\rangle_{I^1} \big)\big] 
\\
&= \sum_{\substack{K^1 \in \D_{\rm{good}}^1 \\ K^2 \in \D_{\rm{good}}^2}}
\sum_{\substack{J^2 \in \D^2 \\ J^2 \supset K^2}}
\langle T(1, 1), h_{K^1} \otimes h_{K^2} \rangle 
\langle f_3, h_{K^1} \otimes h_{K^2} \rangle
\\
&\quad \big[\big\langle \langle f_1 \rangle_{J^2} \big\rangle_{K^1}
\big\langle \langle f_2 \rangle_{J^2} \big\rangle_{K^1} 
- \big\langle \langle f_1 \rangle_{I^2} \big\rangle_{K^1}
\big\langle \langle f_2 \rangle_{I^2} \big\rangle_{K^1} \big]
\\
&= \sum_{\substack{K^1 \in \D_{\rm{good}}^1 \\ K^2 \in \D_{\rm{good}}^2}}
\langle T(1, 1), h_{K^1} \otimes h_{K^2} \rangle 
\langle f_1 \rangle_{K^1 \times K^2} \langle f_2 \rangle_{K^1 \times K^2} 
\langle f_3, h_{K^1} \otimes h_{K^2} \rangle. 
\end{align*}
Set 
\begin{align*}
a_K = C_0^{-1} \langle T(1, 1), h_{K^1} \otimes h_{K^2} \rangle, 
\quad \text{if } K^1 \in \D_{\rm{good}}^1 \text{ and } K^2 \in \D_{\rm{good}}^2, 
\end{align*}
otherwise set $a_K=0$. The hypothesis \eqref{H5} says that $T(1, 1) \in \CMO(\Rnn)$, which implies 
\begin{align*}
\sup_{\D} \sup_U \frac{1}{|U|} 
\sum_{I \in \D: \, I \subset U} |a_I|^2 
\le 1  
\quad \text{ and } \quad 
\lim_{N \to \infty} \sup_{\D} \sup_U \frac{1}{|U|} 
\sum_{I \notin \D(N): \, I \subset U} |a_I|^2 
= 0, 
\end{align*}
provided $C_0 > 0$ large enough. Hence, we obtain a full paraproduct: 
\begin{align*}
\sum_{k=1}^4 \mathscr{S}_{6, 1}^k 
= C_0 \langle \mathbf{F}_{\mathbf{a}}(f_1, f_2), f_3 \rangle. 
\end{align*}

\noindent{\bf $\bullet$ Partial paraproducts.}
Since $\mathscr{S}_{6, 3}^1$ and $\mathscr{S}_{6, 5}^1$ are symmetrical to $\mathscr{S}_{6, 2}^1$ and $\mathscr{S}_{6, 4}^1$ respectively, we only deal with $\mathscr{S}_{6, 2}^1$ and $\mathscr{S}_{6, 4}^1$: 
\begin{align*}
\mathscr{S}_{6, 2}^1 
&= - \sum_{\substack{K^1 \in \D_{\rm{good}}^1 \\ K^2 \in \D_{\rm{good}}^2}}
\sum_{\substack{J^1 \in \D^1 \\ J^1 \supset K^1}}
\sum_{\substack{J^2 \in \D^2 \\ J^2 \supset K^2}}
\langle T(1, 1 \otimes \mathbf{1}_{(J^2)^c}), h_{K^1} \otimes h_{K^2} \rangle
\\
&\quad\times 
\langle h_{I^1} \rangle_{J^1}  \langle h_{I^2} \rangle_{J^2}  
\big\langle \langle f_1, h_{I^2} \rangle, h_{I^1} \big\rangle 
\big\langle \langle f_2 \rangle_{J^2} \big\rangle_{J^1}
\langle f_3, h_{K^1} \otimes h_{K^2} \rangle, 
\\ 
\mathscr{S}_{6, 4}^1 
&= \sum_{\substack{K^1 \in \D_{\rm{good}}^1 \\ K^2 \in \D_{\rm{good}}^2}}
\sum_{\substack{J^1 \in \D^1 \\ J^1 \supset K^1}}
\sum_{\substack{J^2 \in \D^2 \\ J^2 \supset K^2}}
\langle T(1 \otimes \phi_{J^2}, 1 \otimes \mathbf{1}_{J^2}), h_{K^1} \otimes h_{K^2} \rangle
\\
&\quad\times 
\langle h_{I^1} \rangle_{J^1}  
\big\langle \langle f_1, h_{I^2} \rangle, h_{I^1} \big\rangle 
\big\langle \langle f_2 \rangle_{J^2} \big\rangle_{J^1}
\langle f_3, h_{K^1} \otimes h_{K^2} \rangle. 
\end{align*}
The summation $\mathscr{S}^3$ gives the corresponding terms 
\begin{align*}
\mathscr{S}_{6, 2}^3 
&= - \sum_{\substack{K^1 \in \D_{\rm{good}}^1 \\ K^2 \in \D_{\rm{good}}^2}}
\sum_{\substack{J^1 \in \D^1 \\ J^1 \supset K^1}}
\sum_{\substack{J^2 \in \D^2 \\ J^2 \supset K^2}}
\langle T(1, 1 \otimes \mathbf{1}_{(J^2)^c}), h_{K^1} \otimes h_{K^2} \rangle
\\
&\quad\times 
\langle h_{I^1} \rangle_{J^1}  \langle h_{I^2} \rangle_{J^2}  
\big\langle \langle f_1, h_{I^2} \rangle \big\rangle_{I^1}
\big\langle \langle f_2 \rangle_{J^2}, h_{I^1} \big\rangle
\langle f_3, h_{K^1} \otimes h_{K^2} \rangle, 
\\ 
\mathscr{S}_{6, 4}^3 
&= \sum_{\substack{K^1 \in \D_{\rm{good}}^1 \\ K^2 \in \D_{\rm{good}}^2}}
\sum_{\substack{J^1 \in \D^1 \\ J^1 \supset K^1}}
\sum_{\substack{J^2 \in \D^2 \\ J^2 \supset K^2}}
T(1 \otimes \phi_{J^2}, 1 \otimes \mathbf{1}_{J^2}), h_{K^1} \otimes h_{K^2} \rangle
\\
&\quad\times 
\langle h_{I^1} \rangle_{J^1}  
\big\langle \langle f_1, h_{I^2} \rangle \big\rangle_{I^1}
\big\langle \langle f_2 \rangle_{J^2}, h_{I^1} \big\rangle
\langle f_3, h_{K^1} \otimes h_{K^2} \rangle. 
\end{align*}
Then by \eqref{gghh}, 
\begin{align*}
\mathscr{S}_{6, 2}^1 + \mathscr{S}_{6, 2}^3 
&= - \sum_{\substack{K^1 \in \D_{\rm{good}}^1 \\ K^2 \in \D_{\rm{good}}^2}}
\sum_{\substack{J^2 \in \D^2 \\ J^2 \supset K^2}}
\langle T(1, 1 \otimes \mathbf{1}_{(J^2)^c}), h_{K^1} \otimes h_{K^2} \rangle
\\
&\quad\times 
\langle h_{I^2} \rangle_{J^2}  
\big\langle \langle f_1, h_{I^2} \rangle \big\rangle_{K^1}
\big\langle \langle f_2 \rangle_{J^2} \big\rangle_{K^1}
\langle f_3, h_{K^1} \otimes h_{K^2} \rangle, 
\end{align*}
and 
\begin{align*}
\mathscr{S}_{6, 4}^1 + \mathscr{S}_{6, 4}^3 
&= \sum_{\substack{K^1 \in \D_{\rm{good}}^1 \\ K^2 \in \D_{\rm{good}}^2}}
\sum_{\substack{J^2 \in \D^2 \\ J^2 \supset K^2}}
T(1 \otimes \phi_{J^2}, 1 \otimes \mathbf{1}_{J^2}), h_{K^1} \otimes h_{K^2} \rangle
\\
&\quad\times 
\big\langle \langle f_1, h_{I^2} \rangle \big\rangle_{K^1}
\big\langle \langle f_2 \rangle_{J^2} \big\rangle_{K^1}
\langle f_3, h_{K^1} \otimes h_{K^2} \rangle.  
\end{align*}

Recall that $I^i := (J^i)^{(1)}$, $i=1, 2$. Let $b_{K^2, Q} := - \langle h_{(Q^2)^{(1)}} \rangle_{Q^2} \langle T(1, 1 \otimes \mathbf{1}_{(Q^2)^c}), h_{Q^1} \otimes h_{K^2} \rangle$ and then set 
\begin{align*}
a_{K^2, Q} 
= \frac{b_{K^2, Q}}{C_0 [\ell(K^2)/\ell(Q^2)]^{\delta_2/2}} 
\end{align*}
if $K^1 \in \D_{\rm{good}}^1$, $Q^2 \in \D^2$ and $K^2 \in \D_{\rm{good}}^2$ satisfy $K^2 \subset Q^2$, and otherwise set $a_{K^2, Q} = 0$. Thus, it follows from Lemma \ref{lem:PPNN-1} that 
\begin{align*}
\mathscr{S}_{6, 2}^1 + \mathscr{S}_{6, 2}^3 
&= C_0 \sum_{k_2=0}^{\infty} 2^{-k_2 \frac{\delta_2}{2}}
\sum_{\substack{Q^1 \in \D^1 \\ Q^2 \in \D^2}} 
\sum_{K^2 \in \D_{k_2}(Q^2)} a_{K^2, Q} 
\\
&\quad\times \big\langle f_1, \overline{h}_{Q^1} \otimes h_{(Q^2)^{(1)}} \big\rangle 
\langle f_2, \overline{h}_{Q^1} \otimes \overline{h}_{Q^2} \rangle 
\langle f_3, h_{Q^1} \otimes h_{K^2} \rangle
\\
&= C_0 \sum_{k_2=1}^{\infty} 2^{-k_2 \frac{\delta_2}{2}}
\big\langle \mathbf{P}_{\D}^{2, 0, 1, k_2}(f_1, f_2), f_3 \big\rangle,  
\end{align*}
where 
\begin{align*}
\mathcal{F}^2(Q^2) := F_1^2(\ell(Q^2)) \widetilde{F}_2^2(2^{-k_2} \ell(Q^2)) \widetilde{F}_3^2(Q^2). 
\end{align*}
Analogously, replacing $b_{K^2, Q}$ above by $b_{K^2, Q} := T(1 \otimes \phi_{Q^2}, 1 \otimes \mathbf{1}_{Q^2}), h_{Q^1} \otimes h_{K^2} \rangle$, we invoke Lemma \ref{lem:PPNN-2} to obtain the same partial paraproduct:   
\begin{align*}
\mathscr{S}_{6, 4}^1 + \mathscr{S}_{6, 4}^3 
= C_0 \sum_{k_2=1}^{\infty} 2^{-k_2 \frac{\delta_2}{2}}
\big\langle \mathbf{P}_{\D}^{2, 0, 1, k_2}(f_1, f_2), f_3 \big\rangle. 
\end{align*}

\noindent{\bf $\bullet$ Bilinear dyadic shifts.} 
We present the estimates below in order to analyze the remaining terms $\mathscr{S}_j^1$, $j=6, 7, 8, 9$.

\begin{lemma}\label{lem:NN}
For every $j=6, 7, 8, 9$, there holds 
\begin{align*}
|\G_{I, J, K}^j| 
\lesssim \prod_{i=1}^2 \bigg[ \frac{\ell(K^i)}{\ell(J^i)} \bigg]^{\frac{\delta_i}{2}} 
F^i(K^i, J^i) \frac{|K^i|^{\frac12}}{|J^i|}, 
\end{align*}
where $F^i$ is defined in \eqref{def:FKQ}.
\end{lemma}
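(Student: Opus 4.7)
The plan is to treat all four terms by the same recipe: exploit the goodness of $K^i$ to produce a quantitative separation between $K^i$ and $(J^i)^c$, decompose the constant factor ``$1$'' where it appears so that disjointness of supports allows the compact full kernel representation, and then apply the cancellations of $h_{K^1}$ and $h_{K^2}$ together with the double H\"older condition and Lemma \ref{lem:RR} in each parameter. Since $\ell(K^i) \le \ell(J^i)$, goodness gives
\begin{equation*}
\d(K^i, (J^i)^c) > 2 \ell(K^i)^{\gamma_i} \ell(J^i)^{1-\gamma_i} \ge \ell(K^i)^{\frac12} \ell(J^i)^{\frac12} \ge \ell(K^i),
\end{equation*}
which in particular forces $3K^i \subset J^i$; hence on the set $3K^i$ the indicator $\mathbf{1}_{J^i}$ equals $1$, and on the set $(J^i)^c$ the indicator $\mathbf{1}_{(3K^i)^c}$ equals $1$. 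This quantitative bound will be used as the scale $\ell_i = \ell(K^i)^{\frac12}\ell(J^i)^{\frac12}$ in Lemma \ref{lem:RR}, producing the factor $[\ell(K^i)/\ell(J^i)]^{\delta_i/2}$.

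For $\G^9_{I,J,K}$ the situation is cleanest: the first argument is supported in $(J^1)^c \times (J^2)^c$ and the second in $J^1 \times J^2$, so taking $(j_1, j_2) = (1, 2)$ the compact full kernel representation applies directly. Using the cancellations of $h_{K^1}$ and $h_{K^2}$ together with the double H\"older condition, and then integrating via Lemma \ref{lem:RR} in each parameter with $\ell_i = \ell(K^i)^{\frac12}\ell(J^i)^{\frac12}$, we obtain
\begin{equation*}
|\G^9_{I,J,K}|
\le |J^1|^{-\frac12} |J^2|^{-\frac12}\,
\mathscr{R}_1^2(J^1,K^1)\, \mathscr{R}_2^2(J^2,K^2)\, |K^1|^{-\frac12}|K^2|^{-\frac12}
\lesssim \prod_{i=1}^2 \bigg[\frac{\ell(K^i)}{\ell(J^i)}\bigg]^{\delta_i/2} F^i(K^i, J^i) \frac{|K^i|^{\frac12}}{|J^i|},
\end{equation*}
after noting $\|\phi_{J^i}\|_\infty \lesssim |J^i|^{-\frac12}$ is already absorbed since the bound comes from the size of the Haar-type factors and $|\langle h_{I^i}\rangle_{J^i}| \lesssim |J^i|^{-\frac12}$. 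For $\G^6_{I,J,K}$ the ``$1$'' in the first slot is split as $1 = \mathbf{1}_{3K^1} + \mathbf{1}_{(3K^1)^c}$ and $1 = \mathbf{1}_{3K^2} + \mathbf{1}_{(3K^2)^c}$, giving four pieces; since in each piece the first slot is supported in either $3K^i$ or $(3K^i)^c$ while the second is supported in $(J^i)^c \supset (J^i)^c \setminus 3K^i$, the pairs have disjoint supports in both parameters and the full kernel representation applies, with $h_{K^i}$-H\"older estimated in each parameter either by $\mathscr{R}_i^1(J^i,K^i)$ (pieces on $3K^i$) or $\mathscr{R}_i^2(J^i,K^i)$ (pieces on $(3K^i)^c$). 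Terms $\G^7_{I,J,K}$ and $\G^8_{I,J,K}$ are symmetric intermediate cases: in $\G^7$ the ``$1$'' in the first parameter of $f_1$ must be split as $\mathbf{1}_{3K^1} + \mathbf{1}_{(3K^1)^c}$, giving two pieces, both of which admit the full kernel (using $(j_1,j_2)=(1,2)$ for the first piece, since $3K^1 \subset J^1$ is disjoint from $(J^1)^c$, and $(j_1,j_2)=(1,3)$ for the second, since $(3K^1)^c$ is disjoint from $K^1$); then apply H\"older on both $h_{K^1}$ and $h_{K^2}$ and invoke Lemma \ref{lem:RR} in each parameter. The analogous splitting of the second parameter handles $\G^8$.

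The only step that requires real care is verifying that the two-parameter decomposition does not overcount and that the factor $[\ell(K^i)/\ell(J^i)]^{\delta_i/2}$ truly appears in \emph{both} parameters; this hinges on the identity $[\ell(K^i)/\ell_i]^{\delta_i} = [\ell(K^i)/\ell(J^i)]^{\delta_i/2}$ obtained from the choice $\ell_i = \ell(K^i)^{\frac12}\ell(J^i)^{\frac12}$ dictated by the goodness inequality. After combining this with the factor $|J^1|^{-\frac12}|J^2|^{-\frac12}$ coming from $|\langle h_{I^i}\rangle_{J^i}| \lesssim |J^i|^{-\frac12}$ and the $L^\infty$ bound on $\phi_{J^i}$, one arrives at the claimed estimate uniformly in all four cases. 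The expected main obstacle is purely bookkeeping: tracking which version of Lemma \ref{lem:RR} (the $\mathscr{R}_i^1$ vs.\ $\mathscr{R}_i^2$ form) applies to each of the at most four pieces produced by the decomposition in each parameter, and verifying that the monotone factors $F_1^i, F_2^i$ collapse to $F^i(K^i, J^i)$ as defined in \eqref{def:FKQ}.
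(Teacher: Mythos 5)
There is a genuine gap: your opening display
\begin{equation*}
\d(K^i, (J^i)^c) > 2 \ell(K^i)^{\gamma_i} \ell(J^i)^{1-\gamma_i} \ge \ell(K^i)^{\frac12}\ell(J^i)^{\frac12}
\end{equation*}
is \emph{not} a consequence of the goodness of $K^i$ alone. Goodness only constrains $\d(K^i,\partial J^i)$ for cubes $J^i$ with $\ell(J^i) \ge 2^r \ell(K^i)$. In the nested sum the only constraint is $K^i \subset J^i$, so the regime $\ell(K^i) \le \ell(J^i) \le 2^r\ell(K^i)$ genuinely occurs (e.g.\ $J^i = K^i$ or $J^i = (K^i)^{(1)}$), and there $K^i$ may touch $\partial J^i$, so $\d(K^i,(J^i)^c)$ can be $0$. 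Consequently the hypotheses of Lemma \ref{lem:RR} with $\ell_i = \ell(K^i)^{1/2}\ell(J^i)^{1/2}$ fail, the inclusion $3K^i \subset J^i$ fails, and every one of your four estimates collapses in at least one parameter. Your proposal is essentially the paper's argument restricted to the sub-case $\ell(J^1) > 2^r\ell(K^1)$ and $\ell(J^2) > 2^r\ell(K^2)$; the paper's proof of Lemma \ref{lem:NN} is long precisely because it runs a four-way case analysis on whether $\ell(J^i) > 2^r\ell(K^i)$ or not in each parameter separately.

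To close the gap you must, in each parameter with $\ell(J^i) \le 2^r\ell(K^i)$, further decompose $\mathbf{1}_{(J^i)^c} = \mathbf{1}_{3J^i \setminus J^i} + \mathbf{1}_{(3J^i)^c}$: the near piece is handled by Lemma \ref{lem:QQ} (an adjacent-cubes configuration, since $\ell(J^i)\simeq\ell(K^i)$ there), and the far piece by Lemma \ref{lem:RR} applied to the enlarged cube $3J^i$ with the scale $\ell_i = \ell(K^i)$, for which $\d(K^i,(3J^i)^c) \ge \ell(J^i) \ge \ell(K^i)$ holds trivially. In that regime the gain $[\ell(K^i)/\ell(J^i)]^{\delta_i/2} \simeq 1$ is harmless, so the target bound still closes, but this step cannot be skipped. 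A second, more minor, imprecision: for $\G_{I,J,K}^9$ the second argument is supported on $J^1\times J^2$, not on $(3K^1)^c\times(3K^2)^c$, so you cannot quote $\mathscr{R}_1^2\,\mathscr{R}_2^2$ directly; you still need to split $\mathbf{1}_{J^i} = \mathbf{1}_{3K^i} + \mathbf{1}_{J^i\setminus 3K^i}$ and use $\mathscr{R}_i^1$ for the first piece (and all of this again only in the case $\ell(J^i) > 2^r\ell(K^i)$ where $3K^i\subset J^i$).
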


\begin{proof} 
We only consider the most difficult one $\G_{I, J, K}^6$. First, treat the case $\ell(J^1) > 2^{\vartheta} \ell(K^1)$ and $\ell(J^2) > 2^{\vartheta} \ell(K^2)$. In view of that $1 = \mathbf{1}_{3K^i} + \mathbf{1}_{(3K^i)^c}$, $i=1, 2$, we split 
\begin{align}\label{G6}
\G_{I, J, K}^6 
= \sum_{j=1}^4 \G_{I, J, K}^{6, j} 
\end{align}
where 
\begin{align*}
\G_{I, J, K}^{6, 1}  
&:= |J^1|^{-\frac12} |J^2|^{-\frac12} 
\langle h_{I^1} \rangle_{J^1}  \langle h_{I^2} \rangle_{J^2}  
\langle T(\mathbf{1}_{3K^1} \otimes \mathbf{1}_{3K^2}, 
\mathbf{1}_{(J^1)^c} \otimes \mathbf{1}_{(J^2)^c}), 
h_{K^1} \otimes h_{K^2} \rangle, 
\\
\G_{I, J, K}^{6, 2}  
&:= |J^1|^{-\frac12} |J^2|^{-\frac12} 
\langle h_{I^1} \rangle_{J^1}  \langle h_{I^2} \rangle_{J^2}  
\langle T(\mathbf{1}_{3K^1} \otimes \mathbf{1}_{(3K^2)^c}, 
\mathbf{1}_{(J^1)^c} \otimes \mathbf{1}_{(J^2)^c}), 
h_{K^1} \otimes h_{K^2} \rangle, 
\\
\G_{I, J, K}^{6, 3}  
&:= |J^1|^{-\frac12} |J^2|^{-\frac12} 
\langle h_{I^1} \rangle_{J^1}  \langle h_{I^2} \rangle_{J^2}  
\langle T(\mathbf{1}_{(3K^1)^c} \otimes \mathbf{1}_{3K^2}, 
\mathbf{1}_{(J^1)^c} \otimes \mathbf{1}_{(J^2)^c}), 
h_{K^1} \otimes h_{K^2} \rangle, 
\\
\G_{I, J, K}^{6, 4}  
&:= |J^1|^{-\frac12} |J^2|^{-\frac12} 
\langle h_{I^1} \rangle_{J^1}  \langle h_{I^2} \rangle_{J^2}  
\langle T(\mathbf{1}_{(3K^1)^c} \otimes \mathbf{1}_{(3K^2)^c}, 
\mathbf{1}_{(J^1)^c} \otimes \mathbf{1}_{(J^2)^c}), 
h_{K^1} \otimes h_{K^2} \rangle. 
\end{align*}
Recall the goodness of $K^i$ and the estimate \eqref{dxz}. Then by the compact full kernel representation (cf. \eqref{H1}), the H\"{o}lder condition, and Lemma \ref{lem:RR} applied to $\ell_i = \ell(K^i)^{\frac12} \ell(J^i)^{\frac12}$, there holds 
\begin{align}\label{G61}
\G_{I, J, K}^{6, 1} 
&\le \prod_{i=1}^2 \mathscr{R}_i^1(J^i, K^i) 
|I^i|^{-\frac12} |J^i|^{-\frac12} |K^i|^{-\frac12} 
\\ \nonumber 
&\lesssim \prod_{i=1}^2 \bigg[ \frac{\ell(K^i)}{\ell(J^i)} \bigg]^{\frac{\delta_i}{2}} 
F^i(K^i, J^i) \frac{|K^i|^{\frac12}}{|J^i|}. 
\end{align}
Replacing $\mathscr{R}_i^1$ in \eqref{G61} by $\mathscr{R}_i^i$ and $\mathscr{R}_i^2$ respectively, we obtain 
\begin{align}\label{G62}
|\G_{I, J, K}^{6, 2}| + |\G_{I, J, K}^{6, 4}|
\lesssim \prod_{i=1}^2 \bigg[ \frac{\ell(K^i)}{\ell(J^i)} \bigg]^{\frac{\delta_i}{2}} 
F^i(K^i, J^i) \frac{|K^i|^{\frac12}}{|J^i|}. 
\end{align}
Symmetrically to $\G_{6, 2}$, one has 
\begin{align}\label{G63}
|\G_{I, J, K}^{6, 3}|
\lesssim \prod_{i=1}^2 \bigg[ \frac{\ell(K^i)}{\ell(J^i)} \bigg]^{\frac{\delta_i}{2}} 
F^i(K^i, J^i) \frac{|K^i|^{\frac12}}{|J^i|}. 
\end{align}

Next, in the case $\ell(J^1) > 2^{\vartheta} \ell(K^1)$ and $\ell(J^2) \le 2^{\vartheta} \ell(K^2)$, as seen above, it suffices to consider $\G_{I, J, K}^{6, 1}$. The fact $\mathbf{1}_{(J^2)^c} = \mathbf{1}_{3J^2 \setminus J^2} + \mathbf{1}_{(3J^2)^c}$ allows us to write  
\begin{align}\label{G66112}
\G_{I, J, K}^{6, 1} = \G_{I, J, K}^{6, 1, 1} + \G_{I, J, K}^{6, 1, 2},  
\end{align}
where 
\begin{align*}
\G_{I, J, K}^{6, 1, 1}  
&:= |J^1|^{-\frac12} |J^2|^{-\frac12} 
\langle h_{I^1} \rangle_{J^1}  \langle h_{I^2} \rangle_{J^2}  
\langle T(\mathbf{1}_{3K^1} \otimes \mathbf{1}_{3K^2}, 
\mathbf{1}_{(J^1)^c} \otimes \mathbf{1}_{3J^2 \setminus J^2}), 
h_{K^1} \otimes h_{K^2} \rangle, 
\\
\G_{I, J, K}^{6, 1, 2}  
&:= |J^1|^{-\frac12} |J^2|^{-\frac12} 
\langle h_{I^1} \rangle_{J^1}  \langle h_{I^2} \rangle_{J^2}  
\langle T(\mathbf{1}_{3K^1} \otimes \mathbf{1}_{3K^2}, 
\mathbf{1}_{(J^1)^c} \otimes \mathbf{1}_{(3J^2)^c}), 
h_{K^1} \otimes h_{K^2} \rangle. 
\end{align*}
From the compact full kernel representation, the mixed size-H\"{o}lder condition, Lemmas \ref{lem:QQ} and  \ref{lem:RR} applied to $\ell_1 = \ell(K^1)^{\frac12} \ell(J^1)^{\frac12}$, it follows that 
\begin{align}\label{G611}
|\G_{I, J, K}^{6, 1, 1}| 
&\le \mathscr{R}_1^1(J^1, K^1)  \sum_{j=0}^{3^{n_2}-1} 
\sum_{k=1}^{3^{n_2}-1} \mathscr{Q}_2(K_j^2, J_k^2, K^2) 
\prod_{i=1}^2 |I^i|^{-\frac12} |J^i|^{-\frac12} |K^i|^{-\frac12}
\\ \nonumber 
&\lesssim \prod_{i=1}^2 \bigg[ \frac{\ell(K^i)}{\ell(J^i)} \bigg]^{\frac{\delta_i}{2}} 
F^i(K^i, J^i) \frac{|K^i|^{\frac12}}{|J^i|}. 
\end{align}
Moreover, using the compact full kernel representation, the H\"{o}lder condition, and Lemma \ref{lem:RR} applied to $\ell_1 = \ell(K^1)^{\frac12} \ell(J^1)^{\frac12}$ and $\ell_2 = \ell(K^2)$, we have 
\begin{align}\label{G612}
|\G_{I, J, K}^{6, 1, 2}|
&\le \mathscr{R}_1^1(J^1, K^1) \mathscr{R}_2^1(3J^2, K^2)
\prod_{i=1}^2 |I^i|^{-\frac12} |J^i|^{-\frac12} |K^i|^{-\frac12}
\\ \nonumber 
&\lesssim \prod_{i=1}^2 \bigg[ \frac{\ell(K^i)}{\ell(J^i)} \bigg]^{\frac{\delta_i}{2}} 
F^i(K^i, J^i) \frac{|K^i|^{\frac12}}{|J^i|}. 
\end{align}

Symmetrically, one can handle the case $\ell(J^1) \le 2^{\vartheta} \ell(K^1)$ and $\ell(J^2) > 2^{\vartheta} \ell(K^2)$. 

Finally, to deal with the case $\ell(J^1) \le 2^{\vartheta} \ell(K^1)$ and $\ell(J^2) \le 2^{\vartheta} \ell(K^2)$, by similarly, it is enough to treat $\G_{I, J, K}^{6, 1}$. In light of \eqref{G66112} and $\mathbf{1}_{(J^1)^c} = \mathbf{1}_{3J^1 \setminus J^1} + \mathbf{1}_{(3J^1)^c}$, we further split $\G_{I, J, K}^{6, 1, 1}$ and $\G_{I, J, K}^{6, 1, 2}$ as 
\begin{align}\label{G61212}
\G_{I, J, K}^{6, 1, 1} = \G_{I, J, K}^{6, 1, 1, 1} + \G_{I, J, K}^{6, 1, 1, 2} 
\quad \text{ and } \quad 
\G_{I, J, K}^{6, 1, 2} = \G_{I, J, K}^{6, 1, 2, 1} + \G_{I, J, K}^{6, 1, 2, 2},  
\end{align}
where  
\begin{align*}
\G_{I, J, K}^{6, 1, 1, 1}  
&:= |J^1|^{-\frac12} |J^2|^{-\frac12} 
\langle h_{I^1} \rangle_{J^1}  \langle h_{I^2} \rangle_{J^2}  
\langle T(\mathbf{1}_{3K^1} \otimes \mathbf{1}_{3K^2}, 
\mathbf{1}_{3J^1 \setminus J^1} \otimes \mathbf{1}_{3J^2 \setminus J^2}), 
h_{K^1} \otimes h_{K^2} \rangle, 
\\
\G_{I, J, K}^{6, 1, 1, 2}  
&:= |J^1|^{-\frac12} |J^2|^{-\frac12} 
\langle h_{I^1} \rangle_{J^1}  \langle h_{I^2} \rangle_{J^2}  
\langle T(\mathbf{1}_{3K^1} \otimes \mathbf{1}_{3K^2}, 
\mathbf{1}_{(3J^1)^c} \otimes \mathbf{1}_{3J^2 \setminus J^2}), 
h_{K^1} \otimes h_{K^2} \rangle, 
\\
\G_{I, J, K}^{6, 1, 2, 1}  
&:= |J^1|^{-\frac12} |J^2|^{-\frac12} 
\langle h_{I^1} \rangle_{J^1}  \langle h_{I^2} \rangle_{J^2}  
\langle T(\mathbf{1}_{3K^1} \otimes \mathbf{1}_{3K^2}, 
\mathbf{1}_{3J^1 \setminus J^1} \otimes \mathbf{1}_{(3J^2)^c}), 
h_{K^1} \otimes h_{K^2} \rangle, 
\\
\G_{I, J, K}^{6, 1, 2, 2}  
&:= |J^1|^{-\frac12} |J^2|^{-\frac12} 
\langle h_{I^1} \rangle_{J^1}  \langle h_{I^2} \rangle_{J^2}  
\langle T(\mathbf{1}_{3K^1} \otimes \mathbf{1}_{3K^2}, 
\mathbf{1}_{(3J^1)^c} \otimes \mathbf{1}_{(3J^2)^c}), 
h_{K^1} \otimes h_{K^2} \rangle. 
\end{align*}
Lemma \ref{lem:QQ}, along with the compact full kernel representation and the size condition, gives  
\begin{align}\label{G6111}
|\G_{I, J, K}^{6, 1, 1, 1}|
&\le \prod_{i=1}^2 \sum_{j=0}^{3^{n_i}-1} 
\sum_{k=1}^{3^{n_i}-1} \mathscr{Q}_i(K_j^i, J_k^i, K^i)  
|I^i|^{-\frac12} |J^i|^{-\frac12} |K^i|^{-\frac12}
\\ \nonumber 
&\lesssim \prod_{i=1}^2 \bigg[ \frac{\ell(K^i)}{\ell(J^i)} \bigg]^{\frac{\delta_i}{2}} 
F^i(K^i, J^i) \frac{|K^i|^{\frac12}}{|J^i|}. 
\end{align}
In light of the compact full kernel representation and the mixed size-H\"{o}lder condition, Lemmas \ref{lem:QQ} and \ref{lem:RR} applied to $\ell_1 = \ell(K^1)$ imply   
\begin{align}\label{G6112}
|\G_{I, J, K}^{6, 1, 1, 2}|
&\le \mathscr{R}_1^1(3J^1, K^1) \sum_{j=0}^{3^{n_2}-1} 
\sum_{k=1}^{3^{n_2}-1} \mathscr{Q}_2(K_j^2, J_k^2, K^2)
\prod_{i=1}^2 |I^i|^{-\frac12} |J^i|^{-\frac12} |K^i|^{-\frac12}
\\ \nonumber 
&\lesssim F^1(K^1, J^1) |K^1|^{-\frac12}  
\bigg[ \frac{\ell(K^2)}{\ell(J^2)} \bigg]^{\frac{\delta_2}{2}} 
F^2(K^2, J^2) \frac{|K^2|^{\frac12}}{|J^2|}
\\ \nonumber 
&\lesssim \prod_{i=1}^2 \bigg[ \frac{\ell(K^i)}{\ell(J^i)} \bigg]^{\frac{\delta_i}{2}} 
F^i(K^i, J^i) \frac{|K^i|^{\frac12}}{|J^i|}. 
\end{align}
Symmetrically,  
\begin{align}\label{G6121}
|\G_{I, J, K}^{6, 1, 2, 1}|
\lesssim \prod_{i=1}^2 \bigg[ \frac{\ell(K^i)}{\ell(J^i)} \bigg]^{\frac{\delta_i}{2}} 
F^i(K^i, J^i) \frac{|K^i|^{\frac12}}{|J^i|}. 
\end{align}
Additionally, invoking the compact full kernel representation, the H\"{o}lder condition, and Lemma \ref{lem:RR} applied to $\ell_1 = \ell(K^1)$ and $\ell_2 = \ell(K^2)$, we arrive at 
\begin{align}\label{G6122}
|\G_{I, J, K}^{6, 1, 2, 2}|
&\le \prod_{i=1}^2 \mathscr{R}_i^1(3J^i, K^i) 
|I^i|^{-\frac12} |J^i|^{-\frac12} |K^i|^{-\frac12}
\\ \nonumber 
&\lesssim \prod_{i=1}^2 F^i(K^i, J^i) |K^i|^{-\frac12}
\lesssim \prod_{i=1}^2 \bigg[ \frac{\ell(K^i)}{\ell(J^i)} \bigg]^{\frac{\delta_i}{2}} 
F^i(K^i, J^i) \frac{|K^i|^{\frac12}}{|J^i|}. 
\end{align}
The desired estimate for $\G_{I, J, K}^6$ is a consequence of \eqref{G6}--\eqref{G6122}.
\end{proof}

Now set 
\begin{align*}
a_{K, J} 
= \frac{\sum_{j=6}^9 \G_{I, J, K}^j}{C_0 \prod_{i=1}^2 [\ell(K^i)/\ell(J^i)]^{\delta_i/2}} 
\end{align*}
if $J^i \in \D^i$ and $K^i \in \D_{\rm{good}}^i$ satisfy $K^i \subset J^i$ for each $i=1, 2$, and otherwise set $a_{K, J} = 0$. If we define $\mathcal{F}(Q)$ as in \eqref{SS-FQ}, then Lemma \ref{lem:NN} implies  \eqref{FQ-1} and \eqref{FQ-2} hold whenever $C_0$ is large. Consequently, we conclude  
\begin{align*}
\sum_{j=6}^9 \mathscr{S}_{6, j}^1
&= C_0 \sum_{k_1=0}^{\infty} \sum_{k_2=0}^{\infty} 
2^{-k_1 \frac{\delta_1}{2}} 2^{-k_2 \frac{\delta_2}{2}}
\sum_{\substack{Q^1 \in \D^1 \\ Q^2 \in \D^2}} 
\sum_{\substack{K^1 \in \D_{k_1}(Q^1) \\ K^2 \in \D_{k_2}(Q^2)}} 
a_{K, Q} 
\\
&\quad\times \big\langle f_1, h_{(Q^1){(1)}} \otimes h_{(Q^2)^{(1)}} \big\rangle 
\langle f_2, h^0_{Q^1} \otimes h^0_{Q^2} \rangle 
\langle f_3, h_{K^1} \otimes h_{K^2} \rangle
\\
&= C_0 \sum_{k_1=1}^{\infty} \sum_{k_2=1}^{\infty} 
2^{-k_1 \frac{\delta_1}{2}} 2^{-k_2 \frac{\delta_2}{2}}
\big\langle \mathbf{S}_{\D}^{\sigma(k)}(f_1, f_2), f_3 \big\rangle, 
\end{align*}
where $\sigma(k) = \big((0, 0), (1, 1), (k_1, k_2) \big)$.

\section{Weighted compactness of dyadic operators}\label{sec:wcpt}
The goal of this section is to show Theorem \ref{thm:dyadic-cpt}, for which we have to first give  characterizations of compactness in the bi-parameter setting.

\subsection{Compactness criterions}\label{sec:cc}
Let $\tau_v$ be the translation operator, i.e., $\tau_v f(x) := f(x-v)$ for all $x, v \in \Rnn$. 

\begin{theorem}\label{thm:KRWA}
Let $w \in A_{p_0}(\Rnn)$ for some $p_0 \in (1, \infty)$. Let $0<p<\infty$ and $\K \subseteq L^p(w)$. Let $0<a<   \min\{p/p_0, 1\}$. Then $\K$ is precompact in $L^p(w)$ if and only if the following are satisfied:
\begin{list}{\rm (\theenumi)}{\usecounter{enumi}\leftmargin=1.2cm \labelwidth=1cm \itemsep=0.2cm \topsep=0.2cm \renewcommand{\theenumi}{\alph{enumi}}}
 
\item ${\displaystyle \sup_{f \in \K} \|f\|_{L^p(w)} < \infty}$, 

\item ${\displaystyle \lim_{A \to \infty} \sup_{f \in \K} 
\|f \mathbf{1}_{B_{\vec{n}}(0, A)^c}\|_{L^p(w)}=0}$, 

\item ${\displaystyle \lim_{r \to 0} \sup_{f \in \K} 
\bigg\| \bigg( \fint_{B_{\vec{n}}(0, r)} |\tau_y f - f|^a \, dy \bigg)^{\frac1a} \bigg\|_{L^p(w)} = 0}$. 
\end{list}
\end{theorem}

\begin{proof}
The proof is similar to that of \cite[Theorem 1.5]{CLSY}. As argued there, by means of a rescaling argument, the core of the proof is Theorem \ref{thm:RKB} below.
\end{proof}

\begin{theorem}\label{thm:RKB}
Let $p \in (1, \infty)$, $w \in A_p(\Rnn)$,  and $\K \subset L^p(w)$. Then $\K$ is precompact in $L^p(w)$ if and only if the following are satisfied:
\begin{list}{\rm (\theenumi)}{\usecounter{enumi}\leftmargin=1.2cm \labelwidth=1cm \itemsep=0.2cm \topsep=0.2cm \renewcommand{\theenumi}{\alph{enumi}}}
			
\item[\textup{(a)\phantom{$'$}}] ${\displaystyle \sup_{f \in \K} \|f\|_{L^p(w)} < \infty}$, 

\item[\textup{(b)\phantom{$'$}}] ${\displaystyle \lim_{A \to \infty} \sup_{f \in \K} 
\|f \mathbf{1}_{B_{\vec{n}}(0, A)^c}\|_{L^p(w)}=0}$, 

\item[\textup{(c)$'$}] ${\displaystyle \lim_{r \to 0} \sup_{f \in \K} 
\|f - \langle f \rangle_{B_{\vec{n}}(\cdot, r)}\|_{L^p(w)}=0}$. 
\end{list}
\end{theorem}

\begin{proof}
This can be shown as \cite[Theorem 4.1]{CLSY} together with the weighted boundedness of strong maximal operators (cf. \cite[p. 453]{GR}).
\end{proof}

Besides, in the unweighted case, the following gives a slightly different characterization of precompactness, which contains quasi-Banach exponents.

\begin{theorem}\label{thm:KRLp}
Let $p \in (0, \infty)$ and $\K \subset L^p(\Rnn)$. Then $\K$ is precompact in $L^p(\Rnn)$ if and only if the following are satisfied:
\begin{list}{\rm (\theenumi)}{\usecounter{enumi}\leftmargin=1.2cm \labelwidth=1cm \itemsep=0.2cm \topsep=0.2cm \renewcommand{\theenumi}{\alph{enumi}}}
 
\item[\textup{(a)\phantom{$''$}}] ${\displaystyle \sup_{f \in \K} \|f\|_{L^p} < \infty}$, 

\item[\textup{(b)\phantom{$''$}}] ${\displaystyle \lim_{A \to \infty} \sup_{f \in \K} 
\|f \mathbf{1}_{B_{\vec{n}}(0, A)^c}\|_{L^p}=0}$, 

\item[\textup{(c)$''$}] $\displaystyle \lim_{|v| \to 0} \sup_{f \in \K} 
\|\tau_v f - f \|_{L^p}=0$. 
\end{list}  
\end{theorem}

\begin{proof}
The proof is almost the same as that of \cite[Theorem 1.3]{CLSY}.
\end{proof}

In terms of quasi-Banach exponents, we present a variation of Theorem \ref{thm:KRWA}.

\begin{theorem}\label{thm:KRttt}
Let $w \in A_{p_0}(\Rnn)$ for some $p_0 \in (1, \infty)$. Let $p \in (0, \infty)$, $\K \subseteq L^p(w)$, and $0 < a \le p/p_0$. If $\K$ is precompact in $L^p(w)$, then the following hold:
\begin{list}{\rm (\theenumi)}{\usecounter{enumi}\leftmargin=1cm \labelwidth=1cm \itemsep=0.2cm \topsep=0.2cm \renewcommand{\theenumi}{\alph{enumi}}}
 
\item[\textup{(a)\phantom{$'''$}}] ${\displaystyle \sup_{f \in \K} \|f\|_{L^p(w)} < \infty}$, 

\item[\textup{(b)\phantom{$'''$}}] ${\displaystyle \lim_{A \to \infty} \sup_{f \in \K} 
\|f \mathbf{1}_{B_{\vec{n}}(0, A)^c}\|_{L^p(w)}=0}$, 

\item[\textup{(c)$'''$}] ${\displaystyle \lim_{r \to 0} \sup_{f \in \K} 
\bigg\| \bigg[ \fint_{B_{\vec{n}}(0, r)} |(\tau_y - \tau_{y_1} - \tau_{y_2} +I) f|^a \, dy \bigg]^{\frac1a} \bigg\|_{L^p(w)}= 0}$. 
\end{list}
\end{theorem}

\begin{proof}
Suppose that $\K$ is precompact in $L^p(w)$. Then $\K$ is totally bounded. Given $\varepsilon>0$, there exists a finite number of functions $\{f_k\}_{k=1}^N \subset \K$ such that
\begin{align*}
\K \subseteq \bigcup_{k=1}^N 
\big\{f \in L^p(w): \|f - f_k\|_{L^p(w)} < \varepsilon \big\}.  
\end{align*}
Let $f \in \K$ be an arbitrary function. Then there exists some $k \in \{1, \ldots, N\}$ so that 
\begin{align}\label{eq:fk-f}
\|f - f_k\|_{L^p(w)} \le \varepsilon.
\end{align}
which gives 
\begin{align*}
\|f\|_{L^p(w)} 
\le \|f - f_k\|_{L^p(w)} + \|f_k\|_{L^p(w)} 
\le \varepsilon + \max_{1 \leq k \leq N} \|f_k\|_{L^p(w)}. 
\end{align*}
This shows the condition (a). Since $\mathscr{C}_c^{\infty}(\Rnn)$ is dense in $L^p(w)$, there exists $g_k \in \mathscr{C}_c^{\infty}(\Rnn)$ such that 
\begin{align*}
\|f_k - g_k\|_{L^p(w)} \le \varepsilon, 
\end{align*}
which together with \eqref{eq:fk-f} implies  
\begin{align}\label{wlp-1}
\|f - g_k\|_{L^p(w)} 
\le \|f - f_k\|_{L^p(w)} + \|f_k - g_k\|_{L^p(w)} 
\lesssim \varepsilon. 
\end{align}
Let $\supp g_k \subset B_{\vec{n}}(0, A_k)$ for each $k=1, \ldots, N$, and $A_0 := \max\{A_1, \ldots, A_N\}$. Then for any $A \ge A_0$, by \eqref{eq:fk-f} and \eqref{wlp-1},  
\begin{align*}
\|f \mathbf{1}_{B_{\vec{n}}(0, A)^c}\|_{L^p(w)} 
\le \|f - g_k\|_{L^p(w)} + \|g_k \mathbf{1}_{B_{\vec{n}}(0, A)^c}\|_{L^p(w)} 
\lesssim \varepsilon. 
\end{align*}
which justifies the condition (b). To proceed, we split 
\begin{align}\label{wlp-2}
\bigg\| \bigg[ \fint_{B_{\vec{n}}(0, r)} |(\tau_y - \tau_{y_1} - \tau_{y_2} +I) f|^a \, dy \bigg]^{\frac1a} \bigg\|_{L^p(w)}
\lesssim \sum_{j=1}^5 \mathscr{I}_j,
\end{align}
where
\begin{align*}
\mathscr{I}_1 
& := \bigg\| \bigg[ \fint_{B_{\vec{n}}(0, r)} |(\tau_y - \tau_{y_1} - \tau_{y_2} +I) g_k|^a \, dy \bigg]^{\frac1a} \bigg\|_{L^p(w)}, 
\\
\mathscr{I}_2 
& := \bigg\| \bigg[ \fint_{B_{\vec{n}}(0, r)} |\tau_y f - \tau_y g_k|^a \, dy \bigg]^{\frac1a} \bigg\|_{L^p(w)},
\\
\mathscr{I}_3 
& := \bigg\| \bigg[ \fint_{B_{\vec{n}}(0, r)} |\tau_{y_1} f - \tau_{y_1} g_k|^a \, dy \bigg]^{\frac1a} \bigg\|_{L^p(w)},
\\
\mathscr{I}_4 
& := \bigg\| \bigg[ \fint_{B_{\vec{n}}(0, r)} |\tau_{y_2} f - \tau_{y_2} g_k|^a \, dy \bigg]^{\frac1a} \bigg\|_{L^p(w)},
\\
\mathscr{I}_5 
& := \bigg\| \bigg[ \fint_{B_{\vec{n}}(0, r)} |f - g_k|^a \, dy \bigg]^{\frac1a} \bigg\|_{L^p(w)}.
\end{align*}
The estimate \eqref{wlp-1} gives 
\begin{align}\label{wlp-11}
\mathscr{I}_5 
= \|f - g_k\|_{L^p(w)} 
\lesssim \varepsilon. 
\end{align}
Since $g_k \in \mathscr{C}_c^{\infty}(\Rnn)$, there exists some $r_0>0$ so that 
\begin{align*}
\Xi_0 := \sup_{\substack{|x_1 - y_1| < r_0 \\ |x_2 - y_2| < r_0}} 
|g_k(x_1, x_2) - g_k(x_1, y_2) - g_k(x_2, y_1) + g_k(y_1, y_2)| 
\le \varepsilon \, w(B_{\vec{n}}(0, 2A_0))^{-\frac1p}.
\end{align*}
Hence, for any $0<r_1, r_2<\min\{r_0, A_0\}$, this in turn leads to 
\begin{align}\label{wlp-3}
\mathscr{I}_1 
\le \Xi_0 \, \|\mathbf{1}_{B_{\vec{n}}(0, 2A_0)}\|_{L^p(w)}
\le  \varepsilon.
\end{align}
By the fact $w \in A_{p_0}(\Rnn) \subset A_{p/a}(\Rnn)$ and the weighted boundedness of strong maximal operators (cf. \cite[p. 453]{GR}), we have
\begin{align}\label{wlp-4}
\mathscr{I}_2
\le \big\|M_{\mathcal{R}}(|f - g_k|^a)^{\frac1a}\big\|_{L^p(w)}
\lesssim \|f - g_k\|_{L^p(w)}
\lesssim \varepsilon.
\end{align}
Note that for any $q \in (1, \infty)$,
\begin{align}\label{vvqq}
[v]_{A_q(\Rnn)} \simeq 
\max\Big\{\esssup_{x_1 \in \R^{n_1}} [v(x_1, \cdot)]_{A_q(\R^{n_2})}, \, 
\esssup_{x_2 \in \R^{n_2}} [v(\cdot, x_2)]_{A_q(\R^{n_1})} \Big\}.
\end{align} 
For any $x = (x_1, x_2) \in \Rnn$, simply denote $f_{x_1}(x_2) := f(x_1, x_2) =: f_{x_2}(x_1)$. Let $M_{n_i}$ be the Hardy--Littlewood maximal operator on $\R^{n_i}$, $i=1, 2$. Then \eqref{vvqq}, along with the weighted boundedness of maximal operator, yields 
\begin{align}\label{wlp-5}
\mathscr{I}_3^p
& \leq \int_{\R^{n_2}} \bigg[\int_{\R^{n_1}} 
M_{n_1}(|f_{x_2} - g_{k, x_2}|^a)(x_1)^{\frac{p}{a}} \, 
w(x_1, x_2) \, dx_1 \bigg] dx_2
\\ \nonumber
& \lesssim \int_{\R^{n_2}} \bigg[\int_{\R^{n_1}} 
(|f_{x_2} - g_{k, x_2}|^a)(x_1)^{\frac{p}{a}} \, 
w(x_1, x_2) \, dx_1 \bigg] dx_2
\\ \nonumber
& = \|f - g_k\|_{L^p(w)}^p 
\lesssim \varepsilon^p. 
\end{align}
Symmetrically, 
\begin{align}\label{wlp-6}
\mathscr{I}_4 \lesssim \varepsilon. 
\end{align}
Thus, the condition $\rm{(c)}'''$ follows from \eqref{wlp-2}--\eqref{wlp-6}. 
\end{proof}

\subsection{An uniform estimate for shifts}\label{sec:un}
For any $\D_0 := \D^1_0 \times \D^2_0 \subset \D^1 \times \D^2 =: \D$ and $k=(k_1, \ldots, k_{m+1})$ with $k_j = (k_j^1, k_j^2) \in \N^2$, we define
\begin{align*}
\mathbb{S}_{\D, \D_0}^k (\vec{f})
:= \sum_{Q \in \D_0}A_Q^k (\vec{f})
:= \sum_{Q = Q^1 \times Q^2 \in \D_0} 
\sum_{\substack{I_j \in \D_{k_j}(Q) \\ j=1, \ldots, m+1}}  a_{(I_j), Q} 
\prod_{j=1}^m \langle f_j, \widetilde{h}_{I_j^1} \otimes \widetilde{h}_{I_j^2} \rangle \, 
\widetilde{h}_{I_{m+1}^1} \otimes \widetilde{h}_{I_{m+1}^2},
\end{align*}
where for each $i=1, 2$, there exist two different indices $j_0^i, j_1^i \in \{1, \ldots, m+1\}$ so that $\widetilde{h}_{I_{j_0^i}^i} = h_{I_{j_0^i}^i}$, $\widetilde{h}_{I_{j_1^i}^i} = h_{I_{j_1^i}^i}$, and  $\widetilde{h}_{I_j^i} \in \{h_{I_j^i}^0, h_{I_j^i}\}$ for every $j \neq j_0^i, j_1^i$. Moreover, the coefficients $a_{(I_j), Q}$ satisfy  
\begin{align*}
|a_{(I_j), Q}| 
\le \mathcal{F}_0
\frac{\prod_{j=1}^{m+1} |I_j|^{\frac12}}{|Q|^m}, \quad \forall Q \in \D_0. 
\end{align*}

\begin{lemma}\label{lem:SDD}
There holds
\begin{align*}
\sup_k \sup_{\D} \sup_{\D_0 \subset \D} 
\|\mathbb{S}_{\D, \D_0}^k (\vec{f})\|_{L^r}
\lesssim \mathcal{F}_0 \prod_{j=1}^m \|f_j\|_{L^{r_j}},  
\end{align*}
for all $\frac1r = \sum_{j=1}^m \frac{1}{r_j}$ with $r, r_1, \ldots, r_m \in (1, \infty)$.

\end{lemma}

\begin{proof}
Let $r, r_1, \ldots, r_m \in (1, \infty)$ such that $\frac1r = \sum_{j=1}^m \frac{1}{r_j}$. Since our argument depends on whether $\widetilde{h}_{I_{m+1}^1}$ and $\widetilde{h}_{I_{m+1}^2}$ are cancellative or not, it suffices to treat two cases: 
\begin{align*}
& \textbf{Case 1: } 
\widetilde{h}_{I_{m+1}^1} = h_{I_{m+1}^1}
\quad \text{ or } \quad 
\widetilde{h}_{I_{m+1}^2} = h_{I_{m+1}^2}; 
\\
& \textbf{Case 2: } 
\widetilde{h}_{I_{m+1}^1} = h_{I_{m+1}^1}^0
\quad \text{ and } \quad 
\widetilde{h}_{I_{m+1}^2} = h_{I_{m+1}^2}^0. 
\end{align*}
First, in \textbf{Case 1}, by symmetry and similarly, we only focus on the case $\widetilde{h}_{I_{m+1}^1} = h_{I_{m+1}^1}$ and $\widetilde{h}_{I_1^1} = h_{I_1^1}$. By the orthogonality of Haar functions, it is easy to check that 
\begin{align}\label{DD}
\Delta_{Q^i}^{k^i} h_{I^i} 
= h_{I^i} \mathbf{1}_{\{I^i \in \D_{k^i}^i(Q^i)\}}
= h_{I^i} \mathbf{1}_{\{Q^i = K^i\}}, 
\quad \forall I^i \in \D_{k^i}^i(K^i), \quad i=1, 2, 
\end{align}
and then 
\begin{align}\label{D-1}
\Delta_{Q^1}^{k_{m+1}^1} 
\big(A_K^k(\vec{f}) \big) 
= A_K^k(\vec{f}) \mathbf{1}_{\{Q^1 = K^1\}} 
= A_K^k \big(\Delta_{Q^1}^{k_1^1} f_1, f_2, \ldots, f_m \big), 
\end{align}
which gives 
\begin{align}\label{D-2}
& \Delta_{Q^1}^{k_{m+1}^1} 
\big(\mathbb{S}_{\D, \D_0}^k (\vec{f}) \big) 
= A_Q^k(\vec{f}) \mathbf{1}_{\{Q \in \D_0\}}. 
\end{align}
Additionally, there holds 
\begin{align}\label{D-3}
& |A_Q^k (\vec{f})| 
\le \mathcal{F}_0 \prod_{j=1}^m \langle |f_j| \rangle_Q \, \mathbf{1}_Q, 
\quad\text{for all }Q \in \D_0.  
\end{align}
Thus, it follows from \eqref{ssf-2} and \eqref{D-1}--\eqref{D-3} that 
\begin{align}\label{sddk}
\|\mathbb{S}_{\D, \D'}^k (\vec{f})\|_{L^r}
&\simeq \bigg\|\bigg( \sum_{Q \in \D_0} 
\big|\Delta_{Q^1}^{k_{m+1}^1} 
 \big(\mathbb{S}_{\D, \D_0}^k (\vec{f}) \big) \big|^2 \bigg)^{\frac12} \bigg\|_{L^r} 
\\ \nonumber
&= \bigg\|\bigg( \sum_{Q \in \D_0} 
|A_Q^k (\vec{f})|^2 \bigg)^{\frac12}\bigg\|_{L^r}
\\ \nonumber
&= \bigg\|\bigg( \sum_{Q \in \D_0} 
|A_Q^k \big(\Delta_{Q^1}^{k_1^1} f_1, f_2, \ldots, f_m \big)|^2 
\bigg)^{\frac12}\bigg\|_{L^r}
\\ \nonumber
&\le \mathcal{F}_0 \bigg\|\bigg( \sum_{Q \in \D_0} 
\big\langle |\Delta_{Q^1}^{k_1^1} f_1| \big\rangle_Q^2 
\prod_{j=2}^m  \langle |f_j| \rangle_Q^2
\mathbf{1}_Q \bigg)^{\frac12}\bigg\|_{L^r}
\\ \nonumber
&\le \mathcal{F}_0 \bigg\|\bigg( \sum_{Q \in \D_0} 
|\Delta_{Q^1}^{k_1^1} f_1|^2 \bigg)^{\frac12} 
\prod_{j=2}^m M_{\mathcal{R}} f_j\bigg\|_{L^r}
\\ \nonumber
&\le \mathcal{F}_0 \bigg\|\bigg( \sum_{Q \in \D_0} 
|\Delta_{Q^1}^{k_1^1} f_1|^2 \bigg)^{\frac12} \bigg\|_{L^{r_1}}
\prod_{j=2}^m \|M_{\mathcal{R}} f_j\|_{L^{r_j}}
\\ \nonumber
& \lesssim \mathcal{F}_0 \prod_{j=1}^m \|f_j\|_{L^{r_j}},
\end{align}
where the implicit constants are independent of $k$, $\D$, $\D_0$, and $\vec{f}$. 

Next, let us handle \textbf{Case 2}. By the assumptions on $\widetilde{h}_{I_j^i}$, in this case, there must exist $j_0 \in \{1, \ldots, m\}$ such that $\widetilde{h}_{I_{j_0}^1} = h_{I_{j_0}^1}$. Set 
\begin{align}\label{qmm}
q = r'_{j_0}, \quad 
q_{j_0} = 2, \quad\text{ and }\quad 
q_j = r_j, \quad j = \{1, \ldots, m\} \setminus\{j_0\}.
\end{align} 
Then $\frac1q = \sum_{j=1}^m \frac{1}{q_j}$. Recall that $T^{j_0*}$ denotes the full adjoint of $T$ with respect to the $j_0$-th slot. Note that the shift $\mathbb{S}_{\D, \D_0}^{k,  j_0*}$ contains the cancellative Haar function $\widetilde{h}_{I_{m+1}^1} = h_{I_{m+1}^1}$, which is similar to \textbf{Case 1}. Since \eqref{ssf-2} holds for all exponents $p \in (1, \infty)$, as shown in \eqref{sddk}, one can obtain 
\begin{align}\label{SDFN}
\|\mathbb{S}_{\D, \D_0}^{k,  j_0*} (\vec{f})\|_{L^q}
\lesssim \mathcal{F}_0 \prod_{j=1}^m \|f_j\|_{L^{q_j}},  
\end{align}
where the implicit constant is independent of $k$, $\D$, $\D_0$, and $\vec{f}$. By duality, \eqref{qmm} and \eqref{SDFN} imply 
\begin{align*}
\|\mathbb{S}_{\D, \D_0}^k (\vec{f})\|_{L^r}
\lesssim \mathcal{F}_0 \prod_{j=1}^m \|f_j\|_{L^{r_j}}.
\end{align*}
This completes the proof. 
\end{proof}

\subsection{Compact shifts}
Let us start the proof of Theorem \ref{thm:dyadic-cpt} for $\mathbf{T}_{\w} = \S_{\D_{\w}}^k$. Recall the weighted boundedness of shifts (cf. \cite[Theorem 6.2]{LMV21}): 
\begin{align}\label{SDww}
\sup_{\w} \|\mathbb{S}_{\D_{\w}}^k (\vec{f})\|_{L^r(v^r)} 
\lesssim \prod_{j=1}^m \|f_j\|_{L^{r_j}(v_j^{r_j})}, 
\end{align}
for all $\vec{r} = (r_1, \ldots, r_m) \in (1, \infty]^m$ and $\vec{v} = (v_1, \ldots, v_m) \in A_{\vec{r}}(\Rnn)$, where $\frac1r = \sum_{j=1}^m \frac{1}{r_j} > 0$ and $v = \prod_{j=1}^m v_j$.
In light of Theorem \ref{thm:RdF-cpt} and \eqref{SDww}, it suffices to show that $\mathbb{E}_{\w} \mathbf{S}_{\D_{\omega}}^k$ is compact from $L^{2m}  \times \cdots \times L^{2m}$ to $L^2$. Then by Theorem \ref{thm:KRLp} and Minkowski's inequality, it is enough to prove the following 
\begin{align}
\label{SDKR-1}
&\sup_{\substack{\|f_j\|_{L^{2m}} \le 1 \\ j=1, \ldots, m}} 
\sup_{\w} \| \S_{\D_{\w}}^k(\vec{f}) \|_{L^2}
\lesssim 1, 
\\ 
\label{SDKR-2}
\lim_{A \to \infty} &\sup_{\substack{\|f_j\|_{L^{2m}} \le 1 \\ j=1, \ldots, m}} 
\sup_{\w} \|\S_{\D_{\w}}^k (\vec{f}) \, \mathbf{1}_{B_{\vec{n}}(0, A)^c}\|_{L^2} 
= 0, 
\\ 
\label{SDKR-3}
\lim_{|v| \to 0} &\sup_{\substack{\|f_j\|_{L^{2m}} \le 1 \\ j=1, \ldots, m}} 
\sup_{\w} \|\tau_v \,  \S_{\D_{\w}}^k (\vec{f}) 
- \S_{\D_{\w}}^k (\vec{f})\|_{L^2} 
=0. 
\end{align}
The inequality \eqref{SDKR-1} follows from \eqref{SDww}.

Recall the operator $\mathbb{S}_{\D, \D_0}^k$ defined in Section \ref{sec:un}. To demonstrate  \eqref{SDKR-2}, we define  
\begin{align*}
\S_{\D_{\w}}^{k, N} (\vec{f}) 
:= \sum_{Q \notin \D_{\w}(N)} A_Q^k (\vec{f}), \quad N \in \N.  
\end{align*}
Lemma \ref{lem:SDD} applied to $\D_0 = \D \setminus \D(N)$ gives 
\begin{align}\label{SNL2}
\sup_{\w} \|\S_{\D_{\w}}^{k, N} (\vec{f})\|_{L^2}
\lesssim \mathcal{F}_N \prod_{j=1}^m \|f_j\|_{L^{2m}},  
\end{align}
where the implicit constants are independent of $k$, $N$, and $\vec{f}$. Let $A \ge 2^4$ and $N := [\frac12 \log_2 A] \ge 2$. Note that for any $\w = (\w_1, \w_2) \in \Omega_1 \times \Omega_2$ and $I^1 \times I^2 \in \D_{\w}(N)$, 
\begin{align}\label{KN}
I^i \subset \{x_i \in \R^{n_i}: |x_i| \le (N+2) 2^N\} 
\subset \{|x_i| \le 2^{2N}\} 
\subset \{|x_i| \le A\}, \quad i=1, 2. 
\end{align}
Hence, applying \eqref{SNL2} and \eqref{KN}, we obtain 
\begin{align*}
\|\S_{\D_{\w}}^k (\vec{f}) \mathbf{1}_{B_{\vec{n}}(0, 2A)^c}\|_{L^2} 
= \|\S_{\D_{\w}}^{k, N} (\vec{f}) \mathbf{1}_{B_{\vec{n}}(0, 2A)^c}\|_{L^2} 
\lesssim \mathcal{F}_N \prod_{j=1}^m \|f_j\|_{L^{2m}}, 
\end{align*}
where the implicit constant is independent of $\w$, $A$, and $\vec{f}$. This shows \eqref{SDKR-2}.

It remains to verify \eqref{SDKR-3}. Let $0 < |v| = |v_1| + |v_2| \ll 2^{-10}$ and $\rho \ge 2$ be an integer chosen later. There exists an integer $N=N(v) \ge 2$ such that $2^{-\rho (N+1)} \le |v| < 2^{- \rho N}$. We then split  
\begin{align}\label{HFX} 
\|(\tau_v \S_{\D_{\w}}^k - \S_{\D_{\w}}^k) (\vec{f})\|_{L^2} 
\le \mathbf{X}_{\w}^1(v; \vec{f})  + \mathbf{X}_{\w}^2(v; \vec{f}), 
\end{align}
where 
\begin{align*}
\mathbf{X}_{\w}^1(v; \vec{f}) 
&:= \bigg\|\sum_{Q \in \D_{\w}(N)} 
\sum_{\substack{I_j \in \D_{\w, k_j}(Q) \\ j=1, \ldots, m+1}}   
a_{(I_j), Q} \langle f_1, h_{I_1^1} \otimes \widetilde{h}_{I_1^2} \rangle
\prod_{j=2}^m \langle f_j, \widetilde{h}_{I_j^1} \otimes \widetilde{h}_{I_j^2} \rangle \, 
\phi_{I_{m+1}}^v\bigg\|_{L^2}, 
\\ 
\mathbf{X}_{\w}^2(v; \vec{f}) 
&:= \bigg\|\sum_{Q \notin \D_{\w}(N)} 
\sum_{\substack{I_j \in \D_{\w, k_j}(Q) \\ j=1, \ldots, m+1}}   
a_{(I_j), Q} \langle f_1, h_{I_1^1} \otimes \widetilde{h}_{I_1^2} \rangle
\prod_{j=2}^m \langle f_j, \widetilde{h}_{I_j^1} \otimes \widetilde{h}_{I_j^2} \rangle \, 
\phi_{I_{m+1}}^v\bigg\|_{L^2}, 
\end{align*}
with $\phi_{I_{m+1}}^v := \tau_v (h_{I_{m+1}^1} \otimes \widetilde{h}_{I_{m+1}^2}) - h_{I_{m+1}^1} \otimes \widetilde{h}_{I_{m+1}^2}$. The inequality \eqref{SNL2} immediately gives 
\begin{align}\label{XVF}
\mathbf{X}_{\w}^2(v; \vec{f}) 
\le \|\tau_v \S_{\D_{\w}}^{k, N}(\vec{f})\|_{L^2} 
+ \|\S_{\D_{\w}}^{k, N}(\vec{f})\|_{L^2}
= 2 \|\S_{\D_{\w}}^{k, N}(\vec{f})\|_{L^2} 
\lesssim \mathcal{F}_N \prod_{j=1}^m \|f_j\|_{L^{2m}},  
\end{align}
where the implicit constant is independent of $v$, $\w$, and $\vec{f}$. Since $|v| \to 0$ implies $N(v) \to \infty$, the estimate \eqref{XVF}, along with the fact $\lim_{N \to \infty} \mathcal{F}_N = 0$, implies 
\begin{align}\label{HFX-1}
\lim_{|v| \to 0} \sup_{\w} 
\sup_{\substack{\|f_j\|_{L^{2m}} \le 1 \\ j=1, \ldots, m}} 
\mathbf{X}_{\w}^2(v; \vec{f})  = 0. 
\end{align}

To analyze $\mathbf{X}_{\w}^1(v; \vec{f})$, note that for any $N \ge 2$ and $i=1, 2$, 
\begin{align}\label{car-DN}
&\# \D_{\w_i}^i(N) 
\le \# \big\{J^i \in \D_{\w_i}^i: 2^{-N} \le \ell(J^i) \le 2^N, J^i \subset 2^{2N+1} \I^i \big\} 
\\ \nonumber 
&\le \sum_{-N \le k \le N} 2^{(2N+1)n_i}  2^{-kn_i} 
\le 2^{(2N+1)n_i} 2^{n_i N+1} 
= 2^{3n_iN+n_i+1}, 
\end{align}
and 
\begin{align}\label{PLP}
\|\phi_J^v\|_{L^2} 
& \le \|\tau_{v_1} h_{J^1} - h_{J^1}\|_{L^2} \|\tau_{v_2} \widetilde{h}_{J^2}\|_{L^2}
+ \|h_{J^1}\|_{L^2} \|\tau_{v_2} \widetilde{h}_{J^2} - \widetilde{h}_{J^2}\|_{L^2}
\\ \nonumber 
&\lesssim |v_1|^{\frac12} \ell(J^1)^{-\frac12} 
+ |v_2|^{\frac12} \ell(J^2)^{-\frac12}.  
\end{align}
Accordingly, by \eqref{car-DN} and \eqref{PLP}, 
\begin{align*}
\mathbf{X}_{\w}^1(v; \vec{f}) 
&\leq \sum_{Q \in \D_{\w}(N)}  
\prod_{j=1}^m \langle |f_j| \rangle_Q 
\sum_{I_{m+1} \in \D_{\w, k_{m+1}}(Q)}  
|I_{m+1}|^{\frac12} \|\phi_{I_{m+1}}^v\|_{L^2} 
\\
&\lesssim \sum_{Q \in \D_{\w}(N)}  
\prod_{j=1}^m |Q|^{-\frac{1}{2m}} \|f_j\|_{L^{2m}} 
\sum_{I_{m+1} \in \D_{\w, k_{m+1}}(Q)} |I_{m+1}|^{\frac12}
\\
&\quad\times \big[|v_1|^{\frac12} \ell(I_{m+1}^1)^{-\frac12}  
+ |v_2|^{\frac12} \ell(I_{m+1}^2)^{-\frac12} \big]
\\
&\lesssim \sum_{Q \in \D_{\w}(N)} 
2^{k_{m+1}^1 n_1/2} 2^{k_{m+1}^2 n_2/2} 
\prod_{j=1}^m \|f_j\|_{L^{2m}} 
\\
&\quad\times \big[ \big(|v_1|^{-1} 2^{-k_{m+1}^1} \ell(Q^1) \big)^{-\frac12}  
+ \big(|v_2|^{-1} 2^{-k_{m+1}^2} \ell(Q^2) \big)^{-\frac12} \big]
\\
&\lesssim 2^{(3n_1 + 3n_2) N} 
2^{k_{m+1}^1 n_1 + k_{m+1}^2 n_2} 
(|v| 2^{k_{m+1}^1 + k_{m+1}^2} 2^N)^{\frac12} 
\prod_{j=1}^m \|f_j\|_{L^{2m}} 
\\
&\le 2^{(k_{m+1}^1 + k_{m+1}^2) (n_1 + n_2 + 1)/2} 
2^{(3n_1 + 3n_2 +1 - \rho/2)N} 
\prod_{j=1}^m \|f_j\|_{L^{2m}} .
\end{align*}
Choosing $\rho > 2(3n_1 + 3n_2 +1)$, we conclude  
\begin{align}\label{HFX-2}
\lim_{|v| \to 0} \sup_{\w} 
\sup_{\substack{\|f_j\|_{L^{2m}} \le 1 \\ j=1, \ldots, m}} 
\mathbf{X}_{\w}^1(v; \vec{f})  =0. 
\end{align} 
Consequently, \eqref{SDKR-3} follows from \eqref{HFX}, \eqref{HFX-1}, and \eqref{HFX-2}.  
\qed

\subsection{Compact partial paraproducts}
Next, we would like to prove Theorem \ref{thm:dyadic-cpt} for $\mathbf{T}_{\w} = \mathbf{P}_{\D_{\w}}^{1, k}$, and the proof for $\mathbf{T}_{\w} = \mathbf{P}_{\D_{\w}}^{2, k}$ is symmetric. It was shown in \cite[Theorem 6.7]{LMV21} that 
\begin{align}\label{PDww}
\sup_{\w} \|\mathbb{P}_{\D_{\w}}^{1, k} (\vec{f})\|_{L^r(v^r)} 
\lesssim \prod_{j=1}^m \|f_j\|_{L^{r_j}(v_j^{r_j})}, 
\end{align}
for all $\vec{r} = (r_1, \ldots, r_m) \in (1, \infty]^m$ and $\vec{v} = (v_1, \ldots, v_m) \in A_{\vec{r}}(\Rnn)$, where $\frac1r = \sum_{j=1}^m \frac{1}{r_j} > 0$ and $v = \prod_{j=1}^m v_j$. By Theorem \ref{thm:RdF-cpt} and \eqref{PDww}, it is enough to show that $\mathbb{E}_{\w} \mathbf{P}_{\D_{\w}}^{1, k}$ is compact from $L^{2m}  \times \cdots \times  L^{2m}$ to $L^2$. With Theorem \ref{thm:KRLp} in hand, this is reduced to proving that 
\begin{align}
\label{PFK-1}
&\sup_{\substack{\|f_j\|_{L^{2m}} \le 1 \\ j=1, \ldots, m}} 
\sup_{\w} \|\mathbf{P}_{\D_{\w}}^{1, k}(\vec{f}) \|_{L^2}
\lesssim 1, 
\\ 
\label{PFK-2}
\lim_{A \to \infty} &\sup_{\substack{\|f_j\|_{L^{2m}} \le 1 \\ j=1, \ldots, m}} 
\sup_{\w} \|\mathbf{P}_{\D_{\w}}^{1, k}(\vec{f}) \, \mathbf{1}_{B_{\vec{n}}(0, A)^c}\|_{L^2}
= 0, 
\\ 
\label{PFK-3}
\lim_{|v| \to 0} &\sup_{\substack{\|f_j\|_{L^{2m}} \le 1 \\ j=1, \ldots, m}} 
\sup_{\w} \|\tau_v \, \mathbf{P}_{\D_{\w}}^{1, k} (\vec{f)} 
- \mathbf{P}_{\D_{\w}}^{1, k} (\vec{f})\|_{L^2} 
=0. 
\end{align}
Note that \eqref{PDww} implies \eqref{PFK-1}.

Without loss of generality, we may assume that $\widetilde{h}_{I_1^1} = h_{I_1^1}$ and $\widetilde{h}_{I_2^1} = h_{I_2^1}$. The estimate \eqref{KN} gives 
\begin{align}\label{EP}
\|\mathbf{P}_{\D_{\w}}^{1, k}(\vec{f}) \, \mathbf{1}_{B_{\vec{n}}(0, A)^c}\|_{L^2}
\le \|\Xi_{\w}^1(\vec{f})\|_{L^2} + \|\Xi_{\w}^2(\vec{f})\|_{L^2}, 
\end{align}
where 
\begin{align*}
\Xi_{\w}^1(\vec{f}) 
&:= \sum_{\substack{Q^1 \notin \D_{\w_1}^1(N) \\ Q^2 \in \D_{\w_2}^2}} 
\sum_{\substack{I_j^1 \in \D_{k_j}^1(Q^1) \\ j=1, \ldots, m+1}}
a_{(I_j^1), Q} 
\prod_{j=1}^m \langle f_j, \widetilde{h}_{I_j^1} \otimes \overline{h}_{j, Q^2} \rangle \, 
\widetilde{h}_{I_{m+1}^1} \otimes \overline{h}_{m+1, Q^2}, 
\\
\Xi_{\w}^2(\vec{f}) 
&:= \sum_{\substack{Q^1 \in \D_{\w_1}^1(N) \\ Q^2 \notin \D_{\w_2}^2(N)}} 
\sum_{\substack{I_j^1 \in \D_{k_j}^1(Q^1) \\ j=1, \ldots, m+1}}
a_{(I_j^1), Q} 
\prod_{j=1}^m \langle f_j, \widetilde{h}_{I_j^1} \otimes \overline{h}_{j, Q^2} \rangle \, 
\widetilde{h}_{I_{m+1}^1} \otimes \overline{h}_{m+1, Q^2}. 
\end{align*}
Let $f_{m+1} \in L^2(\Rnn)$ with $\|f_{m+1}\|_{L^2} \le 1$. Denote 
\begin{align*}
\Lambda(\vec{f}, f_{m+1}) 
&:= \sum_{Q^1 \in \D_{\w_1}^1} 
\sum_{\substack{I_j^1 \in \D_{k_j}^1(Q^1) \\ j=1, \ldots, m+1}}
\frac{\prod_{j=1}^{m+1} |I_j^1|^{\frac12}}{|Q^1|^m} 
\bigg\|\bigg[\sum_{Q^2 \in \D_{\w_2}^2} \prod_{j=1}^{m+1} \big| \langle f_j, 
\widetilde{h}_{I_j^1} \otimes \overline{h}_{j, Q^2} \rangle\big|^2 
\frac{\mathbf{1}_{Q^2}}{|Q^2|} \bigg]^{\frac12} \bigg\|_{L^1(\R^{n_2})}. 
\end{align*}
It was shown in \cite[p. 25]{LMV21} that 
\begin{align}\label{LAF}
\Lambda(\vec{f}, f_{m+1}) 
\lesssim \prod_{j=1}^m \|f_j\|_{L^{2m}},  
\end{align} 
which, along with \eqref{H1BMO} and the assumption on partial paraproducts, leads to 
\begin{align}\label{PG-1}
|\langle \Xi_{\w}^1(\vec{f}), f_{m+1} \rangle| 
&\lesssim \sum_{Q^1 \notin \D_{\w_1}^1(N)} 
\sum_{\substack{I_j^1 \in \D_{k_j}^1(Q^1) \\ j=1, \ldots, m+1}}
\bigg[\sup_{Q_0^2 \in \D_{\w_2}^2} \frac{1}{|Q_0^2|} 
\sum_{\substack{Q^2 \in \D_{\w_2}^2 \\ Q^2 \subset Q_0^2}} 
|a_{(I_j^1), Q}|^2 \bigg]^{\frac12} 
\\ \nonumber 
&\quad\times \int_{\R^{n_2}} \bigg[\sum_{Q^2 \in \D_{\w_2}^2} 
\prod_{j=1}^{m+1} \big| \langle f_j, \widetilde{h}_{I_j^1} \otimes \overline{h}_{j, Q^2} \rangle\big|^2 
\frac{\mathbf{1}_{Q^2}}{|Q^2|} \bigg]^{\frac12} \, dx_2 
\\ \nonumber 
&\lesssim \sup_{Q^1 \notin \D_{\w_1}^1(N)} \mathcal{F}^1(Q^1) \, 
\Lambda(\vec{f}, f_{m+1}) 
\lesssim \sup_{Q^1 \notin \D_{\w_1}^1(N)} \mathcal{F}^1(Q^1) \, 
\prod_{j=1}^m \|f_j\|_{L^{2m}},  
\end{align}
where the implicit constants are independent of $\w$. Analogously, 
\begin{align}\label{PG-2}
|\langle \Xi_{\w}^2(\vec{f}), f_{m+1} \rangle| 
&\lesssim \sum_{Q^1 \in \D_{\w_1}^1(N)} 
\sum_{\substack{I_j^1 \in \D_{k_j}^1(Q^1) \\ j=1, \ldots, m+1}}
\bigg[\sup_{Q_0^2 \in \D_{\w_2}^2} \frac{1}{|Q_0^2|} 
\sum_{\substack{Q^2 \notin \D_{\w_2}^2(N) \\ Q^2 \subset Q_0^2}} 
|a_{(I_j^1), Q}|^2 \bigg]^{\frac12} 
\\ \nonumber 
&\quad\times \int_{\R^{n_2}} \bigg[\sum_{Q^2 \in \D_{\w_2}^2} 
\prod_{j=1}^{m+1} \big| \langle f_j, \widetilde{h}_{I_j^1} \otimes \overline{h}_{j, Q^2} \rangle\big|^2 
\frac{\mathbf{1}_{Q^2}}{|Q^2|} \bigg]^{\frac12} \, dx_2 
\\ \nonumber 
&\lesssim \mathcal{F}^1_N \, \Lambda(\vec{f}, f_{m+1}) 
\lesssim  \mathcal{F}^1_N \, \prod_{j=1}^m \|f_j\|_{L^{2m}},  
\end{align}
where the implicit constants are independent of $\w$. Since 
\begin{align}\label{NFN}
\lim_{N \to \infty} \widetilde{\mathcal{F}}^1_N
:= \lim_{N \to \infty} \Big[ \sup_{\D^1} \sup_{Q^1 \notin \D^1(N)} \mathcal{F}^1(Q^1)
+ \mathcal{F}^1_N \Big]
= 0,
\end{align}
we deduce \eqref{PFK-2} from \eqref{EP}, \eqref{PG-1}, and \eqref{PG-2}. 

To justify \eqref{PFK-3}, let $0<|v| = |v_1| + |v_2| \ll 2^{-8}$ and $\rho \ge 2$ be an integer chosen later. Then there exists an integer $N=N(v) \ge 2$ so that $2^{-\rho (N+1)} < |v| \le 2^{-\rho N}$. We have 
\begin{align}\label{TVE-1}
\|\tau_v \, \mathbf{P}_{\D_{\w}}^{1, k} (\vec{f)} 
- \mathbf{P}_{\D_{\w}}^{1, k} (\vec{f})\|_{L^2} 
\le \|\Gamma_{\w}^{v, 1}(\vec{f})\|_{L^2} 
+ \|\Gamma_{\w}^{v, 2}(\vec{f})\|_{L^2}, 
\end{align}
where 
\begin{align*}
\Gamma_{\w}^{v, 1}(\vec{f}) 
&:= \sum_{Q \in \D_{\w}(N)} 
\sum_{\substack{I_j^1 \in \D_{k_j}^1(Q^1) \\ j=1, \ldots, m+1}} a_{(I_j^1), Q} 
\prod_{j=1}^m \langle f_j, \widetilde{h}_{I_j^1} \otimes \overline{h}_{j, Q^2} \rangle \, \psi_Q^v, 
\\
\Gamma_{\w}^{v, 2}(\vec{f}) 
&:= \sum_{Q \notin \D_{\w}(N)} 
\sum_{\substack{I_j^1 \in \D_{k_j}^1(Q^1) \\ j=1, \ldots, m+1}} a_{(I_j^1), Q} 
\prod_{j=1}^m \langle f_j, \widetilde{h}_{I_j^1} \otimes \overline{h}_{j, Q^2} \rangle \, \psi_Q^v, 
\end{align*}
where $\psi_Q^v := \tau_v (\widetilde{h}_{I_{m+1}^1} \otimes \overline{h}_{m+1, Q^2}) - \widetilde{h}_{I_{m+1}^1} \otimes \overline{h}_{m+1, Q^2}$. In light of \eqref{PG-1} and \eqref{PG-2}, we obtain 
\begin{align*}
\|\Gamma_{\w}^{v, 2}(\vec{f})\|_{L^2}
\le 2 \|\Xi_{\w}^1(\vec{f})\|_{L^2} + 2 \|\Xi_{\w}^2(\vec{f})\|_{L^2}
\lesssim \widetilde{\mathcal{F}}^1_N \prod_{j=1}^m \|f_j\|_{L^{2m}}, 
\end{align*}
which together with \eqref{NFN} implies  
\begin{align}\label{TVE-2}
\lim_{|v| \to 0} \sup_{\w} 
\sup_{\substack{\|f_j\|_{L^{2m}} \le 1 \\ j=1, \ldots, m}} 
\|\Gamma_{\w}^{v, 2}(\vec{f})\|_{L^2} = 0.
\end{align}

As argued in \eqref{PG-1}, there holds 
\begin{align}\label{TVE-3}
\|\Gamma_{\w}^{v, 1}(\vec{f})\|_{L^2}
\lesssim \sup_{\|f_{m+1}\|_{L^2} \le 1} 
\Lambda_{\w}^v (\vec{f}, f_{m+1}),  
\end{align}
where 
\begin{align}\label{TVE-4}
\Lambda_{\w}^v (\vec{f}, f_{m+1}) 
&:= \sum_{Q^1 \in \D_{\w_1}^1(N)} 
\sum_{\substack{I_j^1 \in \D_{k_j}^1(Q^1) \\ j=1, \ldots, m+1}}
\frac{\prod_{j=1}^{m+1} |I_j^1|^{\frac12}}{|Q^1|^m} 
\\ \nonumber 
&\qquad\times \bigg\|\bigg[\sum_{Q^2 \in \D_{\w_2}^2(N)} \prod_{j=1}^m \big| \langle f_j, 
\widetilde{h}_{I_j^1} \otimes \overline{h}_{j, Q^2} \rangle\big|^2
\big|\langle f_{m+1}, \psi_Q^v \rangle\big|^2 
\frac{\mathbf{1}_{Q^2}}{|Q^2|} \bigg]^{\frac12} \bigg\|_{L^1(\R^{n_2})}
\\ \nonumber
&\le \sum_{Q \in \D_{\w}(N)} 
\sum_{\substack{I_j^1 \in \D_{k_j}^1(Q^1) \\ j=1, \ldots, m+1}}
\frac{\prod_{j=1}^{m+1} |I_j^1|^{\frac12}}{|Q^1|^m}  
|Q^2|^{\frac12} \prod_{j=1}^m \big| \langle f_j, 
\widetilde{h}_{I_j^1} \otimes \overline{h}_{j, Q^2} \rangle\big|
\big|\langle f_{m+1}, \psi_Q^v \rangle\big|. 
\end{align}
To estimate the last term, one has to know the exact form of $\overline{h}_{m+1, Q^2}$. If $\overline{h}_{m+1, Q^2} = h_{Q^2}$, then 
\begin{align}\label{TVE-5}
\|\psi_Q^v\|_{L^2}
&\le \|\tau_{v_1} \widetilde{h}_{I_{m+1}^1} - \widetilde{h}_{I_{m+1}^1}\|_{L^2} 
\|\tau_{v_2} \overline{h}_{m+1, Q^2}\|_{L^2} 
\\ \nonumber
&\qquad + \|\widetilde{h}_{I_{m+1}^1}\|_{L^2} 
\|\tau_{v_2} \overline{h}_{m+1, Q^2} - \overline{h}_{m+1, Q^2}\|_{L^2} 
\\ \nonumber 
&\lesssim |v_1|^{\frac12} \ell(I_{m+1}^1)^{-\frac12} 
+ |v_2|^{\frac12} \ell(Q^2)^{-\frac12}. 
\end{align}
If $\overline{h}_{m+1, Q^2} = \frac{\mathbf{1}_{Q^2}}{|Q^2|}$, then 
\begin{align}\label{TVE-6}
\|\psi_Q^v\|_{L^2}
\lesssim |v_1|^{\frac12} \ell(I_{m+1}^1)^{-\frac12} |Q^2|^{-\frac12}
+ |v_2|^{\frac12} \ell(Q^2)^{-\frac12} |Q^2|^{-\frac12}. 
\end{align}
By the assumption, there exists only one function in $\{\overline{h}_{j, Q^2}\}_{j=1}^{m+1}$ being cancellative Haar function. This, along with \eqref{TVE-4}--\eqref{TVE-6}, gives 
\begin{align}\label{TVE-7}
\Lambda_{\w}^v (\vec{f}, f_{m+1}) 
&\lesssim \sum_{Q \in \D_{\w}(N)}  
\sum_{I_{m+1}^1 \in \D_{k_{m+1}}^1(Q^1)} 
|Q^2|^{\frac12} \prod_{j=1}^m \langle |f_j| \rangle_Q 
|I_{m+1}^1|^{\frac12} \|f_{m+1}\|_{L^2} 
\\ \nonumber 
&\qquad\times \big[|v_1|^{\frac12} \ell(I_{m+1}^1)^{-\frac12} 
+ |v_2|^{\frac12} \ell(Q^2)^{-\frac12} \big] 
\\ \nonumber 
&\lesssim |v|^{\frac12} \# \D_{\w}(N) \, 2^{k_{m+1} 	\frac{n_1}{2}} 
2^{N \frac{n_1}{2}} (2^{k_{m+1}} 2^N)^{\frac12} 
\prod_{j=1}^m \|f_j\|_{L^{2m}}
\\ \nonumber 
&\lesssim 2^{k_{m+1} (n_1 + 1)/2} 2^{(4n_1 + 4n_2 - \rho/2) N} 
\prod_{j=1}^m \|f_j\|_{L^{2m}}, 
\end{align}
where the implicit constants are independent of $v$ and $\w$. Then taking $\rho > 8n_1 + 8n_2$ and using \eqref{TVE-3} and \eqref{TVE-7}, we deduce 
\begin{align}\label{TVE-8}
\lim_{|v| \to 0} \sup_{\w} 
\sup_{\substack{\|f_j\|_{L^{2m}} \le 1 \\ j=1, \ldots, m}} 
\|\Gamma_{\w}^{v, 1}(\vec{f})\|_{L^2} = 0.
\end{align}
Hence, \eqref{PFK-3} follows from \eqref{TVE-1}, \eqref{TVE-2}, and \eqref{TVE-8}. 
\qed

\subsection{Compact full paraproducts}
Finally, let us turn to the proof of Theorem \ref{thm:dyadic-cpt} for $\mathbf{T}_{\w} = \mathbf{F}_{\mathbf{a}_{\w}}$. A careful checking of the proof of \cite[Theorem 6.21]{LMV21} yields 
\begin{align}\label{Fbww}
\sup_{\w} \|\mathbb{F}_{\mathbf{a}_{\w}}\|_{L^{r_1}(v_1^{r_1}) \times \cdots \times L^{r_m}(v_m^{r_m}) \to L^r(v^r)} 
\lesssim \sup_{\w} \|\mathbf{a}_{\w}\|_{\BMO(\D_{\w})}
\le 1, 
\end{align}
for all $\vec{r} = (r_1, \ldots, r_m) \in (1, \infty]^m$ and $\vec{v} = (v_1, \ldots, v_m) \in A_{\vec{r}}(\Rnn)$, where $\frac1r = \sum_{j=1}^m \frac{1}{r_j} > 0$ and $v = \prod_{j=1}^m v_j$. By Theorem \ref{thm:RdF-cpt} and \eqref{Fbww}, it is enough to show that $\mathbb{E}_{\w} \mathbf{F}_{\mathbf{a}_{\w}}$ is compact from $L^{2m}  \times \cdots \times  L^{2m}$ to $L^2$. With Theorem \ref{thm:KRLp} in hand, this is reduced to proving that 
\begin{align}
\label{PibFK-1}
&\sup_{\substack{\|f_j\|_{L^{2m}} \le 1 \\ j=1, \ldots, m}} 
\sup_{\w} \|\mathbf{F}_{\mathbf{a}_{\w}}(\vec{f}) \|_{L^2}
\lesssim 1, 
\\ 
\label{PibFK-2}
\lim_{A \to \infty} &\sup_{\substack{\|f_j\|_{L^{2m}} \le 1 \\ j=1, \ldots, m}} 
\sup_{\w} \|\mathbf{F}_{\mathbf{a}_{\w}}(\vec{f}) \, \mathbf{1}_{B_{\vec{n}}(0, A)^c}\|_{L^2}
= 0, 
\\ 
\label{PibFK-3}
\lim_{|v| \to 0} &\sup_{\substack{\|f_j\|_{L^{2m}} \le 1 \\ j=1, \ldots, m}} 
\sup_{\w} \|\tau_v \, \mathbf{F}_{\mathbf{a}_{\w}} (\vec{f)} 
- \mathbf{F}_{\mathbf{a}_{\w}} (\vec{f})\|_{L^2} 
=0. 
\end{align}
Since it no longer needs the cancellation of Haar functions, it suffices to consider the case 
\begin{align*}
\mathbf{F}_{\mathbf{a}_{\w}} (\vec{f}) 
= \sum_{I = I^1 \times I^2 \in \D_{\w}} a_I 
\langle f_1, h_{I^1} \otimes \overline{h}_{I^2} \rangle 
\langle f_2, \overline{h}_{I^1} \otimes h_{I^2} \rangle 
\prod_{j=3}^m \langle f_j \rangle_{I^1 \times I^2} \, 
\overline{h}_{I^1} \otimes \overline{h}_{I^2},  
\end{align*}
where $\overline{h}_{I^i} = \frac{\mathbf{1}_{I^i}}{|I^i|}$, $i=1, 2$.

The estimate \eqref{PibFK-1} follows from \eqref{Fbww}. To show \eqref{PibFK-2}, let $\mathbf{a}_{\w}^N := \big\{a_I \mathbf{1}_{\{I \notin \D_{\w}(N) \}} \big\}_{I \in \D_{\w}}$. The assumption gives 
\begin{align}\label{ANB}
\lim_{N \to \infty} \sup_{\w} \|\mathbf{a}_{\w}^N\|_{\BMO(\D_{\w})} = 0. 
\end{align}
By \eqref{KN} and \eqref{Fbww}, there holds
\begin{align*}
\|\mathbf{F}_{\mathbf{a}_{\w}} (\vec{f}) \mathbf{1}_{B_{\vec{n}}(0, A)^c}\|_{L^2} 
& = \|\mathbf{F}_{\mathbf{a}_{\w}^N} (\vec{f}) \mathbf{1}_{B_{\vec{n}}(0, A)^c}\|_{L^2} 
\le \|\mathbf{F}_{\mathbf{a}_{\w}^N} (\vec{f})\|_{L^2}  
\\
& \lesssim \sup_{\w} \|\mathbf{a}_{\w}^N\|_{\BMO(\D_{\w})} 
\prod_{j=1}^m \|f_j\|_{L^{2m}},  
\end{align*}
which along with \eqref{ANB} yields \eqref{PibFK-2}. 

Let $0<|v| = |v_1| + |v_2| \ll 2^{-4}$ and $\rho \ge 2$ be an integer chosen later. Then there exists an integer $N=N(v) \ge 2$ so that $2^{-\rho (N+1)} < |v| \le 2^{-\rho N}$. We split 
\begin{align}\label{HPB-2}
\|(\tau_v \mathbf{F}_{\mathbf{a}_{\w}} 
- \mathbf{F}_{\mathbf{a}_{\w}}) (\vec{f})\|_{L^2} 
\le \Upsilon_{\w}^1(v; \vec{f}) + \Upsilon_{\w}^2(v; \vec{f}), 
\end{align}
where 
\begin{align*}
\Upsilon_{\w}^1(v; \vec{f}) 
&:= \bigg\|\sum_{I \in \D_{\w}(N)} a_I  
\langle f_1, h_{I^1} \otimes \overline{h}_{I^2} \rangle 
\langle f_2, \overline{h}_{I^1} \otimes h_{I^2} \rangle 
\prod_{j=3}^m \langle f_j \rangle_{I^1 \times I^2} \, \phi_I^v \bigg\|_{L^2},
\\ 
\Upsilon_{\w}^2(v; \vec{f})
&:= \bigg\|\sum_{I \notin \D_{\w}(N)}  a_I
\langle f_1, h_{I^1} \otimes \overline{h}_{I^2} \rangle 
\langle f_2, \overline{h}_{I^1} \otimes h_{I^2} \rangle 
\prod_{j=3}^m \langle f_j \rangle_{I^1 \times I^2} \, \phi_I^v \bigg\|_{L^2}, 
\end{align*}
with $\phi_I^v := \tau_v (\overline{h}_{I^1} \otimes \overline{h}_{I^2}) - \overline{h}_{I^1} \otimes \overline{h}_{I^2}$. Invoking \eqref{Fbww}, we arrive at   
\begin{align}\label{UP2}
\Upsilon_{\w}^2(v; \vec{f}) 
&\le 2 \bigg\|\sum_{I \notin \D_{\w}(N)} a_I  
\langle f_1, h_{I^1} \otimes \overline{h}_{I^2} \rangle 
\langle f_2, \overline{h}_{I^1} \otimes h_{I^2} \rangle 
\prod_{j=3}^m \langle f_j \rangle_{I^1 \times I^2} 
\overline{h}_{I^1} \otimes \overline{h}_{I^2} \bigg\|_{L^2}
\\ \nonumber 
&= 2 \| \mathbf{F}_{\mathbf{a}_{\w}^N} (\vec{f})\|_{L^2}
\lesssim \sup_{\w \in \Omega} \|\mathbf{a}_{\w}^N\|_{\BMO(\D_{\w})}  
\prod_{j=1}^m \|f_j\|_{L^{2m}}, 
\end{align}
where the implicit constant does not depend on $\omega$. Thus, \eqref{ANB} and \eqref{UP2} imply 
\begin{align}\label{HO-1}
\lim_{|v| \to 0} \sup_{\w} 
\sup_{\substack{\|f_j\|_{L^{2m}} \le 1 \\ j=1, \ldots, m}} 
\Upsilon_{\w}^2(v; \vec{f}) = 0.
\end{align}
To proceed, note that 
\begin{align}\label{bcar}
\sum_{I \in \D_{\w}:\, I \subset U} |a_I|^2 
\le \|\mathbf{a}_{\w}\|_{\BMO(\D_{\w})}^2 \, |U|
\le |U|,   
\end{align}
for all open sets $U \subset \Rnn$ with $0<|U|<\infty$, and 
\begin{align}
\|\phi_I^v\|_{L^2}
&\le \|\tau_{v_1} \overline{h}_{I^1} - \overline{h}_{I^1}\|_{L^2} 
\|\tau_{v_2} \overline{h}_{I^2}\|_{L^2} 
+ \|\overline{h}_{I^1}\|_{L^2} 
\|\tau_{v_2} \overline{h}_{I^2} - \overline{h}_{I^2}\|_{L^2} 
\\ \nonumber 
&\lesssim |v_1|^{\frac12} \ell(I^1)^{-\frac{n_1}{2} - \frac12} \ell(I^1)^{\frac{n_1}{2}} 
+ |v_2|^{\frac12} \ell(I^2)^{-\frac{n_2}{2} - \frac12} \ell(I^2)^{\frac{n_2}{2}}. 
\end{align}
Hence, it follows from the Cauchy-Schwarz inequality, \eqref{car-DN}, \eqref{bcar}, and Lemma \ref{lem:Car} that 
\begin{align*}
\Upsilon_{\w}^1(v; \vec{f})
&\le \sum_{I \in \D_{\w}(N)} |a_I| |I|^{\frac12} 
\prod_{j=1}^m \langle |f_j| \rangle_{I} \|\phi_I^v\|_{L^2}
\\
&\lesssim |v|^{\frac12} \, 2^{\frac{N}{2}(n_1 + n_2 + 1)} 
\sum_{I \in \D_{\w}(N)} |a_I|  
\prod_{j=1}^m \langle |f_j| \rangle_I
\\
&\lesssim |v|^{\frac12} \, 2^{\frac{N}{2}(n_1 + n_2 + 1)} 
\big[\# \D_{\w}(N) \big]^{\frac12} 
\bigg[\sum_{I \in \D} \prod_{j=1}^m |a_I|^{\frac2m} 
\langle |f_j|^m \rangle_I^{\frac2m} \bigg]^{\frac12}
\\
&\lesssim |v|^{\frac12} \, 2^{\frac{N}{2}(n_1 + n_2 + 1)} 
2^{\frac{N}{2} (3n_1 + 3n_2)} 
\prod_{j=1}^m \bigg[\sum_{I \in \D} |a_I|^2 
\langle |f_j|^m \rangle_I^2 \bigg]^{\frac{1}{2m}}
\\
&\lesssim 2^{\frac{N}{2}(4n_1 + 4n_2 + 1 - \rho)} 
\prod_{j=1}^m \|M_{\mathcal{R}} (|f_j|^m)\|_{L^2}^{\frac1m} 
\\
&\lesssim 2^{\frac{N}{2}(4n_1 + 4n_2 + 1 - \rho)} 
\prod_{j=1}^m \|f_j\|_{L^{2m}}, 
\end{align*}
where we have used that $M_{\mathcal{R}}$ is bounded on $L^p(\Rnn)$ for any $p \in (1, \infty)$. Picking $\rho > 4n_1 + 4n_2 + 1$, the above leads to  
\begin{align}\label{HO-2}
\lim_{|v| \to 0} \sup_{\w} 
\sup_{\substack{\|f_j\|_{L^{2m}} \le 1 \\ j=1, \ldots, m}} 
\Upsilon_{\w}^1(v; \vec{f}) = 0.
\end{align}
Therefore, \eqref{PibFK-3} is a consequence of \eqref{HPB-2}--\eqref{HO-2}. This completes the proof.  
\qed

\section{Mean continuity of commutators}\label{sec:wcc} 
In this section we aim to demonstrate Theorem \ref{thm:dyadic-bT}. Our analysis is based on the size of dyadic cubes and the cancellation of Haar functions.

\subsection{Commutators of shifts}\label{sec:cs}
First, let us show the mean continuity of $[\b, \mathbb{E}_{\w} \mathbb{S}_{\D_{\w}}^k]_{\a}$. Let $\a \in \N^m \setminus \{0\}^m$ and $\b = (b_1, \ldots, b_m) \in \cmo(\Rnn)^m$. It follows from \eqref{SDww} and Theorem \ref{thm:TTb} that 
\begin{align}\label{SDW-2}
\sup_{\w} \|[\b, \mathbb{S}_{\D_{\w}}^k]_{\a} (\vec{f})\|_{L^r(v^r)} 
\lesssim \prod_{j=1}^m \|b_j\|_{\bmo}^{\alpha_j} \|f_j\|_{L^{r_j}(v_j^{r_j})}, 
\end{align}
for all $\vec{r} = (r_1, \ldots, r_m) \in (1, \infty]^m$ and $\vec{v} = (v_1, \ldots, v_m) \in A_{\vec{r}}(\Rnn)$, where $\frac1r = \sum_{j=1}^m \frac{1}{r_j} > 0$ and $v = \prod_{j=1}^m v_j$. 
In view of \eqref{SDww} and Theorem \ref{thm:RdF-bT}, it suffices to prove
\begin{equation}\label{SDW-3}
\begin{array}{c}
\text{$[\b, \E_{\w} \mathbb{S}_{\D_{\w}}^k]_{\a}$ is mean continuous from} 
\\[4pt]
L^{p_1}(\Rnn) \times \cdots \times L^{p_m}(\Rnn) \text{ to } L^p(\Rnn)
\\[4pt]
\text{for some $\vec{p} = (p_1, \ldots, p_m) \in (1, \infty)^m$ with $\frac1p = \sum_{j=1}^m \frac{1}{p_j} \in (0, 1)$}.
\end{array}
\end{equation} 
In light of \eqref{SDW-2} and that $\cmo(\Rnn) = \overline{\mathscr{C}_c^{\infty}(\Rnn)}^{\bmo}$, we may assume that $\b \in \mathscr{C}_c^{\infty}(\Rnn)^m$. 

We claim that for each $i \in \{1, \ldots, m\}$, 
\begin{align}\label{SDC-3}
\lim_{|v| \to 0} \sup_{\substack{\|f_j\|_{L^{p_j}} \le 1 \\ j=1, \ldots, m}} 
\sup_{\w} \|(\tau_v - \tau_{v_1} - \tau_{v_2} + I) 
[\b, \mathbb{S}_{\D_{\w}}^k]_{e_i} (\vec{f})\|_{L^p} 
= 0,
\end{align}
for some $p, p_1, \ldots, p_m \in (1, \infty)$ satisfying $\frac1p = \sum_{j=1}^m \frac{1}{p_j}$. Assuming \eqref{SDC-3} holds momentarily, let us conclude \eqref{SDW-3} as follows. For any $i_1, i_2 \in \{1, \ldots, m\}$, there holds
\begin{align*}
\big[\b, [\b, \mathbb{S}_{\D_{\w}}^k]_{e_{i_1}} \big]_{e_{i_2}}(\vec{f})
= b_{i_2} [\b, \mathbb{S}_{\D_{\w}}^k]_{e_{i_1}}(\vec{f}) 
- [\b, \mathbb{S}_{\D_{\w}}^k]_{e_{i_1}}(f_1, \ldots, b_{i_2} f_{i_2}, \ldots, f_m), 
\end{align*}
which gives
\begin{align}\label{LLL}
(\tau_v - \tau_{v_1} - \tau_{v_2} + I) \big[\b, [\b, \mathbb{S}_{\D_{\w}}^k]_{e_{i_1}} \big]_{e_{i_2}}(\vec{f})
= \mathscr{L}_1 (\vec{f}) + \mathscr{L}_2 (\vec{f}) - \mathscr{L}_3 (\vec{f}),
\end{align}
where
\begin{align*}
\mathscr{L}_1 (\vec{f})
& := \tau_v b_{i_2} \, (\tau_v - \tau_{v_1} - \tau_{v_2} + I) 
[\b, \mathbb{S}_{\D_{\w}}^k]_{e_{i_1}} (\vec{f}),
\\
\mathscr{L}_2 (\vec{f})
& := (\tau_v b_{i_2} - \tau_{v_1} b_{i_2}) \tau_{v_1} 
[\b, \mathbb{S}_{\D_{\w}}^k]_{e_{i_1}} (\vec{f})
\\
&\quad \, \, + (\tau_v b_{i_2} - \tau_{v_2} b_{i_2}) \tau_{v_2} 
[\b, \mathbb{S}_{\D_{\w}}^k]_{e_{i_1}} (\vec{f})
- (\tau_v b_{i_2} - b_{i_2}) [\b, \mathbb{S}_{\D_{\w}}^k]_{e_{i_1}} (\vec{f}),
\\
\mathscr{L}_3 (\vec{f})
& := (\tau_v - \tau_{v_1} - \tau_{v_2} + I) [\b, \mathbb{S}_{\D_{\w}}^k]_{e_{i_1}} 
(f_1, \ldots, b_{i_2} f_{i_2}, \ldots, f_m).
\end{align*}
Then given $\varepsilon>0$, by \eqref{SDW-2} and \eqref{SDC-3}, there exists some $\delta_0 = \delta_0(\varepsilon) > 0$ such that for any $0 < |v| < \delta_0$, 
\begin{align}\label{LLL-1}
\|\mathscr{L}_1 (\vec{f})\|_{L^p}
& \le \|b_{i_2}\|_{L^{\infty}} \|(\tau_v - \tau_{v_1} - \tau_{v_2} + I) 
[\b, \mathbb{S}_{\D_{\w}}^k]_{e_{i_1}} (\vec{f})\|_{L^p}
\lesssim \varepsilon \prod_{j=1}^m \|f_j\|_{L^{p_j}},
\end{align}
\begin{align}\label{LLL-2}
\|\mathscr{L}_2 (\vec{f})\|_{L^p}
\le 3 |v| \|\nabla b_{i_2}\|_{L^{\infty}} 
\|[\b, \mathbb{S}_{\D_{\w}}^k]_{e_{i_1}} (\vec{f})\|_{L^p}
\lesssim \varepsilon \prod_{j=1}^m \|f_j\|_{L^{p_j}},
\end{align}
and
\begin{align}\label{LLL-3}
\|\mathscr{L}_3 (\vec{f})\|_{L^p}
\lesssim \varepsilon \|b_{i_2} f_{i_2}\|_{L^{p_{i_2}}} \prod_{j \ne i_2} \|f_j\|_{L^{p_j}}
\lesssim \varepsilon \prod_{j=1}^m \|f_j\|_{L^{p_j}},
\end{align}
where the implicit constants are independent of $\varepsilon$, $v$, $\w$, and $\vec{f}$. Hence, by \eqref{LLL}--\eqref{LLL-3}, 
\begin{align*}
\lim_{|v| \to 0} \sup_{\substack{\|f_j\|_{L^{p_j}} \le 1 \\ j=1, \ldots, m}} 
\sup_{\w} \|(\tau_v - \tau_{v_1} - \tau_{v_2} + I) 
\big[\b, [\b, \mathbb{S}_{\D_{\w}}^k]_{e_{i_1}}]_{e_{i_2}} (\vec{f})\|_{L^p} 
= 0. 
\end{align*}
By induction and the same technique as above, we eventually obtain that for any $\a \in \N^m \setminus \{0\}^m$,
\begin{align*}
\lim_{|v| \to 0} \sup_{\substack{\|f_j\|_{L^{p_j}} \le 1 \\ j=1, \ldots, m}} 
\sup_{\w} \|(\tau_v - \tau_{v_1} - \tau_{v_2} + I) 
[\b, \mathbb{S}_{\D_{\w}}^k]_{\a}] (\vec{f})\|_{L^p} 
= 0,
\end{align*}
which along with \eqref{SDW-2} and Minkowski's inequality implies \eqref{SDW-3} as desired.

It remains to verify \eqref{SDC-3}. By symmetry, it is enough to show the case $i=1$. Let $\supp (b_1) \subset B_{\vec{n}}(0, 2^{N_0})$ for some $N_0 \ge 4$. Choose 
\begin{equation}\label{ppp}
\begin{aligned}
& 1 < p < p_1 < \min \bigg\{ \frac{n_1}{n_1-1}, \frac{n_2}{n_2-1} \bigg\}
\quad \text{and} \quad 
\\
& 0 < \gamma < \min\big\{1, \, p(1-n_1/p'_1), \, p(1-n_2/p'_2) \big\}.
\end{aligned}
\end{equation}
Then pick $p_2, \ldots, p_m \in (1, \infty)$ such that $\frac1p = \sum_{j=1}^m \frac{1}{p_j}$. Observe that there exists a finite collection of dyadic rectangles $\big\{P_{\kappa} = P_{\kappa}^1 \times P_{\kappa}^2 \big\}_{\kappa=1}^{\kappa_0} \subset \D_{\w}$ such that $\ell(P_{\kappa}^1) = \ell(P_{\kappa}^2) = 2^{N_0}$, $P_{\kappa} \cap B_{\vec{n}}(0, 2^{N_0}) \neq \emptyset$, $\kappa = 1, \ldots, \kappa_0$, and  
\begin{align}\label{bbn}
\supp (b_1) \subset B_{\vec{n}}(0, 2^{N_0}) 
\subset \bigcup_{\kappa=1}^{\kappa_0} P_{\kappa} \subset B_{\vec{n}}(0, A/2),  
\end{align}
where $\kappa_0$ is a universal integer.

Let $v = (v_1, v_2) \in \Rnn$ with $0<|v| = |v_1| + |v_2| < 1$. Note that
\begin{align*}
& (\tau_v - \tau_{v_1} - \tau_{v_2} + I) (b_1 \cdot \widetilde{h}_{I^1} 
\otimes \widetilde{h}_{I^2}) 
\\
& = b_1 \cdot (\tau_{v_1} \widetilde{h}_{I^1} - \widetilde{h}_{I^1}) 
\otimes (\tau_{v_2} \widetilde{h}_{I^2} - \widetilde{h}_{I^2}) 
- (b_1 - \tau_v b_1) \cdot \tau_v (\widetilde{h}_{I^1} \otimes \widetilde{h}_{I^2})
\\
& \quad + (b_1 - \tau_{v_1} b_1) \cdot \tau_{v_1} \widetilde{h}_{I^1} \otimes \widetilde{h}_{I^2}
 + (b_1 - \tau_{v_2} b_1) \cdot \tau_{v_2} \widetilde{h}_{I^1} \otimes \widetilde{h}_{I^2},
\end{align*}
and 
\begin{align*}
(\tau_v - \tau_{v_1} - \tau_{v_2} + I) (\widetilde{h}_{I^1} \otimes \widetilde{h}_{I^2})
= (\tau_{v_1} \widetilde{h}_{I^1} - \widetilde{h}_{I^1}) 
\otimes (\tau_{v_2} \widetilde{h}_{I^2} - \widetilde{h}_{I^2}).
\end{align*}
For convenience, simply denote
\begin{align*}
\widetilde{a}_{(I_j), Q} := a_{(I_j), Q} \prod_{j=2}^m 
\langle f_j, \widetilde{h}_{I_j^1} \otimes \widetilde{h}_{I_j^2} \rangle
\quad \text{ and } \quad
\sum_Q := \sum_{Q \in \D_{\w}} 
\sum_{\substack{I_j \in \D_{\w, k_j}(Q) \\ j=1, \ldots, m+1}}.
\end{align*}
Then we arrive at
\begin{align}\label{ttt-1}
(\tau_v - \tau_{v_1} - \tau_{v_2} + I) [\b, \mathbb{S}_{\D_{\w}}^k] (\vec{f}) 
= \Gamma^0 (\vec{f}) + \Gamma^1 (\vec{f}) - \Gamma^2 (\vec{f}),
\end{align}
where
\begin{align*}
\Gamma^0 (\vec{f}) 
& := - (b_1 - \tau_v b_1) \tau_v \mathbb{S}_{\D_{\w}}^k (\vec{f})
+ (b_1 - \tau_{v_1} b_1) \tau_{v_1} \mathbb{S}_{\D_{\w}}^k (\vec{f})
+ (b_1 - \tau_{v_2} b_1) \tau_{v_2} \mathbb{S}_{\D_{\w}}^k (\vec{f}), 
\\
\Gamma^1 (\vec{f})
& := \sum_Q \widetilde{a}_{(I_j), Q} \, b_1 \,  
\langle f_1, \widetilde{h}_{I_1^1} \otimes \widetilde{h}_{I_1^2} \rangle
(\tau_{v_1} \widetilde{h}_{I_{m+1}^1} - \widetilde{h}_{I_{m+1}^1}) 
\otimes (\tau_{v_2} \widetilde{h}_{I_{m+1}^2} - \widetilde{h}_{I_{m+1}^2}),
\\
\Gamma^2 (\vec{f})
& := \sum_Q \widetilde{a}_{(I_j), Q} \,
\langle b_1 f_1, \widetilde{h}_{I_1^1} \otimes \widetilde{h}_{I_1^2} \rangle
(\tau_{v_1} \widetilde{h}_{I_{m+1}^1} - \widetilde{h}_{I_{m+1}^1}) 
\otimes (\tau_{v_2} \widetilde{h}_{I_{m+1}^2} - \widetilde{h}_{I_{m+1}^2}).
\end{align*}
The inequality \eqref{SDww} implies
\begin{align}\label{GA0}
\|\Gamma^0 (\vec{f})\|_{L^p} 
\le (|v| + |v_1| + |v_2|) \|\nabla b_1\|_{L^{\infty}} \|\mathbb{S}_{\D_{\w}}^k (\vec{f})\|_{L^p}
\lesssim |v| \prod_{j=1}^m \|f_j\|_{L^{p_j}},
\end{align}
where the implicit constant is independent of $\w$, $v$, and $\vec{f}$.

To bound $\Gamma^1$ and $\Gamma^2$, for any $x = (x_1, x_2) \in \Rnn$, define 
\begin{align*}
b_Q^1(x_1) & := b_1(x_1, c_{Q^2}) - b_1(c_{Q^1}, c_{Q^2}), 
\\
b_Q^2(x_2) & := b_1(c_{Q^1}, x_2) - b_1(c_{Q^1}, c_{Q^2}), 
\\
b_Q(x_1, x_2) & := b_1(x_1, x_2) - b_1(x_1, c_{Q^2}) - b_1(c_{Q^1}, x_2) + b_1(c_{Q^1}, c_{Q^2}).
\end{align*}
Perform the decomposition 
\begin{align*}
\Gamma^1 - \Gamma^2
= \widetilde{\Gamma}^1 - \widetilde{\Gamma}^2 
+ \widetilde{\Gamma}^3 - \widetilde{\Gamma}^4
+ \widetilde{\Gamma}^5 - \widetilde{\Gamma}^6, 
\end{align*}
where 
\begin{align*}
\widetilde{\Gamma}^1 (\vec{f})
& := \sum_Q \widetilde{a}_{(I_j), Q} \, b_Q \, 
\langle f_1, \widetilde{h}_{I_1^1} \otimes \widetilde{h}_{I_1^2} \rangle
(\tau_{v_1} \widetilde{h}_{I_{m+1}^1} - \widetilde{h}_{I_{m+1}^1}) 
\otimes (\tau_{v_2} \widetilde{h}_{I_{m+1}^2} - \widetilde{h}_{I_{m+1}^2}),
\\
\widetilde{\Gamma}^2 (\vec{f})
& := \sum_Q \widetilde{a}_{(I_j), Q} \,
\langle b_Q f_1, \widetilde{h}_{I_1^1} \otimes \widetilde{h}_{I_1^2} \rangle
(\tau_{v_1} \widetilde{h}_{I_{m+1}^1} - \widetilde{h}_{I_{m+1}^1}) 
\otimes (\tau_{v_2} \widetilde{h}_{I_{m+1}^2} - \widetilde{h}_{I_{m+1}^2}),
\\
\widetilde{\Gamma}^3 (\vec{f})
& := \sum_Q \widetilde{a}_{(I_j), Q} \, b_Q^1 \, 
\langle f_1, \widetilde{h}_{I_1^1} \otimes \widetilde{h}_{I_1^2} \rangle
(\tau_{v_1} \widetilde{h}_{I_{m+1}^1} - \widetilde{h}_{I_{m+1}^1}) 
\otimes (\tau_{v_2} \widetilde{h}_{I_{m+1}^2} - \widetilde{h}_{I_{m+1}^2}),
\\
\widetilde{\Gamma}^4 (\vec{f})
& := \sum_Q \widetilde{a}_{(I_j), Q} \,
\langle b_Q^1 f_1, \widetilde{h}_{I_1^1} \otimes \widetilde{h}_{I_1^2} \rangle
(\tau_{v_1} \widetilde{h}_{I_{m+1}^1} - \widetilde{h}_{I_{m+1}^1}) 
\otimes (\tau_{v_2} \widetilde{h}_{I_{m+1}^2} - \widetilde{h}_{I_{m+1}^2}),
\\
\widetilde{\Gamma}^5 (\vec{f})
& := \sum_Q \widetilde{a}_{(I_j), Q} \, b_Q^2 \, 
\langle f_1, \widetilde{h}_{I_1^1} \otimes \widetilde{h}_{I_1^2} \rangle
(\tau_{v_1} \widetilde{h}_{I_{m+1}^1} - \widetilde{h}_{I_{m+1}^1}) 
\otimes (\tau_{v_2} \widetilde{h}_{I_{m+1}^2} - \widetilde{h}_{I_{m+1}^2}),
\\
\widetilde{\Gamma}^6 (\vec{f})
& := \sum_Q \widetilde{a}_{(I_j), Q} \,
\langle b_Q^2 f_1, \widetilde{h}_{I_1^1} \otimes \widetilde{h}_{I_1^2} \rangle
(\tau_{v_1} \widetilde{h}_{I_{m+1}^1} - \widetilde{h}_{I_{m+1}^1}) 
\otimes (\tau_{v_2} \widetilde{h}_{I_{m+1}^2} - \widetilde{h}_{I_{m+1}^2}).
\end{align*}
In what follows, we will demonstrate 
\begin{align}\label{GAj}
\|\widetilde{\Gamma}^i (\vec{f})\|_{L^p} 
\lesssim |v|^{\frac{\gamma}{p}} \prod_{j=1}^m \|f_j\|_{L^{p_j}}, 
\quad i = 1, \ldots, 6,
\end{align}
where the implicit constant is independent of $\w$, $v$, and $\vec{f}$. Thus, \eqref{SDC-3} is a consequence of \eqref{ttt-1}--\eqref{GAj}.

\noindent{\bf $\bullet$ Estimates for $\widetilde{\Gamma}^2$.} 
We split
\begin{align*}
\widetilde{\Gamma}^2
= \widetilde{\Gamma}^2_1 + \widetilde{\Gamma}^2_2 
+ \widetilde{\Gamma}^2_3 + \widetilde{\Gamma}^2_4,
\end{align*}
where
\begin{align*}
\widetilde{\Gamma}^2_1 (\vec{f})
& := \sum_{\substack{Q \cap \supp(b_1) \neq \emptyset \\ 
\ell(Q^1) \le 2^{N_0} \\ \ell(Q^2) \le 2^{N_0}}} 
\widetilde{a}_{(I_j), Q} \,
\langle b_Q f_1, \widetilde{h}_{I_1^1} \otimes \widetilde{h}_{I_1^2} \rangle
(\tau_{v_1} \widetilde{h}_{I_{m+1}^1} - \widetilde{h}_{I_{m+1}^1}) 
\otimes (\tau_{v_2} \widetilde{h}_{I_{m+1}^2} - \widetilde{h}_{I_{m+1}^2}),
\\
\widetilde{\Gamma}^2_2 (\vec{f})
& := \sum_{\substack{Q \cap \supp(b_1) \neq \emptyset \\ 
\ell(Q^1) \le 2^{N_0} \\ \ell(Q^2) > 2^{N_0}}} 
\widetilde{a}_{(I_j), Q} \,
\langle b_Q f_1, \widetilde{h}_{I_1^1} \otimes \widetilde{h}_{I_1^2} \rangle
(\tau_{v_1} \widetilde{h}_{I_{m+1}^1} - \widetilde{h}_{I_{m+1}^1}) 
\otimes (\tau_{v_2} \widetilde{h}_{I_{m+1}^2} - \widetilde{h}_{I_{m+1}^2}),
\\
\widetilde{\Gamma}^2_3 (\vec{f})
& := \sum_{\substack{Q \cap \supp(b_1) \neq \emptyset \\ 
\ell(Q^1) > 2^{N_0} \\ \ell(Q^2) \le 2^{N_0}}} 
\widetilde{a}_{(I_j), Q} \,
\langle b_Q f_1, \widetilde{h}_{I_1^1} \otimes \widetilde{h}_{I_1^2} \rangle
(\tau_{v_1} \widetilde{h}_{I_{m+1}^1} - \widetilde{h}_{I_{m+1}^1}) 
\otimes (\tau_{v_2} \widetilde{h}_{I_{m+1}^2} - \widetilde{h}_{I_{m+1}^2}),
\\
\widetilde{\Gamma}^2_4 (\vec{f})
& := \sum_{\substack{Q \cap \supp(b_1) \neq \emptyset \\ 
\ell(Q^1) > 2^{N_0} \\ \ell(Q^2) > 2^{N_0}}} 
\widetilde{a}_{(I_j), Q} \,
\langle b_Q f_1, \widetilde{h}_{I_1^1} \otimes \widetilde{h}_{I_1^2} \rangle
(\tau_{v_1} \widetilde{h}_{I_{m+1}^1} - \widetilde{h}_{I_{m+1}^1}) 
\otimes (\tau_{v_2} \widetilde{h}_{I_{m+1}^2} - \widetilde{h}_{I_{m+1}^2}).
\end{align*}
An elementary estimate gives 
\begin{align}\label{AIJQ}
|\widetilde{a}_{(I_j), Q}|
\lesssim |Q|^{\frac{1}{p_1} - \frac1p} \prod_{j=2}^m \|f_j\|_{L^{p_j}},
\end{align}
and
\begin{align}\label{tauvh}
\|\tau_{v_i} \widetilde{h}_{I^i} - \widetilde{h}_{I^i}\|_{L^s(\R^{n_i})} 
\lesssim |I^i|^{\frac1s - \frac12} \big[ |v_i|/\ell(I^i) \big]^{\frac{\gamma}{s}}, \qquad 
0 < \gamma \le 1 < s < \infty.
\end{align}
Then by \eqref{ppp}--\eqref{bbn} and \eqref{AIJQ}--\eqref{tauvh}, 
\begin{align*}
\|\widetilde{\Gamma}^2_1 (\vec{f}) \|_{L^p}
& \lesssim \sum_{\kappa=1}^{\kappa_0} \sum_{Q \subset P_{\kappa}} 
|Q|^{-\frac1p + \frac{1}{p_1}} 
\|\nabla b_1\|_{L^{\infty}} \ell(Q^1) \ell(Q^2) 
\|f_1 \mathbf{1}_Q\|_{L^{p_1}} 
|Q|^{\frac12 - \frac{1}{p_1}}
\\
&\quad \times |Q|^{\frac1p - \frac12}
\big[ |v_1|/\ell(Q^1) \big]^{\frac{\gamma}{p}} 
\big[ |v_2|/\ell(Q^2) \big]^{\frac{\gamma}{p}}
\prod_{j=2}^m \|f_j\|_{L^{p_j}}
\\
& \lesssim |v|^{\frac{\gamma}{p}} \sum_{\kappa=1}^{\kappa_0} \sum_{s_1, s_2 \ge 0} 
\sum_{\substack{(Q^1)^{(s_1)} = P_{\kappa}^1 \\ (Q^2)^{(s_2)} = P_{\kappa}^2}} 
\ell(Q^1)^{1-\frac{\gamma}{p}} \ell(Q^2)^{1-\frac{\gamma}{p}} 
\|f_1 \mathbf{1}_Q\|_{L^{p_1}} \prod_{j=2}^m \|f_j\|_{L^{p_j}}
\\
& \lesssim |v|^{\frac{\gamma}{p}} \sum_{s_1, s_2 \ge 0}
2^{-s_1(1-\frac{\gamma}{p}-\frac{n_1}{p'_1})} 
2^{-s_2(1-\frac{\gamma}{p}-\frac{n_2}{p'_1})} 
\prod_{j=1}^m \|f_j\|_{L^{p_j}}
\\
&\lesssim |v|^{\frac{\gamma}{p}} \prod_{j=1}^m \|f_j\|_{L^{p_j}}.
\end{align*}
Similarly, 
\begin{align*}
\|\widetilde{\Gamma}^2_2 (\vec{f}) \|_{L^p}
& \lesssim \sum_{\kappa=1}^{\kappa_0} 
\sum_{\substack{Q^1 \subset P_{\kappa}^1 \\ Q^2 \supset P_{\kappa}^2}} 
\|\nabla b_1\|_{L^{\infty}} \ell(Q^1) \|f_1 \mathbf{1}_{Q^1}\|_{L^{p_1}} 
\big[ |v_1|/\ell(Q^1) \big]^{\frac{\gamma}{p}} 
\big[ |v_2|/\ell(Q^2) \big]^{\frac{\gamma}{p}}
\prod_{j=2}^m \|f_j\|_{L^{p_j}}
\\
& \lesssim |v|^{\frac{\gamma}{p}} \sum_{\kappa=1}^{\kappa_0} \sum_{s_1, s_2 \ge 0} 
\sum_{\substack{(Q^1)^{(s_1)} = P_{\kappa}^1 \\ Q^2 = (P_{\kappa}^2)^{(s_2)}}} 
\ell(Q^1)^{1-\frac{\gamma}{p}} \ell(Q^2)^{-\frac{\gamma}{p}} 
\|f_1 \mathbf{1}_{Q^1}\|_{L^{p_1}} 
\prod_{j=2}^m \|f_j\|_{L^{p_j}}
\\
& \lesssim |v|^{\frac{\gamma}{p}} \sum_{s_1, s_2 \ge 0}
2^{-s_1(1-\frac{\gamma}{p}-\frac{n_1}{p'_1})} 
2^{-s_2 \frac{\gamma}{p}} 
\prod_{j=1}^m \|f_j\|_{L^{p_j}}
\\
&\lesssim |v|^{\frac{\gamma}{p}} \prod_{j=1}^m \|f_j\|_{L^{p_j}}.
\end{align*}
Symmetrically, one has
\begin{align*}
\|\widetilde{\Gamma}^2_3 (\vec{f}) \|_{L^p}
\lesssim |v|^{\frac{\gamma}{p}} \prod_{j=1}^m \|f_j\|_{L^{p_j}}.
\end{align*}
Moreover,
\begin{align*}
\|\widetilde{\Gamma}^2_4 (\vec{f}) \|_{L^p}
& \lesssim \sum_{\kappa=1}^{\kappa_0} 
\sum_{\substack{Q^1 \supset P_{\kappa}^1 \\ Q^2 \supset P_{\kappa}^2}} 
\|b_1\|_{L^{\infty}}
\big[ |v_1|/\ell(Q^1) \big]^{\frac{\gamma}{p}} 
\big[ |v_2|/\ell(Q^2) \big]^{\frac{\gamma}{p}} 
\prod_{j=1}^m \|f_j\|_{L^{p_j}}
\\
& \lesssim |v|^{\frac{\gamma}{p}} \sum_{s_1, s_2 \ge 0} 
2^{-s_1 \frac{\gamma}{p}} 2^{-s_2 \frac{\gamma}{p}} 
\prod_{j=1}^m \|f_j\|_{L^{p_j}} 
\lesssim |v|^{\frac{\gamma}{p}} \prod_{j=1}^m \|f_j\|_{L^{p_j}}.
\end{align*}
Then gathering these estimates above, we obtain 
\begin{align*}
\|\widetilde{\Gamma}^2 (\vec{f}) \|_{L^p}
\lesssim |v|^{\frac{\gamma}{p}} \prod_{j=1}^m \|f_j\|_{L^{p_j}}.
\end{align*}

\noindent{\bf $\bullet$ Estimates for $\widetilde{\Gamma}^1$.} 
Rewrite
\begin{align}\label{GA1123}
\widetilde{\Gamma}^1
= \widetilde{\Gamma}^1_1 + \widetilde{\Gamma}^1_2 + \widetilde{\Gamma}^1_3,
\end{align}
where 
\begin{align*}
\widetilde{\Gamma}^1_1 (\vec{f})
& := \sum_{\substack{\ell(Q^1) > |v_1| \\ \ell(Q^2) > |v_2|}} 
\widetilde{a}_{(I_j), Q} \, b_Q \,
\langle f_1, \widetilde{h}_{I_1^1} \otimes \widetilde{h}_{I_1^2} \rangle
(\tau_{v_1} \widetilde{h}_{I_{m+1}^1} - \widetilde{h}_{I_{m+1}^1}) 
\otimes (\tau_{v_2} \widetilde{h}_{I_{m+1}^2} - \widetilde{h}_{I_{m+1}^2}),
\\
\widetilde{\Gamma}^1_2 (\vec{f})
& := \sum_{\substack{Q^1 \in \D^1 \\ \ell(Q^2) \le |v_2|}} 
\widetilde{a}_{(I_j), Q} \, b_Q \,
\langle f_1, \widetilde{h}_{I_1^1} \otimes \widetilde{h}_{I_1^2} \rangle
(\tau_{v_1} \widetilde{h}_{I_{m+1}^1} - \widetilde{h}_{I_{m+1}^1}) 
\otimes (\tau_{v_2} \widetilde{h}_{I_{m+1}^2} - \widetilde{h}_{I_{m+1}^2}),
\\
\widetilde{\Gamma}^1_3 (\vec{f})
& := \sum_{\substack{\ell(Q^1) \le |v_1| \\ \ell(Q^2) > |v_2|}} 
\widetilde{a}_{(I_j), Q} \, b_Q \,
\langle f_1, \widetilde{h}_{I_1^1} \otimes \widetilde{h}_{I_1^2} \rangle
(\tau_{v_1} \widetilde{h}_{I_{m+1}^1} - \widetilde{h}_{I_{m+1}^1}) 
\otimes (\tau_{v_2} \widetilde{h}_{I_{m+1}^2} - \widetilde{h}_{I_{m+1}^2}).
\end{align*}
Observe that $\supp \big( (\tau_{v_1} \widetilde{h}_{I_{m+1}^1} - \widetilde{h}_{I_{m+1}^1}) 
\otimes (\tau_{v_2} \widetilde{h}_{I_{m+1}^2} - \widetilde{h}_{I_{m+1}^2}) \big) \subset 3Q$ whenever $I_{m+1}^1 \times I_{m+1}^2 \subset Q^1 \times Q^2 = Q$, $\ell(Q^1) > |v_1|$, and $\ell(Q^2) > |v_2|$. Thus, as shown for $\widetilde{\Gamma}^2$ above,
\begin{align}\label{GA11}
\|\widetilde{\Gamma}^1_1 (\vec{f}) \|_{L^p}
\lesssim |v|^{\frac{\gamma}{p}} \prod_{j=1}^m \|f_j\|_{L^{p_j}}.
\end{align}
To treat $\widetilde{\Gamma}^1_2$, we split 
\begin{align}\label{GA121234}
\widetilde{\Gamma}^1_2 
= \widetilde{\Gamma}^1_{2, 1} - \widetilde{\Gamma}^1_{2, 2} 
- \widetilde{\Gamma}^1_{2, 3} + \widetilde{\Gamma}^1_{2, 4}, 
\end{align}
where
\begin{align*}
\widetilde{\Gamma}^1_{2, 1} (\vec{f})
& := \sum_{\substack{Q^1 \in \D^1 \\ \ell(Q^2) \le |v_2|}} 
\widetilde{a}_{(I_j), Q} \, b_Q \,
\langle f_1, \widetilde{h}_{I_1^1} \otimes \widetilde{h}_{I_1^2} \rangle \,
\tau_v \big(\widetilde{h}_{I_{m+1}^1} \otimes \widetilde{h}_{I_{m+1}^2}\big),
\\
\widetilde{\Gamma}^1_{2, 2} (\vec{f})
& := \sum_{\substack{Q^1 \in \D^1 \\ \ell(Q^2) \le |v_2|}} 
\widetilde{a}_{(I_j), Q} \, b_Q \,
\langle f_1, \widetilde{h}_{I_1^1} \otimes \widetilde{h}_{I_1^2} \rangle \,
\tau_{v_1} \widetilde{h}_{I_{m+1}^1} \otimes \widetilde{h}_{I_{m+1}^2},
\\
\widetilde{\Gamma}^1_{2, 3} (\vec{f})
& := \sum_{\substack{Q^1 \in \D^1 \\ \ell(Q^2) \le |v_2|}} 
\widetilde{a}_{(I_j), Q} \, b_Q \,
\langle f_1, \widetilde{h}_{I_1^1} \otimes \widetilde{h}_{I_1^2} \rangle \,
\widetilde{h}_{I_{m+1}^1} \otimes \tau_{v_2} \widetilde{h}_{I_{m+1}^2},
\\
\widetilde{\Gamma}^1_{2, 4} (\vec{f})
& := \sum_{\substack{Q^1 \in \D^1 \\ \ell(Q^2) \le |v_2|}} 
\widetilde{a}_{(I_j), Q} \, b_Q \,
\langle f_1, \widetilde{h}_{I_1^1} \otimes \widetilde{h}_{I_1^2} \rangle \, 
\widetilde{h}_{I_{m+1}^1} \otimes \widetilde{h}_{I_{m+1}^2}.
\end{align*}
First, consider $\widetilde{\Gamma}^1_{2, 4}$:
\begin{align*}
\widetilde{\Gamma}^1_{2, 4} 
= \widetilde{\Gamma}^{1, 1}_{2, 4} - \widetilde{\Gamma}^{1, 2}_{2, 4}, 
\end{align*}
where
\begin{align*}
\widetilde{\Gamma}^{1, 1}_{2, 4} (\vec{f})
& := \sum_{\substack{Q^1 \in \D^1 \\ \ell(Q^2) \le |v_2| \\ Q \cap \supp(b_1) \neq \emptyset}} 
\widetilde{a}_{(I_j), Q} [b_1(x_1, x_2) - b_1(x_1, c_{Q^2})] 
\langle f_1, \widetilde{h}_{I_1^1} \otimes \widetilde{h}_{I_1^2} \rangle \, 
\widetilde{h}_{I_{m+1}^1} \otimes \widetilde{h}_{I_{m+1}^2}, 
\\
\widetilde{\Gamma}^{1, 2}_{2, 4} (\vec{f})
& := \sum_{\substack{Q^1 \in \D^1 \\ \ell(Q^2) \le |v_2| \\ Q \cap \supp(b_1) \neq \emptyset}} 
\widetilde{a}_{(I_j), Q} [b_1(c_{Q^1}, x_2) - b_1(c_{Q^1}, c_{Q^2})] 
\langle f_1, \widetilde{h}_{I_1^1} \otimes \widetilde{h}_{I_1^2} \rangle \, 
\widetilde{h}_{I_{m+1}^1} \otimes \widetilde{h}_{I_{m+1}^2}.
\end{align*}
Note that
\begin{align}\label{bbL}
|b'_1(x)|
:= |b_1(x_1, x_2) - b_1(x_1, c_{Q^2})|
\le \|\nabla b_1\|_{L^{\infty}} \, \ell(Q^2), 
\end{align}
for all $x_1 \in \R^{n_1}$ and $x_2 \in Q^2$. By \eqref{bbL} and the boundedness of $m$-linear one-parameter shift (cf. \cite[Theorem 5.3]{CLSY} in the bilinear case), there holds
\begin{align*}
\|\widetilde{\Gamma}^{1, 1}_{2, 4} (\vec{f})\|_{L^p}
& \le \sum_{\kappa=1}^{\kappa_0} 
\sum_{\substack{\ell(Q^2) \le |v_2| \\ Q^2 \subset P_{\kappa}^2}} 
\Bigg\| \bigg\|b'_1 \sum_{Q^1 \in \D^1}
a_{(I_j), Q} \prod_{j=1}^m 
\big\langle \langle f_j, \widetilde{h}_{I_j^2} \rangle, \widetilde{h}_{I_j^1} \big\rangle
\widetilde{h}_{I_{m+1}^1} \bigg\|_{L^p(\R^{n_1})} 
\widetilde{h}_{I_{m+1}^2} \Bigg\|_{L^p(\R^{n_2})}
\\
& \lesssim \sum_{\kappa=1}^{\kappa_0} 
\sum_{\substack{\ell(Q^2) \le |v_2| \\ Q^2 \subset P_{\kappa}^2}} 
\ell(Q^2) \Big[ |Q^2|^{-m} \prod_{j=1}^{m+1} |I_j^2|^{\frac12} \Big]
\prod_{j=1}^m \|\langle f_j, \widetilde{h}_{I_j^2} \rangle\|_{L^p(\R^{n_1})}
\|\widetilde{h}_{I_{m+1}^2}\|_{L^p(\R^{n_2})}
\\
& \lesssim |v|^{\frac{\gamma}{p}} \sum_{\kappa=1}^{\kappa_0} 
\sum_{Q^2 \subset P_{\kappa}^2} 
\ell(Q^2)^{1-\frac{\gamma}{p}} \|f_1 \mathbf{1}_{Q^2}\|_{L^{p_1}} 
\prod_{j=2}^m \|f_j\|_{L^{p_j}}
\\
& \lesssim |v|^{\frac{\gamma}{p}} \sum_{\kappa=1}^{\kappa_0} 
\sum_{s_2 \ge 0} \sum_{(Q^2)^{(s_2)} = P_{\kappa}^2}
\ell(Q^2)^{1-\frac{\gamma}{p}} 
\|f_1 \mathbf{1}_{Q^2}\|_{L^{p_1}} 
\prod_{j=2}^m \|f_j\|_{L^{p_j}}
\\
& \lesssim |v|^{\frac{\gamma}{p}} \sum_{s_2 \ge 0}
2^{-s_2(1-\frac{\gamma}{p}-\frac{n_2}{p'_1})} 
\prod_{j=1}^m \|f_j\|_{L^{p_j}}
\\
& \lesssim |v|^{\frac{\gamma}{p}} \prod_{j=1}^m \|f_j\|_{L^{p_j}},
\end{align*} 
provided \eqref{ppp}. If we denote 
\begin{align*}
a_{(I_j), Q}^{x_2} 
:= a_{(I_j), Q} [b_1(c_{Q^1}, x_2) - b_1(c_{Q^1}, c_{Q^2})],
\end{align*} 
then 
\begin{align}\label{aaijk}
\sup_{x_2 \in Q^2} |a_{(I_j), Q}^{x_2}| 
\le \|\nabla b_1\|_{L^{\infty}} \ell(Q^2) 
\frac{\prod_{j=1}^{m+1} |I_j|^{\frac12}}{|Q|^m}. 
\end{align}
Using \eqref{aaijk} and the same argument as above, we obtain 
\begin{align*}
\|\widetilde{\Gamma}^{1, 2}_{2, 4} (\vec{f})\|_{L^p}
&\lesssim |v|^{\frac{\gamma}{p}} \prod_{j=1}^m \|f_j\|_{L^{p_j}}.
\end{align*}
Hence, 
\begin{align}\label{GA124}
\|\widetilde{\Gamma}^1_{2, 4} (\vec{f})\|_{L^p}
\le \|\widetilde{\Gamma}^{1, 1}_{2, 4} (\vec{f})\|_{L^p}
+ \|\widetilde{\Gamma}^{1, 2}_{2, 4} (\vec{f})\|_{L^p}
&\lesssim |v|^{\frac{\gamma}{p}} \prod_{j=1}^m \|f_j\|_{L^{p_j}}.
\end{align}

Next, to treat $\widetilde{\Gamma}^1_{2, 1}$, rewrite 
\begin{align}\label{GA12112}
\widetilde{\Gamma}^1_{2, 1} 
= \widetilde{\Gamma}^{1, 1}_{2, 1} + \widetilde{\Gamma}^{1, 2}_{2, 1},
\end{align}
where
\begin{align*}
\widetilde{\Gamma}^{1, 1}_{2, 1} (\vec{f})
& := \sum_{\substack{Q^1 \in \D^1 \\ \ell(Q^2) \le |v_2|}} 
\widetilde{a}_{(I_j), Q} \, \tau_v b_Q \,
\langle f_1, \widetilde{h}_{I_1^1} \otimes \widetilde{h}_{I_1^2} \rangle \,
\tau_v \big(\widetilde{h}_{I_{m+1}^1} \otimes \widetilde{h}_{I_{m+1}^2}\big),
\\
\widetilde{\Gamma}^{1, 2}_{2, 1} (\vec{f})
& := \sum_{\substack{Q^1 \in \D^1 \\ \ell(Q^2) \le |v_2|}}  
\widetilde{a}_{(I_j), Q} \, (b_Q - \tau_v b_Q) \,
\langle f_1, \widetilde{h}_{I_1^1} \otimes \widetilde{h}_{I_1^2} \rangle \,
\tau_v \big(\widetilde{h}_{I_{m+1}^1} \otimes \widetilde{h}_{I_{m+1}^2} \big).
\end{align*}
The inequality \eqref{GA124} gives 
\begin{align}\label{GA1121}
\|\widetilde{\Gamma}^{1, 1}_{2, 1} (\vec{f})\|_{L^p}
= \|\tau_v(\widetilde{\Gamma}^1_{2, 4} (\vec{f}))\|_{L^p}
= \|\widetilde{\Gamma}^1_{2, 4} (\vec{f})\|_{L^p}
\lesssim |v|^{\frac{\gamma}{p}} \prod_{j=1}^m \|f_j\|_{L^{p_j}}.
\end{align}
For $\widetilde{\Gamma}^{1, 2}_{2, 1}$, there holds 
\begin{align}\label{GA1221123}
\widetilde{\Gamma}^{1, 2}_{2, 1} 
= \widetilde{\Gamma}^{1, 2, 1}_{2, 1} 
- \widetilde{\Gamma}^{1, 2, 2}_{2, 1} 
- \widetilde{\Gamma}^{1, 2, 3}_{2, 1},
\end{align}
where 
\begin{align*}
\widetilde{\Gamma}^{1, 2, 1}_{2, 1} (\vec{f})
& := [b_1(x) - \tau_v b_1(x)] 
\sum_{\substack{Q^1 \in \D^1 \\ \ell(Q^2) \le |v_2|}} 
\widetilde{a}_{(I_j), Q} \,
\langle f_1, \widetilde{h}_{I_1^1} \otimes \widetilde{h}_{I_1^2} \rangle \,
\tau_v \big(\widetilde{h}_{I_{m+1}^1} \otimes \widetilde{h}_{I_{m+1}^2}\big),
\\
\widetilde{\Gamma}^{1, 2, 2}_{2, 1} (\vec{f})
& := \sum_{\substack{Q^1 \in \D^1 \\ \ell(Q^2) \le |v_2|}} 
\widetilde{a}_{(I_j), Q} \, [b_1(x_1, c_{Q^2}) - b_1(x_1 - v_1, c_{Q^2})] 
\langle f_1, \widetilde{h}_{I_1^1} \otimes \widetilde{h}_{I_1^2} \rangle \,
\tau_v \big(\widetilde{h}_{I_{m+1}^1} \otimes \widetilde{h}_{I_{m+1}^2}\big),
\\
\widetilde{\Gamma}^{1, 2, 3}_{2, 1} (\vec{f})
& := \sum_{\substack{Q^1 \in \D^1 \\ \ell(Q^2) \le |v_2|}} 
\widetilde{a}_{(I_j), Q} \, [b_1(c_{Q^1}, x_2) - b_1(c_{Q^1}, x_2 - v_2)]  
\langle f_1, \widetilde{h}_{I_1^1} \otimes \widetilde{h}_{I_1^2} \rangle \,
\tau_v \big(\widetilde{h}_{I_{m+1}^1} \otimes \widetilde{h}_{I_{m+1}^2}\big).
\end{align*}
Lemma \ref{lem:SDD} gives 
\begin{align}\label{GA12121}
\|\widetilde{\Gamma}^{1, 2, 1}_{2, 1} (\vec{f})\|_{L^p}
\le |v| \|\nabla b_1\|_{L^{\infty}} 
\bigg\|\sum_{\substack{Q^1 \in \D^1 \\ \ell(Q^2) \le |v_2|}} a_{(I_j), Q} \,
\prod_{j=1}^m \langle f_j, \widetilde{h}_{I_j^1} \otimes \widetilde{h}_{I_j^2} \rangle \,
\widetilde{h}_{I_{m+1}^1} \otimes \widetilde{h}_{I_{m+1}^2}\bigg\|_{L^p}
\lesssim |v| \prod_{j=1}^m \|f_j\|_{L^{p_j}}.
\end{align}
If we let 
\begin{align*}
a_{(I_j), Q}^{x_2} 
:= a_{(I_j), Q} [b_1(c_{Q^1}, x_2 + v_2) - b_1(c_{Q^1}, x_2)],
\end{align*} 
then 
\begin{align}\label{ax2}
\sup_{x_2 \in \R^{n_2}} |a_{(I_j), Q}^{x_2}| 
\le |v| \|\nabla b_1\|_{L^{\infty}}
\frac{\prod_{j=1}^{m+1} |I_j|^{\frac12}}{|Q|^m}. 
\end{align}
Invoking \eqref{ax2} and following the proof of Lemma \ref{lem:SDD}, we have 
\begin{align}\label{GA12321}
\|\widetilde{\Gamma}^{1, 2, 3}_{2, 1} (\vec{f})\|_{L^p}
= \bigg\|\sum_{\substack{Q^1 \in \D^1 \\ \ell(Q^2) \le |v_2|}} a_{(I_j), Q}^{x_2} \,
\prod_{j=1}^m \langle f_j, \widetilde{h}_{I_j^1} \otimes \widetilde{h}_{I_j^2} \rangle \,
\widetilde{h}_{I_{m+1}^1} \otimes \widetilde{h}_{I_{m+1}^2}\bigg\|_{L^p}
\lesssim |v| \prod_{j=1}^m \|f_j\|_{L^{p_j}}. 
\end{align}
Symmetrically, there holds
\begin{align*}
\|\widetilde{\Gamma}^{1, 2, 2}_{2, 1} (\vec{f})\|_{L^p}
\lesssim |v| \prod_{j=1}^m \|f_j\|_{L^{p_j}},  
\end{align*}
which along with \eqref{GA1221123}, \eqref{GA12121}, and \eqref{GA12321} gives 
\begin{align}\label{GA1221}
\|\widetilde{\Gamma}^{1, 2}_{2, 1} (\vec{f})\|_{L^p}
\lesssim |v| \prod_{j=1}^m \|f_j\|_{L^{p_j}}.
\end{align}
Then it follows from \eqref{GA12112}, \eqref{GA1121}, and \eqref{GA1221} that
\begin{align}\label{GA121}
\|\widetilde{\Gamma}^1_{2, 1} (\vec{f})\|_{L^p} 
\lesssim |v|^{\frac{\gamma}{p}} \prod_{j=1}^m \|f_j\|_{L^{p_j}}. 
\end{align}
Similarly, one can show 
\begin{align}\label{GA1223}
\|\widetilde{\Gamma}^1_{2, 2} (\vec{f})\|_{L^p} 
+ \|\widetilde{\Gamma}^1_{2, 3} (\vec{f})\|_{L^p} 
\lesssim |v|^{\frac{\gamma}{p}} \prod_{j=1}^m \|f_j\|_{L^{p_j}}. 
\end{align}
Consequently, by \eqref{GA121234}, \eqref{GA124}, \eqref{GA121}, and \eqref{GA1223},
\begin{align}\label{GA12}
\|\widetilde{\Gamma}^1_2 (\vec{f})\|_{L^p} 
\lesssim |v|^{\frac{\gamma}{p}} \prod_{j=1}^m \|f_j\|_{L^{p_j}}. 
\end{align}

Note that $\widetilde{\Gamma}^1_3$ is essentially symmetrical to $\widetilde{\Gamma}^1_2$ because the restriction $\ell(Q^2) > |v_2|$ contributes nothing. Hence, a similar argument as above yields 
\begin{align}\label{GA13}
\|\widetilde{\Gamma}^1_3 (\vec{f})\|_{L^p} 
\lesssim |v|^{\frac{\gamma}{p}} \prod_{j=1}^m \|f_j\|_{L^{p_j}}. 
\end{align}
Collecting \eqref{GA1123}, \eqref{GA11}, \eqref{GA12}, and \eqref{GA13}, we conclude 
\begin{align*}
\|\widetilde{\Gamma}^1 (\vec{f})\|_{L^p} 
\lesssim |v|^{\frac{\gamma}{p}} \prod_{j=1}^m \|f_j\|_{L^{p_j}}. 
\end{align*}

\noindent{\bf $\bullet$ Estimates for $\widetilde{\Gamma}^3$ and $\widetilde{\Gamma}^4$.} 
Observe that 
\begin{align}
\label{KBL-1}
|\langle b_Q^1 f_1, \widetilde{h}_{I_1^1}\rangle|
& \le 2 \|b_1\|_{L^{\infty}} \|f_1\|_{L^{p_1}(\R^{n_1})} 
\|\widetilde{h}_{I_1^1}\|_{L^{p'_1}(\R^{n_1})},
\\
\label{KBL-2}
|\langle b_Q^1 f_1, \widetilde{h}_{I_1^1}\rangle|
& \le \ell(Q^1) \|\nabla b_1\|_{L^{\infty}} 
\|f_1 \mathbf{1}_{Q^1}\|_{L^{p_1}(\R^{n_1})} \
\|\widetilde{h}_{I_1^1}\|_{L^{p'_1}(\R^{n_1})}.
\end{align}
Set
\begin{align*}
H_{Q^1} := 
\sum_{Q^2: \, Q \cap \supp(b_1) \neq \emptyset} a_{(I_j), Q} 
\big\langle \langle b_Q^1 f_1, \widetilde{h}_{I_1^1} \rangle, \widetilde{h}_{I_1^2} \big\rangle
\prod_{j=2}^m \big\langle \langle f_j, \widetilde{h}_{I_j^1} \rangle, \widetilde{h}_{I_j^2} \big\rangle
\widetilde{h}_{I_{m+1}^2}.
\end{align*}
In view of \eqref{ppp}, \eqref{tauvh}, and \eqref{KBL-1}--\eqref{KBL-2}, we follow the proof of  \cite[Theorem 5.3]{CLSY} to arrive at
\begin{align*}
\|\widetilde{\Gamma}^4 (\vec{f})\|_{L^p}
& \lesssim \bigg\|\sum_Q \widetilde{a}_{(I_j), Q} \,
\langle b_Q^1 f_1, \widetilde{h}_{I_1^1} \otimes \widetilde{h}_{I_1^2} \rangle
(\tau_{v_1} \widetilde{h}_{I_{m+1}^1} - \widetilde{h}_{I_{m+1}^1}) 
\otimes \widetilde{h}_{I_{m+1}^2} \bigg\|_{L^p}
\\
& \le \sum_{Q^1} \| H_{Q^1} \|_{L^p(\R^{n_2})} 
\|\tau_{v_1} \widetilde{h}_{I_{m+1}^1} - \widetilde{h}_{I_{m+1}^1}\|_{L^p(\R^{n_1})}
\\
& \lesssim \sum_{\kappa=1}^{\kappa_0} 
\sum_{\substack{\ell(Q^1) > 2^{N_0} \\ Q^1 \cap P_{\kappa}^1 \neq \emptyset}} 
|Q^1|^{\frac{1-m}{2}} \big\| \|f_1\|_{L^{p_1}(\R^{n_1})} \big\|_{L^{p_1}(\R^{n_2})}
\|\widetilde{h}_{I_1^1}\|_{L^{p'_1}(\R^{n_1})} 
\\
&\qquad\qquad\qquad \times 
\prod_{j=2}^m \|\langle f_j, \widetilde{h}_{I_j^1} \rangle \|_{L^{p_j}(\R^{n_2})}
\|\tau_{v_1} \widetilde{h}_{I_{m+1}^1} - \widetilde{h}_{I_{m+1}^1}\|_{L^p(\R^{n_1})}
\\
& \quad + \sum_{\kappa=1}^{\kappa_0} 
\sum_{\substack{\ell(Q^1) \le 2^{N_0} \\ Q^1 \cap P_{\kappa}^1 \neq \emptyset}} 
|Q^1|^{\frac{1-m}{2}} \ell(Q^1) 
\big\| \|f_1 \mathbf{1}_{Q^1}\|_{L^{p_1}(\R^{n_1})} \big\|_{L^{p_1}(\R^{n_2})}
\|\widetilde{h}_{I_1^1}\|_{L^{p'_1}(\R^{n_1})} 
\\
&\qquad\qquad\qquad \times 
\prod_{j=2}^m \|\langle f_j, \widetilde{h}_{I_j^1} \rangle \|_{L^{p_j}(\R^{n_2})}
\|\tau_{v_1} \widetilde{h}_{I_{m+1}^1} - \widetilde{h}_{I_{m+1}^1}\|_{L^p(\R^{n_1})}
\\
& \lesssim |v_1|^{\frac{\gamma}{p}} \sum_{\kappa=1}^{\kappa_0} 
\sum_{Q^1 \supset P_{\kappa}^1} \ell(Q^1)^{-\frac{\gamma}{p}} 
\prod_{j=1}^m \|f_j\|_{L^{p_j}} 
\\
& \quad + |v_1|^{\frac{\gamma}{p}} \sum_{\kappa=1}^{\kappa_0} 
\sum_{Q^1 \subset P_{\kappa}^1} 
\ell(Q^1)^{1-\frac{\gamma}{p}} \|f_1 \mathbf{1}_{Q^1}\|_{L^{p_1}}
 \prod_{j=2}^m \|f_j\|_{L^{p_j}} 
\\
& \lesssim |v|^{\frac{\gamma}{p}}
\bigg[\sum_{s_1 \ge 0} 2^{-s_1 \frac{\gamma}{p}}  
+  \sum_{s_1 \ge 0} 2^{-s_1(1-\frac{\gamma}{p}- \frac{n_1}{p'_1})} \bigg]
\prod_{j=1}^m \|f_j\|_{L^{p_j}}
\\
& \lesssim |v|^{\frac{\gamma}{p}} \prod_{j=1}^m \|f_j\|_{L^{p_j}}, 
\end{align*}
where the implicit constants are independent of $\w$, $v$, and $\vec{f}$.

In order to dominate $\widetilde{\Gamma}^3$, it suffices to treat 
\begin{align*}
\widetilde{\Gamma}^3_0 (\vec{f})
:= \sum_Q \widetilde{a}_{(I_j), Q} \, b_Q^1
\langle f_1, \widetilde{h}_{I_1^1} \otimes \widetilde{h}_{I_1^2} \rangle
(\tau_{v_1} \widetilde{h}_{I_{m+1}^1} - \widetilde{h}_{I_{m+1}^1}) 
\otimes \widetilde{h}_{I_{m+1}^2},
\end{align*}
which can be split into 
\begin{align}\label{GA30}
\widetilde{\Gamma}^3_0 
= \widetilde{\Gamma}^3_1 + \widetilde{\Gamma}^3_2 
+ \widetilde{\Gamma}^3_3 - \widetilde{\Gamma}^3_4,
\end{align}
where
\begin{align*}
\widetilde{\Gamma}^3_1 (\vec{f})
& := \sum_{\substack{\ell(Q^1) > |v_1| \\ Q^2 \in \D^2}} 
\widetilde{a}_{(I_j), Q} \, b_Q^1 \,
\langle f_1, \widetilde{h}_{I_1^1} \otimes \widetilde{h}_{I_1^2} \rangle
(\tau_{v_1} \widetilde{h}_{I_{m+1}^1} - \widetilde{h}_{I_{m+1}^1}) 
\otimes \widetilde{h}_{I_{m+1}^2},
\\
\widetilde{\Gamma}^3_2 (\vec{f})
& := \sum_{\substack{\ell(Q^1) \le |v_1| \\ Q^2 \in \D^2}} 
\widetilde{a}_{(I_j), Q} \, \tau_{v_1} b_Q^1 \,
\langle f_1, \widetilde{h}_{I_1^1} \otimes \widetilde{h}_{I_1^2} \rangle \,
\tau_{v_1} \widetilde{h}_{I_{m+1}^1} \otimes \widetilde{h}_{I_{m+1}^2},
\\
\widetilde{\Gamma}^3_3 (\vec{f})
& := \sum_{\substack{\ell(Q^1) \le |v_1| \\ Q^2 \in \D^2}} 
\widetilde{a}_{(I_j), Q} \, [b_1(x_1, c_{Q^2}) - b_1(x_1 - v_1, c_{Q^2})]
\langle f_1, \widetilde{h}_{I_1^1} \otimes \widetilde{h}_{I_1^2} \rangle \,
\tau_{v_1} \widetilde{h}_{I_{m+1}^1} \otimes \widetilde{h}_{I_{m+1}^2},
\\
\widetilde{\Gamma}^3_4 (\vec{f})
& := \sum_{\substack{\ell(Q^1) \le |v_1| \\ Q^2 \in \D^2}} 
\widetilde{a}_{(I_j), Q} \, b_Q^1 \,
\langle f_1, \widetilde{h}_{I_1^1} \otimes \widetilde{h}_{I_1^2} \rangle \,
\widetilde{h}_{I_{m+1}^1} \otimes \widetilde{h}_{I_{m+1}^2}.
\end{align*}
Since $\supp(\tau_{v_1} \widetilde{h}_{I_{m+1}^1} - \widetilde{h}_{I_{m+1}^1}) \subset 3Q^1$ for any $I_{m+1}^1 \subset Q^1$ and $\ell(Q^1) > |v_1|$, a similar argument as for $\widetilde{\Gamma}^4$ gives
\begin{align}\label{GA31}
\|\widetilde{\Gamma}^3_1 (\vec{f})\|_{L^p}
\lesssim |v|^{\frac{\gamma}{p}} \prod_{j=1}^m \|f_j\|_{L^{p_j}}. 
\end{align}
Applying the same strategy as for $\widetilde{\Gamma}^{1, 2, 2}_{2, 1}$ in \eqref{GA12321}, we have
\begin{align}\label{GA3234}
\|\widetilde{\Gamma}^3_2 (\vec{f})\|_{L^p}
= \|\widetilde{\Gamma}^3_4 (\vec{f})\|_{L^p}
\lesssim |v| \prod_{j=1}^m \|f_j\|_{L^{p_j}}
\quad \text{and} \quad 
\|\widetilde{\Gamma}^3_3 (\vec{f})\|_{L^p}
\lesssim |v| \prod_{j=1}^m \|f_j\|_{L^{p_j}}. 
\end{align}
Therefore, it follows from \eqref{GA30}--\eqref{GA3234} that
\begin{align*}
\|\widetilde{\Gamma}^3 (\vec{f})\|_{L^p}
\le 2 \|\widetilde{\Gamma}^3_0 (\vec{f})\|_{L^p}
\lesssim |v|^{\frac{\gamma}{p}} \prod_{j=1}^m \|f_j\|_{L^{p_j}}. 
\end{align*}

\noindent{\bf $\bullet$ Estimates for $\widetilde{\Gamma}^5$ and $\widetilde{\Gamma}^6$.} 
Since $\widetilde{\Gamma}^5$ and $\widetilde{\Gamma}^6$ are essentially symmetrical to $\widetilde{\Gamma}^3$ and $\widetilde{\Gamma}^4$ respectively, one can follow the argument above to deduce 
\begin{align*}
\|\widetilde{\Gamma}^5 (\vec{f})\|_{L^p}
+ \|\widetilde{\Gamma}^6 (\vec{f})\|_{L^p}
\lesssim |v|^{\frac{\gamma}{p}} \prod_{j=1}^m \|f_j\|_{L^{p_j}}. 
\end{align*}
So far, we have completed the proof of \eqref{GAj}.
\qed

\subsection{Commutators of partial paraproducts}\label{sec:cp}
Next, we prove the mean continuity of $[\b, \mathbb{E}_{\w} \mathbb{P}_{\D_{\w}}^{1, k}]_{\a}$, and a symmetrical proof holds for $[\b, \mathbb{E}_{\w} \mathbb{P}_{\D_{\w}}^{2, k}]_{\a}$. We will adopt the same strategy as in Section \ref{sec:cs}, and also clarify the similarities and differences in the current scenario. 

For any $k=(k_1, \ldots, k_{m+1}) \in \N^{m+1}$, a partial paraproduct $\mathbb{P}_{\D}^{1, k}$ can be viewed as a shift $\mathbb{S}_{\D}^{\mathbf{k}}$ of complexity $\mathbf{k} = \big((k_1, 0), \ldots, (k_{m+1}, 0) \big)$. Indeed, by definition of partial paraproducts, there holds
\begin{align}\label{cpaa-1}
|a_{(I_j), Q}| := \big| a_{(I_j^1), Q} |Q^2|^{-\frac{m}{2}} \big|
\le \frac{\prod_{j=1}^{m+1} |I_j^1|^{\frac12}}{|Q^1|^m} 
\frac{\prod_{j=1}^{m+1} |Q^2|^{\frac12}}{|Q^2|^m}, 
\end{align}
and
\begin{align}\label{cpaa-2}
\mathbb{P}_{\D}^{1, k}(\vec{f})
= \sum_{Q = Q^1 \times Q^2 \in \D} 
\sum_{\substack{I_j^1 \in \D_{k_j}^1(Q^1) \\ j=1, \ldots, m+1}} 
a_{(I_j), Q} 
\prod_{j=1}^m \langle f_j, \widetilde{h}_{I_j^1} \otimes \widetilde{h}_{j, Q^2} \rangle \, 
\widetilde{h}_{I_{m+1}^1} \otimes \widetilde{h}_{m+1, Q^2},
\end{align}
where the functions $\widetilde{h}_{I_j^1}$ and $\widetilde{h}_{j, Q^2}$ satisfy the following: there exist two different indices $j_0, j_1 \in \{1, \ldots, m+1\}$ so that $\widetilde{h}_{I_{j_0}^1} = h_{I_{j_0}^1}$, $\widetilde{h}_{I_{j_1}^1} = h_{I_{j_1}^1}$, and  $\widetilde{h}_{I_j^1} \in \{h_{I_j^1}^0, h_{I_j^1}\}$ for every $j \neq j_0, j_1$; there exists $j_2 \in \{1, \ldots, m+1\}$ so that $\widetilde{h}_{j_2, Q^2} = h_{Q^2}$ and $\widetilde{h}_{j, Q^2} = h_{Q^2}^0$ for every $j \ne j_2$. Thus, $\mathbb{P}_{\D}^{1, k}$ coincides with a shift $\mathbb{S}_{\D}^{\mathbf{k}}$ with $\mathbf{k} = \big((k_1, 0), \ldots, (k_{m+1}, 0) \big)$.

In view of \eqref{cpaa-1} and \eqref{cpaa-2}, we utilize \eqref{PDww} instead of \eqref{SDww} to reduce the mean continuity to the estimate \eqref{SDC-3}. To this end, we perform the decomposition \eqref{ttt-1}. The term $\Gamma_0$ can be handled by means of \eqref{PDww}. Due to \eqref{cpaa-1}, the estimates for $\widetilde{\Gamma}_2$ and $\widetilde{\Gamma}_4$ are the same as in Section \ref{sec:cs}. Note that $\widetilde{\Gamma}^5$ and $\widetilde{\Gamma}^6$ are essentially symmetrical to $\widetilde{\Gamma}^3$ and $\widetilde{\Gamma}^4$ respectively. Thus, it is enough to analyze $\widetilde{\Gamma}^1$ and $\widetilde{\Gamma}^3$. 

Observe that $\|\widetilde{\Gamma}^3 f\|_{L^p} \le 2 \|\widetilde{\Gamma}^3_0 f\|_{L^p}$ and $\widetilde{\Gamma}^3_0 = \widetilde{\Gamma}^3_1 + \widetilde{\Gamma}^3_2 + \widetilde{\Gamma}^3_3 - \widetilde{\Gamma}^3_4$, where $\widetilde{\Gamma}^3_1$ is similar to $\widetilde{\Gamma}^4$, and $\widetilde{\Gamma}^3_j$ is similar to $\widetilde{\Gamma}^{1, 2, 3}_{2, 1}$ for each $j=2, 3, 4$.

Since $\widetilde{\Gamma}^1
= \widetilde{\Gamma}^1_1 + \widetilde{\Gamma}^1_2 + \widetilde{\Gamma}^1_3$,where $\widetilde{\Gamma}^1_1$ is similar to $\widetilde{\Gamma}^2$, and $\widetilde{\Gamma}^1_3$ is symmetrical to $\widetilde{\Gamma}^1_2$, in order to bound $\widetilde{\Gamma}^1$, it suffices to treat $\widetilde{\Gamma}^1_2 = \widetilde{\Gamma}^1_{2, 1} - \widetilde{\Gamma}^1_{2, 2} - \widetilde{\Gamma}^1_{2, 3} + \widetilde{\Gamma}^1_{2, 4}$. By \eqref{cpaa-1}, the estimate for $\widetilde{\Gamma}^1_{2, 4} = \widetilde{\Gamma}^{1, 1}_{2, 4} - \widetilde{\Gamma}^{1, 2}_{2, 4}$  remain unchanged. Since $\widetilde{\Gamma}^1_{2, 2}$ and $\widetilde{\Gamma}^1_{2, 3}$ are similar to $\widetilde{\Gamma}^1_{2, 1}$, the estimate for $\widetilde{\Gamma}^1_2$ is reduced to that for $\widetilde{\Gamma}^1_{2, 1} = \widetilde{\Gamma}^{1, 1}_{2, 1} + \widetilde{\Gamma}^{1, 2}_{2, 1}$. Note that $\widetilde{\Gamma}^{1, 1}_{2, 1} = \tau_v \widetilde{\Gamma}^1_{2, 4}$ and $\widetilde{\Gamma}^{1, 2}_{2, 1} = \widetilde{\Gamma}^{1, 2, 1}_{2, 1} - \widetilde{\Gamma}^{1, 2, 2}_{2, 1} - \widetilde{\Gamma}^{1, 2, 3}_{2, 1}$, where $\widetilde{\Gamma}^{1, 2, 2}_{2, 1}$ is identical to that in Section \ref{sec:cs} because of the size estimate for $a_{(I_j), Q}$ \eqref{cpaa-1}, and $\widetilde{\Gamma}^{1, 2, 2}_{2, 1}$ is symmetrical to $\widetilde{\Gamma}^{1, 2, 3}_{2, 1}$. Consequently, it remains to dominate $\widetilde{\Gamma}^{1, 2, 3}_{2, 1}$.

By changing variables, it just needs to consider 
\begin{align*}
\Xi (\vec{f})(x)
& := \sum_{\substack{Q^1 \in \D^1 \\ \ell(Q^2) \le |v_2|}} 
a_{(I_j^1), Q} \, b_{Q^1}(x_2)  \prod_{j=1}^m 
\langle f_j, \widetilde{h}_{I_j^1} \otimes \overline{h}_{j, Q^2} \rangle \,
\widetilde{h}_{I_{m+1}^1} \otimes \overline{h}_{m+1, Q^2}(x),
\end{align*}
where $b_{Q^1}(x_2) := b(c_{Q^1}, x_2+v_2) - b(c_{Q^1}, x_2)$.

\noindent{\bf$\bullet$ Case 1: $\overline{h}_{m+1, Q^2} = \frac{\mathbf{1}_{Q^2}}{|Q^2|}$.} Let $f_{m+1} \in L^{p'}(\Rnn)$ with $\|f_{m+1}\|_{L^p} \le 1$. Following the proof of \cite[Theorem 6.7]{LMV21}, we treat the quantity $|\langle \Xi(\vec{f}), f_{m+1} \rangle|$, which contains the term 
\begin{align}\label{bQM}
\bigg| \bigg\langle b_{Q^1} f_{m+1}, h_{Q^1}^0 
\otimes \frac{\mathbf{1}_{Q^2}}{|Q^2|} \bigg\rangle \bigg| 
\le |v| \|\nabla b_1\|_{L^{\infty}} |Q^1|^{\frac12} M_{\D} f_{m+1}(x), \quad\forall x \in Q. 
\end{align}
In light of \eqref{bQM}, the rest of the proof is the same as there. Thus, 
\begin{align*}
\|\Xi (\vec{f})\|_{L^p} 
\lesssim |v| \prod_{j=1}^m \|f_j\|_{L^{p_j}}, 
\end{align*}
where the implicit constant is independent of $\D$, $v$, and $\vec{f}$.

\noindent{\bf$\bullet$ Case 2: $\overline{h}_{m+1, Q^2} = h_{Q^2}$.}  By the assumption on partial paraproducts, we may assume $\widetilde{h}_{I_1^1} = h_{I_1^1}$. Then we see that 
\begin{align*}
\Xi^{1*} (\vec{f})
= \sum_{\substack{Q^1 \in \D^1 \\ I_j^1 \in \D^1_{k_j}(Q^1) \\ j=1, \ldots, m+1}} 
h_{I_1^1} \otimes \bigg[ \sum_{\ell(Q^2) \le |v_2|} 
a_{(I_j^1), Q} \, \big\langle b_{Q^1} \langle f_1, \widetilde{h}_{I_{m+1}^1} \rangle, h_{Q^2} \big \rangle  \prod_{j=2}^m \big\langle \langle f_j, \widetilde{h}_{I_j^1} \rangle, \overline{h}_{j, Q^2} \big\rangle \,
\overline{h}_{1, Q^2} \bigg].
\end{align*}
By duality, it suffices to show 
\begin{align}\label{XFL}
\|\Xi^{1*} (\vec{f})\|_{L^r} 
\lesssim |v| \prod_{j=1}^m \|f_j\|_{L^{r_j}}, 
\end{align}
for all $r, r_1, \ldots, r_m \in (1, \infty)$ satisfying $\frac1r = \sum_{j=1}^m \frac{1}{r_j}$, where the implicit constant is independent of $\D$, $v$, and $\vec{f}$.

To proceed, we define an $m$-linear one-parameter paraproduct on $\R^{n_2}$: 
\begin{align*}
\Pi_{a} (\vec{g}) 
:= \sum_{Q^2 \in \D^2} a_{Q^2} \, 
\prod_{j=1}^m \langle g_j, \overline{h}_{j, Q^2} \rangle_I \, \overline{h}_{m+1, Q^2}, 
\end{align*}
where there exists a unique $j_0 \in \{1, \ldots, m+1\}$ so that $\overline{h}_{j_0, Q^2} = h_{Q^2}$ and $\overline{h}_{j, Q^2} = \frac{\mathbf{1}_{Q^2}}{|Q^2|}$ for each $j \neq j_0$, and $\sup_{\D^2} \|a\|_{\BMO(\D^2)} \lesssim 1$. As shown in \cite[Theorem 5.24]{CLSY}, 
\begin{align*}
\|\Pi_a (\vec{g})\|_{L^p}
\lesssim \sup_{\D^2} \|a\|_{\BMO(\D^2)} \prod_{j=1}^m \|f_j\|_{L^{p_j}}.
\end{align*}
which along with \cite[eq. (6.10)]{LMV20} gives
\begin{align}\label{PFG}
\bigg\| \bigg( \sum_{j \in \Z} \bigg|\sum_{k \in \Z} 
\Pi_{a_{k, j}} (\vec{g}_{k, j}) \bigg|^q \bigg)^{\frac1q}\bigg\|_{L^p(\R^{n_2})}
\lesssim \sup_{k, j \in \Z} \|a_{k, j}\|_{\BMO}
\bigg\| \bigg( \sum_{j \in \Z} \bigg|\sum_{k \in \Z} 
\prod_{i=1}^m (M_{\D} g_{k, j}^i) \bigg|^q \bigg)^{\frac1q}\bigg\|_{L^p(\R^{n_2})},
\end{align}
for all $p, q \in (0, \infty)$. Recall that 
\begin{align}\label{ABM}
\|(a_{(I_j^1), Q})\|_{\BMO(\D^2)} 
\lesssim \frac{\prod_{j=1}^{m+1} |I_j^1|^{\frac12}}{|Q^1|}
=: a_{(I_j^1), Q^1}.
\end{align}
Denote 
\begin{align*}
\varphi (f) 
:= \sum_{I^1 \in \D^1} h_{I^1} \otimes M_{\D^2} (\langle f, h_{I^1} \rangle).
\end{align*}
Then 
\begin{align}\label{TDM}
M_{\D^2} (\langle f, h_{I^1} \rangle)
= \langle \varphi (f), h_{I^1} \rangle 
\quad \text{and} \quad
M_{\D^2} (\langle f, h_{I^1}^0 \rangle) 
\le \langle M_{\D} f, h_{I^1}^0 \rangle, 
\end{align}
For convenience of notation, suppose that $\widetilde{h}_{I_{m+1}^1} = h_{I_{m+1}^1}$ and $\widetilde{h}_{I_j^1} = h_{I_j^1}^0$ for every $j=2, \ldots, m$. Then by \eqref{ssf-2} and \eqref{PFG}--\eqref{TDM}, we deduce
\begin{align*}
& \|\Xi^{1*} (\vec{f})\|_{L^r}
\lesssim \bigg\| \bigg(\sum_{Q^1 \in \D^1} 
\bigg| \sum_{(I_j^1)} 
h_{I_1^1} \otimes \bigg( \sum_{\ell(Q^2) \le |v_2|} a_{(I_j), Q} \, 
\cdots \cdots \overline{h}_{1, Q^2} 
\bigg) \bigg|^2 \bigg)^{\frac12} \bigg\|_{L^r}
\\
& \lesssim \sup_{Q^1 \in \D^1} \|b_{Q^1}\|_{L^{\infty}} 
\bigg\| \bigg(\sum_{Q^1 \in \D^1} 
\bigg| \sum_{(I_j^1)} a_{(I_j^1), Q^1} \, 
h_{I_1^1} \otimes \Big[M_{\D^2} (\langle f_1, \widetilde{h}_{I_{m+1}^1} \rangle)
\prod_{j=2}^m M_{\D^2} (\langle f_j, \widetilde{h}_{I_j^1} \rangle) \Big]
\bigg|^2 \bigg)^{\frac12} \bigg\|_{L^r}
\\
& \lesssim |v| \bigg\|\bigg(\sum_{Q^1 \in \D^1} 
\bigg| \sum_{(I_j^1)} a_{(I_j^1), Q^1} \, 
h_{I_1^1} \otimes \Big[ \langle \varphi(f_1), h_{I_{m+1}^1} \rangle 
\prod_{j=2}^m \langle M_{\D} f_j, h_{I_j^1}^0 \rangle \Big] \bigg|^2 \bigg)^{\frac12} \bigg\|_{L^r}
\\
& = |v| \Bigg\| \bigg\|\bigg(\sum_{Q^1 \in \D^1} 
\bigg| \sum_{(I_j^1)} a_{(I_j^1), Q^1} \, 
\Big[ \langle \varphi(f_1), h_{I_{m+1}^1} \rangle 
\prod_{j=2}^m \langle M_{\D} f_j, h_{I_j^1}^0 \rangle \Big] 
\bigg|^2 \frac{\mathbf{1}_{I_1^1}}{|I_1^1|} 
\bigg)^{\frac12} \bigg\|_{L^r(\R^{n_1})} \Bigg\|_{L^r(\R^{n_2})}
\\
& \simeq |v| \Bigg\| \bigg\| \sum_{Q^1 \in \D^1} 
\sum_{(I_j^1)} a_{(I_j^1), Q^1} \, 
\langle \varphi(f_1), h_{I_{m+1}^1} \rangle 
\prod_{j=2}^m \langle M_{\D} f_j, h_{I_j^1}^0 \rangle \, h_{I_1^1}
\bigg\|_{L^r(\R^{n_1})} \Bigg\|_{L^r(\R^{n_2})}
\\
& \lesssim |v| \bigg\| \|\varphi(f_1)\|_{L^{r_1}(\R^{n_1})}
\prod_{j=2}^m \|M_{\D}(f_j)\|_{L^{r_j}(\R^{n_1})} \bigg\|_{L^r(\R^{n_2})}
\\
& = |v| \|\varphi(f_1)\|_{L^{r_1}} 
\prod_{j=2}^m \|M_{\D}(f_j)\|_{L^{r_j}}
\lesssim |v| \prod_{j=1}^m \|f_j\|_{L^{r_j}},
\end{align*}
where ``$\simeq$" is due to \eqref{sdff}, and after that we used the boundedness of $m$-linear one-parameter shifts. This completes the proof of \eqref{XFL}.
\qed

\subsection{Commutators of full paraproducts}\label{sec:cf}
Finally, let us demonstrate the mean continuity of $[\b, \mathbb{E}_{\w} \mathbb{F}_{\mathbf{a}_{\w}}]_{\a}$. We will modify the proof in Section \ref{sec:cs} to the current situation. 

A full paraproduct $\mathbb{F}_{\mathbf{a}}$ can be viewed as a shift $\mathbb{S}_{\D}^k$ of complexity $k = \big((0, 0), \ldots, (0, 0) \big)$. Indeed, by definition of full paraproducts, there holds
\begin{align}\label{cfaa-1}
|a'_I| := \big| a_I |I|^{-\frac{m}{2}} \big|
\lesssim |I|^{- \frac{m-1}{2}}, \quad \forall I \in \D,
\end{align}
and
\begin{align}\label{cfaa-2}
\mathbb{F}_{\mathbf{a}}(\vec{f})
= \sum_{I = I^1 \times I^2 \in \D} a'_I 
\prod_{j=1}^m \langle f_j, \widetilde{h}_{j, I^1} \otimes \widetilde{h}_{j, I^2} \rangle \, 
\widetilde{h}_{m+1, I^1} \otimes \widetilde{h}_{m+1, I^2},
\end{align}
where there exist $j_0^1, j_0^2 \in \{1, \ldots, m+1\}$ so that $\widetilde{h}_{j_0^1, I^1} = h_{I^1}$, $\widetilde{h}_{j_0^2, I^2} = h_{I^2}$, $\widetilde{h}_{j, I^1} = h_{I^1}^0$ for every $j \ne j_0^1$, and $\widetilde{h}_{j, I^2} = h_{I^2}^0$ for every $j \ne j_0^2$. Then it is clear that $\mathbb{F}_{\mathbf{a}}$ coincides with a shift $\mathbb{S}_{\D}^k$ with $k = \big((0, 0), \ldots, (0, 0) \big)$.

As argued in Section \ref{sec:cp}, with \eqref{Fbww}, \eqref{cfaa-1}, and \eqref{cfaa-2} in hand, we are eventually reduced to handling 
\begin{align*}
\Xi (\vec{f})(x) := 
\sum_{\substack{I^1 \in \D^1 \\ \ell(I^2) \le |v_2|}} a_I \, b_{I_1}(x_2) \, 
\prod_{j=1}^m \langle f_j, \overline{h}_{j, I^1} \otimes \overline{h}_{j, I^2} \rangle \, 
\overline{h}_{m+1, I^1} \otimes \overline{h}_{m+1, I^2}(x),
\end{align*}
where $b_{I^1}(x_2) := b(c_{I^1}, x_2+v_2) - b(c_{I^1}, x_2)$, and $\overline{h}_{j, I^1}$ and $\overline{h}_{j, I^2}$ are defined in Definition \ref{def:paraproduct}. Let $f_{m+1} \in L^{p'}(\Rnn)$ with $\|f_{m+1}\|_{L^{p'}} \le 1$. Then
\begin{align}\label{XFF}
&|\langle \Xi (\vec{f}), f_{m+1} \rangle|
\le \sum_{I = I^1 \times I^2 \in \D} |a_I|
\prod_{j=1}^m |\langle f_j, \overline{h}_{j, I^1} \otimes \overline{h}_{j, I^2} \rangle| \, 
|\langle b_{I^1} f_{m+1}, \overline{h}_{m+1, I^1} \otimes \overline{h}_{m+1, I^2}|
\\ \nonumber
&\lesssim \|\mathbf{a}\|_{\BMO(\D)} \bigg\| \bigg[\sum_{I \in \D} 
\prod_{j=1}^m |\langle f_j, \overline{h}_{j, I^1} \otimes \overline{h}_{j, I^2} \rangle|^2 
|\langle b_{I^1} f_{m+1}, \overline{h}_{m+1, I^1} \otimes \overline{h}_{m+1, I^2}|^2 
\frac{\mathbf{1}_I}{|I|} \bigg]^{\frac12} \bigg\|_{L^1}.
\end{align}
\noindent{$\bullet$ \bf Case 1: $\overline{h}_{m+1, I^2} = \frac{\mathbf{1}_{I^2}}{|I^2|}$ and $\overline{h}_{m+1, I^1} = \frac{\mathbf{1}_{I^1}}{|I^1|}$.} By the assumption on full paraproducts, we may assume that $\overline{h}_{1, I^1} = h_{I^1}$ and $\overline{h}_{2, I^2} = h_{I^2}$. It is not hard to verify that
\begin{align}
\label{bhh-11}
& |\langle f_1, h_{I^1} \otimes \overline{h}_{I^2} \rangle|
= |\langle \Delta_{I^1} f_1, h_{I^1} \otimes \overline{h}_{I^2} \rangle|
\le \langle |\Delta_{I^1} f_1| \rangle_I \, |I^1|^{\frac12},
\\
\label{bhh-12}
& |\langle f_2, \overline{h}_{I^1} \otimes h_{I^2} \rangle|
= |\langle \Delta_{I^2} f_2, \overline{h}_{I^1} \otimes h_{I^2} \rangle|
\le \langle |\Delta_{I^2} f_2| \rangle_I \, |I^2|^{\frac12},
\\
\label{bhh-13}
& |\langle f_j, \overline{h}_{I^1} \otimes \overline{h}_{I^2} \rangle|
\le \langle |f_j| \rangle_I, \quad j=3, \ldots, m, 
\\
\label{bhh-14}
& |\langle b_{I^1} f_{m+1}, \overline{h}_{I^1} \otimes \overline{h}_{I^2} \rangle|
\le |v| \|\nabla b\|_{L^{\infty}} \langle |f_{m+1}| \rangle_I.
\end{align}
Hence, it follows from \eqref{XFF}--\eqref{bhh-14} that
\begin{align}\label{XFF-1}
|\langle \Xi (\vec{f}), f_{m+1} \rangle|
\lesssim |v| \bigg\| \bigg[\sum_{I \in \D} \langle |\Delta_{I^1} f_1| \rangle_I^2
\langle |\Delta_{I^2} f_2| \rangle_I^2 \prod_{j=3}^m \langle |f_j| \rangle|^2 
\, \mathbf{1}_I \bigg]^{\frac12} \bigg\|_{L^1}.
\end{align}
\noindent{$\bullet$ \bf Case 2: $\overline{h}_{m+1, I^2} = \frac{\mathbf{1}_{I^2}}{|I^2|}$ and $\overline{h}_{m+1, I^1} = h_{I^1}$.} By symmetry, we may assume that $\overline{h}_{2, I^2} = h_{I^2}$. Then
\begin{align}
\label{bhh-21}
& |\langle f_1, \overline{h}_{I^1} \otimes \overline{h}_{I^2} \rangle|
\le \langle |f_1| \rangle_I,
\\
\label{bhh-22}
& |\langle f_2, \overline{h}_{I^1} \otimes h_{I^2} \rangle|
= |\langle \Delta_{I^2} f_2, \overline{h}_{I^1} \otimes h_{I^2} \rangle|
\le \langle |\Delta_{I^2} f_2| \rangle_I \, |I^2|^{\frac12},
\\
\label{bhh-23}
& |\langle f_j, \overline{h}_{I^1} \otimes \overline{h}_{I^2} \rangle|
\le \langle |f_j| \rangle_I, \quad j=3, \ldots, m, 
\\
\label{bhh-24}
& |\langle b_{I_1} f_{m+1}, h_{I^1} \otimes \overline{h}_{I^2} \rangle|
= |\langle b_{I_1} \Delta_{I^1} f_{m+1}, h_{I^1} \otimes \overline{h}_{I^2} \rangle|
\lesssim |v| \langle |\Delta_{I^1} f_{m+1}| \rangle_I \, |I^1|^{\frac12}.
\end{align}
By \eqref{XFF} and \eqref{bhh-21}--\eqref{bhh-24}, there holds
\begin{align}\label{XFF-2}
|\langle \Xi (\vec{f}), f_{m+1} \rangle|
\lesssim |v| \bigg\| \bigg[\sum_{I \in \D} \langle |\Delta_{I^2} f_2| \rangle_I^2
\langle |\Delta_{I^1} f_{m+1}| \rangle_I^2 \prod_{j=1, 3, \ldots, m} \langle |f_j| \rangle|^2 
\, \mathbf{1}_I \bigg]^{\frac12} \bigg\|_{L^1}.
\end{align}
\noindent{$\bullet$ \bf Case 3: $\overline{h}_{m+1, I^2} = h_{I^2}$ and $\overline{h}_{m+1, I^1} = \frac{\mathbf{1}_{I^1}}{|I^1|}$.} By symmetry, we may assume that $\overline{h}_{1, I^1} = h_{I^1}$. Then
\begin{align}
\label{bhh-31}
& |\langle f_1, h_{I^1} \otimes \overline{h}_{I^2} \rangle|
= |\langle \Delta_{I^1} f_1, h_{I^1} \otimes \overline{h}_{I^2} \rangle|
\le \langle |\Delta_{I^1} f_1| \rangle_I \, |I^1|^{\frac12},
\\
\label{bhh-32}
& |\langle f_j, \overline{h}_{I^1} \otimes \overline{h}_{I^2} \rangle|
\le \langle |f_j| \rangle_I, \quad j=2, \ldots, m, 
\\
\label{bhh-33}
& |\langle b_{I_1} f_{m+1}, \overline{h}_{I^1} \otimes h_{I^2} \rangle|
\lesssim |v| \langle |f_{m+1}| \rangle_I \, |I^2|^{\frac12}.
\end{align}
As a consequence of \eqref{XFF} and \eqref{bhh-31}--\eqref{bhh-33}, we have
\begin{align}\label{XFF-3}
|\langle \Xi (\vec{f}), f_{m+1} \rangle|
\lesssim |v| \bigg\| \bigg[\sum_{I \in \D} 
\langle |\Delta_{I^1} f_1| \rangle_I^2 \prod_{j=2}^{m+1} \langle |f_j| \rangle|^2 
\, \mathbf{1}_I \bigg]^{\frac12} \bigg\|_{L^1}.
\end{align}
\noindent{$\bullet$ \bf Case 4: $\overline{h}_{m+1, I^2} = h_{I^2}$ and $\overline{h}_{m+1, I^1} = h_{I^1}$.} Then one has
\begin{align}
\label{bhh-41}
& |\langle f_j, \overline{h}_{I^1} \otimes \overline{h}_{I^2} \rangle|
\le \langle |f_j| \rangle_I, \quad j=1, \ldots, m, 
\\
\label{bhh-42}
& |\langle b_{I_1} f_{m+1}, h_{I^1} \otimes h_{I^2} \rangle|
= |\langle b_{I_1} \Delta_{I^1} f_{m+1}, h_{I^1} \otimes h_{I^2} \rangle|
\lesssim |v| \langle |\Delta_{I^1} f_{m+1}| \rangle_I \, |I|^{\frac12}.
\end{align}
In view of \eqref{XFF} and \eqref{bhh-31}--\eqref{bhh-33}, we arrive at
\begin{align}\label{XFF-4}
|\langle \Xi (\vec{f}), f_{m+1} \rangle|
\lesssim |v| \bigg\| \bigg[\sum_{I \in \D}
\prod_{j=1}^m \langle |f_j| \rangle|^2 \, \langle |\Delta_{I^1} f_{m+1}| \rangle_I^2
\, \mathbf{1}_I \bigg]^{\frac12} \bigg\|_{L^1}.
\end{align}
Having obtained \eqref{XFF-1}, \eqref{XFF-2}, \eqref{XFF-3}, and \eqref{XFF-4}, we apply \cite[Theorem 5.5]{LMV21} to conlude 
\begin{align*}
|\langle \Xi (\vec{f}), f_{m+1} \rangle|
\lesssim |v| \prod_{j=1}^{m+1} \|f_j\|_{L^{p_j}}.
\end{align*}
The proof is complete. 
\qed

\section{Extrapolation of compactness}\label{sec:RdF}
This section is devoted to proving Theorems \ref{thm:RdF-cpt} and \ref{thm:RdF-bT}, which have been used in Sections \ref{sec:wcpt} and \ref{sec:wcc} respectively. After that, we will conclude  Theorems \ref{thm:cpt} and \ref{thm:bT} from Theorems \ref{thm:repre}--\ref{thm:RdF-bT}.

\subsection{Interpolation of compactness}
Let us first show Theorem \ref{thm:RdF-cpt} in the bi-parameter setting. Although the proof follows  the scheme in \cite{CLSY}, we present some details to clarify the main ingredients. 

First, we give an interpolation of multiple weights below, which allows us to use interpolation of compactness to show extrapolation of compactness.   

\begin{lemma}\label{lem:AA}
Let $\frac1p = \sum_{j=1}^m \frac{1}{p_j}>0$ with $p_1, \ldots, p_m \in (1, \infty]$ and $\frac1s = \sum_{j=1}^m \frac{1}{s_j}$ with $s_1, \ldots, s_m \in [1, \infty]$. Assume that $\vec{w}\in A_{\vec{p}}(\Rnn)$ and $\vec{v} \in A_{\vec{s}}(\Rnn)$. Then there exists $\theta \in (0, 1)$ such that $\vec{u} \in A_{\vec{r}}(\Rnn)$, 
where 
\begin{align*}
\frac1r = \sum_{j=1}^m \frac{1}{r_j}, \quad 
u = \prod_{j=1}^m u_j, \quad 
w_j = u_j^{1-\theta} v_j^{\theta},\quad 
\frac{1}{p_j} = \frac{1-\theta}{r_j} + \frac{\theta}{s_j}, \quad j=1, \ldots, m. 
\end{align*}
\end{lemma}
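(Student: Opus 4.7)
The proof verifies the $A_{\vec{r}}(\Rnn)$ condition for the weight $\vec{u}$, whose components are forced by the relation $w_j = u_j^{1-\theta} v_j^{\theta}$ to be $u_j = w_j^{1/(1-\theta)} v_j^{-\theta/(1-\theta)}$. The argument rests on two ingredients: (i) the characterization, recalled in the theorem preceding this lemma, of bi-parameter multiple $A_{\vec{q}}$ classes via the joint memberships $w^q \in A_{mq}(\Rnn)$ and $w_j^{-q_j'} \in A_{mq_j'}(\Rnn)$; and (ii) the self-improvement (openness) of scalar bi-parameter Muckenhoupt classes, namely that $W \in A_q(\Rnn)$ implies $W^{1+\varepsilon} \in A_q(\Rnn)$ for some $\varepsilon = \varepsilon(q, [W]_{A_q}) > 0$. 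Both are standard tools in the product-space $A_p$ theory, deriving from the reverse Hölder inequality on rectangles.

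The plan is, first, to fix $\theta \in (0,1)$ sufficiently small. The exponent identity forces
\begin{equation*}
\frac{1}{r_j} = \frac{1}{1-\theta}\Bigl(\frac{1}{p_j} - \frac{\theta}{s_j}\Bigr),
\end{equation*}
and for $\theta$ close to $0$ one readily checks $r_j \in (1,\infty]$, so $A_{\vec{r}}(\Rnn)$ is well-defined. Second, I apply the openness principle to the weights arising from $\vec{w}$: there exists $\varepsilon > 0$ such that $w^{p(1+\varepsilon)} \in A_{mp}(\Rnn)$ and $w_j^{-p_j'(1+\varepsilon)} \in A_{mp_j'}(\Rnn)$, with uniform constants controlled by $[\vec{w}]_{A_{\vec{p}}}$.

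Third, on each rectangle $R \in \mathcal{R}$ I expand
\begin{equation*}
\langle u^r \rangle_R^{1/r} \prod_{j=1}^{m} \langle u_j^{-r_j'} \rangle_R^{1/r_j'}
\end{equation*}
and split each average by Hölder's inequality into a factor involving a power of $w$ (resp.\ $w_j$) and a factor involving a power of $v$ (resp.\ $v_j$). Choosing the Hölder conjugate exponents so that the $w$-pieces become $\langle w^{p(1+\varepsilon)}\rangle_R^{\cdot}$ and $\langle w_j^{-p_j'(1+\varepsilon)}\rangle_R^{\cdot}$ — controlled by the improved $A_{mp}$ and $A_{mp_j'}$ constants from step two — the complementary $v$-pieces have exponents that, when collected across the product $\prod_j$, reassemble into the $A_{\vec{s}}$-type averages $\langle v^{s}\rangle_R$ and $\langle v_j^{-s_j'}\rangle_R$, controlled by $[\vec{v}]_{A_{\vec{s}}}$. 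Multiplying the rectangle-wise estimates and taking $\sup_R$, I expect to arrive at an inequality of the form
\begin{equation*}
[\vec{u}]_{A_{\vec{r}}(\Rnn)} \lesssim [\vec{w}]_{A_{\vec{p}}(\Rnn)}^{a(1-\theta)}\,[\vec{v}]_{A_{\vec{s}}(\Rnn)}^{b\theta}
\end{equation*}
for admissible constants $a, b > 0$ depending on $m, \vec{p}, \vec{s}, \theta, \varepsilon$, which yields $\vec{u} \in A_{\vec{r}}(\Rnn)$.

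The principal obstacle will be coordinating the two small parameters: $\theta$ must be small enough that the Hölder exponents applied to the $w$-factors respect the openness window $\varepsilon$ (which itself depends on $[\vec{w}]_{A_{\vec{p}}}$), yet the bookkeeping for the $v$-factors must collapse to the $A_{\vec{s}}$ characterization without cross-terms. I anticipate a calibration of the form $\theta \le c\,\varepsilon$, with $c$ depending only on $m$ and the exponent vectors $\vec{p}, \vec{s}$, will make every Hölder exponent land within the admissible ranges of the characterization theorem.
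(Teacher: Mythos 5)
Your strategy --- self-improvement of the $\vec w$-weights via the multi-parameter reverse H\"{o}lder inequality, followed by H\"{o}lder's inequality on rectangles and the scalar characterization of $A_{\vec p}(\Rnn)$ --- is exactly what the paper does: its proof consists of invoking the reverse H\"{o}lder inequality for $A_p(\Rnn)$ from \cite[p.~458]{GR} and then running the one-parameter argument of \cite[Lemma C.1]{CLSY} unchanged. One bookkeeping caveat: since $u^r = w^{r/(1-\theta)}v^{-r\theta/(1-\theta)}$ and $u_j^{-r_j'} = w_j^{-r_j'/(1-\theta)}v_j^{\theta r_j'/(1-\theta)}$, the complementary factors in your H\"{o}lder split carry a \emph{negative} (small) power of $v$ and \emph{positive} (small) powers of the $v_j$, so they are controlled by Jensen's inequality together with the dual memberships $(v^s)^{1-(ms)'}\in A_{(ms)'}$ and $(v_j^{-s_j'})^{1-(ms_j')'}\in A_{(ms_j')'}$, not literally by the averages $\langle v^s\rangle_R$ and $\langle v_j^{-s_j'}\rangle_R$ that you named.
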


\begin{proof}
The proof heavily relies on the reverse H\"{o}lder inequality for $A_p(\Rnn)$, which was proved by \cite[p. 458]{GR} in the multi-parameter case. With this in hand, the proof is similar to that of \cite[Lemma C.1]{CLSY}.
\end{proof}

Next, let us recall extrapolation of boundedness in the bi-parameter case \cite[Theorem 3.12]{LMV21}, which extends the one-parameter extrapolation result in \cite{Nie}. 

\begin{theorem}\label{thm:RdF-bdd}
Let $\mathcal{F}$ be a family of $(m+1)$-tuples of non-negative functions. Assume that there exists some $\vec{q} = (q_1, \ldots, q_m) \in [1, \infty]^m$ such that for all $\vec{v} \in A_{\vec{q}}(\Rnn)$, 
\begin{align*}
\|f\|_{L^q(v^q)} 
\lesssim \prod_{j=1}^m \|f_j\|_{L^{q_j}(v_j^{q_j})}, 
\quad \forall (f, f_1, \ldots, f_m) \in \mathcal{F}, 
\end{align*}
where $\frac1q = \sum_{j=1}^m \frac{1}{q_j}$ and $v = \prod_{j=1}^m v_j$. Then for all $\vec{p} = (p_1, \ldots, p_m) \in (1, \infty]^m$ and for all $\vec{w} = (w_1, \ldots, w_m) \in A_{\vec{p}}(\Rnn)$, 
\begin{align*}
\|f\|_{L^p(w^p)} 
\lesssim \prod_{j=1}^m \|f_j\|_{L^{p_j}(w_j^{p_j})}, 
\quad \forall (f, f_1, \ldots, f_m) \in \mathcal{F}, 
\end{align*}
where $\frac1p = \sum_{j=1}^m \frac{1}{p_j} > 0$ and $w = \prod_{j=1}^m w_j$. 
\end{theorem}

Beyond that, the core of the proof of Theorem \ref{thm:RdF-cpt} is an interpolation of multilinear compact operators below, which determines the range of exponents. The greatest difficulty of Theorem \ref{thm:cpt-inter} is to obtain the quasi-Banach range of exponents, for which we use compactness criterion Theorem \ref{thm:KRWA}.

\begin{theorem}\label{thm:cpt-inter}
Let $0< p_0, q_0 <\infty$ and $1 \leq p_j, q_j \leq \infty$, $j=1, \ldots, m$. Let $u_0^{p_0}, v_0^{q_0} \in A_{\infty}(\Rnn)$ and $u_j, v_j$ be weights on $\Rnn$, $j=1, \ldots, m$. Assume that 
\begin{align}
\label{cptinter-1} 
&T: L^{p_1}(u_1^{p_1}) \times \cdots \times L^{p_m}(u_m^{p_m}) \to L^{p_0}(u_0^{p_0}) \text{ boundedly}, 
\\
\label{cptinter-2} 
&T: L^{q_1}(v_1^{q_1}) \times \cdots \times L^{q_m}(v_m^{q_m}) \to L^{q_0}(v_0^{q_0}) \text{ compactly}. 
\end{align}
Then, 
\begin{align}\label{cptinter-3} 
T: L^{r_1}(w_1^{r_1}) \times \cdots \times L^{r_m}(w_m^{r_m}) \to L^{r_0}(w_0^{q_0}) \text{ compactly}, 
\end{align}
where 
\begin{align*}
\theta \in (0, 1), \quad 
w_j = u_j^{1-\theta} v_j^{\theta}, \quad 
\frac{1}{r_j} = \frac{1-\theta}{p_j} + \frac{\theta}{q_j}, \quad j=0, 1, \ldots, m. 
\end{align*}
\end{theorem}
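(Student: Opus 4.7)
The plan is to apply the bi-parameter weighted compactness criterion, Theorem \ref{thm:KRWA}, to the image $\mathcal{K} := \overline{T(B_1 \times \cdots \times B_m)}$ in $L^{r_0}(w_0^{r_0})$, where $B_j$ denotes the closed unit ball of $L^{r_j}(w_j^{r_j})$. That the criterion is applicable follows from $u_0^{p_0}, v_0^{q_0} \in A_\infty(\Rnn)$: the bi-parameter reverse H\"older inequality (cf.\ \cite[p.~458]{GR}) yields $u_0^{p_0}, v_0^{q_0} \in A_s(\Rnn)$ for some $s > 1$, and then $w_0^{r_0} = (u_0^{p_0})^{(1-\theta) r_0/p_0}(v_0^{q_0})^{\theta r_0/q_0} \in A_\infty(\Rnn)$ by standard factorization. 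The central analytic tool is the H\"older-type interpolation inequality
$$\|g\|_{L^{r_0}(w_0^{r_0})} \le \|g\|_{L^{p_0}(u_0^{p_0})}^{1-\theta}\,\|g\|_{L^{q_0}(v_0^{q_0})}^{\theta},$$
proved by standard H\"older with exponents $\tfrac{p_0}{(1-\theta) r_0}$ and $\tfrac{q_0}{\theta r_0}$ together with $w_0 = u_0^{1-\theta} v_0^{\theta}$ and $\tfrac{1}{r_0} = \tfrac{1-\theta}{p_0} + \tfrac{\theta}{q_0}$. An analogous input-side inequality combined with multilinear complex interpolation applied to \eqref{cptinter-1}--\eqref{cptinter-2} yields the interpolated boundedness of $T$ at the intermediate exponent, which verifies condition (a) of Theorem \ref{thm:KRWA}.

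For conditions (b) and (c) the chief difficulty is that a sequence $\{\vec{f}_k\}$ bounded only in $\prod_j L^{r_j}(w_j^{r_j})$ need not be bounded in either endpoint product. To bypass this, I would employ a level-set truncation
$$f_k^j = f_k^{j,\eta} + g_k^{j,\eta}, \qquad f_k^{j,\eta} := f_k^{j}\,\mathbf{1}_{\{|f_k^j| \le \eta\}\cap B_{\vec{n}}(0,\eta)},$$
so that $(f_k^{1,\eta},\ldots,f_k^{m,\eta})$ lies in $\prod_j L^{q_j}(v_j^{q_j})$ (with norm bounded, though non-uniformly in $\eta$) by weighted Chebyshev on the level-set and support constraints. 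The compactness hypothesis \eqref{cptinter-2} then supplies, after subsequence extraction and diagonalization in $\eta$, uniform decay and translation continuity of $\{T(\vec{f}_k^{\,\eta})\}$ at the $q_0$-exponent; these are transported to the intermediate $r_0$-exponent via the target-side H\"older inequality above combined with the $L^{p_0}$-bound from \eqref{cptinter-1}. The error $T(\vec{f}_k) - T(\vec{f}_k^{\,\eta})$ decomposes by multilinearity into $2^m - 1$ pieces each containing at least one factor $g_k^{j,\eta}$; each such piece is controlled in $L^{r_0}(w_0^{r_0})$ by the interpolated boundedness together with a second application of the target-side H\"older that extracts a smallness factor from the geometry of the truncation set, using weighted Chebyshev to trade integrability on $\{|f_k^j| > \eta\}$ between endpoints.

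The main obstacle is precisely the joint coordination of the truncation parameter $\eta \to \infty$, the diagonal subsequence extraction, and the H\"older interpolations on both input and target sides, so that for any prescribed tolerance the compactness modulus of the truncated main term (inherited from \eqref{cptinter-2}) and the $r_0$-norm of the high-level remainder (controlled via \eqref{cptinter-1} and the interpolated bound) are simultaneously small, uniformly in the index. This is the multi-parameter, weighted, quasi-Banach analogue of the classical Krasnoselskii interpolation theorem for compact operators; the full bookkeeping, in particular the careful handling of the quasi-Banach range allowed by the hypothesis $0 < r_0 < \infty$, is executed systematically in our prior work \cite{COY}. The only genuinely new input required in the present bi-parameter, weighted setting is the product-space compactness criterion Theorem \ref{thm:KRWA}, whose three conditions (boundedness, uniform decay at infinity, and equicontinuity of local averages) are tailored precisely to the bi-parameter weighted measure on $\Rnn$.
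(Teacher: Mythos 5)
You have the right skeleton (apply Theorem \ref{thm:KRWA} at the intermediate exponent, use the H\"older interpolation $\|g\|_{L^{r_0}(w_0^{r_0})}\le\|g\|_{L^{p_0}(u_0^{p_0})}^{1-\theta}\|g\|_{L^{q_0}(v_0^{q_0})}^{\theta}$), but the way you propose to verify conditions (b) and (c) contains a genuine gap. The "chief difficulty" you identify --- that a family bounded only in $\prod_j L^{r_j}(w_j^{r_j})$ need not be bounded in either endpoint product --- is illusory, and the truncation scheme you introduce to get around it does not close. Each of the three conditions in Theorem \ref{thm:KRWA} is a statement of the form $\sup_{\|f_j\|\le 1}\|S(\vec f)\|$ for an auxiliary (sub)multilinear operator $S$ (namely $T$ itself, $\vec f\mapsto T(\vec f)\mathbf 1_{B_{\vec n}(0,A)^c}$, and $\vec f\mapsto\big(\fint_{B_{\vec n}(0,\rho)}|(\tau_yT-T)(\vec f)|^a\,dy\big)^{1/a}$). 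The paper's proof simply interpolates these operator norms between the two endpoints: the criterion applied to \eqref{cptinter-2} makes the second and third suprema $\le\varepsilon$ at the $q$-endpoint, \eqref{cptinter-1} makes them $\le C_0$ at the $p$-endpoint (the third via the pointwise bound by $M_{\mathcal R}(|T(\vec f)|^a)^{1/a}+|T(\vec f)|$), and interpolation with change of weights (plus \cite[Theorem 3.5]{COY} for the mixed-norm case) gives $C_0^{1-\theta}\varepsilon^{\theta}$ at the intermediate point. No input function ever needs to belong to an endpoint space.

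Your level-set truncation, by contrast, would fail at the error term. To make $T(\vec f_k)-T(\vec f_k^{\,\eta})$ small in $L^{r_0}(w_0^{r_0})$ uniformly in $k$, the $2^m-1$ pieces force you to show $\|g_k^{j,\eta}\|_{L^{r_j}(w_j^{r_j})}\to 0$ as $\eta\to\infty$ uniformly in $k$; this is false for a general bounded family, which may concentrate (e.g.\ normalized indicators of shrinking cubes keep their full norm in the high-level part for every $\eta$). Weighted Chebyshev only controls the measure of $\{|f_k^j|>\eta\}$, not the norm of $f_k^j$ restricted there, and there is no second endpoint bound on the inputs to trade against. Additional problems: the theorem places no local integrability assumption on $v_j^{q_j}$ or $u_j^{p_j}$, so the truncated functions need not lie in the endpoint spaces at all; and even when they do, transporting decay of $T(\vec f_k^{\,\eta})$ from the $q_0$- to the $r_0$-exponent via H\"older requires the $L^{p_0}(u_0^{p_0})$ factor, whose bound degenerates as $\eta\to\infty$, so the diagonalization you defer to cannot be made to balance. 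The fix is to abandon the input decomposition entirely and interpolate the three criterion quantities as operator norms, exactly as in the paper.
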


\begin{proof}
Let $u_0^{p_0}, v_0^{q_0} \in A_{\infty}(\Rnn)$. Then $u_0^{p_0} \in A_s(\Rnn)$ for some $s \in (1, \infty)$. Picking $0 < a < \min\{p_0/s, 1\}$, we see that $u_0^{p_0} \in A_s(\Rnn) \subset A_{p_0/a}(\Rnn)$. 
Note that for any $\rho>0$, 
\begin{align*}
\bigg(\fint_{B_{\vec{n}}(0, \rho)} |\tau_y f(x) - f(x)|^a \, dy \bigg)^{\frac1a}
\lesssim M_{\mathcal{R}}(|f|^a)(x)^{\frac1a} + |f(x)|, 
\end{align*}
which, along with the weighted boundedness of $M_{\mathcal{R}}$ (cf. \cite[p. 453]{GR}) and \eqref{cptinter-1}, implies 
\begin{align}\label{tauy}
&\bigg\|\bigg(\fint_{B_{\vec{n}}(0, \rho)} |(\tau_y T - T)(\vec{f})|^a dy \bigg)^{\frac1a}\bigg\|_{L^{p_0}(u_0^{p_0})} 
\\ \nonumber
&\lesssim \|M_{\mathcal{R}}(|T(\vec{f})|^a)^{\frac1a}\|_{L^{p_0}(u_0^{p_0})} 
+ \|T(\vec{f})\|_{L^{p_0}(u_0^{p_0})} 
\\ \nonumber
&\lesssim \|T(\vec{f})\|_{L^{p_0}(u_0^{p_0})} 
\lesssim \prod_{j=1}^m \|f_j\|_{L^{p_j}(u_j^{p_j})}. 
\end{align}

Let $\varepsilon>0$. By \eqref{cptinter-2} and Theorem \ref{thm:KRWA}, there exist $A_0 = A_0(\varepsilon) > 0$ and $\rho_0 = \rho_0(\varepsilon) > 0$ such that for all $A \ge A_0$ and $0<\rho<\rho_0$, 
\begin{align}
\label{KAM-1}
\sup_{\substack{\|f_j\|_{L^{q_j}(v_j^{q_j})} \le 1 \\ j=1, \ldots, m}} 
& \|T(\vec{f})\|_{L^{q_0}(v_0^{q_0})} \le C_1, 
\\
\label{KAM-2}
\sup_{\substack{\|f_j\|_{L^{q_j}(v_j^{q_j})} \le 1 \\ j=1, \ldots, m}} 
& \|T(\vec{f}) \mathbf{1}_{B_{\vec{n}}(0, A)^c}\|_{L^{q_0}(v_0^{q_0})} \le \varepsilon, 
\\
\label{KAM-3}
\sup_{\substack{\|f_j\|_{L^{q_j}(v_j^{q_j})} \le 1 \\ j=1, \ldots, m}} 
& \bigg\|\bigg(\fint_{B_{\vec{n}}(0, \rho)} |(\tau_y T - T)(\vec{f})|^a dy 
\bigg)^{\frac1a}\bigg\|_{L^{q_0}(v_0^{q_0})} 
\le \varepsilon,  
\end{align}
where $C_1>0$ is an absolute constant. Moreover, by \eqref{cptinter-1} and \eqref{tauy}, for any $A>0$ and $\rho>0$, 
\begin{align}
\label{KAM-11}
\sup_{\substack{\|f_j\|_{L^{p_j}(u_j^{p_j})} \le 1 \\ j=1, \ldots, m}} 
& \|T(\vec{f})\|_{L^{p_0}(u_0^{p_0})} 
\le C_0, 
\\
\label{KAM-22}
\sup_{\substack{\|f_j\|_{L^{p_j}(u_j^{p_j})} \le 1 \\ j=1, \ldots, m}} 
& \|T(\vec{f}) \mathbf{1}_{B_{\vec{n}}(0, A)^c}\|_{L^{p_0}(u_0^{p_0})} 
\le C_0, 
\\
\label{KAM-33}
\sup_{\substack{\|f_j\|_{L^{p_j}(u_j^{p_j})} \le 1 \\ j=1, \ldots, m}} 
& \bigg\|\bigg(\fint_{B_{\vec{n}}(0, \rho)} |(\tau_y T - T)(\vec{f})|^a dy 
\bigg)^{\frac1a}\bigg\|_{L^{p_0}(u_0^{p_0})} 
\le C_0,  
\end{align}
where $C_0>0$ is an absolute constant. Thus, interpolating between \eqref{KAM-1} and \eqref{KAM-11} gives 
\begin{align}\label{KAM-111}
\sup_{\substack{\|f_j\|_{L^{r_j}(w_j^{r_j})} \le 1 \\ j=1, \ldots, m}} 
\|T(\vec{f})\|_{L^{r_0}(w_0^{r_0})} 
\le C_0^{1-\theta} C_1^{\theta}.  
\end{align}
Similarly, by \eqref{KAM-2} and \eqref{KAM-22}, 
\begin{align}\label{KAM-222}
\sup_{\substack{\|f_j\|_{L^{r_j}(w_j^{r_j})} \le 1 \\ j=1, \ldots, m}} 
& \|T(\vec{f}) \mathbf{1}_{B_{\vec{n}}(0, A)^c}\|_{L^{r_0}(w_0^{r_0})} 
\le C_0^{1-\theta} \varepsilon^{\theta}, 
\end{align}
for any $A \ge A_0$. Moreover, by interpolation on mixed-norm Lebesgue spaces (cf. \cite[Theorem 3.5]{COY}), \eqref{KAM-3} and \eqref{KAM-33} imply 
\begin{align}\label{KAM-333}
\sup_{\substack{\|f_j\|_{L^{r_j}(w_j^{r_j})} \le 1 \\ j=1, \ldots, m}} 
& \bigg\|\bigg(\fint_{B_{\vec{n}}(0, \rho)} |(\tau_y T - T)(\vec{f})|^a dy 
\bigg)^{\frac1a}\bigg\|_{L^{r_0}(w_0^{r_0})} 
\le C_0^{1-\theta} \varepsilon^{\theta}, 
\end{align}
for any $0<\rho<\rho_0$. Therefore, \eqref{cptinter-3} follows from \eqref{KAM-111}--\eqref{KAM-333} and Theorem \ref{thm:KRWA}. 
\end{proof}

\subsection{Proof of Theorem \ref{thm:RdF-cpt}}
Fix $\vec{r} = (r_1, \ldots, r_m) \in (1, \infty]^m$ with $\frac1r = \sum_{j=1}^m \frac{1}{r_j} > 0$. Let $\vec{w} \in A_{\vec{r}}(\Rnn)$ and $w = \prod_{j=1}^m w_j$. Recall that the assumption \eqref{RdFcpt-1}: 
\begin{align}\label{RdFcpt-3}
T: L^{p_1}(u_1^{p_1}) \times \cdots \times L^{p_m}(u_m^{p_m}) \to L^p(u^p) \text{ compactly}, 
\end{align} 
where $\vec{u} \in A_{\vec{p}}(\Rnn)$. Then by Lemma \ref{lem:AA} applied to $\vec{w} \in A_{\vec{r}}(\Rnn)$ and $\vec{u} \in A_{\vec{p}}(\Rnn)$, there exists $\theta \in (0, 1)$ such that $\vec{v} \in A_{\vec{s}}(\Rnn)$, where 
\begin{align}\label{eq:AA}
\frac1s = \sum_{j=1}^m \frac{1}{s_j}, \quad 
v = \prod_{j=1}^m v_j, \quad 
w_j = v_j^{1-\theta} u_j^{\theta},\quad 
\frac{1}{r_j} = \frac{1-\theta}{s_j} + \frac{\theta}{p_j}, \quad j=1, \ldots, m. 
\end{align}
In view of Theorem \ref{thm:RdF-bdd}, the assumption \eqref{RdFcpt-2} implies 
\begin{align}\label{TLT}
T: L^{t_1}(\sigma_1^{t_1}) \times \cdots \times L^{\sigma_m}(\sigma_m^{t_m}) \to L^t(\sigma^t)
\text{ boundedly}, 
\end{align}
for all $\vec{t} = (t_1, \ldots, t_m) \in (1, \infty]^m$ and for all $\vec{\sigma} = (\sigma_1, \ldots, \sigma_m) \in A_{\vec{t}}(\Rnn)$, where $\frac1t = \sum_{j=1}^m \frac{1}{t_j} > 0$ and $\sigma = \prod_{j=1}^m \sigma_j$. Then \eqref{TLT} applied to $\vec{v} \in A_{\vec{s}}(\Rnn)$ yields 
\begin{align}\label{tss}
T: L^{s_1}(v_1^{s_1}) \times \cdots \times L^{v_m}(v_m^{s_m}) \to L^s(v^s)
\text{ boundedly}. 
\end{align}
Hence, it follows from \eqref{RdFcpt-3}, \eqref{eq:AA}, \eqref{tss}, and Theorem \ref{thm:cpt-inter} that 
\begin{align*}
T: L^{r_1}(w_1^{r_1}) \times \cdots \times L^{r_m}(w_m^{r_m}) \to L^r(w^r) \text{ compactly}. 
\end{align*}
This completes the proof of Theorem \ref{thm:RdF-cpt}. 
\qed

\subsection{Extrapolation for commutators}
Before proving Theorem \ref{thm:RdF-bT}, let us present an extrapolation for commutators in the spirit of \cite{BMMST}.
 
\begin{theorem}\label{thm:TTb}
Let $T$ be an $m$-linear operator. Assume that there exists some $\vec{q} = (q_1, \ldots, q_m) \in [1, \infty]^m$ such that for all $\vec{v} = (v_1, \ldots, v_m) \in A_{\vec{q}}(\Rnn)$, 
\begin{align}\label{TTb-1}
\|T(\vec{f})\|_{L^q(v^q)} 
\lesssim \prod_{j=1}^m \|f_j\|_{L^{q_j}(v_j^{q_j})}, 
\end{align}
where $\frac1q = \sum_{j=1}^m \frac{1}{q_j}$ and $v = \prod_{j=1}^m v_j$. Then for any $\alpha \in \N^m$ and $\b = (b_1, \ldots, b_m) \in \bmo(\Rnn)^m$, 
\begin{align}\label{TTb-2}
\|[\b, T]_{\a}(\vec{f})\|_{L^p(w^p)} 
\lesssim \prod_{j=1}^m \|b_j\|_{\bmo}^{\alpha_j} \|f_j\|_{L^{p_j}(w_j^{p_j})},  
\end{align}
for all $\vec{p} = (p_1, \ldots, p_m) \in (1, \infty]^m$ and for all $\vec{w} = (w_1, \ldots, w_m) \in A_{\vec{p}}(\Rnn)$, where $\frac1p = \sum_{j=1}^m \frac{1}{p_j} > 0$ and $w = \prod_{j=1}^m w_j$.  
\end{theorem}

\begin{proof}
Fix $\alpha \in \N^m$ and $\b = (b_1, \ldots, b_m) \in \bmo(\Rnn)^m$. Choose $\vec{r} = (r_1, \ldots, r_m) \in (1, \infty)^m$ so that $\frac1r = \sum_{j=1}^m \frac{1}{r_j} < 1$. In view of Theorem \ref{thm:RdF-bdd}, the hypothesis \eqref{TTb-1} implies that 
\begin{align}\label{TTb-3}
\|T(\vec{f})\|_{L^r(u^r)} 
\lesssim \prod_{j=1}^m \|f_j\|_{L^{r_j}(u_j^{r_j})}, 
\end{align}
for all $\vec{u} = (u_1, \ldots, u_m) \in A_{\vec{r}}(\Rnn)$, where $u = \prod_{j=1}^m u_j$.

In the bi-parameter setting, recall the reverse H\"{o}lder inequality \cite[p. 458]{GR} and the John--Nirenberg inequality \cite[Section 5.6]{HT}. Using these two inequalities, as argued in \cite[p. 84]{BMMST}, one can show that for any $\vec{v}= (v_1, \ldots, v_m) \in A_{\vec{p}}(\Rnn)$, there holds
\begin{align}\label{TTb-4}
\vec{u} = (u_1, \ldots, u_m) 
:= \big(v_1 e^{-\mathrm{Re}(z_1) b_1}, \ldots, v_m e^{-\mathrm{Re}(z_m) b_m} \big) 
\in A_{\vec{p}}(\Rnn), 
\end{align}
whenever $|z_j| \le \eta_j$ for some $\eta_j>0$, $j=1, \ldots, m$. Then following the proof of {\cite[Theorem 4.13]{BMMST}}, we apply the Cauchy integral trick, \eqref{TTb-3}, and \eqref{TTb-4} to deduce that 
\begin{align}\label{TTb-5}
\|[\b, T]_{\a}(\vec{f})\|_{L^r(v^r)}
\lesssim \prod_{j=1}^m \|b_j\|_{\bmo}^{\alpha_j} \|f_j\|_{L^{r_j}(v_j^{r_j})}, 
\end{align}
for all $\vec{v} = (v_1, \ldots, v_m) \in A_{\vec{r}}(\Rnn)$, where $v = \prod_{j=1}^m v_j$. Therefore, it follows from \eqref{TTb-5} and Theorem \ref{thm:RdF-bdd} that 
\begin{align*}
\|[\b, T]_{\a}(\vec{f})\|_{L^p(w^p)}
\lesssim \prod_{j=1}^m \|b_j\|_{\bmo}^{\alpha_j} \|f_j\|_{L^{p_j}(w_j^{p_j})}, 
\end{align*}
for all $\vec{p} = (p_1, \ldots, p_m) \in (1, \infty]^m$ and for all $\vec{w} = (w_1, \ldots, w_m) \in A_{\vec{p}}(\Rnn)$, where $\frac1p = \sum_{j=1}^m \frac{1}{p_j} > 0$ and $w = \prod_{j=1}^m w_j$. This justifies \eqref{TTb-2}.  
\end{proof}

\subsection{Proof of Theorem \ref{thm:RdF-bT}}
Let $\vec{r} = (r_1, \ldots, r_m) \in (1, \infty]^m$ with $\frac1r = \sum_{j=1}^m \frac{1}{r_j} > 0$. Let $\vec{w} \in A_{\vec{r}}(\Rnn)$ and $w = \prod_{j=1}^m w_j$. Let $\varepsilon>0$ be an arbitrary number. By the assumption \eqref{RdFbT-1}, there exists some $\rho_0 = \rho_0(\varepsilon) > 0$ such that 
\begin{align}\label{RbT-1}
\bigg\|\bigg[\fint_{B_{\vec{n}}(0, \rho)}
\big|(\tau_y - \tau_{y_1} - \tau_{y_2} + I)[\b, T]_{\a} (\vec{f}) \big|^{a_0} \, dy 
\bigg]^{\frac{1}{a_0}}\bigg\|_{L^p(u^p)} 
\le \varepsilon \prod_{j=1}^m \|f_j\|_{L^{p_j}(u_j^{p_j})}, 
\end{align} 
for all $0 < \rho < \rho_0$ and for some $a_0 \in (0, 1]$. Recall that $\vec{w} \in A_{\vec{r}}(\Rnn)$ and $\vec{u} \in A_{\vec{p}}(\Rnn)$. Then invoking Lemma \ref{lem:AA}, one can find some $\theta \in (0, 1)$ such that $\vec{v} \in A_{\vec{s}}(\Rnn)$, where 
\begin{align}\label{RbT-2}
\frac1s = \sum_{j=1}^m \frac{1}{s_j}, \quad 
v = \prod_{j=1}^m v_j, \quad 
w_j = v_j^{1-\theta} u_j^{\theta},\quad 
\frac{1}{r_j} = \frac{1-\theta}{s_j} + \frac{\theta}{p_j}, \quad j=1, \ldots, m. 
\end{align}

On the other hand, by the assumption \eqref{RdFbT-2} and Theorem \ref{thm:TTb},
\begin{align}\label{RbT-3}
[\b, T]_{\a}: L^{t_1}(\sigma_1^{t_1}) \times \cdots \times L^{\sigma_m}(\sigma_m^{t_m}) \to L^t(\sigma^t)
\text{ boundedly}, 
\end{align}
for all $\vec{t} = (t_1, \ldots, t_m) \in (1, \infty]^m$ and for all $\vec{\sigma} = (\sigma_1, \ldots, \sigma_m) \in A_{\vec{t}}(\Rnn)$, where $\frac1t = \sum_{j=1}^m \frac{1}{t_j} > 0$ and $\sigma = \prod_{j=1}^m \sigma_j$. Hence, applying \eqref{RbT-3} to $\vec{v} \in A_{\vec{s}}(\Rnn)$, we obtain 
\begin{align}\label{RbT-4}
\|[\b, T]_{\a} (\vec{f})\|_{L^s(v^s)} 
\lesssim \prod_{j=1}^m \|f_j\|_{L^{s_j}(v_j^{s_j})}, 
\end{align}
where the implicit constant is independent of $\vec{f}$. In light of Theorem \ref{thm:ww}, there holds $v^s \in A_{ms}(\Rnn) \subset A_{s/a}(\Rnn)$, where $0 < a \le \min\{a_0, \frac{1}{m}\}$. Observe that for any $\rho > 0$ and $x \in \Rnn$,
\begin{align*} 
& \bigg[\fint_{B_{\vec{n}}(0, \rho)} 
\big|(\tau_y - \tau_{y_1} - \tau_{y_2} + I) f(x) \big|^a \, dy \bigg]^{\frac1a}
\\
& \lesssim M_{\mathcal{R}}(|f|^a)(x)^{\frac1a} 
+ M_{n_1}(|f_{x_2}|^a)(x_1)^{\frac1a} 
+ M_{n_2}(|f_{x_1}|^a)(x_2)^{\frac1a} 
+ |f(x)|, 
\end{align*} 
which, along with the weighted boundedness of $M_{\mathcal{R}}$ (cf. \cite[p. 453]{GR}), \eqref{vvqq}, and \eqref{RbT-4}, implies 
\begin{align}\label{RbT-5}
&\bigg\|\bigg[\fint_{B_{\vec{n}}(0, \rho)} 
\big|(\tau_y - \tau_{y_1} - \tau_{y_2} + I) [\b, T]_{\a}(\vec{f}) \big|^a dy \bigg]^{\frac1a}\bigg\|_{L^s(v^s)} 
\\ \nonumber
&\lesssim \big\|M_{\mathcal{R}} \big(|[\b, T]_{\a}(\vec{f})|^a \big)^{\frac1a}\big\|_{L^s(v^s)} 
+ \big\|M_{n_1} \big( \big| \big([\b, T]_{\a}(\vec{f})\big)_{x_2} \big|^a \big)(x_1)^{\frac1a}\big\|_{L^s(v^s)}
\\ \nonumber
&\quad + \big\|M_{n_2} \big( \big| \big([\b, T]_{\a}(\vec{f})\big)_{x_1} \big|^a \big)(x_2)^{\frac1a}\big\|_{L^s(v^s)}
+ \|[\b, T]_{\a}(\vec{f})\|_{L^s(v^s)} 
\\ \nonumber
&\lesssim \|[\b, T]_{\a}(\vec{f})\|_{L^s(v^s)} 
\lesssim \prod_{j=1}^m \|f_j\|_{L^{s_j}(v_j^{s_j})}, 
\end{align}
for all $\rho > 0$. As a consequence of \eqref{RbT-1} (which holds for $a$ in place of $a_0$), \eqref{RbT-2}, \eqref{RbT-5}, and \cite[Theorem 3.5]{COY}, we conclude that 
\begin{align*}
\sup_{\substack{\|f_j\|_{L^{r_j}(w_j^{r_j})} \le 1 \\ j=1, \ldots, m}} 
\bigg\|\bigg[\fint_{B_{\vec{n}}(0, \rho)}
\big|(\tau_y - \tau_{y_1} - \tau_{y_2} + I)[\b, T]_{\a} (\vec{f}) \big|^a \, dy 
\bigg]^{\frac1a}\bigg\|_{L^r(w^r)} 
\le C_0^{1-\theta} \varepsilon^{\theta}, 
\end{align*}
for all $0 < \rho < \rho_0$. This and \eqref{RbT-3} show Theorem \ref{thm:RdF-bT}. 
\qed

\subsection{Proof of Theorem \ref{thm:cpt}}
Having established fundamental estimates above, let us demonstrate Theorem \ref{thm:cpt}.

Pick $\vec{p} = (p_1, \ldots, p_m) \in (1, \infty)^m$ such that $\frac1p = \sum_{j=1}^m \frac{1}{p_j} \in (0, 1)$. Let $\vec{u} = (u_1, \ldots, u_m) \in A_{\vec{p}}(\Rnn)$. Theorem \ref{thm:repre} asserts that $T$ admits a compact $m$-linear bi-parameter dyadic representation. This, along with Theorem \ref{thm:dyadic-cpt}, gives  
\begin{align}\label{tth-1}
T: L^{p_1}(u_1^{p_1}) \times \cdots \times L^{p_m}(u_m^{p_m}) \to L^p(u^p) 
\, \text{ compactly}, 
\end{align}
where we have used an $m$-linear bi-parameter version of \cite[Lemma 4.52]{CLSY}. Moreover, the hypotheses \eqref{H1}--\eqref{H5} imply that $T$ is an $m$-linear bi-parameter Calder\'{o}n--Zygmund operator, which together with \cite[Theorem 1.2]{LMV21} yields 
\begin{align}\label{tth-2}
T: L^{q_1}(v_1^{q_1}) \times \cdots \times L^{q_m}(v_m^{q_m}) \to L^q(v^q) 
\, \text{ boundedly}, 
\end{align}
for all $\vec{q} = (q_1, \ldots, q_m) \in (1, \infty]^m$ and for all $\vec{v} = (v_1, \ldots, v_m) \in A_{\vec{q}}(\Rnn)$, where $\frac1q = \sum_{j=1}^m \frac{1}{q_j} > 0$ and $v = \prod_{j=1}^m v_j$. Accordingly, invoking Theorem \ref{thm:RdF-cpt}, \eqref{tth-1}, and \eqref{tth-2}, we conclude that 
\begin{align*}
T: L^{r_1}(w_1^{r_1}) \times \cdots \times L^{r_m}(w_m^{r_m}) \to L^r(w^r) 
\, \text{ compactly}, 
\end{align*} 
for all $\vec{r} = (r_1, \ldots, r_m) \in (1, \infty]^m$ and $\vec{w} = (w_1, \ldots, w_m) \in A_{\vec{r}}(\Rnn)$, where $\frac1r = \sum_{j=1}^m \frac{1}{r_j} > 0$ and $w = \prod_{j=1}^m w_j$. The proof is complete.  
\qed

\subsection{Proof of Theorem \ref{thm:bT}}\label{sec:bTc}
Let $\b = (b_1, \ldots, b_m) \in \cmo(\Rnn)^m$ and $T$ be an $m$-linear bi-parameter Calder\'{o}n--Zygmund operator. Then in view of Theorem \ref{thm:RdF-bT} and the weighted boundedness \eqref{tth-2}, to obtain the mean continuity of $[\b, T]_{\a}$ on weighted Lebesgue spaces, it suffices to show that 
\begin{equation}\label{eq:bT}
\begin{array}{c}
\text{$[\b, T]_{\a}$ is mean continuous from} 
\\[4pt]
L^{p_1}(\Rnn) \times \cdots \times L^{p_m}(\Rnn) \text{ to } L^p(\Rnn)
\\[4pt]
\text{for some $\vec{p} = (p_1, \ldots, p_m) \in (1, \infty)^m$ with $\frac1p = \sum_{j=1}^m \frac{1}{p_j} \in (0, 1)$}.
\end{array}
\end{equation} 
By \cite[Proposition 7.7]{LMV21}, $T$ admits an $m$-linear bi-parameter dyadic representation (cf. Definition \ref{def:repre}). This and Theorem \ref{thm:dyadic-bT} imply \eqref{eq:bT} as desired. 
\qed 
\black

\appendix
\section{Uchiyama's characterization of $\CMO$ space}\label{sec:Uch}
For the convenience of the reader, we present all details in the proof of the characterization of $\CMO$ although the central case $p=1$ was given in \cite[Lemma 3]{Uch}. It is helpful to understand the precise structure of $\CMO(\Rn)$.

\begin{theorem}\label{thm:Uch}
Let $p \in [1, \infty)$ and $f \in L^1_{\loc}(\Rn)$. Then $f \in \CMO(\Rn)$ if and only if $f \in \BMO(\Rn)$ and the following properties hold:
\begin{align}
\label{rr-1}
\gamma_1(f) & :=
\lim_{r \to 0} \sup_{\substack{Q \in \mathcal{Q} \\ \ell(Q) \le r}} 
\mathcal{O}_p(f; Q) = 0,
\\
\label{rr-2}
\gamma_2(f) & :=
\lim_{r \to \infty} \sup_{\substack{Q \in \mathcal{Q} \\ \ell(Q) \ge r}}
\mathcal{O}_p(f; Q) = 0,
\\
\label{rr-3}
\gamma_3(f) & :=
\lim_{r \to \infty} \sup_{\substack{Q \in \mathcal{Q} \\ \d(Q, 0) \ge r}}
\mathcal{O}_p(f; Q) = 0,
\end{align}
where $\displaystyle{\mathcal{O}_p(f; Q) := \bigg[\fint_Q |f(x) - \langle f \rangle_Q|^p \, dx \bigg]^{\frac1p}}$. 
\end{theorem}

\begin{proof}
Let $f \in \CMO(\Rn)$ and $\varepsilon>0$. Then there exists $f_{\varepsilon} \in \mathscr{C}_c^{\infty}(\Rn)$ such that $\|f - f_{\varepsilon}\|_{\BMO} \le \varepsilon$, which gives $\|f\|_{\BMO} \le \|f - f_{\varepsilon}\|_{\BMO} + \|f_{\varepsilon}\|_{\BMO} \le \varepsilon + 2 \|f_{\varepsilon}\|_{L^{\infty}} < \infty$. Since $f_{\varepsilon}$ is uniformly continuous on $\Rn$, there exists some $r_{\varepsilon} > 0$ so that
\begin{align}
\sup_{|x-y| \le r_{\varepsilon}} |f_{\varepsilon}(x) - f_{\varepsilon}(y)| \le \varepsilon.
\end{align}
By definition, we have
\begin{align}\label{OF}
\mathcal{O}_p(f; Q) 
\le \mathcal{O}_p(f - f_{\varepsilon}; Q) 
+ \mathcal{O}_p(f_{\varepsilon}; Q),
\end{align}
which implies 
\begin{align}\label{OF-1}
\mathcal{O}_p(f; Q) 
\le \|f - f_{\varepsilon}\|_{\BMO} 
+ \sup_{x, y \in Q} |f_{\varepsilon}(x) - f_{\varepsilon}(y)|
\le 2 \varepsilon, 
\quad \forall Q: \ell(Q) \le r_{\varepsilon},
\end{align}
and
\begin{align}\label{OF-2}
\mathcal{O}_p(f; Q) 
\le \|f - f_{\varepsilon}\|_{\BMO} + 2 \|f_{\varepsilon}\|_{L^p} |Q|^{-\frac1p}
\le 2 \varepsilon, 
\quad \forall Q: |Q| \ge 2^p \varepsilon^{-p} \|f_{\varepsilon}\|_{L^p}^p.
\end{align}
Let $\supp(f_{\varepsilon}) \subset B(0, A_{\varepsilon}/2)$ for some $A_{\varepsilon} > 0$. If $\d(Q, 0) \ge A_{\varepsilon}$, then $Q \cap \supp(f_{\varepsilon}) = \emptyset$ and $\mathcal{O}_p(f_{\varepsilon}; Q) = 0$. This along with \eqref{OF} yields 
\begin{align}\label{OF-3}
\mathcal{O}_p(f; Q) 
\le \|f-f_{\varepsilon}\|_{\BMO} + 0
\le \varepsilon, \quad \forall Q: \d(Q, 0) \ge A_{\varepsilon}.
\end{align}
Hence, \eqref{OF-1}--\eqref{OF-3} imply $\gamma_1(f) = \gamma_2(f) = \gamma_3(f) = 0$.

To show the converse, it suffices to treat the case $p=1$ because $\mathcal{O}(f; Q) := \mathcal{O}_1(f; Q) \le \mathcal{O}_p(f; Q)$ for any $p \in [1, \infty)$. Let $f \in \BMO(\Rn)$ with $\gamma_i(f) = 0$, $i=1, 2, 3$. Let $\varepsilon > 0$ be an arbitrary number. It is enough to show that there exist $f_{\varepsilon} \in \mathscr{C}_c^{\infty}(\Rn)$ and a simple function $g_{\varepsilon}$ such that 
\begin{align*}
\|f_{\varepsilon} - g_{\varepsilon}\|_{\BMO} \lesssim \varepsilon 
\quad \text{ and } \quad 
\|f - g_{\varepsilon}\|_{\BMO} \lesssim \varepsilon.
\end{align*}
For any $r>0$, let $Q(r)$ denote the cube centered at the origin with sidelength $2r$. By the condition $\gamma_1(f) = \gamma_3(f) = 0$, there exists some $j_0 = j_0(\varepsilon) \ge 2$ such that 
\begin{align}\label{OFj0}
\sup_{\ell(Q) \le 2^{-j_0}} \mathcal{O}(f; Q) 
+ \sup_{\d(Q, 0) \ge 2^{j_0}} \mathcal{O}(f; Q) 
\le \varepsilon.
\end{align}
Moreover, the fact $\gamma_2(f) = 0$ implies that for some $j_1 = j_1(\varepsilon) > 3j_0$,  
\begin{align}\label{OFk0}
\sup_{\ell(Q) \ge 2^{j_1}} \mathcal{O}(f; Q) 
\le 2^{-2nj_0} \varepsilon.
\end{align}
Let $\D$ be the standard dyadic grid on $\Rn$. Define $\mathcal{F} = \bigcup_{j \ge j_0} \mathcal{F}_j$, where 
\begin{align*}
& \mathcal{F}_{j_0} 
:= \big\{Q \in \D: Q \subset \overline{Q(2^{j_0})}, \ell(Q) = 2^{-j_0}\big\},
\\
& \mathcal{F}_j 
:= \big\{Q \in \D: Q \subset \overline{Q(2^j) \setminus Q(2^{j-1})}, \ell(Q) = 2^{-j_0+j-j_0}\big\}, \quad \forall j > j_0.
\end{align*}
Observe that $\mathcal{F}$ is a collection of disjoint dyadic cubes, and for any $x \in \Rn$, there exists a unique cube $Q_x \in \mathcal{F}$ such that $x \in Q_x$. Then by \eqref{OFj0},
\begin{align}\label{Oscx}
\sup_{x \in \Rn} \mathcal{O}(f; Q_x) 
\le \varepsilon.
\end{align}

Now choose a non-negative function $\varphi \in \mathscr{C}_c^{\infty}(\Rn)$ with $\supp(\varphi) \subset B(0, 1/2)$ and $\int_{\Rn} \varphi \, dx =1$. Define $\varphi_{j_0}(x) := 2^{n j_0} \varphi(2^{j_0} x)$,
\begin{equation*}
g_{\varepsilon}(x) 
= \begin{cases}
\langle f \rangle_{Q_x}, & x \in Q(2^{j_1}),
\\
\langle f \rangle_{Q(2^{j_1}) \setminus Q(2^{j_1-1})}, & x \notin Q(2^{j_1}),
\end{cases}
\end{equation*}
and 
\begin{equation*}
f_{\varepsilon} 
= \varphi_{j_0} * \big(g_{\varepsilon} - \langle f \rangle_{Q(2^{j_1}) \setminus Q(2^{j_1-1})}\big).
\end{equation*}
Note that $g_{\varepsilon}$ is a simple function and $f_{\varepsilon} \in \mathscr{C}_c^{\infty}(\Rn)$. In addition, for any $x, y \in \Rn$ with $|x-y| \le 2^{-j_0-1}$, it must hold $\overline{Q}_x \cap \overline{Q}_y \neq \emptyset$. Then by Lemma \ref{lem:gege}, 
\begin{align*}
\|f_{\varepsilon} - g_{\varepsilon}\|_{\BMO}
& = \|\varphi_{j_0}* g_{\varepsilon} - g_{\varepsilon}\|_{\BMO}
\le 2 \|\varphi_{j_0}* g_{\varepsilon} - g_{\varepsilon}\|_{L^{\infty}}
\\
&\le 2 \|\varphi_{j_0}\|_{L^1}
\sup_{|x-y| \le 2^{-j_0-1}} |g_{\varepsilon}(x) - g_{\varepsilon}(y)|
\lesssim  \varepsilon. 
\end{align*}

Let us next verify $\|f - g_{\varepsilon}\|_{\BMO} \lesssim \varepsilon$. Let $Q$ be an arbitrary cube in $\Rn$. We are going to demonstrate
\begin{align}\label{OFV}
\mathcal{O}(f - g_{\varepsilon}; Q) 
\lesssim \varepsilon,
\end{align}
where the implicit constant depends only on the dimension $n$. 
First, in the case $Q \cap Q(2^{j_1-1}) = \emptyset$, we invoke \eqref{OFj0} and Lemma \ref{lem:gege} to obtain
\begin{align*}
\mathcal{O}(f - g_{\varepsilon}; Q) 
\le \mathcal{O}(f; Q) + \sup_{x, y \notin Q(2^{j_1-1})} |g_{\varepsilon}(x) - g_{\varepsilon}(y)|
\lesssim \varepsilon.
\end{align*}
Second, to consider the case $Q \subset Q(2^{j_1})$, set $\ell(Q_{x_0}) := \max \big\{\ell(Q_x): Q_x \cap Q \ne \emptyset \big\}$. In the scenario $\ell(Q_{x_0}) \le 4 \ell(Q)$, observe that 
\begin{align}\label{Qxfge}
\fint_{Q_x} |f - g_{\varepsilon}| \, dy 
\lesssim  \varepsilon, \quad \forall x \in Q(2^{j_1}). 
\end{align}
Indeed, for all $x \in Q(2^{j_1})$ and $y \in Q_x$, we have $y \in Q_y = Q_x \subset Q(2^{j_1})$ and $g_{\varepsilon}(y) = \langle f \rangle_{Q_y} = \langle f \rangle_{Q_x}$, which imply $\fint_{Q_x} |f - g_{\varepsilon}| \, dy = \mathcal{O}(f; Q_x) \le \varepsilon$, provided \eqref{Oscx}. The condition $Q_x \cap Q \ne \emptyset$ and $\ell(Q_x) \le 4 \ell(Q)$ implies $Q_x \subset 9Q$. Thus, by \eqref{Qxfge} and the disjointness of $\{Q_x\}$,
\begin{align*}
\mathcal{O}(f-g_{\varepsilon}; Q) 
\le 2 \fint_Q |f - g_{\varepsilon}| \, dy 
\le \sum_{Q_x \cap Q \ne \emptyset} \frac{2}{|Q|} 
\int_{Q_x} |f - g_{\varepsilon}| \, dy 
\lesssim \frac{|9 Q|}{|Q|} \varepsilon
\lesssim \varepsilon.
\end{align*}
In the setting $\ell(Q_{x_0}) > 4 \ell(Q)$, note that 
\begin{align}\label{QQM}
Q \cap Q(2^{j_0}) \ne \emptyset 
\quad \Longrightarrow \quad 
\ell(Q) < 2^{-j_0}.
\end{align}
Indeed, if $x_0 \in Q(2^{j_0+1})$, then $\ell(Q) < \ell(Q_{x_0})/4 \le 2^{-j_0+1}/4 < 2^{-j_0}$; if $x_0 \in Q(2^j) \setminus Q(2^{j-1})$ for some $j \ge j_0+2$, then $Q_{x_0} \subset Q(2^j) \setminus Q(2^{j-1})$ and $\ell(Q) < \ell(Q_{x_0})/4 = 2^{j-2j_0-2}$, which contradicts the fact $\ell(Q) \ge 2^{j-1} - 2^{j_0}$ because $Q \cap Q_{x_0} \ne \emptyset$, $Q \cap Q(2^{j_0}) \ne \emptyset$, and $2^{j-1} - 2^{j_0} - 2^{j-2j_0-2} = 2^{j-1}[1 - 2^{-(j-j_0-2)-1} - 2^{-2j_0-1}] > 0$. Consequently, it follows from \eqref{OFj0} and \eqref{QQM} that 
\begin{align}\label{Ofeq}
\mathcal{O}(f; Q) \le \varepsilon.
\end{align}
In addition, for any $x \in Q(2^{j_1})$ with $\overline{Q}_x \cap \overline{Q}_{x_0} \ne \emptyset$, there holds $2 \ell(Q) \le \ell(Q_{x_0})/2 \le \ell(Q_x) \le 2 \ell(Q_{x_0})$. This indicates 
\begin{align*}
Q \subset \bigcup_{x \in \Lambda} Q_x 
\quad\text{with} \quad Q_x \cap Q \ne \emptyset 
\quad \text{and} \quad 
\# \Lambda \lesssim 1.
\end{align*}
Then for all $x, y \in Q$, one can a sequence $\{x_i\}_{i=1}^{N_0} \subset \Lambda$ such that $\overline{Q}_{x_i} \cap \overline{Q}_{x_{i+1}} \neq \emptyset$, $i=0, 1, \ldots, N_0 = N_0(x, y)$, where $x_0 := x$ and $x_{N_0+1} := y$. Hence, by Lemma \ref{lem:gege},
\begin{align*}
\mathcal{O}(g_{\varepsilon}; Q) 
\le \sup_{x, y \in Q} |g_{\varepsilon}(x) - g_{\varepsilon}(y)|
\le \sup_{x, y \in Q} \sum_{i=0}^{N_0} |g_{\varepsilon}(x_i) - g_{\varepsilon}(x_{i+1})|
\lesssim \varepsilon,
\end{align*}
which along with \eqref{Ofeq} yields 
\begin{align*}
\mathcal{O}(f-g_{\varepsilon}; Q) 
\le \mathcal{O}(f; Q) + \mathcal{O}(g_{\varepsilon}; Q) 
\lesssim \varepsilon.
\end{align*}

Finally, to treat the case $Q \cap Q(2^{j_1-1}) \ne \emptyset$ and $Q \cap Q(2^{j_1})^c \ne \emptyset$, let $j_2 > j_1$ be the smallest integer satisfying $Q \subset Q(2^{j_2})$. Then, 
\begin{align}\label{Ofg12}
\mathcal{O}(f - g_{\varepsilon}; Q) 
\lesssim \mathcal{O}(f - g_{\varepsilon}; Q(2^{j_2}))
\le \mathscr{I}_1 + \mathscr{I}_2,
\end{align}
where
\begin{align*}
\mathscr{I}_1
& :=  \frac{1}{|Q(2^{j_2})|} \int_{Q(2^{j_2}) \setminus Q(2^{j_1})} 
|(f-g_{\varepsilon}) - \langle f-g_{\varepsilon} \rangle_{Q(2^{j_2})}| \, dy,
\\
\mathscr{I}_2
& := \frac{1}{|Q(2^{j_2})|} \int_{Q(2^{j_1})} 
|(f-g_{\varepsilon}) - \langle f-g_{\varepsilon} \rangle_{Q(2^{j_2})}| \, dy.
\end{align*}
We split
\begin{align*}
\langle g_{\varepsilon} \rangle_{Q(2^{j_2})}
= \bigg[1 - \frac{|Q(2^{j_1})|}{|Q(2^{j_2})|}\bigg]
\langle f \rangle_{Q(2^{j_1}) \setminus Q(2^{j_1-1})}
+ \frac{|Q(2^{j_1})|}{|Q(2^{j_2})|} 
\langle g_{\varepsilon} \rangle_{Q(2^{j_1})},
\end{align*}
which together with \eqref{OFj0} implies
\begin{align}\label{Ofg121}
\mathscr{I}_1
& \le \mathcal{O}(f; Q(2^{j_2}))
+ \big| \langle f \rangle_{Q(2^{j_1}) \setminus Q(2^{j_1-1})} 
- \langle g_{\varepsilon} \rangle_{Q(2^{j_2})} \big|
\\ \nonumber
& \le \varepsilon + \big|\langle f \rangle_{Q(2^{j_1}) \setminus Q(2^{j_1-1})} 
- \langle g_{\varepsilon} \rangle_{Q(2^{j_1})} \big|
\\ \nonumber
& \le \varepsilon 
+ \big|\langle f \rangle_{Q(2^{j_1}) \setminus Q(2^{j_1-1})} 
- \langle f \rangle_{Q(2^{j_1})} \big|
+ \langle |f - g_{\varepsilon}| \rangle_{Q(2^{j_1})}
\\ \nonumber
& \le \varepsilon 
+ 2^n \mathcal{O}(f; Q(2^{j_1})) 
+ \langle |f - g_{\varepsilon}| \rangle_{Q(2^{j_1})}
\\ \nonumber
& \le (1+2^n) \varepsilon 
+ \langle |f - g_{\varepsilon}| \rangle_{Q(2^{j_1})}. 
\end{align}
Similarly, 
\begin{align}\label{Ofg122}
\mathscr{I}_2
& \le \frac{1}{|Q(2^{j_2})|} \int_{Q(2^{j_1})} 
|(f-g_{\varepsilon}) - \langle f-g_{\varepsilon} \rangle_{Q(2^{j_2})}| \, dy
\\ \nonumber
& \le \langle |f - g_{\varepsilon}| \rangle_{Q(2^{j_1})} 
+ 2^{-(j_2 - j_1) n} 
|\langle f \rangle_{Q(2^{j_2})} - \langle g_{\varepsilon} \rangle_{Q(2^{j_2})}|
\\ \nonumber
& \le \langle |f - g_{\varepsilon}| \rangle_{Q(2^{j_1})} 
+ |\langle f \rangle_{Q(2^{j_1}) \setminus Q(2^{j_1-1})} 
- \langle g_{\varepsilon} \rangle_{Q(2^{j_2})}|
\\ \nonumber
&\quad + 2^{-(j_2 - j_1) n} 
|\langle f \rangle_{Q(2^{j_2})} - \langle f \rangle_{Q(2^{j_1}) \setminus Q(2^{j_1-1})}|
\\ \nonumber
& \lesssim \varepsilon + \langle |f - g_{\varepsilon}| \rangle_{Q(2^{j_1})} 
+ 2^{-(j_2 - j_1) n} 
|\langle f \rangle_{Q(2^{j_2})} - \langle f \rangle_{Q(2^{j_1}) \setminus Q(2^{j_1-1})}|.
\end{align}
The inequality \eqref{Oscx} gives
\begin{align}\label{Ofg123}
\langle |f - g_{\varepsilon}| \rangle_{Q(2^{j_1})} 
& \le \sum_{Q_x \subset Q(2^{j_1})} \frac{1}{|Q(2^{j_1})|} 
\int_{Q_x} |f - g_{\varepsilon}| \, dy
\\ \nonumber
& = \sum_{Q_x \subset Q(2^{j_1})} \frac{|Q_x|}{|Q(2^{j_1})|} \mathcal{O}(f; Q_x)
\le \varepsilon,
\end{align}
and \eqref{OFk0} leads to
\begin{align}\label{Ofg124}
& |\langle f \rangle_{Q(2^{j_2})} - \langle f \rangle_{Q(2^{j_1}) \setminus Q(2^{j_1-1})}|
\\ \nonumber
& \le \sum_{j_1 < j \le j_2} |\langle f \rangle_{Q(2^j)} - \langle f \rangle_{Q(2^{j-1})}| 
+ |\langle f \rangle_{Q(2^{j_1})} - \langle f \rangle_{Q(2^{j_1}) \setminus Q(2^{j_1-1})}|
\\ \nonumber
& \le \sum_{j_1 \le j \le j_2} 2^n \mathcal{O}(f; Q(2^j)) 
\lesssim (j_2 - j_1) \varepsilon.
\end{align}
Now gathering \eqref{Ofg12}--\eqref{Ofg124}, we conclude 
\begin{align*}
\mathcal{O}(f - g_{\varepsilon}; Q) 
\lesssim \varepsilon + (j_2 - j_1) 2^{-(j_2 - j_1)n} \varepsilon 
\le 2 \varepsilon.
\end{align*}
This completes the proof of \eqref{OFV}.
\end{proof}

\begin{lemma}\label{lem:gege}
If $x, y \notin Q(2^{j_1-1})$ or $\overline{Q}_x \cap \overline{Q}_y \neq \emptyset$, then 
\begin{align*}
|g_{\varepsilon}(x) - g_{\varepsilon}(y)|
\lesssim  \varepsilon. 
\end{align*}
\end{lemma}

\begin{proof}
We first deal with the case $x, y \notin Q(2^{j_1-1})$. If $x, y \notin Q(2^{j_1})$, then $g_{\varepsilon}(x) = g_{\varepsilon}(y)$. If $x, y \in Q(2^{j_1}) \setminus Q(2^{j_1-1})$, we have $Q_x \cup Q_y \subset  Q(2^{j_1})$, $\ell(Q_x) = \ell(Q_y) = 2^{j_1-2j_0}$, and 
\begin{align*}
& |g_{\varepsilon}(x) - g_{\varepsilon}(y)|
= |\langle f \rangle_{Q_x} - \langle f \rangle_{Q_y}| 
\\ 
& \le |\langle f \rangle_{Q_x} - \langle f \rangle_{Q(2^{j_1})}|
+ |\langle f \rangle_{Q_y} - \langle f \rangle_{Q(2^{j_1})}| 
\\
& \le \fint_{Q_x} |f - \langle f \rangle_{Q(2^{j_1})}| 
+ \fint_{Q_y} |f - \langle f \rangle_{Q(2^{j_1})}|
\\
& \le \bigg[\frac{|Q(2^{j_1})|}{|Q_x|} + \frac{|Q(2^{j_1})|}{|Q_y|} \bigg]
\fint_{Q(2^{j_1})} |f - \langle f \rangle_{Q(2^{j_1})}|
\\
& = 2^{2nj_0+1} \mathcal{O}(f; Q(2^{j_1})) 
\lesssim \varepsilon,
\end{align*}
where \eqref{OFk0} was used in the last step. If $x \notin Q(2^{j_1})$ and $y \in Q(2^{j_1}) \setminus Q(2^{j_1-1})$, then the inequality \eqref{OFk0} gives
\begin{align*}
& |g_{\varepsilon}(x) - g_{\varepsilon}(y)|
= |\langle f \rangle_{Q(2^{j_1}) \setminus Q(2^{j_1-1})} - \langle f \rangle_{Q_y}| 
\\
& \le \fint_{Q_y} |f- \langle f \rangle_{Q(2^{j_1})}| 
+ \fint_{Q(2^{j_1}) \setminus Q(2^{j_1-1})} |f- \langle f \rangle_{Q(2^{j_1})}|
\\
& \le \big( 2^{(2j_0+1)n} + 2^n \big) \mathcal{O}(f; Q(2^{j_1}))
\lesssim  \varepsilon. 
\end{align*}
A similar argument holds in the case $x \in Q(2^{j_1}) \setminus Q(2^{j_1-1})$ and $y \notin Q(2^{j_1})$. 

Let us turn to the situation $\overline{Q}_x \cap \overline{Q}_y \neq \emptyset$. The case $x, y \notin Q(2^{j_1-1})$ has been shown above. If $x \in Q(2^{j_1-1})$ and $y \notin Q(2^{j_1-1})$, then the condition $\overline{Q}_x \cap \overline{Q}_y \neq \emptyset$ implies 
\begin{align*}
& y \in Q(2^{j_1}) \setminus Q(2^{j_1-1}), \quad 
2\ell(Q_x) = \ell(Q_y) = 2^{j_1-2j_0}, \quad 
Q_x \subset 2Q_y, \quad \text{and}
\\
& \d(2Q_y, 0) = 2^{j_1 - 1} - \ell(Q_y)/2 
= 2^{j_0} \cdot 2^{j_1 - 3j_0 - 1} (2^{2j_0} - 1) \ge 2^{j_0},
\end{align*}
which together with \eqref{OFj0} leads to
\begin{align*}
|g_{\varepsilon}(x) - g_{\varepsilon}(y)|
& = |\langle f \rangle_{Q_x} - \langle f \rangle_{Q_y}|
\le \fint_{Q_x} |f - \langle f \rangle_{2Q_y}|
+ \fint_{Q_y} |f - \langle f \rangle_{2Q_y}|
\\
& \le \bigg[\frac{|2Q_y|}{|Q_x|} + \frac{|2Q_y|}{|Q_y|}\bigg] 
\fint_{2Q_y} |f - \langle f \rangle_{2Q_y}|
= (4^n + 2^n) \mathcal{O}(f, 2Q_y) 
\lesssim \varepsilon.
\end{align*}
Symmetrically, one can handle the case $x \notin Q(2^{j_1-1})$ and $y \in Q(2^{j_1-1})$.
\end{proof}


\begin{thebibliography}{00}


\bibitem{BMMST}\'{A}. B\'{e}nyi, J.M. Martell, K. Moen, E. Stachura, and R.H. Torres, 
\emph{Boundedness results for commutators with $\BMO$ functions via weighted estimates: a comprehensive approach}, 
Math. Ann. \textbf{376} (2020), 61--102. 


\bibitem{BT}\'{A}. B\'{e}nyi and R.H. Torres,  
\emph{Compact bilinear operators and commutators}, 
Proc. Amer. Math. Soc. \textbf{141} (2013), 3609--3621. 


\bibitem{CIRXY}M. Cao, G. Iba\~{n}ez-Firnkorn, I.P. Rivera-R\'{i}os, Q. Xue, and K. Yabuta, 
\emph{A class of multilinear bounded oscillation operators on measure spaces and applications}, 
Math. Ann. \textbf{388} (2024), 3627--3755. 


\bibitem{CLSY}M. Cao, H. Liu, Z. Si, and K. Yabuta,
\emph{A characterization of compactness via bilinear $T1$ theorem},
https://arxiv.org/abs/2404.14013.


\bibitem{COY}M. Cao, A. Olivo, and K. Yabuta,
\emph{Extrapolation for multilinear compact operators and applications},
Trans. Amer. Math. Soc. \textbf{375} (2022), 5011--5070.


\bibitem{CYY}M. Cao, K. Yabuta, and D. Yang, 
\emph{A compact extension of Journ\'{e}'s $T1$ theorem on product spaces}, 
Trans. Amer. Math. Soc. \textbf{377} (2024), 6251--6309. 


\bibitem{CCHTW}L. Chaffee, P. Chen, Y. Han, R.H. Torres, and L.A. Ward, 
\emph{Characterization of compactness of commutators of bilinear singular integral operators},  
Proc. Amer. Math. Soc. \textbf{146} (2018), 3943--3953. 


\bibitem{C79}S.Y.A. Chang, 
\emph{Carleson measure on the bi-disc},  
Ann. of Math. \textbf{109} (1979), 613--620. 


\bibitem{CF80}S.Y.A. Chang and R. Fefferman,
\emph{A continuous version of duality of $\mathrm{H}^1$ with $\BMO$ on the bidisc},
Ann. of Math. (2) \textbf{112} (1980), 179--201.


\bibitem{CF85}S.Y.A. Chang and R. Fefferman,
\emph{Some recent developments in Fourier analysis and $\mathrm{H}^p$ theory on product domains},
Bull. Amer. Math. Soc. \textbf{12} (1985), 1--43.


\bibitem{CHL}G. Chen, T. Huang, and W. Liu,
\emph{Poiseuille flow of nematic liquid crystals via the full Ericksen--Leslie model},
Arch. Rational Mech. Anal. \textbf{236} (2020) 839--891.


\bibitem{CHLPZ}J. Chen, D. He, G. Lu, B.J. Park, and L. Zhang, 
\emph{A sharp H\"{o}rmander estimate for multi-parameter and multi-linear Fourier multiplier operators}, 
Math. Ann. \textbf{390} (2024), 5923--5985.


\bibitem{CC}A. Clop and V. Cruz, 
\emph{Weighted estimates for Beltrami equations},  
Ann. Acad. Sci. Fenn. Math. \textbf{38} (2013), 91--113.


\bibitem{DL1}W. Dai and G. Lu,
\emph{$L^p$ estimates for bilinear and multiparameter Hilbert transforms},
Anal. PDE \textbf{8} (2015), 675--712. 


\bibitem{DL2}W. Dai and G. Lu,
\emph{$L^p$ estimates for multi-linear and multi-parameter pseudo-differential operators}, 
Bull. Soc. Math. France \textbf{143} (2015), 567--597.


\bibitem{Fef87}R. Fefferman,
\emph{Harmonic analysis on product spaces},
Ann. of Math. (2) \textbf{126} (1987), 109--130.


\bibitem{Fef88}R. Fefferman,
\emph{$A_p$ weights and singular integrals},
Amer. J. Math. \textbf{110} (1988), 975--987.


\bibitem{FS}R. Fefferman and E.M. Stein,
\emph{Singular integrals on product spaces},
Adv. Math. \textbf{45} (1982), 117--143.


\bibitem{FL}S. Ferguson and M. Lacey,
\emph{A characterization of product $\BMO$ by commutators},
Acta Math. \textbf{189} (2002), 143--160.


\bibitem{GR}J. Garc{\'i}a-Cuerva and J.L. Rubio de Francia,
\emph{Weighted Norm Inequalities and Related Topics},
North-Holland Mathematics Studies, vol. 116, North-Holland Publishing Co., Amsterdam, 1985.


\bibitem{GW}S. Goes and R. Welland, 
\emph{Compactness criteria for K\"{o}the spaces}, 
Math. Ann. \textbf{188} (1970), 251--269.


\bibitem{GM14} P. G\'{o}rka and A. Macios, 
\emph{The Riesz--Kolmogorov theorem on metric spaces},  
Miskolc Math. Notes. \textbf{15} (2014), 459--465. 
 

\bibitem{GM15}P. G\'{o}rka and A. Macios, 
\emph{Almost everything you need to know about relatively compact sets in variable Lebesgue spaces},  
J. Funct. Anal. \textbf{269} (2015), 1925--1949.


\bibitem{GRa}P. G\'{o}rka and H. Rafeiro, 
\emph{From Arzel\`{a}--Ascoli to Riesz--Kolmogorov}, 
Nonlinear Analysis \textbf{144} (2016), 23--31.


\bibitem{GLPT}L. Grafakos, L. Liu, C. P\'{e}rez, and R.H. Torres,  
\emph{The multilinear strong maximal function}, 
J. Geom. Anal. \textbf{21} (2011), 118--149. 


\bibitem{Grau}A. Grau de la Herr\'{a}n,
\emph{Comparison of $T1$ conditions for multi-parameter operators},
Proc. Amer. Math. Soc. \textbf{144} (2016), 2437--2443.


\bibitem{HH}H. Hanche-Olsen and H. Holden,
\emph{The Kolmogorov--Riesz compactness theorem}, 
Exp. Math. \textbf{28} (2010), 385--394.


\bibitem{Han}T.S. H\"{a}nninen, 
\emph{Equivalence of sparse and Carleson coefficients for general sets}, 
Ark. Mat. \textbf{56} (2018), 333--339. 


\bibitem{HT}J. Hart and R.H. Torres, 
\emph{John--Nirenberg inequalities and weight invariant $\BMO$ spaces}, 
J. Geom. Anal. \textbf{29} (2019), 1608--1648.


\bibitem{HPW}I. Holmes, S. Petermichl, and B.D. Wick, 
\emph{Weighted little $\mathrm{bmo}$ and two-weight inequalities for Journ\'{e} commutators}, 
Anal. PDE, \textbf{11} (2018), 1693--1740. 


\bibitem{Hyt}T. Hyt\"{o}nen, 
\emph{The sharp weighted bound for general Calder\'{o}n--Zygmund operators}, 
Ann. of Math. \textbf{175} (2012), 1473--1506.


\bibitem{HL22}T. Hyt\"{o}nen and S. Lappas, 
\emph{Extrapolation of compactness on weighted spaces: bilinear operators},
Indag. Math. \textbf{33} (2022), 397--420.


\bibitem{HL23}T. Hyt\"{o}nen and S. Lappas, 
\emph{Extrapolation of compactness on weighted spaces}, 
Rev. Mat. Iberoam. \textbf{39} (2023), 91--122. 


\bibitem{HLMV}T. Hyt\"{o}nen, K. Li, H. Martikainen, and E. Vuorinen, 
\emph{Multiresolution analysis and Zygmund dilations}, 
Amer. J. Math. (2025), to appear. 


\bibitem{HLTY}T. Hyt\"{o}nen, K. Li, J. Tao, and D. Yang, 
\emph{The $L^p$-to-$L^q$ compactness of commutators with $p>q$},  
Studia Math. \textbf{271} (2023), 85--105. 


\bibitem{HM}T. Hyt\"{o}nen and H. Martikainen, 
\emph{Non-homogeneous $T1$ theorem for bi-parameter singular integrals},
Adv. Math. \textbf{261} (2014), 220--273. 


\bibitem{HOS}T. Hyt\"{o}nen, T. Oikari, and J. Sinko, 
\emph{Fractional Bloom boundedness and compactness of commutators}, 
Forum Math. \textbf{35} (2023), 809--830.


\bibitem{Kro}V.G. Krotov, 
\emph{Compactness criteria in the spaces $L^p$, $p > 0$}, 
Sb. Math. \textbf{203} (2012), 1045--1064. 


\bibitem{LTW}M. Lacey, E. Terwilleger, and B.D. Wick, 
\emph{Remarks on product $\mathrm{VMO}$}, 
Proc. Amer. Math. Soc. \textbf{2} (2006), 465--474. 


\bibitem{Ler}A.K. Lerner, 
\emph{On pointwise estimates involving sparse operators},  
New York J. Math. \textbf{22} (2016), 341--349.


\bibitem{LOPTT}A.K. Lerner, S. Ombrosi, C. P\'{e}rez, R.H. Torres, and R. Trujillo-Gonz\'{a}lez,
\emph{New maximal functions and multiple weights for the multilinear Calder\'{o}n--Zygmund theory},
Adv. Math. \textbf{220} (2009), 1222--1264.


\bibitem{LOR}A.K. Lerner, S. Ombrosi, and I.P. Rivera-R\'{i}os, 
\emph{On pointwise and weighted estimates for commutators of Calder\'{o}n--Zygmund operators}, 
Adv. Math. \textbf{319} (2017), 153--181. 


\bibitem{LM}K. Li and H. Martikainen,
\emph{On entangled and multi-parameter commutators},
https://arxiv.org/abs/2412.02497.


\bibitem{LMOV}K. Li, H. Martikainen, Y. Ou, and E. Vuorinen, 
\emph{Bilinear representation theorem},  
Trans. Amer. Math. Soc. \textbf{371} (2019), 4193--4214. 


\bibitem{LMV20}K. Li, H. Martikainen, and E. Vuorinen,
\emph{Bilinear Calder\'{o}n--Zygmund theory on product spaces}, 
J. Math. Pures Appl. \textbf{138} (2020), 356--412. 
 
 
\bibitem{LMV21}K. Li, H. Martikainen, and E. Vuorinen,
\emph{Genuinely multilinear weighted estimates for singular integrals in product spaces},
Adv. Math. \textbf{393} (2021), 108099.


\bibitem{LLiu}L. Li and J.G. Liu, 
\emph{Some compactness criteria for weak solutions of time fractional PDEs}, 
SIAM J. Math. Anal. \textbf{50} (2018), 3963--3995. 


\bibitem{Mar}H. Martikainen,
\emph{Representation of bi-parameter singular integrals by dyadic operators},
Adv. Math. \textbf{229} (2012), 1734--1761.


\bibitem{Mus}J. Musielak, 
\emph{Orlicz Spaces and Modular Spaces}, 
Lecture Notes in Mathematics, 1034. Springer, Berlin (1983).


\bibitem{Jou}J.-L. Journ\'{e},
\emph{Calder\'{o}n--Zygmund operators on product spaces},
Rev. Mat. Iberoam. \textbf{1} (1985), 55--91.


\bibitem{MPTT1}C. Muscalu, J. Pipher, T. Tao, and C. Thiele,
\emph{Bi-parameter paraproducts},
Acta Math. \textbf{193} (2004), 269--296.


\bibitem{Nie}Z. Nieraeth,
\emph{Quantitative estimates and extrapolation for multilinear weight classes},
Math. Ann. \textbf{375} (2019), 453--507.


\bibitem{Ou}Y. Ou,
\emph{Multi-parameter singular integral operators and representation theorem}, 
Rev. Mat. Iberoam. \textbf{33} (2017), 325--350. 


\bibitem{OPS}Y. Ou, S. Petermichl, and E. Strouse, 
\emph{Higher order Journ\'{e} commutators and characterizations of multi-parameter $\BMO$}, 
Adv. Math. \textbf{291} (2016), 24--58. 


\bibitem{Pet}S. Petermichl, 
\emph{Dyadic shifts and a logarithmic estimate for Hankel operators with matrix symbol},  
C. R. Acad. Sci. Paris, \textbf{330} (2000), 455--460. 


\bibitem{Riesz}M. Riesz, 
\emph{Sur les ensembles compacts de fonctions sommables}, 
Acta Szeged Sect. Math. \textbf{6} (1933), 136--142.


\bibitem{SVW}C.B. Stockdale, P. Villarroya, and B.D. Wick, 
\emph{Sparse domination results for compactness on weighted spaces}, 
Collect. Math. \textbf{73} (2022), 535--563.


\bibitem{SY}D. Swartz and N.K. Yip,
\emph{Convergence of diffusion generated motion to motion by mean curvature},
Comm. Partial Differential Equations, \textbf{42} (2017), 1598--1643.



\bibitem{Tam}J.D. Tamarkin, 
\emph{On the compactness of the space $L_p$}, 
Bull. Amer. Math. Soc. \textbf{38} (1932), 79--84. 


\bibitem{TXYY}J. Tao, Q. Xue, D. Yang, and W. Yuan, 
\emph{$\mathrm{XMO}$ and weighted compact bilinear commutators},  
J. Fourier Anal. Appl. \textbf{27} (2021), No. 60. 


\bibitem{TYY}J. Tao, D. Yang, and D. Yang, 
\emph{A new vanishing Lipschitz-type subspace of $\BMO$ and compactness of bilinear commutators}, 
Math. Ann. \textbf{386} (2023), 495--531. 


\bibitem{TX}R.H. Torres and Q. Xue, 
\emph{On compactness of commutators of multiplication and bilinear pesudodifferential operators and a new subspace of $\BMO$},  
Rev. Mat. Iberoam. \textbf{36} (2020), 939--956. 


\bibitem{Tsu}M. Tsuji, 
\emph{On the compactness of space $L_p(p > 0)$ and its application to integral operators},  
Kodai Math. Sem. Rep. \textbf{3} (1951), 33--36.


\bibitem{Tul}A. Tulajkov, 
\emph{Zur Kompaktheit im Raum $L_p$ f\"{u}r $p=1$}, 
Nachr. Ges. Wiss. G\"{o}ttingen, Math. Phys. Kl. I 1933, nr. 39, 167--170.


\bibitem{Uch}A. Uchiyama,
\emph{On the compactness of operators of the Hankel type},
Tohoku Math. J. \textbf{30} (1978), 163--171.


\bibitem{Vil}P. Villarroya,
\emph{A characterization of compactness for singular integrals},
J. Math. Pures Appl. \textbf{104} (2015), 485--532.


\bibitem{WZ}Z. Wei and H. Zhang, 
\emph{On the boundedness and Schatten class property of noncommutative 
martingale paraproducts and operator-valued commutators}, 
https://arxiv.org/abs/2407.14988. 


\bibitem{Wil}M. Wilson, 
\emph{Weighted Littlewood-Paley Theory and Exponential-Square Integrability}, 
Lecture Notes in Mathematics, vol. 1924, Springer, Berlin, 2008.


\bibitem{WY}H. Wu and D. Yang, 
\emph{Characterizations of weighted compactness of commutators via $\CMO(\Rn)$}, 
Proc. Amer. Math. Soc. \textbf{146} (2018), 4239--4254.



\end{thebibliography}
\end{document}